\numberwithin{equation}{section}
\renewcommand{\eqref}[1]{\textcolor{blue}{\href{#1}{(\ref{#1})}}} % Enhance eqref with links
\let\hide\iffalse
\newcommand\obullet[1]{\ThisStyle{\ensurestackMath{%
  \stackon[1pt]{\SavedStyle#1}{\SavedStyle\kern.6\LMpt\bullet}}}}
\newcommand\ocirc[1]{\ThisStyle{\ensurestackMath{%
  \stackon[1pt]{\SavedStyle#1}{\SavedStyle\kern.6\LMpt\circ}}}}
\newtheorem{theorem}{Theorem}[section]
\newtheorem{lemma}[theorem]{Lemma}
\newtheorem{proposition}[theorem]{Proposition}
\newtheorem{corollary}[theorem]{Corollary}
\newtheorem{remark}[theorem]{Remark}
\newtheorem{definition}[theorem]{Definition}
\def\eps{\varepsilon }
\renewcommand{\div}{{\rm div}}
\newcommand{\II}{{\mathbf I}}
\newcommand{\e}{{\varepsilon}}
\newcommand{\Be}{\begin{equation}}
	\newcommand{\Ee}{\end{equation}}
\newcommand{\R}{\mathbb{R}}
\renewcommand{\P}{\mathbf{P}}
\renewcommand{\S}{\mathbb{S}}
\newcommand{\be}{\begin{equation}}
\newcommand{\bm}{\begin{multline}}
\newcommand{\ee}{\end{equation}}
\newcommand{\vp}{v_{\/_\perp}}
\newcommand{\xb}{x_{\mathbf{b}}}
\newcommand{\tb}{t_{\mathbf{b}}}
\newcommand{\Bes}{\begin{eqnarray*}}
\newcommand{\Ees}{\end{eqnarray*}}
\newcommand{\Bs}{\begin{split}}
\def\beq{\begin{equation}}
\def\eeq{\end{equation}}
\def\bb1{{1\!\!1}}
\def\mB{\mathbb{B}}
\def\R{\mbox{Re }}
\def\w{{\omega}}
\def\pt{\partial}
\newcommand{\T}{{\mathbb T}}
\def\ff{ {\bf{f_2}}}
\def\IP{(\II-\P)}
\def\NS{\textbf{NS}}
\def\eps{\varepsilon}
\def\triangle{\Delta}
\def\bega{\begin{aligned}}
\def\enda{\end{aligned}}
\def\R{\mathbb{R}^2}
\def\lw{\left}
\def\rw{\right}
\def\R{\mathbb{R}}
\def\wtd{\widetilde}
\def\la{\langle}
\def\ra{\rangle}
\def\bcase{\begin{cases}}
\def\ecase{\end{cases}}
\def\al{\alpha}
\def\bmx{\begin{bmatrix}}
\def\emx{\end{bmatrix}}
\def\vp{\varphi}
\def\nl{\left\|}
\def\nr{\right\|}
\def\mI{\mathcal I}
\def\mF{\mathcal F}
\def\mB{\mathcal B}
\def\mM{\mathcal M}
\def\mmA{\mathcal A}
\def\bk{\textbf{k}}
\def\vr{\varrho}
\def\AA{\mathcal A}
\begin{document}

\title{
Validity of Prandtl's boundary layer from the Boltzmann theory
} 
\author{
Chanwoo Kim, Trinh T. Nguyen\footnotemark[1]
}
\maketitle

 \renewcommand{\thefootnote}{\fnsymbol{footnote}}

\footnotetext[1]{Department of Mathematics, University of Wisconsin-Madison, WI 53706, USA. Emails: chanwoo.kim@wisc.edu; txn5114@gmail.com.}
\begin{abstract} We justify Prandtl equations and higher order Prandtl expansion from the hydrodynamic limit of the Boltzmann equations. Our fluid data is of the form 
$\text{shear flow}$, plus $\sqrt\kappa$ order term in analytic spaces in $x_\parallel \in\mathbb T^2$ and Sobolev in $x_3\in\R_+$. This work is the first to rigorously justify the Prandtl equations from the hydrodynamic limits of the Boltzmann equations. The novelty lies in obtaining estimates for the linearized Boltzmann equation with a diffusive boundary condition around a Prandtl layer shear flow in analytic spaces. The key techniques involve delicate commutator estimates and the use of local conservation law.
\end{abstract}

\tableofcontents
\section{Introduction}
In this paper, we consider the Boltzmann equation
\begin{equation}\label{F-eq}
\partial_t F + \frac{1}{\varepsilon} \xi \cdot \nabla_x F = \frac{1}{\kappa\varepsilon^2} Q(F, F),
\end{equation}
subject to the boundary condition
\begin{equation}\label{F-bdr}
F(t,x,\xi) = c_\mu \mu_0(\xi) \int_{n(x)\cdot \tilde\xi > 0} F(t,x,\tilde\xi)\, |n(x)\cdot \tilde\xi| \, d\tilde\xi, \quad (x,\xi) \in \{ \partial\Omega \times \mathbb{R}^3 : n(x)\cdot \xi < 0 \},
\end{equation}
where the bilinear hard-sphere collision operator is defined by
\begin{equation}\label{Q-def}
Q(F, G) = \frac{1}{2} \int_{\mathbb{R}^3} \int_{|\omega|=1} |(\xi - \xi_\star) \cdot \omega| \left\{ F(\xi') G(\xi'_\star) + G(\xi') F(\xi'_\star) - F(\xi) G(\xi_\star) - G(\xi) F(\xi_\star) \right\} d\omega\, d\xi_\star,
\end{equation}
and \( c_\mu > 0 \) is a normalization constant given in \eqref{cmu}. The velocities \( (\xi, \xi_\star) \) denote pre-collisional velocities, while \( (\xi', \xi'_\star) \) denote post-collisional velocities.

Understanding the hydrodynamic limit of \eqref{F-eq}--\eqref{F-bdr} in the regime \( \varepsilon, \kappa \to 0 \) is a longstanding and fundamental problem in kinetic theory, dating back to Hilbert’s sixth problem~\cite{HilbertICM}.

The weak convergence of fluid moments was formally established in~\cite{bardos-golse-levermore1} and later rigorously justified for DiPerna--Lions renormalized solutions in~\cite{Golse-Laure} (see also~\cite{bardos-golse-levermore2,bardos-golse-levermore3,bardos-golse-levermore4,Lions-Masmoudi,Masmoudi-SR,Saint-Raymond-book, JiangCPAM17,JiangCPDE2010}). Significant progress on the closeness of Hilbert expansions to solutions of the compressible Euler equations can be found in~\cite{Caflisch1,Lachowicz,Nishida,UkaiAsano}, while further developments include shock profiles~\cite{YuCPAM}, contact discontinuities~\cite{HWT-discontinuity}, vortex solutions~\cite{KimNguyen}, low-regularity Euler flows~\cite{KimLa}, relativistic regimes~\cite{Speck-Strain}, and steady  solutions~\cite{Esposito-Guo-Kim-Marra1,Esposito-Guo-Kim-Marra2,ELM1,ELM2,AEMN1,MR2765739,esposito2023ghosteffectboltzmanntheory,MR4772727}. We also refer to~\cite{YanGuoCPAM06} for the torus setting and~\cite{JangKimAPDE,CaoJangKim} for half-space results.

A central open question is the hydrodynamic limit near singular fluid solutions in bounded domains, where classical Boltzmann solutions may cease to be \( C^1 \) even for smooth convex boundaries. In particular, the derivation of Prandtl-type boundary layers from the Boltzmann equation remains one of the most fundamental and challenging problems in the field. Three major obstacles arise: (1) the Prandtl equations only asymptotically approximate---but do not exactly solve---the Navier--Stokes equations near the boundary; (2) boundary layer instabilities arise even at the linearized level; and (3) there is a lack of general regularity theory for the Boltzmann equation in domains with boundaries, especially under physical boundary conditions such as diffuse reflection.
 These challenges originate from the singular behavior of the transport term \( \varepsilon^{-1} \xi \cdot \nabla_x \) acting on Maxwellians of the form \( e^{-|\xi - \varepsilon U|^2} \) where U denotes the fluid velocities. Similar difficulties persist in vortex-dominated regimes, highlighting the deep interplay between kinetic singularities and fluid instabilities---one of the most important open frontiers in modern kinetic theory.

On the fluid side, a classical and fundamental question in mathematical fluid dynamics is to understand the behavior of solutions to the Navier--Stokes equations in the vanishing viscosity limit. The incompressible Navier--Stokes equations are given by
\begin{equation}\label{NS}
\begin{aligned}
\partial_t u^\kappa + u^\kappa \cdot \nabla u^\kappa + \nabla p^\kappa &= \eta_0 \kappa \Delta u^\kappa, \quad x \in \Omega,\\
\nabla \cdot u^\kappa &= 0,
\end{aligned}
\end{equation}
where \( u^\kappa(t,x) \in \mathbb{R}^3 \) denotes the velocity field, \( p^\kappa \in \mathbb{R} \) the pressure, \( \kappa > 0 \) the kinematic viscosity, and \( \Omega \subset \mathbb{R}^3 \) the fluid domain. In the presence of boundaries, the classical no-slip condition is imposed:
\begin{equation}\label{noslip}
u^\kappa|_{\partial\Omega} = 0,
\end{equation}
which prescribes that the fluid adheres to the boundary, with both tangential and normal components vanishing on \( \partial\Omega \).

Formally setting \( \kappa = 0 \) yields the incompressible Euler equations, i.e., \eqref{NS} with \( \kappa = 0 \), subject to the impermeability condition
\begin{equation}\label{euler}
u^0 \cdot n|_{\partial\Omega} = 0,
\end{equation}
where \( n \) denotes the outward unit normal to \( \partial\Omega \). In contrast to \eqref{noslip}, the Euler boundary condition permits tangential slip along the boundary, leading to a boundary layer mismatch between Navier--Stokes and Euler flows as \( \kappa \to 0 \).

To resolve this mismatch, Prandtl in 1904 \cite{Prandtl1904} proposed a boundary layer corrector. In the canonical half-space setting \( (x_\parallel, x_3) \in \mathbb{T}^2 \times \mathbb{R}_+ \), the approximate solution takes the form
\[
u^\kappa_{\text{app}}(t, x_\parallel, x_3) = u^E(t, x_\parallel, x_3) + u^P\left(t, x_\parallel, \frac{x_3}{\sqrt{\eta_0 \kappa}}\right),
\]
which satisfies the no-slip condition \( u^\kappa_{\text{app}}|_{x_3 = 0} = 0 \). This leads to the celebrated Prandtl boundary layer theory, whose rigorous justification has since attracted extensive attention.

The Prandtl equations are locally well-posed for analytic data in \( x_\parallel \) \cite{SammartinoCaflisch1,SamCafPrandtl,KukavicaVV13}, for monotonic shear flows \cite{MW,PrandtlJAMS,KukavicaVV14}, and for Gevrey-class data \cite{GVMasmoudi15,GV-Pr-Gevrey,Gevrey-3D}. In the absence of such structural conditions, however, the equations are known to be ill-posed in Sobolev spaces \cite{GVDormy,DGVN,GN1}. Even when the Prandtl expansion is well-posed, controlling the remainder \( v := u^\kappa - u^\kappa_{\text{app}} \), which also satisfies homogeneous Dirichlet boundary conditions, poses significant difficulties. The energy estimate for \( v \) contains the convection term
\[
\int_\Omega (v \cdot \nabla u^\kappa_{\text{app}}) \cdot v,
\]
which becomes problematic due to the sharp boundary gradients:
\[
\nabla u^\kappa_{\text{app}} \sim \kappa^{-1/2},
\]
indicating an \( O(\kappa^{-1/2}) \) blow-up in the \( \kappa \to 0^+\) limit. This large vorticity near the boundary is a key source of instability, and whether the boundary layer remains confined or propagates into the interior remains a central and delicate question.

In his seminal work \cite{Kato84}, Kato established a necessary and sufficient condition for the convergence
\begin{equation}\label{IL}
u^\kappa \to u^0 \quad \text{in} \quad L^\infty(0,T; L^2(\Omega))
\end{equation}
as \( \kappa \to 0 \), namely,
\begin{equation}\label{Kato}
\kappa \int_0^T \int_{d(x, \partial\Omega) \lesssim \kappa} |\nabla u^\kappa|^2 \, dx \, dt \to 0.
\end{equation}
Despite the localization of the condition to a vanishingly thin boundary layer of size $\kappa$, verifying \eqref{Kato} remains an open challenge. Importantly, it reveals that any failure of \eqref{IL} must involve subtle concentration phenomena at the boundary, beyond what is captured by formal Prandtl expansions, since \( u^\kappa_{\text{app}} \) does satisfy \eqref{Kato}.

Sammartino and Caflisch \cite{SammartinoCaflisch1,SammartinoCaflisch2} rigorously justified the Prandtl expansion and the inviscid limit \eqref{IL} for analytic initial data in the half-space. Maekawa \cite{Maekawa14} established the inviscid limit under the assumption that the initial vorticity vanishes near the boundary. For Euler shear flows that are monotonic and concave, the inviscid limit is known to hold for Gevrey-regular perturbations of size \( O(\kappa^{1/2+}) \) \cite{GVMM}.

Progress under additional symmetry or structural assumptions can be found in \cite{MR2454590,MR2465261,MR3985543} for symmetric flows, \cite{MR2431354,MR2763344,MR2975337} for channel and pipe geometries, and \cite{MR1842436,MR3877480} for linearized problems. In the absence of any structural assumptions, the inviscid limit has only been established for analytic data near the boundary \cite{2N,KVW,MR4128497,MR4396261,KNVW,BNTT,2N-ext}.

For general smooth data, the formation of boundary vorticity and associated instabilities can lead to multi-scale phenomena that invalidate the Prandtl expansion. This mechanism, rigorously described in \cite{Grenier00CPAM,Toan-Grenier-APDE,GGN3,GreGuoToanAM}, illustrating that small-scale vorticity generation can obstruct the inviscid limit \cite{BardosTiti,ConVV17,MR3927111}. In the stationary setting, significant progress has been made on rigorous boundary layer expansions; see, for instance, \cite{GN2,Iyer1,IyerGlobal1,MR4011041,MR4642817,MR4232771,MR4097332,MR4462474,LinTaoFei,MR4752991,MR4646868,guo2024smallscalesinviscidlimits}.

The goal of the present work is to rigorously justify the Prandtl boundary layer expansion directly from the Boltzmann equation, without passing through the intermediate Navier--Stokes equations. This direct approach is particularly significant: while the Prandtl equations are formally derived from the Navier--Stokes system with no-slip boundary conditions, the validity of this derivation remains delicate due to instability and ill-posedness issues. Moreover, the Navier--Stokes equations themselves do not provide a uniformly valid description of fluid motion in the vanishing viscosity limit, especially near boundaries where sharp velocity gradients and boundary layers form. As a result, the justification of reduced models such as the Prandtl equations—or more generally, asymptotic fluid approximations—from kinetic theory becomes essential. The Boltzmann equation, equipped with diffusive boundary conditions, offers a more physically grounded framework that naturally incorporates microscopic boundary interactions without invoking the controversial no-slip assumption. In this work, we establish the hydrodynamic limit for a class of Prandtl shear flows under diffusive reflection, thereby resolving a longstanding open problem concerning singular fluid limits with large boundary velocity gradients. Our analysis yields the first rigorous derivation of the Prandtl equations from kinetic theory and introduces new techniques to capture boundary layer behavior in singular asymptotic regimes—a decisive step toward understanding this fundamental scaling limit.

To be more precise, we consider the Boltzmann equation \eqref{F-eq}. The goal is to justify that the leading behavior of the solution $F$ is given by 
\[
F=\mu+\eps^2 \ff\sqrt\mu+\eps\delta f\sqrt\mu
\]
where $\mu+\eps^2\ff\sqrt\mu$ is the approximate solution to the Boltzmann equation \eqref{F-eq}
built from the fluid equations and $\eps\delta f\sqrt\mu$ is the remainder. Here, the function $\mu$ is the local Maxwellian corresponding the the velocity $U(t,x)$ of the fluid, given by
\beq\label{mu-def}
\mu(t,x,\xi)=\mu_0(\xi-\eps U).\eeq
Here, $\mu_0$ is the global Maxwellian $\mu_0(\xi)=\frac{1}{(2\pi)^{\frac 32}}e^{-\frac{|\xi|^2}{2}}$.
One can then write the equation for the remainder as 
\beq\label{setup-eq}
\bega 
&\pt_t f+\frac 1 \eps \xi\cdot\nabla_x f+\frac 1 {\kappa\e^2}Lf+\frac{(\pt_t+\frac 1 \eps\xi\cdot\nabla_x)\sqrt\mu}{\sqrt\mu}f\\
&=-\frac 1 \delta \NS(U)\cdot\vp\sqrt\mu+\frac{\eps}{\kappa\delta}\Gamma(\ff,\ff)+\frac{\delta}{\kappa\e}\Gamma(f,f)+\frac 2 \kappa \Gamma(\ff,f)+\text{other terms}.
\enda 
\eeq
Here, $\NS(U)$ is the Navier-Stokes applied to $U$, which will be specified in the next section.
Since $U$ involves a fast scale variable $\frac{x_3}{\sqrt{\eta_0\kappa}}$, we see that $\pt_{x_3}U\sim\kappa^{-\frac 12}$, and so one needs to deal with the singular term 
\[
\frac{(\pt_t+\frac 1 \eps\xi\cdot\nabla_x)\sqrt\mu}{\sqrt\mu}f\sim \frac{1}{\eps}\frac{\xi_3\pt_{x_3}\sqrt\mu}{\sqrt\mu}\sim \pt_{x_3}U_\parallel \xi_\parallel\xi_3 f\sim \kappa^{-\frac 12}\xi_\parallel \xi_3 f.
\]
The problem indeed also appears in the paper  \cite{JangKimAPDE}. One of the crucial ideas of the authors in \cite{JangKimAPDE} is that $U$ solve the \textit{exact} Navier-Stokes equations, giving an error on the right hand side of \eqref{setup-eq} to be $O(\eps)$. Hence, by the energy and Gronwall estimate, formally one gets
\[\bega 
\frac{d}{dt}\nl f\nr_{L^2}&\lesssim\kappa^{-\frac 12}\nl f\nr_{L^2}+O(\eps),\\
\nl f\nr_{L^2}&\lesssim \eps e^{\kappa^{-\frac 12}}.
\enda 
\]
This implies the hydrodynamic limits for the \textit{Navier-Stokes} solutions $\NS(U)=0$ under the regime $\eps e^{\kappa^{-\frac 12}}\to 0$. In our current work, since $U$ does not solve the Navier-Stokes equations and only solve the Prandtl equations, which always leave a remainder in the Navier-Stokes equations, we can expect at most
\[
\NS(U)\lesssim \kappa^{N}
\]
for some $N\ge 1$ large, but finite. A rough $L^2$ estimate for the equation \eqref{setup-eq} gives
\[
\nl f\nr_{L^2}\lesssim \kappa^N e^{\kappa^{-\frac 12}},
\]
which blows up when $\kappa\to 0^+$. In this paper, we settle this problem in the analytic framework for Prandtl shear flows. We consider $\Omega=\T^2\times\R_+$ and denote 
\[
x=(x_\parallel,x_3),\quad x_\parallel=(x_1,x_2)\in\T^2,\quad x_3\ge 0.
\]
We have the formal statement of the main theorem
\begin{theorem} For any Prandtl layers of the form
\[\bega 
U(t,x_1,x_2,x_3)&=(u_e^0(t,x_3)+u_p^0\lw(t,\frac{x_3}{\sqrt{\eta_0\kappa}}\rw)\\
&\quad+\bmx &\sum_{i=0}^N\sqrt{\eta_0\kappa}^i\lw(u_e^i(t,x_\parallel,x_3)+u_p^i(t,x_\parallel,\frac{x_3}{\sqrt{\eta_0\kappa}}
\rw)\\
&\sum_{i=0}^{N-1}\sqrt{\eta_0\kappa}^{i+1}\lw(v_e^{i+1}(t,x_\parallel,\frac{x_3}{\sqrt{\eta_0\kappa}})+v_p^{i+1}(t,x_\parallel,\frac{x_3}{\sqrt{\eta_0\kappa}}
\rw)
\emx 
\enda
\]
that is analytic in $\ x_\parallel \in\T^2$ and analytic in time. There exists a solution $F$ to the Boltzmann equations with the diffusive boundary condition such that 
\[
\frac 1 \eps\nl  \frac{F-\mu}{\sqrt\mu}\nr_{L^2_{x,\xi}}\to 0
\]
as $\eps,\kappa\to 0^+$, in the regime 
\[
\eps\sim \kappa^{N}\quad\text{and}\quad N\gg 1.
\]
\end{theorem}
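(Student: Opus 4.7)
The plan decomposes into three stages: (i) constructing a Prandtl-matched Hilbert expansion of the Boltzmann solution whose Navier--Stokes residual is $O(\kappa^N)$; (ii) estimating the remainder $f$ in an analytic--Sobolev space whose tangential analyticity radius shrinks in time at rate $\kappa^{-1/2}$ so as to absorb the singular boundary-layer derivatives; and (iii) closing the nonlinear system in the regime $\varepsilon\sim\kappa^N$. For (i), I would start from the local Maxwellian $\mu = \mu_0(\xi-\varepsilon U)$ attached to the given Prandtl velocity and add fluid correctors $\varepsilon^{2}\mathbf{f}_2\sqrt\mu,\,\varepsilon^{3}\mathbf{f}_3\sqrt\mu,\dots$ together with Knudsen-layer correctors near $x_3=0$ so that the diffuse boundary condition \eqref{F-bdr} is matched to order $\kappa^N$. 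Because $U$ is a Prandtl ansatz of order $N$, the residual $\mathrm{NS}(U)$ is $O(\kappa^N)$, which propagates to a source of size $\kappa^N/\delta$ in the remainder equation \eqref{setup-eq}.

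For (ii), the natural norm is
\begin{equation*}
\|f\|_{\rho(t)}^{2} \;=\; \sum_{k\in\mathbb{Z}^2} e^{2\rho(t)|k|}\,\|\widehat{f_k}(t,x_3,\xi)\|_{L^2_{x_3}L^2_\xi((\sqrt\mu)^{-1})}^{2},
\end{equation*}
in the spirit of Sammartino--Caflisch and Kukavica--Vicol: analytic in $x_\parallel$, Sobolev in $x_3$, Gaussian-weighted in $\xi$. The heart of the analysis is the coefficient
\begin{equation*}
\frac{(\partial_t+\varepsilon^{-1}\xi\cdot\nabla_x)\sqrt\mu}{\sqrt\mu}\;\sim\;\kappa^{-1/2}\,\partial_{x_3}U_\parallel\,\xi_\parallel\xi_3,
\end{equation*}
which is $O(\kappa^{-1/2})$ and cannot be absorbed by a standard energy identity. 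Splitting $f=\mathbf{P}f+(\mathbf{I}-\mathbf{P})f$, the microscopic part is controlled by the collision dissipation of order $(\kappa\varepsilon^{2})^{-1}$. For the macroscopic part, I would invoke the local conservation laws to rewrite $\xi_\parallel\xi_3\,\mathbf{P}f$ as a tangential derivative of a fluid moment, so that the singular prefactor $\kappa^{-1/2}\partial_{x_3}U_\parallel$ is paired with $\partial_{x_\parallel}\mathbf{P}f$; in Fourier this costs one factor of $|k|$, which is absorbed by choosing $\dot\rho(t)\le -C\kappa^{-1/2}$. The diffuse boundary contributes a nonnegative quadratic form up to a controllable trace handled in the Esposito--Guo--Kim--Marra fashion.

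For (iii), the bilinear operator $\Gamma$ satisfies an algebra-type estimate in $\|\cdot\|_{\rho(t)}$ with a mild loss of regularity, which fits into an abstract Cauchy--Kowalevski iteration; together with Step (ii), this yields $\|f\|_{\rho(t)}\lesssim \kappa^N/\delta$ on a time interval on which $C\int_0^t\kappa^{-1/2}\,ds$ remains below the initial analyticity radius, from which the stated convergence $\varepsilon^{-1}\|(F-\mu)/\sqrt\mu\|_{L^2}\to 0$ follows in the regime $\varepsilon\sim\kappa^N$. The hard part, and the true departure from \cite{JangKimAPDE}, is that $U$ does not solve Navier--Stokes exactly, so the $\kappa^{-1/2}$ singular commutator cannot be pushed into a vanishing source term; it must be cancelled \emph{structurally} via the local conservation laws combined with the shrinking analytic radius, while remaining compatible both with the diffuse reflection boundary and with the $(\mathbf{I}-\mathbf{P})$ micro-dissipation of $L$. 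Making the commutators between the analytic Fourier weight and the boundary-layer coefficients sharp enough to close this energy inequality is the principal technical obstacle.
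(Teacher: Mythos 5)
Your overall scaffolding is sound: a Prandtl-matched Hilbert expansion with $\mathrm{NS}(U)=O(\kappa^N)$ residual, an analytic-in-$x_\parallel$/Sobolev-in-$x_3$ energy framework, and local conservation laws used to tame the singular coefficient $\kappa^{-1/2}\partial_{x_3}U_\parallel\,\xi_\parallel\xi_3$. But the specific mechanism you propose for absorbing the $\kappa^{-1/2}$ does not close. You pair the singular prefactor with a tangential derivative of a fluid moment and propose to compensate the resulting loss $|k|$ by choosing $\dot\rho(t)\lesssim -\kappa^{-1/2}$. That choice burns the analyticity radius on a time scale of length $O(\sqrt{\kappa})$, so your energy estimate only holds on $[0,T]$ with $T\lesssim\sqrt\kappa\to 0$. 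You then have no $\kappa$-uniform time interval, and no hydrodynamic limit statement can be extracted. The paper's Theorem \ref{l2-ana} needs, and obtains, a $T$ independent of $\kappa$ and $\varepsilon$, with a \emph{fixed} decay rate $\tau(t)=\tau_0-M_0 t$.

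The paper removes the $\kappa^{-1/2}$ structurally, \emph{before} any Fourier loss is paid, by exploiting the boundary-layer form of $U$. The key observation is that $x_3\partial_{x_3}U_\parallel = z\,\partial_z U_\parallel = O(1)$ uniformly in $\kappa$, because $\partial_{x_3}U$ is only large on a layer of thickness $\sqrt\kappa$ where $x_3$ itself is small. The singular integral from Proposition \ref{bulk-est} is handled as
\begin{equation*}
\int_\Omega \partial_3 U_1\, b_1\, b_3\,dx
=\int_\Omega \bigl(x_3\partial_3 U_1\bigr)\,b_1\,\frac{b_3}{x_3}\,dx
\lesssim \|b_1\|_{L^2}\,\Bigl\|\frac{b_3}{x_3}\Bigr\|_{L^2},
\end{equation*}
and the Hardy inequality $\|b_3/x_3\|_{L^2}\lesssim\|\partial_3 b_3\|_{L^2}$ applies precisely because the diffuse boundary condition forces $b_3|_{x_3=0}=0$ (Proposition \ref{bc-con12}). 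Only then does the conservation law enter, converting $\partial_3 b_3$ into $\varepsilon\partial_t a+\partial_1 b_1+\partial_2 b_2$ plus controllable sources involving $\nabla\cdot U$; this is a loss of one $(\varepsilon\partial_t,\partial_{x_1},\partial_{x_2})$-derivative with an $O(1)$ constant, not $O(\kappa^{-1/2})$. This is the ``first key observation'' the whole $L^2$ analytic estimate hinges on, and it is the step your proposal is missing.

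Two smaller gaps: (a) you introduce Knudsen-layer correctors, but the paper avoids these -- because the Prandtl ansatz satisfies no-slip, the local Maxwellian $\mu$ equals the global one $\mu_0$ on $\partial\Omega$, and the boundary mismatch of $\mathbf{f}_2$ is carried entirely by the small source $r=(I-\mathbf{P}_{\gamma_+})\mathbf{f}_2$ in the remainder boundary condition \eqref{bdr-r}; (b) closing $\Gamma(f,f)$ needs more than an algebra estimate in a single weighted analytic $L^2$ norm -- the paper runs a coupled hierarchy of analytic $L^2$, weighted $L^\infty$, $L^\infty_t L^6_x$, and $L^2_t L^p_x$ (for $p\in(2,3)$) estimates on $\mathbf{P}f_\alpha$ (Sections \ref{L6-sec} and onward), which a straight Cauchy--Kowalevski iteration in $\|\cdot\|_{\rho(t)}$ will not by itself deliver.
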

~\\
\textbf{
Acknowledgment.} CK is partly supported by NSF-DMS 1900923 and NSF-CAREER 2047681. He thanks Professor Yan Guo and Juhi Jang for ealier inspiring discussions. The authors thank Professor Peter Constantin for his interest to this work.~\\~\\
\textbf{
Conflict of Interest Statement.} The authors declare that there are no conflicts of interest related to this work. 
\subsection*{Ideas of the proof}
Our first ingredient is to build an approximate solution to the Boltzmann equation from the Prandtl equations. To this end,
we solve the Prandtl equations, which in turn solves the Navier-Stokes equations up to some high order $\kappa^N$. It is important that our Prandtl velocity satisfies the no-slip boundary condition, so that the local Maxwellian $\mu$ (see \eqref{mu-def}) matches the global Maxwellian $\mu_0$ at the boundary.
Next, we use the Hilbert expansion 
\[
F=\mu+\eps^2 \ff \sqrt\mu+\eps\delta f\sqrt\mu
\]
where $\ff$ is selected so that the remainder has small macroscopic part (see \eqref{ff-def}), and $f$ solves the remainder equation \eqref{setup-eq}.
Multiplying both sides by $f$ and integrating, we get
\[\bega 
&\frac 1 2 \frac{d}{dt}\nl f(t)\nr_{L^2}^2+O(1)\frac 1{\kappa\eps^2}\nl \IP f
\nr_{L^2}^2+\int_{\Omega\times\R^3}\frac{(\pt_t+\frac 1 \eps\xi\cdot\nabla_x)\sqrt\mu}{\sqrt\mu}|f|^2dxd\xi \\
&\lesssim \frac 1 \delta \lw| \int_{\Omega} \NS(U)\cdot b dx
\rw|+\frac{\eps^2}{\delta\sqrt\kappa}\lw|\la \Gamma(\ff,\ff),\frac{1}{\sqrt\kappa\eps}\IP f\ra_{L^2}
\rw|\\
&+\frac \delta{\sqrt\kappa} \lw|\la \Gamma(f,f),\frac{1}{\sqrt\kappa\e}\IP f\ra_{L^2}
\rw|+\frac\eps{\sqrt\kappa} \lw|\la \Gamma(\ff,f),\frac 1 {\sqrt\kappa\e}\IP f
\ra_{L^2}\rw|.
\enda 
\]
Our first key observation is that 
\[
\int_{\Omega\times\R^3}\frac{(\pt_t+\frac 1 \eps\xi\cdot\nabla_x)\sqrt\mu}{\sqrt\mu}|f|^2dxd\xi =\pt_{x_3}U_1b_1b_3+\text{lower order terms}.
\]
We note that $U_1$ involves boundary layer fast variable $z=\frac{x_3}{\sqrt\kappa}$, therefore $\pt_3 U_1\sim \kappa^{-1/2}$. To treat the term $\pt_3 U_1b_1b_3$, we first  note that $x_3\pt_3 U_1=z\pt_{z}U_1\lesssim 1$, therefore
\[
\int_{\Omega} \pt_3 U_1b_1b_3dx =\int x_3\pt_{x_3}U_1\cdot b_1\cdot\frac{b_3}{x_3}\lesssim \|b_1\|_{L^2}\|\pt_3 b_3\|_{L^2}.
\]
Here, we use the Hardy inequality for $b_3$, thanks to the diffusive boundary condition (see Proposition \ref{bc-con12}).
To treat the term $\pt_3 b_3$, we use the key conservation law 
\[
\eps\pt_t a+\pt_1 b_1+\pt_2 b_2+\pt_3 b_3=-\frac{1}{\eps\delta}(\nabla_x \cdot U)+\text{lower order terms}.
\]
Thus we get 
\[\bega 
&\frac 1 2 \frac{d}{dt}\nl f(t)\nr_{L^2}^2+O(1)\frac 1{\kappa\eps^2}\nl \IP f
\nr_{L^2}^2+\int_{\Omega\times\R^3}\frac{(\pt_t+\frac 1 \eps\xi\cdot\nabla_x)\sqrt\mu}{\sqrt\mu}|f|^2dxd\xi \\
&\lesssim \frac 1 \delta \lw| \int_{\Omega} \NS(U)\cdot b dx
\rw|+\frac{\eps^2}{\delta\sqrt\kappa}\lw|\la \Gamma(\ff,\ff),\frac{1}{\sqrt\kappa\eps}\IP f\ra_{L^2}
\rw|\\
&+\frac \delta{\sqrt\kappa} \lw|\la \Gamma(f,f),\frac{1}{\sqrt\kappa\e}\IP f\ra_{L^2}
\rw|+\frac\eps{\sqrt\kappa} \lw|\la \Gamma(\ff,f),\frac 1 {\sqrt\kappa\e}\IP f
\ra_{L^2}\rw|+\cdots.
\enda 
\]
Now in order to close the estimate for $L^2$, we need 
\[
\frac 1 \delta \nl \NS(U)\nr_{L^2}+\frac 1 {\e\delta}\nl\nabla_x\cdot U
\nr_{L^2}\lesssim 1.
\]
To this end, we construct a high order approximation of the Navier-Stokes equations up to order $\kappa^{2N}$. Since we lose $\eps\pt_t,\pt_{x_1},\pt_{x_2}$ of $f$ in the energy estimate, we  work with analyticity norms in these derivatives, see section \ref{func-space-BE}. Fix $\pt\in \{\eps\pt_t,\pt_{x_1},\pt_{x_2}\}$, we have a new commutator term $\frac{1}{\kappa\eps^2}[L,\pt]f$ in studying the equation for $\pt f$. To be more precise, $\pt f$ solves
\[
\lw(\pt_t+\xi\cdot\nabla_x+\frac 1 {\kappa\eps^2}L\rw)(\pt f)=-\frac{1}{\kappa\eps^2}[L,\pt]f+\text{other terms}.\]
By an explicit calculation, we see that
\[
\frac{1}{\kappa\eps^2}[L,\pt]f\sim \frac{1}{\sqrt\kappa} \pt U_1b_3 \frac{1}{\sqrt\kappa\eps}\la \IP f,\hat A_{13}\ra.
\]
Since $U_1\sim u_e^0(t,x_3)+u_p^0(t,\frac{x_3}{\sqrt{\eta_0\kappa}})+O(\sqrt\kappa)$, we have 
\[
\pt_\parallel U_1=\pt_{x_\parallel}\lw\{u_e^0(t,x_3)+u_p^0(t,\frac{x_3}{\sqrt{\eta_0\kappa}})\rw\}+O(\sqrt\kappa) \sim\sqrt\kappa.\]
This gives us the formal estimate $\frac{1}{\kappa\eps^2}\la [L,\pt]f,\pt f\ra\lesssim \nl \pt  f\nr_{L^2}\frac{1}{\sqrt\kappa\eps}\nl \IP f\nr_{L^2}$. Finally, we construct the corresponding analytic fluid solutions and estimate their remainders in the analytic space, see Theorem \ref{thm-ana-fluid}.
\section{Analysis of the Euler and Prandtl equations}
In this paper, we assume that the initial data is of the form 
\beq\label{initial-data}
\bega
u|_{t=0}&=u_e^0( x_3)+u_p^0\lw(\frac {x_3} {\sqrt{\eta_0\kappa}}\rw)+\sum_{i=1}^N {(\eta_0\kappa)}^{\frac i 2}\lw(
u_{e,0}^i( x_\parallel,  x_3)+u_{p,0}^i\lw(x_\parallel ,\frac  {x_3}{\sqrt{\eta_0\kappa}}\rw)
\rw),\\
v|_{t=0}&=\sqrt{\eta_0\kappa} v_{e,0}^1(x_\parallel,{ x_3})+\sum_{i=1}^{N-1} {(\eta_0\kappa)}^{\frac{i+1} 2}\lw(v_{p,0}^{i+1}(x_\parallel ,\frac{ x_3} {\sqrt{\eta_0\kappa}})+v_{e,0}^{i+1}(x_\parallel,{x_3} )\rw)\\
&\quad+{(\eta_0\kappa)}^{\frac{N+1} 2}v_{p,0}^{N+1}(x_\parallel,\frac  { x_3} {\sqrt{\eta_0\kappa}}),
\enda
\eeq
and $\int_{\T^2}u_{e,0}^id{x_\parallel }=\int_{\T^2}v_{e,0}^id{x_\parallel }=0$.
The Navier-Stokes equations on $\Omega=\mathbb T^2\times \R_+$ reads
\[
\bega 
&\pt_t u+u\cdot \nabla_{x_\parallel}u+v\pt_{x_3} u+\nabla_{x_\parallel}p-\eta_0\kappa\triangle u=0,\\
&\pt_t v+u\cdot\nabla_{x_\parallel}v+v\pt_{x_3}v+\pt_{x_3}p-\eta_0\kappa\triangle v=0,\\
&\nabla_{x_\parallel}\cdot u+\pt_{x_3}v=0.
\enda 
\]
Here we denote \[
U(t,x_1,x_2,x_3)=(u(t,x_\parallel,x_3),v(t,x_\parallel,x_3)),\qquad x_\parallel \in \mathbb T^2,\quad x_3\in \mathbb R_+.
\]
Define the stretched variable 
\Be
z = \frac{x_3}{\sqrt {\eta_0\kappa}} \label{def:z}.
\Ee
We build an approximation solution $U=(u_a,v_a)$ of the form
\beq\label{EP-form}
\bega
u_a&=\sum_{i=0}^N\sqrt{\eta_0\kappa}^{i}\lw(u_e^i(t,x_\parallel,x_3)+u_p^i(t,x_\parallel,z)
\rw),\\
v_a&=v_e^0(t,x_\parallel,{x_3})+\sum_{i=0}^{N-1} {\sqrt{\eta_0\kappa}}^{i+1}\lw(v_p^{i+1}(t,x_\parallel,z)+v_e^{i+1}(t,x_\parallel,{x_3})\rw)+\sqrt{\eta_0\kappa}^{N+1}v_p^{N+1}(t,x_\parallel,z),\\
p_a&=\sum_{i=0}^N\sqrt{\eta_0\kappa}^i \lw(p_e^i(t,x_\parallel,{x_3})+p_p^i(t,x_\parallel,z)
\rw).
\enda
\eeq
We have the zero-order Euler equation
\[
\bega
&\pt_t u_e^0+(u_e^0\cdot\nabla_{x_\parallel}+v_e^0\pt_{x_3})u_e^0+\nabla_{x_\parallel} p_e^0=0,\\
&\pt_t v_e^0+(u_e^0\cdot\nabla_{x_\parallel}+v_e^0\pt_{x_3})v_e^0+\pt_{x_3} p_e^0=0,\\
&\nabla_{x_\parallel}\cdot  u_e^0+\pt_{x_3} v_e^0=0,\quad v_e^0|_{{x_3}=0}=0.\\
\enda
\]
Due to the mismatch in the boundary condition, we add a Prandtl corrector $$(u_p^0(t,x_\parallel,z),\sqrt{\eta_0\kappa} v_p^1(t,x_\parallel,z)$$ given by
\[\bega
&\pt_t u_p^0-\pt_z^2 u_p^0+u_p^0\pt_x u_e^0+(u_e^0|_{x_3=0}+u_p^0)\pt_{x_\parallel} u_p^0+(v_e^1|_{z=0}+v_p^1+z\pt_{x_3}v_e^0|_{x_3=0})\pt_z u_p^0=0,\\
&v_p^1=-\int_z^\infty (\nabla_x\cdot u_p^0)dz',\\
&u_p^0|_{z=0}=-u_e^0|_{{x_3}=0},\quad u_p^0|_{z=\infty}=0,\\
&u_p^0|_{t=0}=u_{p,0}.
\enda
\]
The new approximate solution does not satisfy the equation the boundary condition $v_p^0|_{x_3=0}=0$, hence one needs to introduce $(u_e^1,v_e^1)$ solving \[
\bega
&\pt_t u_e^1+(u_e^1\pt_{x_\parallel}+v_e^1\pt_{x_3})u_e^0+(u_e^0\pt_{x_\parallel}+v_e^0\pt_{x_3})u_e^1+\pt_{x_\parallel} p_e^1=0,\\
&\pt_t v_e^1+(u_e^1\pt_{x_\parallel} +v_e^1\pt_{x_3})v_e^0+(u_e^0\pt_{x_\parallel}+v_e^0\pt_{x_3})v_e^1+\pt_{x_3} p_e^1=0,\\
&\pt_{x_\parallel} u_e^1+\pt_{x_3} v_e^1=0,\\
&(u_e^1,v_e^1)|_{t=0}=(u_{e,0}^1,v_{e,0}^1),\\
&v_e^1|_{{x_3}=0}=-v_p^1|_{z=0}.
\enda
\]
For $0\le m\le N$, we have the linearized Euler equation 
\beq\label{Euler}
\bega
&\pt_t u_e^m+(u_e^0\cdot\nabla_{x_\parallel}+v_e^0\pt_{x_3})u_e^m+(u_e^m\cdot\nabla_{x_\parallel}+v_e^m\pt_{x_3})u_e^0+\pt_{x_\parallel} p_e^m\\
&=-\chi_{\{m\ge 1\}}\sum_{1\le i\le m-1} \lw(u_e^{m-i}\pt_x u_e^i+v_e^{m-i}\pt_{x_3} u_e^i\rw)+\chi_{\{m\ge 2\}}\triangle u_e^{m-2},\\
&\pt_t v_e^m+(u_e^m\cdot\nabla_{x_\parallel}+v_e^m\pt_{x_3}) v_e^0+(u_e^0\cdot\nabla_{x_\parallel}+v_e^0\pt_{x_3}) v_e^m+\pt_{x_3} p_e^m,\\
&\quad=-\chi_{\{m\ge 1\}}\sum_{1\le i\le m-1} \lw(u_e^{m-i}\pt_x v_e^i+v_e^{m-i}\pt_{x_3} v_e^i\rw)+\chi_{\{m\ge 2
\}}\triangle v_e^{m-2},\\
&\nabla_{x_\parallel}\cdot u_e^m+\pt_{x_3}v_e^m=0,\qquad v_e^m|_{x_3=0}=-v_p^m|_{z=0},\qquad 1\le m\le N.
\enda 
\eeq
The Prandtl equations $(u_p^m,v_p^m)$ solves
\beq\label{Prandtl}
\bega
&(\pt_t-\pt_z^2)u_p^m+\lw\{u_p^m \pt_{x_\parallel}u_e^0|_{{x_3}=0}+\pt_x u_p^m\lw(u_e^0|_{{x_3}=0}+u_p^0
\rw)+\pt_{x_\parallel}u_p^0(u_e^m|_{{x_3}=0}+u_p^m)\rw\}\\
& +(v_e^1|_{{x_3}=0}+z\pt_{x_3} v_e^0|_{{x_3}=0}+v_p^1)\pt_z u_p^m\\
&+(v_e^{m+1}|_{{x_3}=0}+v_p^{m+1})\pt_z u_p^0\cdot \chi_{\{m\le N-1\}}+\chi_{\{1\le m\le N-1\}}\pt_z p_p^{m+1}=f_{p,\parallel}^m,\\
&u_p^m|_{z=0}=-u_e^m|_{x_3=0},\quad u_p^m|_{z=\infty}=0,\qquad 0\le m\le N,\\
&v_p^{m+1}=-\int_z^\infty \nabla_{x_\parallel}\cdot u_p^m dz',
\enda 
\eeq
where
\[
\bega
f_{p,\parallel}^m&=-\sum_{\substack{i+j+k=m\\i\le m-1\\k\ge 0}}\frac{z^k}{k!}\pt_{x_3}^{k}\nabla_{x_\parallel}\cdot u_e^j|_{{x_3}=0}u_p^i-\sum_{0\le i\le m-1}\pt_{x_\parallel} u_p^i(u_e^{m-i}|_{{x_3}=0}+u_p^{m-i})\\
&-\sum_{\substack{i+j+k=m\\k\ge 1}}\frac{z^k}{k!}\pt_{x_3}^k u_e^i|_{{x_3}=0}\pt_{x_\parallel} u_p^j-\sum_{\substack{1\le i,j\le N\\k\ge 0\\i+j+k=m}}\frac{z^k}{k!}\pt_{x_3}^{k+1}u_e^j|_{{x_3}=0}v_p^i\\
&-\sum_{\substack{
k+j=m+1\\
k\ge 1\\
0\le j\le m-1
}
}\frac{z^k}{k!}\pt_{x_3}^k v_e^0|_{{x_3}=0}\cdot \pt_z u_p^j-\sum_{\substack{i+j=m\\
0\le i\le m-1\\0\le j\le m-1\\
}
}(v_e^{i+1}|_{{x_3}=0}+v_p^{i+1})\pt_z u_p^j \cdot \chi_{\{m\le N-1\}}\\
&-\chi_{\{m\ge 2\}}\cdot \sum_{\substack{
i+j+k=m\\
0\le i\le N-1\\
0\le j\le N\\
k\ge 1
}
}\frac{z^k}{k!}\pt_{x_3}^k v_e^{i+1}|_{{x_3}=0}\pt_zu_p^j+\chi_{\{m\ge 2\}}\triangle_{x_\parallel} u_p^{m-2}.\\
\enda
\]

\subsection{Analytic norms of the fluid equations}
In this section, we first introduce the analytic norms for the fluid solutions, namely the approximate Prandtl solutions on the domain $\mathbb T^2\times\R_+$.
 We adopt the multi-indices notation for the time-space derivatives: 
\beq\label{multi-i}
\pt^{\alpha} =  (\eps\pt_t)^{\alpha_0}\pt_{x_1}^{\alpha_1} \pt_{x_2}^{\alpha_2}, \  \   \alpha = (\alpha_0, \alpha_1, \alpha_2) \in \mathbb N^3_0,
\eeq
where $\mathbb N_0 = \{0 , 1,2, \cdots\}$. We also denote 
\[
\la \al\ra=1+|\al|=1+\al_0+\al_1+\al_2,\qquad \al\in\mathbb N_0^3.
\]
Define, for $\al\in \mathbb N$ and $\pt^\al$ as above, 
\beq\label{coe-def}
\AA_\al(t)=\frac{\tau(t)^{|\al|}}{\al!}\la\al\ra^9,\qquad \la \al\ra=1+|\al|
\eeq
for a continuous function $\tau(t)=\tau_0-M_0t$  for some constants $\tau_0,M_0>0$ depending only on the initial data.
For a function $f=f(t,x)$ that only depends on $x\in \Omega$ and $t\ge 0$, we define
\[
\||f(t)\||_{\tau,p}=\sum_{\al\in \mathbb{N}^3_0}
\AA_\al(t)\nl \pt^\al f(t) \nr_{L^p_x}.
 \]
 The existence and uniqueness of the Prandtl and Euler solutions in the analytic space $\nl \cdot\nr_{\tau_0,2}$ and $\nl
\nr_{\tau_0,\infty}$ are standard, and we refer the readers to \cite{SammartinoCaflisch1,KukavicaVV13,Igor-Vlad-Euler, GV-Pr-Gevrey, KNVW} for the proof.
\begin{theorem} Suppose the initial data $(u_p^m,v_p^m)|_{t=0}$ and $(u_e^m,v_e^m)|_{t=0}$  given in \eqref{initial-data} is bounded in $\nl \cdot\nr_{\tau_0,2}\cap \nl \cdot\nr_{\tau_0,\infty}$ for $0\le m\le N$ for some $\tau_0>0$,  there exists a unique local solution in the space $\nl \cdot\nr_{\tau_0/2,2}\cap \nl \cdot\nr_{\tau_0/2,\infty}$. \end{theorem}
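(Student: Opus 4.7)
The plan is to construct the solution via a Cauchy--Kowalevskaya-type argument in the analytic norm $\|\cdot\|_{\tau(t),p}$ with shrinking analyticity radius $\tau(t)=\tau_0-M_0 t$, where $M_0$ is chosen sufficiently large depending on the initial data size. The structure of the hierarchy forces a specific iteration order: solve the order-zero Euler system first (uncoupled), then the order-zero Prandtl system (driven by Dirichlet data $u_p^0|_{z=0}=-u_e^0|_{x_3=0}$), then the order-one Euler system (with boundary condition $v_e^1|_{x_3=0}=-v_p^1|_{z=0}$ computed from the previous step), and so on up to order $N$. At each step one solves a linear or semilinear problem whose coefficients have been controlled at prior steps; the underlying existence of regular solutions is standard, and the content of the proof is the a priori analytic estimate.

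For each subproblem I apply the tangential-time derivatives $\partial^\alpha=(\varepsilon\partial_t)^{\alpha_0}\partial_{x_1}^{\alpha_1}\partial_{x_2}^{\alpha_2}$, perform an $L^2_x$ energy estimate for the Euler layer or an $L^2_{x_\parallel}L^2_z$ estimate exploiting the $-\partial_z^2$ dissipation for the Prandtl layer, multiply by the weight $\AA_\alpha(t)$, and sum over $\alpha$. The crucial observation is that
\[
\frac{d}{dt}\AA_\alpha(t)=-M_0|\alpha|\frac{\tau(t)^{|\alpha|-1}}{\alpha!}\langle\alpha\rangle^9
\]
yields a negative contribution that dominates any term in which one derivative has migrated to an adjacent multi-index of length $|\alpha|+1$. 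The nonlinear terms are estimated through the Leibniz splitting followed by combinatorial convolution bounds; the polynomial factor $\langle\alpha\rangle^9$ is inserted precisely to control the $\ell^2$-type tails that appear in these convolutions. Normal-variable regularity in $x_3$ and $z$ is supplied by one-dimensional Gagliardo--Nirenberg inequalities, and information is transferred between the two layers through the traces $v_e^{m+1}|_{x_3=0}=-v_p^{m+1}|_{z=0}$ and $u_p^m|_{z=0}=-u_e^m|_{x_3=0}$.

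The main obstacle is the loss of one tangential derivative in the Prandtl hierarchy: the vertical velocity $v_p^{m+1}=-\int_z^\infty \nabla_{x_\parallel}\cdot u_p^m\,dz'$ carries one extra $x_\parallel$-derivative relative to $u_p^m$, yet appears in convective terms such as $v_p^{m+1}\partial_z u_p^0$ and $(v_e^{m+1}|_{x_3=0}+v_p^{m+1})\partial_z u_p^m$. Here the Sammartino--Caflisch mechanism is essential: the bad top-order contribution of such a term is bounded by a constant times $\tau(t)^{|\alpha|-1}/(\alpha-e_i)!\cdot\langle\alpha\rangle^9$, which is exactly the magnitude absorbed by the $\dot\tau$-induced negative term above once $M_0$ is chosen large enough relative to the initial data. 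Uniqueness follows by applying the same a priori estimate to the difference of two solutions, and existence persists as long as $\tau(t)\geq\tau_0/2$, i.e., $t\leq\tau_0/(2M_0)$, yielding the stated local well-posedness on the time interval implicit in the statement.
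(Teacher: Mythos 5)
The paper does not actually prove this theorem: it states that the existence and uniqueness of the analytic Euler and Prandtl hierarchies ``are standard'' and defers to \cite{SammartinoCaflisch1,KukavicaVV13,Igor-Vlad-Euler,GV-Pr-Gevrey,KNVW}. Your proposal is therefore not competing with an in-paper argument but reconstructing the omitted standard one, and you have done so faithfully: the sequential construction (Euler$^0\to$Prandtl$^0\to$Euler$^1\to\cdots$) respects the coupling built into \eqref{Euler}--\eqref{Prandtl} through the traces $v_e^{m+1}|_{x_3=0}=-v_p^{m+1}|_{z=0}$ and $u_p^m|_{z=0}=-u_e^m|_{x_3=0}$; your computation $\frac{d}{dt}\AA_\alpha=-M_0|\alpha|\tau^{|\alpha|-1}\langle\alpha\rangle^9/\alpha!$ is exactly the negative term that absorbs the single tangential derivative lost through $v_p^{m+1}=-\int_z^\infty\nabla_{x_\parallel}\cdot u_p^m\,dz'$, which is precisely the Sammartino--Caflisch mechanism underlying the cited well-posedness results; and the identification of the time of existence with $\tau(t)\ge\tau_0/2$, i.e.\ $t\le\tau_0/(2M_0)$, matches the radius-halving in the statement.

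Two small points of hygiene that your sketch leaves implicit and that the cited references do address: (i) you need a priori decay of the Prandtl correctors in $z$ (exponential weight in the boundary-layer variable) for the traces and the products such as $z^k\partial_{x_3}^ku_e^j|_{x_3=0}u_p^i$ to live in $L^2_z$ and $L^\infty_z$; this is later used in the paper's proof of Theorem~\ref{thm-ana-fluid}, so it must be carried as part of the inductive hypothesis. (ii) The $L^2$ energy on $\mathbb{T}^2\times\R_+$ for the linearized Euler steps at order $m\ge 1$ involves a pressure obtained from a Neumann problem on the half-space with the inhomogeneous boundary condition $v_e^m|_{x_3=0}=-v_p^m|_{z=0}$; the corresponding boundary term in the energy identity must be controlled by the trace of the previous Prandtl step, which is fine given (i) but should be stated. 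With those two caveats made explicit, your outline is a correct rendering of the standard proof the paper delegates to the literature.
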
 
Our next goal is to show that the approximate solution gives a small error in the analytic spaces for $N$ large. For $(u_a,v_a,p_a)$ defined in \eqref{EP-form}, we define 
\[\bega 
\text{NS}_{\parallel} (u_a)&=\pt_t u_a+(u_a\cdot\nabla_{x_\parallel}+v_a\pt_{x_3})u_a-{\eta_0\kappa} \triangle u_a+\nabla_{x_\parallel} p_a,\\
\text{NS}_{x_3}(v_a)&=\pt_t v_a+(u_a\cdot\nabla_{x_\parallel}+v_a\pt_{x_3})v_a-{\eta_0\kappa} \triangle v_a+ \pt_{x_3} p_a.
\enda \]
 
 \beq\label{Err-ua}
\bega 
&\text{NS}_{\parallel} (u_a)=\sum_{\substack{i+j\ge N+1\\i,j\le N}}\sqrt{\eta_0\kappa}^{i+j}\lw(u_e^i \pt_{x_\parallel} u_e^j+v_e^i\pt_{x_3} u_e^j
\rw)\\
&+\sum_{\substack{
0\le i,j\le N\\k\ge 0\\i+j+k\ge N+1
}} \frac{\sqrt{\eta_0\kappa}^{i+j+k} z^k}{k!}u_p^i \pt_{x_3}^k \pt_{x_\parallel}u_e^j|_{{x_3}=0}\\
&+\sum_{\substack{0\le i,j\le N\\i+j\ge N+1}}\sqrt{\eta_0\kappa}^{i+j}(u_e^i|_{{x_3}=0}+u_p^i)\pt_{x_\parallel} u_p^j+\sum_{\substack{0\le i,j\le N\\i+j+k\ge N+1}}\sqrt{\eta_0\kappa}^{i+j+k}\frac 1 {k!}z^k\pt_{x_3}^k u_e^i|_{{x_3}=0}\pt_{x_\parallel} u_p^j\\
&+\sum_{\substack{
0\le i\le N-1\\
1\le j\le N\\
k\ge 0\\
i+j+k\ge N
}
}
\sqrt{\eta_0\kappa}^{i+j+k+1}v_p^{i+1}\frac{z^k}{k!}\pt_{x_3}^{k+1}u_e^j(t,{x_\parallel},0)+\sum_{j=0}^N\sqrt{\eta_0\kappa}^{N+1+j}v_p^{N+1}\pt_{x_3} u_e^j\\
&+\sum_{\substack{k+j-1\ge N+1\\
0\le j\le N\\
k\ge 1
}
}\sqrt{\eta_0\kappa}^{k+j-1}\frac{z^k}{k!}\pt_{x_3}^k v_e^0|_{{x_3}=0}\pt_z u_p^j+\sum_{\substack{i+j\ge N+1\\
0\le i\le N-1\\
0\le j\le N
}
}\sqrt{\eta_0\kappa}^{i+j}(v_p^{i+1}+v_e^{i+1}|_{{x_3}=0})\pt_z u_p^j\\
&+\sum_{\substack{
i+j+k\ge N+1\\
0\le i\le N-1\\
0\le j\le N\\
k\ge 1
}
}\sqrt{\eta_0\kappa}^{i+j+k}\frac{z^k}{k!}\pt_{x_3}^k v_e^{i+1}|_{{x_3}=0}\pt_zu_p^j.\enda 
\eeq
and 
{\small\beq\label{Err-va}
\bega 
 \text{NS}_{x_3}(v_a) 
&=\sum_{\substack{
i+j\ge N+1\\0\le i,j\le N
}}\sqrt{\eta_0\kappa}^{i+j}\lw(u_e^i\pt_{x_\parallel} v_e^j+v_e^i\pt_{x_3} v_e^j
\rw)\\
&\quad+\sum_{\substack{
0\le i,j\le N\\
k\ge 0\\
i+j+k\ge N+1
}
}\frac{z^k}{k!}u_p^i\pt_{x_3}^k\pt_{x_\parallel}v_e^j|_{{x_3}=0}+\sum_{\substack{0\le i\le N\\0\le j\le N-1\\
i+j\ge N
}
}\sqrt{\eta_0\kappa}^{i+j+1}(u_e^i|_{{x_3}=0}+u_p^i)\pt_{x_\parallel} v_p^{j+1}\\
&\quad+\sum_{\substack{
i\le N, j\le N-1, k\ge 1\\i+j+k\ge N
}
}\sqrt{\eta_0\kappa}^{i+j+k+1}\frac{z^k}{k!}\pt_{x_3}^k u_e^i|_{{x_3}=0}\pt_{x_\parallel} v_p^{j+1}+\sqrt{\eta_0\kappa}^{N+1}u_a \pt_{x_\parallel} v^{N+1}_p \\
%%%%
&\quad+\sum_{\substack{
i\le N-1\\j\le N\\k\ge 0\\i+j+k\ge N
}
}\sqrt{\eta_0\kappa}^{i+j+k+1}\frac{z^k}{k!}\pt_{x_3}^{k+1}v_e^j|_{{x_3}=0}\cdot v_p^{i+1}+\sqrt{\eta_0\kappa}^{N+1}v_p^{N+1}\sum_{j=0}^N \sqrt{\eta_0\kappa}^j \pt_{x_3} v_e^j\\
&\quad+\sum_{\substack{k+j\ge N\\
j\le N-1\\k\ge 1
}
}\frac{\sqrt{\eta_0\kappa}^{k+j}}{k!}z^k\pt_{x_3}^k v_e^0|_{{x_3}=0}\cdot\pt_z v_p^{j+1}+\sum_{\substack{i+j\ge N\\
0\le i,j\le N-1}
}\sqrt{\eta_0\kappa}^{i+j+1}(v_e^{i+1}|_{{x_3}=0}+v_p^{i+1})\pt_z v_p^{j+1}\\
&\quad+\sum_{\substack{i+j+k\ge N\\
i,j\le N-1\\k\ge 1
}
}\sqrt{\eta_0\kappa}^{i+j+k+1}\frac{z^k}{k!}\pt_{x_3}^{k+1}v_e^{i+1}|_{{x_3}=0}\pt_z v_p^{j+1} +\sqrt{\eta_0\kappa}^{N+1}v_p^{N+1}\sum_{j=0}^{N-1}\sqrt{\eta_0\kappa}^{j}\pt_z v_p^{j+1}\\
&\quad+\sqrt{\eta_0\kappa}^N\pt_zv_p^{N+1}\sum_{\substack{
k\ge 1\\
}
}\frac{z^k}{k!}\sqrt{\eta_0\kappa}^{k}\pt_{x_3}^k v_e^0|_{{x_3}=0}\\
&\quad+\pt_z v_p^{N+1}\sum_{i=0}^{N-1}\sqrt{\eta_0\kappa}^{N+i+1}\lw(v_e^{i+1}+v_p^{i+1}
\rw)+\sqrt{\eta_0\kappa}^{2N+1}v_p^{N+1}\pt_z v_p^{N+1}\\
&\quad+\sqrt{\eta_0\kappa}^{N+1}\pt_t v_p^{N+1}-\sum_{m=N+1}^{N+2}\sqrt{\eta_0\kappa}^m (\triangle v_e^{m-2}+\triangle_{x_\parallel} v_p^{m-2}).
\enda 
\eeq}
 We define 
 \[
 \NS(U)=\bmx \text{NS}_\parallel(u_a)\\\text{NS}_{x_3}(v_a)
 \emx.
 \]
\begin{theorem}\label{thm-ana-fluid} There holds 
 \[
 \nl\NS(U)\nr_{\tau_0,2}+\nl \div U\nr_{\tau_0,2}\lesssim {\kappa}^{\frac N 2}.
 \]
 \end{theorem}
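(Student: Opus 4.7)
The plan is to reduce the bound to a term-by-term analysis of the fully explicit sums \eqref{Err-ua}--\eqref{Err-va} together with $\nabla_{x_\parallel}\cdot u_a+\pt_{x_3}v_a$, and to show that each individual summand is already of size $O(\kappa^{N/2})$ in $\|\cdot\|_{\tau_0,2}$. The starting point is the analytic well-posedness theorem just above, which furnishes every profile $u_e^i,v_e^i,u_p^i,v_p^i$ (together with arbitrarily many $x_3$ or $z$ derivatives and boundary traces $\pt_{x_3}^k u_e^j|_{x_3=0}$) uniformly in the analytic norm on a common small time interval. Since the weight $\tau_0^{|\alpha|}\la\alpha\ra^9/\alpha!$ makes the analytic norm a Banach algebra under the tangential--time Leibniz rule, using $\la\alpha\ra\leq\la\beta\ra\la\gamma\ra$ whenever $\beta+\gamma=\alpha$, the $\|\cdot\|_{\tau_0,2}$ norm of a product is controlled by the product of the individual analytic norms of the factors.

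The second ingredient is the conversion between the two spatial scales. For a Prandtl profile $f(t,x_\parallel,z)$ with rapid decay in $z$, the change of variables $z=x_3/\sqrt{\eta_0\kappa}$ yields
\[
\|f(\cdot,\cdot,x_3/\sqrt{\eta_0\kappa})\|_{L^2_{x_3}(\R_+)}=(\eta_0\kappa)^{1/4}\|f(\cdot,\cdot,z)\|_{L^2_z(\R_+)},
\]
producing an extra factor $\kappa^{1/4}$ per Prandtl profile in each $L^2_{x_3}$ norm. The Taylor-type tails $(z^k/k!)\pt_{x_3}^k u_e^j|_{x_3=0}$ in \eqref{Err-ua}--\eqref{Err-va} are Taylor remainders of the Euler profiles at the boundary; by $x_3$-analyticity of $u_e^j$ in a neighborhood of $x_3=0$ one has $|\pt_{x_3}^k u_e^j|_{x_3=0}|\lesssim C^k k!$, so after pairing with a rapidly decaying Prandtl factor of $z$, the sum over $k$ converges absolutely and contributes only a bounded multiplicative constant uniformly in $\kappa$. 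Collecting powers of $\sqrt{\eta_0\kappa}$, every summand in \eqref{Err-ua}--\eqref{Err-va} carries $\sqrt{\eta_0\kappa}^M$ with $M\geq N+1$ by inspection of the summation ranges; combined with the $\kappa^{1/4}$ gain from each Prandtl profile, each term is at least of size $\kappa^{(N+1)/2}\leq\kappa^{N/2}$, and the finitely many such contributions preserve this bound. For $\div U$, the Euler incompressibility $\nabla_{x_\parallel}\cdot u_e^j+\pt_{x_3}v_e^j=0$ at each order and the Prandtl defining relation $v_p^{j+1}=-\int_z^\infty\nabla_{x_\parallel}\cdot u_p^j\,dz'$ force cancellations at all interior orders, leaving only a top-order residual of the form $\sqrt{\eta_0\kappa}^{N}\pt_z v_p^{N+1}$ whose $L^2_{x_3}$-norm acquires a further $\kappa^{1/4}$ from the change of variables, hence is of size $\kappa^{(2N+1)/4}\lesssim\kappa^{N/2}$.

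The main obstacle will be the combinatorial bookkeeping required to apply the derivatives $\pt^\alpha=(\eps\pt_t)^{\alpha_0}\pt_{x_1}^{\alpha_1}\pt_{x_2}^{\alpha_2}$ to products that mix Euler profiles, Prandtl profiles evaluated at $z=x_3/\sqrt{\eta_0\kappa}$, and Taylor tails $\sum_k (z^k/k!)\pt_{x_3}^k u_e^j|_{x_3=0}$, while respecting the weight $\tau_0^{|\alpha|}\la\alpha\ra^9/\alpha!$. Each Leibniz splitting $\alpha=\beta+\gamma$ redistributes derivatives among the factors, and one must verify uniformly in $N$ and $\kappa$ that, after summation over $\alpha$, the combinatorial factors telescope into a product of individual analytic norms; the weight $\la\alpha\ra^9/\alpha!$ is precisely calibrated for this purpose, but the Taylor-tail summation in $k$ is the most delicate point, since it intertwines boundary traces at $x_3=0$ with the tangential--time analytic norm and requires an auxiliary $x_3$-analyticity control of the Euler profiles, matched carefully with the exponential decay of the Prandtl profiles in $z$.
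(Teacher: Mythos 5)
Your overall structure (term-by-term reduction, $z\mapsto x_3$ rescaling producing $\kappa^{1/4}$ per boundary-layer profile, cancellations in $\operatorname{div}U$, algebra property of the $\tau_0^{|\alpha|}\langle\alpha\rangle^9/\alpha!$ weight) is the right setup and agrees with the paper, but there is a real gap in the key step: the treatment of the Taylor tails in $k$. You write that by $x_3$-analyticity $|\partial_{x_3}^k u_e^j|_{x_3=0}|\lesssim C^k k!$, so after pairing with the rapidly decaying Prandtl factor "the sum over $k$ converges absolutely and contributes only a bounded multiplicative constant uniformly in $\kappa$." This is false if read as $\sum_{k}\|\cdot\|_{L^2_z}$: the $k$-th summand is controlled by $\sqrt{\kappa}^{\,k}\,C^k k!\cdot\|z^k u_p^i/k!\|_{L^2_z}$, and since $\|z^k e^{-cz}\|_{L^2_z}^2=(2k)!/(2c)^{2k+1}$ gives $\|z^k u_p^i/k!\|_{L^2_z}\sim(k/(ce))^k$ up to polynomial factors, the $k$-th term behaves like $(\sqrt{\kappa}\,Ck/(ce))^k$, which diverges as $k\to\infty$ for every fixed $\kappa>0$. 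One also cannot simply sum the pointwise geometric series $\sum_k(z\sqrt{\kappa}C)^k$ first and then take $L^2_z$, because the resulting $1/(1-z\sqrt{\kappa}C)$ is not square-integrable against $e^{-2cz}$ across the singular shell $z\approx 1/(\sqrt{\kappa}C)$ without additional care. Your later remark that "the finitely many such contributions preserve this bound" is inconsistent with the fact that $k$ ranges over all of $\mathbb{N}$.

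What the paper does differently, and what your argument is missing, is to replace the infinite Taylor tail by a single Lagrange remainder. Since the approximate solution is constructed from a \emph{finite} truncation of the Taylor expansion of $u_e^j$ at $x_3=0$, the tail $\sum_{k\ge 2N+1}\frac{x_3^k}{k!}\partial_{x_3}^k u_e^j|_{x_3=0}$ is, by the Mean Value Theorem, exactly $\frac{x_3^{2N+1}}{(2N+1)!}\partial_{x_3}^{2N+1}u_e^j(x_3^\star)$ for some $x_3^\star\in(0,x_3)$. The global analyticity bound $|\partial_{x_3}^{2N+1}u_e^j|\lesssim(2N+1)!\,\tau_0^{-(2N+1)}$ then yields a \emph{single} factor $(x_3/\tau_0)^{2N+1}=(z\sqrt{\eta_0\kappa}/\tau_0)^{2N+1}$, which after pairing with $z^{2N+1}u_p^i$ (bounded, thanks to exponential decay) gives the desired $\kappa^{N+1/2}\lesssim\kappa^{N/2}$ with no convergence issue at all. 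This remainder form is the device that makes the argument close; without it, the term-by-term bound does not. A secondary, minor point: the divergence-free identity at each order ($\nabla_\parallel\cdot u_e^m+\partial_{x_3}v_e^m=0$ and $\nabla_\parallel\cdot u_p^m+\partial_z v_p^{m+1}=0$) kills $\operatorname{div}U$ exactly for the ansatz \eqref{EP-form}, so there is no top-order residual of the form $\sqrt{\eta_0\kappa}^N\partial_z v_p^{N+1}$ as you claim; the stated bound on $\|\operatorname{div}U\|_{\tau_0,2}$ is trivially true.
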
 
 \begin{proof} It suffices to bound the infinite sum terms appearing in \eqref{Err-ua} and \eqref{Err-va}. We will give the proof for one term only, as all the other terms can be done similarly. The key idea is that the expansion is derived through the Taylor expansion of the Euler solutions near the boundary, in $x_3=\sqrt{\eta_0\kappa} z$. In fact, to build the approximate solution, we only need to expand up to a finite, but large order, and then use the truncated Taylor expansion. The left over term can be estimated thanks to the Mean Value theorem for the Taylor expansion. To illustrate the method, we give the bound the analytic norm for the term \[
 \sum_{\substack{
0\le i,j\le N\\k\ge 0\\i+j+k\ge N+1
}} \frac{\sqrt{\eta_0\kappa}^{i+j+k} z^k}{k!}u_p^i \pt_{x_3}^k \pt_xu_e^j|_{{x_3}=0} \]
which is the second term in \eqref{Err-ua}. We fix $i,j\le N$. 
 It suffices to consider $k\ge 2N+1$, since if $k\le 2N$, the above sum is a sum of finite terms, and hence it is analytic in $\eps\pt_t$ and $\pt_{x_\parallel}$, using the analyticity of $u_p^i$ and $u_e^j|_{x_3=0}$. Fix $i,j\in [0,N]$, we consider 
 \[
 \sum_{k=2N+1}^\infty \frac 1{k!} {(\eta_0\kappa)}^{\frac {i+j+k}{2}}z^k u_p^i\pt_{x_3}^k \nabla_{x_\parallel}\cdot u_e^j|_{x_3=0}.
 \]
 Define $\wtd u_e^j=\nabla_{x_\parallel}\cdot u_e^j$. The above term can be written as
 \beq\label{ran-s9}
 \sum_{k=2N+1}^\infty \frac 1{k!} {(\eta_0\kappa)}^{\frac {i+j+k}{2}}z^k u_p^i\pt_{x_3}^k \wtd u_e^j|_{x_3=0}=(\eta_0\kappa)^{\frac{i+j}{2}}u_p^i\sum_{k=2N+1}^\infty \frac{1}{k!}x_3^{k}\pt_{x_3}^k\wtd u_e^j|_{x_3=0}.
  \eeq
  By the Mean Value theorem, there exists $x_3^\star\in (0,x_3)$ such that 
 \[\bega 
\sum_{k=2N+1}^\infty \frac 1{k!}x_3^k\pt_{x_3}^k \wtd u_e^j|_{x_3=0} \wtd u_e^j&=x_3^{2N+1}\frac 1{(2N+1)!}\pt_{x_3}^{2N+1}\wtd u_e^j(x_3^\star)\\
&=(z\sqrt{\eta_0\kappa})^{2N+1}\frac 1{(2N+1)!}\pt_{x_3}^{2N+1}\wtd u_e^j(x_3^\star).
\enda \]
Hence, the term \eqref{ran-s9} can be written as
\[
(\eta_0\kappa)^{\frac{i+j}{2}+N+\frac 1 2} z^{N+\frac 1 2 } u_p^i \frac{1}{(2N+1)!}\pt_{x_3}^{2N+1}\wtd u_e^j(x_3^\star).
\]
Since the Euler solution is analytic with radius $\tau_0>0$, the above term can be bounded by
\[
(\eta_0\kappa)^{\frac{i+j+1}{2}+N}\nl
z^{N+\frac 1 2}u_p^i\nr_{L^\infty_z}\tau_0^{-(2N+1)}\lesssim \kappa^{\frac N 2}.
\]
Here, we use also use the exponential decay of the Prandtl solutions, which are standard, see, for example \cite{SammartinoCaflisch1}.
The proof is complete. \end{proof}
 
\subsection{Local Maxwellian of the Prandtl layers}
In this section, we collect and prove estimates on the analytic norm of the local Maxwellian around a velocity $\eps U(t,x)$ where $x\in \Omega=\T^2\times\R_+$.
\begin{proposition} \label{local-1}
Let $s_0>0$ be a positive number. We can make $\tau_0\ll \tau_U$ and $p_0\ll 1$ such that 
\beq\label{local-1-1}
\sum_{|\al|\ge 1}\frac{\tau_0^{|\al|}}{\al!} \nl e^{p_0 (|\xi|^2+|\vp|^2)} \pt^\al (\mu^{s_0})
\nr_{L^\infty_{x,\xi}}\lesssim \eps \kappa^{\frac 12}.
\eeq
\end{proposition}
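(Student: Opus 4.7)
The plan is to write the local Maxwellian in closed form as $\mu^{s_0}(t,x,\xi)=(2\pi)^{-3s_0/2}\exp\!\bigl(-\tfrac{s_0}{2}|\xi-\varepsilon U(t,x)|^2\bigr)$ and to exploit that every derivative in the set $\{\varepsilon\partial_t,\partial_{x_1},\partial_{x_2}\}$ acts on the $\xi$-independent profile $U$ alone. A Fa\`a di Bruno expansion then yields
\[
\partial^\alpha\mu^{s_0}
=\mu^{s_0}\sum_{k=1}^{|\alpha|}\sum_{\substack{\alpha^1+\cdots+\alpha^k=\alpha\\|\alpha^i|\ge 1}}c_{\alpha,k}\,\varepsilon^{k}\,Q_k(\xi-\varepsilon U)\prod_{i=1}^k\partial^{\alpha^i}U,
\]
where $Q_k$ is a polynomial of degree at most $k$ in the shifted velocity with universal coefficients coming from iteratively differentiating the exponent $-\tfrac{s_0}{2}|\xi-\varepsilon U|^2$. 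In particular every term carries at least one power of $\varepsilon$ once $|\alpha|\ge 1$.

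The crucial extra factor $\sqrt\kappa$ comes from the shear-flow structure of the expansion \eqref{EP-form}: the leading contributions $u_e^0(t,x_3)+u_p^0(t,z)$ and $v_e^0(t,x_3)$ are independent of $x_\parallel$ and, for a shear profile, have no leading-order time dependence either, so any $\partial^\beta U$ with $|\beta|\ge 1$ and $\beta$ taken from $\{\varepsilon\partial_t,\partial_{x_1},\partial_{x_2}\}^{|\beta|}$ sees only the correctors $\sqrt{\eta_0\kappa}^{\,i}(u_e^i+u_p^i)$, $i\ge 1$. Combined with Theorem~\ref{thm-ana-fluid} and the analytic existence theory for \eqref{Euler}--\eqref{Prandtl} in the norms $\|\cdot\|_{\tau_U,\infty}$, this produces, for some $\tau_U>0$,
\[
\sum_{|\beta|\ge 1}\frac{\tau_U^{|\beta|}}{\beta!}\,\|\partial^\beta U(t)\|_{L^\infty_x}\lesssim \sqrt\kappa.
\]

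To finish, I would absorb the weight $e^{p_0(|\xi|^2+|\vp|^2)}$ into the Gaussian: since $|\varepsilon U|\lesssim\varepsilon\ll 1$, the choice $p_0<s_0/4$ gives the pointwise bound
\[
e^{p_0(|\xi|^2+|\vp|^2)}\,\mu^{s_0}(t,x,\xi)\,|Q_k(\xi-\varepsilon U)|\lesssim C^{k}\,k!^{1/2}\,e^{-(s_0/8)|\xi|^2},
\]
uniformly in $(t,x,\xi)$. Plugging back into the Fa\`a di Bruno identity, the weighted sum reduces to a $k$-fold Cauchy convolution of the analytic norm of $U$, which is controlled by the standard Banach-algebra estimate for the $\AA_\alpha$-weighted norms as soon as $\tau_0\ll\tau_U$ and $p_0\ll 1$; the polynomial factor $\langle\alpha\rangle^9$ in $\AA_\alpha$ is absorbed by the subadditivity $\langle\alpha\rangle\lesssim \sum_i\langle\alpha^i\rangle$ in each convolution, with the resulting extra $\langle\alpha^i\rangle^9$ compensated by the Fa\`a di Bruno coefficients $c_{\alpha,k}$ as in the classical analytic chain-rule estimates. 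The $k=1$ summand then produces exactly the claimed $\varepsilon\sqrt\kappa$, while $k\ge 2$ contributes $\varepsilon^{k}(\sqrt\kappa)^{k}\ll\varepsilon\sqrt\kappa$.

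The main obstacle, to my mind, is the isolation of the $k=1$ Fa\`a di Bruno term: one must argue that its unique derivative of $U$ unavoidably falls on an $O(\sqrt\kappa)$ Prandtl corrector, and \emph{not} on the $O(1)$ leading shear, which is precisely where the assumption of $x_\parallel$-independence and stationarity of the shear enters. Once this is in place, the terms with $k\ge 2$ are strictly better and the algebra of analytic norms absorbs the combinatorial factors $c_{\alpha,k}$ and $\langle\alpha\rangle^9$ in a routine way.
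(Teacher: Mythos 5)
Your overall plan matches the paper's: view $\mu^{s_0}$ as a composition $G\circ H$ with $G(s)=e^{-s^2}$ (after rescaling) and $H=\xi_\ell-\eps U_\ell$, expand by Fa\`a di Bruno, and absorb the Gaussian weight pointwise. The paper packages the same combinatorics into Proposition~\ref{composite} (a chain-rule analyticity estimate resting on Lemma~\ref{lem-Faa2}) together with Lemma~\ref{exp-ana-ele}, after first factoring the three-dimensional Gaussian into one-dimensional pieces; your unrolled $k$-fold convolution argument is essentially the same idea.

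There is, however, a genuine flaw in the one step you yourself call ``the main obstacle.'' You assert that the leading shear $u_e^0(t,x_3)+u_p^0(t,z)$ ``has no leading-order time dependence,'' and that this is why every $\partial^\beta U$ with $|\beta|\ge 1$ lands on an $O(\sqrt\kappa)$ corrector. That is false: the Prandtl layer $u_p^0(t,z)$ solves a heat equation in \eqref{Prandtl}, so $\partial_t u_p^0=O(1)$, and there is no stationarity hypothesis anywhere. The intermediate estimate $\sum_{|\beta|\ge 1}\frac{\tau^{|\beta|}}{\beta!}\|\partial^\beta U\|_{L^\infty_x}\lesssim\sqrt\kappa$ is still true, but the mechanism depends on which letters appear in $\beta$: for $\partial_{x_1},\partial_{x_2}$ the gain comes from $x_\parallel$-independence of the $O(1)$ shear, exactly as you say; for the time derivative it comes from the explicit $\eps$ prefactor built into $\partial^\alpha=(\eps\partial_t)^{\alpha_0}\partial_{x_1}^{\alpha_1}\partial_{x_2}^{\alpha_2}$ in \eqref{multi-i}, since $(\eps\partial_t)^{\beta_0}U=\eps^{\beta_0}\partial_t^{\beta_0}U=O(\eps^{\beta_0})$ and $\eps\sim\kappa^N\ll\sqrt\kappa$. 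Your argument only covers the first mechanism, so as written the $k=1$ term with $\beta=(\beta_0,0,0)$, $\beta_0\ge 1$, is not controlled. (A minor further point: the sum in \eqref{local-1-1} carries the bare weight $\tau_0^{|\alpha|}/\alpha!$, not $\AA_\alpha$, so the $\langle\alpha\rangle^9$ factor you devote a sentence to absorbing does not appear in this statement.)
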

\begin{proof} First, we show \eqref{local-1-1}.
We note that 
\[
\mu^{s_0}(t,x,\xi)=\frac{1}{(2\pi)^{3s_0/2}}e^{-s_0\frac{|\xi-\eps U|^2}{2}}.
\]
Up to scaling, we may assume $s_0=2$. Since a product of analytic functions is analytic, it suffices to justify that, for fixed $\ell \in \{1,2,3\}$, 
\[
\sum_{|\al|\ge 1}\frac{\tau_0^{|\al|}}{\al!} \lw\{\lw|\pt^\al \lw(e^{-(\xi_{\ell }-\eps U_{\ell })^2}
\rw)
\rw|+\lw| e^{p_0(\xi_\ell-\eps U_\ell)^2} \pt^\al 
\lw( e^{-(\xi_\ell -\eps U_\ell )^2}\rw)\rw|
\rw\}\lesssim \eps \sqrt\kappa.
\]  
Combining Proposition \ref{composite} and Lemma \ref{exp-ana-ele}, we get the result. \end{proof}
\begin{lemma} \label{M-local-2} Assume that 
\[
\sup_{\al\in\mathbb N_0^3}\frac{\tau_0^{|\al|}}{\al!}\nl \pt^\al \pt_{x_3}U
\nr_{L^\infty_x}\lesssim {\kappa}^{-\frac 12},\qquad\sup_{\al\in\mathbb N_0^3}\frac{\tau_0^{|\al|}}{\al!}\nl \pt^\al \pt_{x_i}U
\nr_{L^\infty_x}\lesssim 1 \quad\text{for}\quad i\in\{1,2\}
\]
and 
\[\sup_{\al\in\mathbb N_0^3}\frac{\tau_0^{|\al|}}{\al!}\nl \pt^\al (\eps\pt_t U)
\nr_{L^\infty_x}\lesssim 1,
\]
then for $\rho\ll p_0$, we have 
\[
\sum_{|\al|\ge 1}\frac{\tau_0^{|\al|}}{\al!}\nl e^{\rho|\xi|^2} \pt^\al \pt_{x_3}\lw(\mu^{p_0}\rw)
\nr_{L^\infty_{x,\xi}}\lesssim \eps\kappa^{-\frac 12},\qquad \sum_{|\al|\ge 1}\frac{\tau_0^{|\al|}}{\al!}\nl e^{\rho|\xi|^2}\pt^\al \pt_{x_i}\lw(\mu^{p_0}\rw)
\nr_{L^\infty_{x,\xi}}\lesssim \eps\quad\text{for}\quad i\in\{1,2\}
\]
and 
\[\sum_{|\al|\ge 1}\frac{\tau_0^{|\al|}}{\al!}\nl e^{\rho|\xi|^2} \pt^\al \pt_t \lw(\mu^{p_0}\rw)
\nr_{L^\infty_{x,\xi}}\lesssim 1.
\]
\end{lemma}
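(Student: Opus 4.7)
The plan is to reduce the estimates to a chain-rule identity and then apply the multiplicative structure of the analytic norm together with Proposition~\ref{local-1}. By direct differentiation,
\[
\partial_{x_i}(\mu^{p_0}) = p_0 \, \varepsilon \partial_{x_i} U \cdot (\xi - \varepsilon U) \, \mu^{p_0}, \qquad \partial_t(\mu^{p_0}) = p_0 \, (\varepsilon \partial_t U) \cdot (\xi - \varepsilon U) \, \mu^{p_0},
\]
so each quantity to be estimated is a product of (a) a scalar coefficient ($\varepsilon \partial_{x_i} U$ or $\varepsilon \partial_t U$), (b) the polynomial factor $(\xi - \varepsilon U)$, and (c) the local Maxwellian $\mu^{p_0}$. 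The size of the three conclusions precisely tracks the analytic bound on the coefficient in (a), once (b) and (c) are controlled uniformly in $\kappa$ and $\varepsilon$.

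\textbf{Leibniz and individual factors.} I would apply the Leibniz rule to $\partial^\alpha$ of the products above and use submultiplicativity of $\||\cdot\||_{\tau_0,\infty}$: the weights $\tau_0^{|\alpha|}/\alpha!$ turn the Cauchy product over $\alpha = \beta + \gamma$ into a product of analytic norms. The analytic norm of the scalar coefficient in (a) is given by hypothesis. The factor $(\xi - \varepsilon U)$ in (b) is polynomial of degree one in $\xi$ and has only a single nontrivial derivative in $(t,x_\parallel)$, namely $-\varepsilon \partial U$ (bounded in analytic norm by hypothesis), so $\||(\xi - \varepsilon U)\||$ grows at most linearly in $|\xi|$ plus a bounded term. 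For the Maxwellian factor (c), the outer weight $e^{\rho|\xi|^2}$ with $\rho \ll p_0$ can be absorbed: choose $q_0$ with $\rho < q_0 < p_0/2$ so that $e^{\rho|\xi|^2} |\xi - \varepsilon U|\, \mu^{p_0} \lesssim \mu^{q_0}$ pointwise in $\xi$; then Proposition~\ref{local-1} applied at exponent $s_0 = q_0$ controls the $|\alpha| \geq 1$ contributions by $O(\varepsilon \kappa^{1/2})$, while the $|\alpha| = 0$ contribution is just a pointwise Gaussian bound.

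\textbf{Main obstacle.} The delicate bookkeeping step is ensuring the polynomial factor $(\xi - \varepsilon U)$ together with the weight $e^{\rho|\xi|^2}$ is absorbed cleanly by $\mu^{p_0}$ without losing Gaussian decay. This is exactly where the assumption $\rho \ll p_0$ is used: a strictly better Gaussian tail in $\mu^{p_0}$ than what $e^{\rho|\xi|^2}$ removes is needed, with enough margin to also absorb the linear factor in $\xi$ from $(\xi - \varepsilon U)$. Once this absorption is in place and the Leibniz sum in $\alpha = \beta + \gamma$ is performed (the geometric factor $\tau_0^{|\alpha|}/\alpha!$ splits cleanly, allowing one to close under a slightly larger $\tau_0$), the three claimed bounds follow, with the output sizes $1$, $\varepsilon$, and $\varepsilon \kappa^{-1/2}$ coming directly from the hypotheses on $\varepsilon \partial_t U$, $\partial_{x_{1,2}} U$, and $\partial_{x_3} U$ respectively.
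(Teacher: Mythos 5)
Your proposal is correct and follows essentially the same route as the paper: differentiate $\mu^{p_0}$ once to pull out the factor $p_0\,\varepsilon\,\partial U\cdot(\xi-\varepsilon U)$, apply Leibniz to $\partial^\alpha$ of the resulting product, split the analytic norm into the factor coming from the velocity gradient (whose size is exactly the content of the hypotheses, giving $\varepsilon\kappa^{-1/2}$, $\varepsilon$, or $1$) times the factor involving $(\xi-\varepsilon U)\mu^{p_0}$, and then invoke Proposition~\ref{local-1} to bound the latter's analytic norm by $O(1)$ after absorbing the weight $e^{\rho|\xi|^2}$ into the Gaussian. The only cosmetic difference is that the paper lumps $\varphi$ and $e^{-|\varphi|^2}$ into a single factor and normalizes $p_0=2$, whereas you keep $(\xi-\varepsilon U)$ and $\mu^{p_0}$ separate and re-absorb them pointwise; both close for the same reason.
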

\begin{proof} We show the inequalities for $\pt_{x_3}$ only. Up to scaling, we may assume $p_0=2$. We have 
\[
\pt_{x_3}\lw(e^{-|\xi-\eps U|^2}\rw)=-2(\xi-\eps U)\cdot \eps\pt_{x_3}U e^{-|\xi-\eps U|^2}=2\eps \pt_{x_3}U\cdot(\xi-\eps U) e^{-|\xi-\eps U|^2}
\]
Hence 
\[\bega 
\sum_{\al\in\mathbb N_0^3}\frac{\tau_0^{|\al|}}{\al!}\lw|\la \vp\ra^m \pt^\al \pt_{x_3}\lw(e^{-|\xi-\eps U|^2}\rw)\rw|&\lesssim \sum_{\al\in\mathbb N_0^3}\frac{\tau_0^{|\al|}}{\al!} \lw|\eps \pt^\al \pt_{x_3} U\rw|\cdot \sum_{\al\in\mathbb N_0^3}\frac{\tau_0^{|\al|}}{\al!}\lw|\la\vp\ra^m \pt^\al \lw(\vp e^{-|\vp|^2}
\rw)
\rw|\\
&\lesssim \eps\kappa^{-\frac 12} \sum_{\al\in\mathbb N_0^3}\frac{\tau_0^{|\al|}}{\al!}\lw|\pt^\al \lw(\vp e^{-|\vp|^2}
\rw)
\rw|\lesssim 1.\enda
\]
using \eqref{local-1-1}. The proof is complete.
\end{proof}
\section{Analysis of the Boltzmann equations}
\subsection{Preliminaries} 
In this paper, we recall 
 \beq\label{Gamma-def}
 \Gamma(f,g)=\Gamma_+(f,g)-\Gamma_-(f,g)
 \eeq
 where 
 \beq\label{Gamma-pm}
 \begin{cases}
 \Gamma_+(f,g)&=\int_{\R^3\times\S^2}|(\xi-\xi_\star)\cdot\w|\sqrt\mu(\xi_\star)\lw(f(\xi')g(\xi_\star')+g(\xi')f(\xi_\star')
 \rw)d\w d\xi_\star,\\
 \Gamma_-(f,g)&=\int_{\R^3\times\S^2}|(\xi-\xi_\star)\cdot\w|\sqrt\mu(\xi_\star)\lw(f(\xi)g(\xi_\star)+g(\xi)f(\xi_\star)
 \rw)d\w d\xi_\star,
 \end{cases}
 \eeq
 and 
 \beq\label{vec}
\begin{cases}
\xi'&=\xi-\lw\{(\xi-\xi_\star)\cdot \w\rw\} \w\\
\xi_\star'&=\xi_\star+\lw\{(\xi-\xi_\star)\cdot \w\rw\} \w.\\
\end{cases}
\eeq
 We also recall the linear operator around the local Maxwellian $\mu(\xi)=\mu_0(\xi-\eps U)$, given by
\beq\label{L-def}
\bega 
Lf&=-\int_{\R^3\times\S^2}|(\xi-\xi_\star)\cdot \w|\sqrt\mu(\xi_\star)\\
&\quad\times \lw\{\sqrt\mu(\xi')f(\xi_\star')+f(\xi')\sqrt\mu(\xi_\star')-\sqrt\mu(\xi)f(\xi_\star)-f(\xi)\sqrt\mu(\xi_\star)
\rw\}d\w d\xi_\star.
\enda
\eeq
We also define 
\beq\label{L0-def}
\bega 
L_0f(\xi)&=-\int_{\R^3\times\S^2}|(\xi-\xi_\star)\cdot \w|\sqrt\mu_0(\xi_\star)\\
&\quad\times \lw\{\sqrt\mu_0(\xi')f(\xi_\star')+f(\xi')\sqrt\mu_0(\xi_\star')-\sqrt\mu_0(\xi)f(\xi_\star)-f(\xi)\sqrt\mu_0(\xi_\star)
\rw\}d\w d\xi_\star.
\enda
\eeq
For $i,j\in \{1,2,3\}$ and $\vp=\xi-\eps U\in\mathbb R^3 $, we define 
\beq\label{Aij}
\hat A_{ij}(\vp)=\lw(\vp_i\vp_j-\frac 1 3 \delta_{ij}|\vp|^2
\rw)\sqrt\mu_0(\vp),\qquad A_{ij}(\vp)=L^{-1}(\hat A_{ij}).
\eeq

\subsection{Real Analytic Function spaces }\label{func-space-BE}
Recall \eqref{multi-i} and \eqref{coe-def}. For a function $f=f(t,x,\xi)$ where $(t,x,\xi)\in [0,\infty)\times\Omega\times\R^3$ and a function $\tau: t\in [0,\infty)\to \tau(t)\in [0,\infty)$, we define the following real-analytic norms:
\beq\label{EDH}
\bega 
\mathcal E_f(t)&=\nl \AA_\al(t) \nl \pt^\al f(t)\nr_{L^2_{x,\xi}}\nr_{\ell_\al^2},\\
\mathcal D_f(t)&=\eps^{-1}\kappa^{-\frac 12}\nl \nl\nu^{\frac 12}\AA_\al (t)\IP \pt^\al f(t)
\nr_{L^2_{x,\xi}}
\nr_{\ell_\al^2},\\
\mathcal H_{\rho,f}(t)&=\nl \nl \AA_\al(t) e^{\rho|\xi|^2}\pt^\al f(t)\nr_{L^\infty_{t,x,\xi}}\nr_{\ell_\al^2}.\enda 
\eeq
To treat the loss of derivative, we also define the norm 
\[
\mathcal Y_f(t)^2=\sum_{\al\in\mathbb N_0^3} \frac{|\al|+1}{\tau}\AA_\al(t)^2 \nl \pt^\al f
\nr _{L^2_{x,\xi}}^2.
\] For a function $f(t,x,\xi)$ defined on $t\ge 0$, $x\in \Omega$ and $\xi\in\R^3$, we denote 
\beq\label{Pf-def}
\P f(t,x,\xi)=\lw(a+b\cdot\vp+c\frac{|\vp|^2-3}{2}
\rw)\sqrt\mu.
\eeq
For $1\le p\le \infty$, we denote 
\[\bega 
\nl \P f
\nr_{L^p_x}&=\|a\|_{L^p(\Omega)}+\sum_{i=1}^3 \|b_i\|_{L^p(\Omega)}+\|c\|_{L^p(\Omega)},\\
\nl \P f
\nr_{L^p_{x,\xi}}&=\lw\{\int_{\Omega\times\R^3}|\P f(t,x,\xi)|^pdxd\xi\rw\}^{1/p}.
\enda 
\]
We also denote 
\[
f_\al=\pt^\al f\]
\and 
\[
a_\al=\int_{\R^3}f_\al \sqrt\mu d\xi,\quad b_\al=\int_{\R^3}f_\al \vp \sqrt\mu d\xi,\qquad c_\al=\int_{\R^3}f_\al \frac{|\vp|^2-3}{3}\sqrt\mu d\xi.
\]
From \eqref{Pf-def}, it is obvious that 
\beq\label{PFal}\P f_\al =a_\al\sqrt\mu+b_\al \cdot\vp\sqrt\mu+c_\al \frac{|\vp|^2-3}{2}\sqrt\mu.
\eeq
\subsection{Remainder Equations}
We consider the equation \eqref{F-eq}. 
 For a general velocity field $U=U(t,x)\in \R^3$ where $t\ge 0$ and $x\in\T^2\times \R_+$ and a scalar function $P$, 
we define 
\Be\label{def:NS}
\NS(U, P)=\pt_t U+(U\cdot\nabla_x) U+\nabla_x \lw(P-\frac {\eta_0}3 \div U
\rw)-\kappa\eta_0\triangle U.
\Ee
We define 
\beq\label{ff-def}
\ff=P\sqrt\mu-\kappa\sum_{i,j}\pt_i U_jA_{ij}(\vp).\eeq
For $L^2$ estimate, we expand the Boltzmann solution $F$ around $\mu$ as follows
\beq\label{F-exp}
F=\mu+\eps^2\ff\sqrt\mu+\eps\delta \sqrt\mu f,
\eeq
For the $L^\infty$ estimate, we will use the expansion
\beq\label{h-def}
F=\mu+\sqrt\mu\eps^2\ff+\eps\delta\sqrt\mu_M h,
\eeq
where 
\beq\label{mu-M-def}
\mu_M(\xi)=\frac{1}{(2\pi T_M)^{\frac 3 2}}e^{-\frac{|\xi|^2}{2T_M}},\qquad \frac 12<T_M<1.
\eeq
We define the error term
\beq\label{Ra-def}
\textbf{R}_a=(\pt_t+\frac 1\eps \xi\cdot\nabla_x)(\mu+\sqrt\mu\eps^2\ff)-\frac2\kappa Q(\mu,\ff\sqrt\mu)-\frac{\eps^2}{\kappa}Q(\sqrt\mu \ff,\sqrt\mu \ff),
\eeq
the bilinear terms 
 \beq\label{GammaM}
 \bega
  \Gamma_M(f,g)&=\frac{1}{\sqrt{\mu_M}}Q(\sqrt{\mu_M}f,\sqrt{\mu_M}g)=\Gamma_{M,+}(f,g)-\Gamma_{M,-}(f,g),\\ 
 \Gamma_{M,+}(f,g)&=\int_{\R^3\times\S^2}|(\xi-\xi_\star)\cdot\w|\sqrt\mu_M(\xi_\star)\lw(f(\xi')g(\xi_\star')+g(\xi')f(\xi_\star')
 \rw)d\w d\xi_\star,\\
 \Gamma_{M,-}(f,g)&=\int_{\R^3\times\S^2}|(\xi-\xi_\star)\cdot\w|\sqrt\mu_M(\xi_\star)\lw(f(\xi)g(\xi_\star)+g(\xi)f(\xi_\star)
 \rw)d\w d\xi_\star,
 \enda
 \eeq
and the linearized operator 
\[\bega 
L_M h&=-\frac{2}{\sqrt{\mu_M}}Q(\mu,\sqrt{\mu_M}h).\\
\enda\]
We recall the following properties on $L_M$ and $L$.
\begin{lemma} \label{LM-lem}
For any $\eps_0>0$, there exists a constant $C_{\eps_0}>0$ such that 
\[
L_M h=\nu_M(\xi)h-K_Mh,
\]
where
\[
K_Mh=K_{\eps_0} h+K_ch,
\]
and 
\[\begin{cases}
\nu_M(t,x,\xi)&\sim\nu_0(\xi)=\la \xi\ra,\\
K_{\eps_0}h&\lesssim \eps_0\nu_0(\xi) \nl h\nr_{L^\infty_\xi},\\\
K_ch&=\int_{\R^3}k(\xi,\xi_\star)h(\xi_\star)d\xi_\star.
\end{cases}
\] The kernel $k$ satisfies the bound 
\[\bega 
k(\xi,\xi_\star)\lesssim C_{\eps_0}k_{\rho_0}(\xi,\xi_\star),\qquad k_{\rho_0}(\xi,\xi_\star)=|\xi-\xi_\star|^{-1}e^{-\rho_0|\xi-\xi_\star|^2}
\enda 
\]
for some universal constant $\rho_0>0$.
\end{lemma}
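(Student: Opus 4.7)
The lemma is a Grad-type splitting of the linearized collision operator, adapted to the present setting in which one linearizes around a shifted Maxwellian $\mu=\mu_0(\xi-\eps U)$ but renormalizes by the fixed centered Maxwellian $\mu_M$ with $\tfrac12<T_M<1$. My plan is to follow the classical Grad/Caflisch/Guo strategy, tracking how all Gaussian exponents balance.

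First I would expand $L_Mh=-\tfrac{2}{\sqrt{\mu_M}}Q(\mu,\sqrt{\mu_M}h)$ with the bilinear definition \eqref{Q-def}, written as two loss and two gain integrals. The pure loss term $\sqrt{\mu_M(\xi)}h(\xi)\mu(\xi_\star)$ yields the multiplication piece
\[
\nu_M(t,x,\xi)=\int_{\R^3\times\mathbb{S}^2}|(\xi-\xi_\star)\cdot\w|\,\mu(\xi_\star)\,d\w\,d\xi_\star.
\]
Since $\mu(\xi_\star)=\mu_0(\xi_\star-\eps U)$, the translation $\xi_\star\mapsto\xi_\star+\eps U$ together with the boundedness of $\eps U$ gives $\nu_M(t,x,\xi)\sim \la\xi-\eps U\ra\sim \la\xi\ra=\nu_0(\xi)$ uniformly in $(t,x)$.

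Next I would treat the remaining three terms, which together define $K_Mh$. I introduce a smooth cutoff $\chi_{\eps_0}(\xi,\xi_\star,\w)$ supported where either the grazing set is reached ($|(\xi-\xi_\star)\cdot\w|\le \eps_0|\xi-\xi_\star|$), the relative velocity is small ($|\xi-\xi_\star|\le \eps_0$), or $|\xi|\ge 1/\eps_0$. On this support the smallness of the measure, combined with Gaussian factors from $\sqrt{\mu_M}$ and $\mu$, bounds the integrand by $\eps_0\nu_0(\xi)\|h\|_{L^\infty_\xi}$, giving the $K_{\eps_0}h$ contribution. On the complementary set, Grad's change of variables $\xi_\star\mapsto\xi_\star'$ applied to the two gain terms (and leaving $\xi_\star$ unchanged in the remaining loss term) rewrites $K_Mh(\xi)-K_{\eps_0}h$ as an integral operator $K_ch=\int k(\xi,\xi_\star)h(\xi_\star)\,d\xi_\star$ with an explicit kernel. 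The classical Grad/Carleman estimate gives
\[
k(\xi,\xi_\star)\lesssim C_{\eps_0}\,|\xi-\xi_\star|^{-1}\exp\lw(-c_1|\xi-\xi_\star|^2-c_2\frac{(|\xi|^2-|\xi_\star|^2)^2}{|\xi-\xi_\star|^2}\rw)\times \mathcal J(\xi,\xi_\star),
\]
where $\mathcal J$ collects the ratios $\mu(\cdot)/\sqrt{\mu_M(\cdot)}$ evaluated at pre- and post-collisional velocities. Because $\tfrac12<T_M<1$, the factor $\mu(\zeta)/\sqrt{\mu_M(\zeta)}$ still decays like $\exp(-(1/2-1/(4T_M))|\zeta|^2+O(\eps|U||\zeta|))$, which is a genuine Gaussian as soon as $\eps|U|$ is below a fixed small constant, so $\mathcal J$ is bounded by $e^{-c_3(|\xi|^2+|\xi_\star|^2)}$ after slightly shrinking $c_1$. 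Absorbing $c_2$ in favor of a cleaner estimate yields $k(\xi,\xi_\star)\lesssim C_{\eps_0}\,k_{\rho_0}(\xi,\xi_\star)$ for some universal $\rho_0>0$ depending only on $T_M$ and on an upper bound for $\eps\|U\|_{L^\infty}$.

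The principal obstacle is the exponent bookkeeping in the kernel step. Because the linearization uses $\mu$ rather than $\mu_0$, the standard conservation identity $\mu_0(\xi)\mu_0(\xi_\star)=\mu_0(\xi')\mu_0(\xi_\star')$ is not available in clean form; one must instead combine the collisional invariance $|\xi|^2+|\xi_\star|^2=|\xi'|^2+|\xi_\star'|^2$ with the translation by $\eps U$ and complete the square to extract Gaussian decay of the kernel in $(\xi,\xi_\star)$. The bound $\tfrac12<T_M<1$ is exactly what is needed: $T_M>\tfrac12$ ensures that $\mu/\sqrt{\mu_M}$ keeps Gaussian decay after the shift, while $T_M<1$ ensures that the $\sqrt{\mu_M(\xi_\star')}$ factor coming from the gain terms dominates the $\mu$ factor pulled onto the same variable by the collisional change of variables. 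These two inequalities have to be verified simultaneously on every one of the three gain/loss contributions; once they are, the stated decomposition $L_M h=\nu_Mh-K_{\eps_0}h-K_ch$ with the claimed bounds follows.
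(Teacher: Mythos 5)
The paper states Lemma \ref{LM-lem} without proof, presenting it as recalled standard material (compare the surrounding text ``We recall the following properties on $L_M$ and $L$''), so there is no paper argument to compare against; your proposal must be judged on its own. Your reconstruction follows the classical Grad--Caflisch--Guo splitting, adapted to the genuinely delicate feature here: the linearization center is the shifted Maxwellian $\mu=\mu_0(\xi-\eps U)$ at temperature $1$, while the renormalizing weight is $\sqrt{\mu_M}$ at a different temperature $T_M$. You isolate the multiplication operator $\nu_M$ from the pure loss term and observe $\nu_M\sim\la\xi\ra$ by translation; correct. You then propose a small-measure cutoff plus Carleman change of variables for the remaining three contributions; also the standard route. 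Your exponent bookkeeping for $\mu/\sqrt{\mu_M}$ is accurate: the Gaussian coefficient is $\tfrac12-\tfrac{1}{4T_M}$ and $T_M>\tfrac12$ is precisely what keeps it positive, with the $\eps U\cdot\xi$ cross term harmless because $\eps\|U\|_{L^\infty}\ll 1$. (Incidentally, the paper's later displayed identity for $\mu/\sqrt{\mu_M}$ preceding \eqref{ran-120} writes $|\xi|^2/4$ where $|\xi|^2/(4T_M)$ should appear; your version is the correct one.) One small refinement: for the hard-sphere kernel $|(\xi-\xi_\star)\cdot\w|$ there is no genuine grazing singularity, so the $K_{\eps_0}$ cutoff only needs to excise small relative velocity $|\xi-\xi_\star|\le\eps_0$ and large $|\xi|\ge 1/\eps_0$; the $|(\xi-\xi_\star)\cdot\w|\le\eps_0|\xi-\xi_\star|$ cone is superfluous here. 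Also, your informal attribution of the two sides of $\tfrac12<T_M<1$ to the two gain/loss contributions is heuristically reasonable but a cleaner statement is that, after the Carleman change of variables, the residual kernel inherits factors proportional to $e^{-\frac{|\cdot|^2}{4}(2-T_M^{-1})}$ and $e^{-\frac{|\cdot|^2}{4}(T_M^{-1}-1)}$, and both exponents are positive if and only if $\tfrac12<T_M<1$ (these are exactly the exponents in the weights $w_1,w_3$ of the boundary identity \eqref{h-al-bdr}, so the constraint is enforced throughout the paper, not just in this lemma). With those minor adjustments, the proposal is a correct proof outline of the claimed decomposition and kernel bound.
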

\begin{proposition} \label{L-decom-max} Let $f=f(t,x,\xi)$ and $L$ is defined in \eqref{L-def}. There holds 
\[
Lf=\nu f+K_1 f-K_2 f.
\]
Here 
\beq\label{nu-def}
\nu(\xi)=\int_{\R^3\times\S^2}|(\xi-\xi_\star)\cdot\w|\mu(\xi_\star)d\w d\xi_\star\eeq
The linear operator $K_1$ is defined by 
\beq\label{Kk1}
K_1f(\xi)=\int_{\xi_\star}\bk_1(\xi,\xi_\star)f(\xi_\star)d\xi_\star,\qquad \bk_1(\xi,\xi_\star)=\sqrt\mu(\xi)\sqrt\mu(\xi_\star)\int_{|\w|=1}|(\xi-\xi_\star)\cdot\w|d\w.
\eeq
The linear operator $K_2$ is given by
\[
K_2 f=C_0\int_{V_\parallel\in\R^3}\bk_2(\zeta_\perp,\zeta_\parallel,V_\parallel)f(\xi+V_\parallel)d V_\parallel,
\]
where 
\beq\label{Kk2}
\begin{cases}
&\bk_2(\zeta_\perp,\zeta_\parallel,V_\parallel)=\frac 1{|V_\parallel|}e^{-\frac 1 8|V_\parallel|^2-\frac 1 2|\zeta_\parallel|^2}\int_{V_\perp\in\R^2}e^{-\frac 1 2|\zeta_\perp+V_\perp|^2}dV_\perp,\\
&\zeta_\parallel=\frac{(\xi-\eps U)\cdot V_\parallel}{|V_\parallel|^2}V_\parallel+\frac 1 2 V_\parallel,\\
&\zeta_\perp=\xi-\eps U-\frac{(\xi-\eps U)\cdot V_\parallel}{\|V_\parallel\|^2}V_\parallel,
\end{cases}
\eeq
and $C_0$ is a universal constant. Moreover, one has 
\beq\label{L-1}
\nl e^{\rho|\xi|^2} L^{-1}g
\nr _{L^2_\xi}\lesssim \nl e^{\rho|\xi|^2}\nu^{-1}g
\nr_{L^2_\xi}.
\eeq
\end{proposition}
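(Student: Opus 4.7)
The plan is to expand \eqref{L-def}, identify the multiplier and $K_1$ pieces by direct inspection, convert the two ``gain'' contributions into the asserted $K_2$ form via a Carleman-type change of variables, and finally deduce \eqref{L-1} by reducing to the global Maxwellian operator $L_0$ and invoking the classical weighted inverse estimate.

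Expanding the brace in \eqref{L-def} and multiplying in $\sqrt\mu(\xi_\star)$ yields four summands. The summand built from $f(\xi)\sqrt\mu(\xi_\star)\cdot\sqrt\mu(\xi_\star)=f(\xi)\mu(\xi_\star)$ gives exactly $\nu(\xi)f(\xi)$ with $\nu$ as in \eqref{nu-def}. The summand built from $\sqrt\mu(\xi)\sqrt\mu(\xi_\star)f(\xi_\star)$, after pulling $\sqrt\mu(\xi)$ outside and performing the angular integration, gives $K_1f(\xi)=\int\bk_1(\xi,\xi_\star)f(\xi_\star)d\xi_\star$ with $\bk_1$ as in \eqref{Kk1}. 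The remaining two terms
\[
\begin{gathered}
T_1=\int|(\xi-\xi_\star)\cdot\w|\sqrt\mu(\xi_\star)\sqrt\mu(\xi')f(\xi_\star')\,d\w\,d\xi_\star,\\
T_2=\int|(\xi-\xi_\star)\cdot\w|\sqrt\mu(\xi_\star)f(\xi')\sqrt\mu(\xi_\star')\,d\w\,d\xi_\star
\end{gathered}
\]
inherit the leading minus sign of $L$ and together will produce $-K_2f$.

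The heart of the proof is the Carleman parameterization of $T_1+T_2$. Fix $\xi$; by \eqref{vec}, the vector $V_\parallel:=\xi'-\xi$ is parallel to $\w$. Writing $W_\perp\in V_\parallel^\perp\simeq\R^2$, a direct computation gives $\xi_\star=\xi+V_\parallel+W_\perp$ and $\xi_\star'=\xi+W_\perp$, together with the standard Carleman Jacobian $d\w\,d\xi_\star=|V_\parallel|^{-1}dV_\parallel\,dW_\perp$. Applying this to $T_2$ brings $f(\xi+V_\parallel)$ under the outer integral; the analogous Carleman parameterization based on $V_\parallel:=\xi_\star'-\xi$ turns $T_1$ into the same form after relabeling, so $T_1+T_2$ equals twice a single $V_\parallel$-integral. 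Using the identity
\[
|\xi_\star-\eps U|^2+|\xi_\star'-\eps U|^2=2\bigl|(\xi-\eps U)+W_\perp+\tfrac12V_\parallel\bigr|^2+\tfrac12|V_\parallel|^2
\]
and decomposing $\xi-\eps U$ into its components parallel and perpendicular to $V_\parallel$ recovers exactly $\zeta_\parallel$ and $\zeta_\perp$ of \eqref{Kk2}, so the Gaussian factorizes as $e^{-|V_\parallel|^2/8}e^{-|\zeta_\parallel|^2/2}e^{-|\zeta_\perp+W_\perp|^2/2}$. Integrating in $W_\perp$ reproduces the $\int_{V_\perp\in\R^2}e^{-|\zeta_\perp+V_\perp|^2/2}dV_\perp$ factor and yields the stated formula for $K_2f$ with a universal constant $C_0$.

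For the weighted estimate \eqref{L-1}, the collision kernel $|(\xi-\xi_\star)\cdot\w|$ is translation-invariant in velocity, so the shift $\xi\mapsto\vp=\xi-\eps U$ turns $L$ into exactly the global Maxwellian operator $L_0$ from \eqref{L0-def}; consequently $L^{-1}g(\xi)=(L_0^{-1}\tilde g)(\vp)$ with $\tilde g(\vp)=g(\vp+\eps U)$, and $\nu\sim\nu_0$. Since $|\eps U|$ is bounded, $e^{\rho|\xi|^2}$ is comparable to $e^{\rho|\vp|^2}$ up to a harmless universal constant, and the classical weighted inverse estimate for $L_0$---which follows from the spectral gap on $(\ker L_0)^\perp$ in $L^2_{\nu_0}$ together with the weighted boundedness of $K_0=\nu_0I-L_0$---yields $\nl e^{\rho|\xi|^2}L^{-1}g\nr_{L^2_\xi}\lesssim\nl e^{\rho|\xi|^2}\nu^{-1}g\nr_{L^2_\xi}$. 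I expect the algebraic bookkeeping in the Carleman step---verifying both the Jacobian $|V_\parallel|^{-1}$ and the exact identification of the completed-square exponent with the triple $(\zeta_\perp,\zeta_\parallel,V_\parallel)$ in \eqref{Kk2}---to be the main obstacle, since the shift $\eps U$ enters nontrivially into the parallel/perpendicular splitting.
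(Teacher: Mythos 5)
Your proposal follows the same route as the paper: expand $L$ directly to read off the multiplier $\nu$ and the kernel $K_1$, lump the two remaining gain terms into $K_2$, and convert to Carleman variables with a completed square. The completed-square identity you wrote is correct, and so is its decomposition into $(\zeta_\parallel,\zeta_\perp,V_\parallel)$. You also make explicit that $T_1=T_2$, which the paper silently elides by simplifying only $K_{2,1}$; this equality is true (it follows from the angular change $\w\mapsto\tilde\w$ in the plane of $\w$ and $\xi-\xi_\star$, which swaps $\xi'\leftrightarrow\xi_\star'$ with Jacobian compensating the change in $|(\xi-\xi_\star)\cdot\w|$), but you should spell it out rather than call it a relabeling.

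There is, however, a concrete error in the Carleman Jacobian. Writing $\xi_\star=\xi+V$, $V=r\w+W_\perp$ with $r\in\R$, $W_\perp\perp\w$, and lifting $r\w$ to the three-dimensional variable $V_\parallel=r\w$, the spherical measure gives $d\w\,dr=|V_\parallel|^{-2}\,dV_\parallel$ (with an extra factor of $2$ from the $\w\to-\w$ identification), so
\[
d\w\,d\xi_\star=\frac{2}{|V_\parallel|^{2}}\,dV_\parallel\,dW_\perp,
\]
not $|V_\parallel|^{-1}dV_\parallel\,dW_\perp$ as you assert. The factor $|V_\parallel|^{-1}$ appearing in $\bk_2$ of \eqref{Kk2} comes precisely from multiplying this $|V_\parallel|^{-2}$ by the collision kernel $|(\xi-\xi_\star)\cdot\w|=|V_\parallel|$. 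With the Jacobian as you stated it, that combination would be $O(1)$ rather than $|V_\parallel|^{-1}$, so your computation would not reproduce \eqref{Kk2}. This is an easy fix but must be corrected for the derivation to close.

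A smaller remark on \eqref{L-1}: the reduction to $L_0$ by translating $\xi\mapsto\vp=\xi-\eps U$ is the right idea, but the claim that $e^{\rho|\xi|^2}$ is comparable to $e^{\rho|\vp|^2}$ ``up to a harmless universal constant'' is not literally true, since $e^{\rho|\xi|^2}=e^{\rho|\vp|^2}e^{2\rho\eps U\cdot\vp+\rho\eps^2|U|^2}$ and the cross term grows linearly in $|\vp|$ in the exponent. One should instead absorb the linear exponential into a slightly enlarged quadratic weight (using that $\eps\|U\|_{L^\infty}$ is small and that the $L_0^{-1}$ estimate holds for a range of exponential weights below the Maxwellian threshold). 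The paper's own proof does not address \eqref{L-1} at all, so your sketch is supplementary; just tighten the weight comparison.
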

\begin{proof}
We have 
\[\bega 
Lf&=\sqrt\mu(\xi)\int_{\R^3\times\S^2}|(\xi-\xi_\star)\cdot\w|\sqrt\mu(\xi_\star)f(\xi_\star)d\w d\xi_\star+f(\xi)\int_{\R^3\times\S^2}|(\xi-\xi_\star)\cdot\w|\mu(\xi_\star)d\w d\xi_\star\\
&\quad-\int_{\R^3\times\S^2}|(\xi-\xi_\star)\cdot\w|\sqrt\mu(\xi_\star)\lw\{\sqrt\mu (\xi')f(\xi_\star')+f(\xi')\sqrt\mu(\xi_\star')
\rw\}d\w d\xi_\star\\
&=\int_{\R^3}\bk_1(\xi,\xi_\star)f(\xi_\star)d\xi_\star+\nu(\xi) f(\xi)-K_2 f,\\
\enda 
\]
where
\[\bega 
K_2 f&=\int_{\R^3\times\S^2}|(\xi-\xi_\star)\cdot\w|\sqrt\mu(\xi_\star)\lw\{\sqrt\mu(\xi') f(\xi_\star')+f(\xi')\sqrt\mu(\xi_\star')
\rw\}d\w d\xi_\star.\\
\enda 
\]
Let 
\[
V=\xi_\star-\xi.
\]
Then we have 
\[
\bega
\xi_\star&=\xi+V,\\
\xi_\star'&=\xi_\star-(\w\cdot (\xi_\star-\xi))\w=V+\xi-(\w\cdot V)\w=\xi+V_\perp,\\
\xi'&=\xi+((\xi_\star-\xi)\cdot \w) \w=\xi+V_\parallel.
\enda
\]
where $V_\parallel$ is the vector projection of $V$ onto $\w$ and $V_\perp=V-V_\parallel$.
Thus we get
\[\bega 
K_2f&=\int_{\R^3\times\S^2}|V_\parallel|\sqrt\mu(\xi+V)\sqrt\mu(\xi+V_\perp)f(\xi+V_\parallel)d\w dV\\
&\quad+\int_{\R^3\times\S^2}|V_\parallel| \sqrt\mu(\xi+V) \sqrt\mu(\xi+V_\parallel)f(\xi+V_\perp)d\w dV\\
&=K_{2,1}f+K_{2,2}f.
\enda 
\]
We first simplify $K_{2,1}$. We will integrate in $V_\perp$ first, then $V_\parallel$ and then $\w$. For fixed $\w\in \S^2$, we have 
\[
V_\perp\in\R^2,\qquad V_\parallel\in\R.
\]
Namely, $V_\perp$ lies on the plane that is perpendicular to $\w$ and $V_\parallel$ lies on the axis containing $\w$.
Now we have 
\[
\int_{\R^3}dV=\int_{-\infty}^\infty dV_{\parallel}\int_{\R^2}dV_\perp=2\int_0^\infty dV_\parallel \int_{\R^2} dV_\perp.
\]
Hence 
\[
d\w dV=d\w  \cdot 2dV_\perp d |V_\parallel|=2 dV_\perp \frac{|V_\parallel|^2 d|V_\parallel| d\w}{|V_\parallel|^2}=\frac 2 {|V_\parallel|^2}dV_\perp dV_{\parallel},\qquad V_{\parallel}\in \R^3, \quad V_\perp\in\R^2.
\]
This implies 
\[\bega 
K_{2,1}f&=2\int_{V_\perp\in\R^2}\int_{V_\parallel\in\R^3}|V_\parallel|^{-1}\sqrt\mu(\xi+V)\sqrt\mu(\xi+V_\perp)f(\xi+V_\parallel)d V_\parallel dV_{\perp}\\
&=2\int_{V_\perp\in\R^2}\int_{V_\parallel\in\R^3}|V_\parallel|^{-1}\mu_0^{\frac 12}(\xi-\eps U+V)\mu_0^{\frac 1 2} (\xi-\eps U+V_\perp)f(\xi+V_\parallel)d V_\parallel dV_\perp.
\enda
\]
Let $\zeta=\xi-\eps U+\frac 1 2 V_\parallel$. Define $(\zeta_\parallel,\zeta_\perp)$ so that 
\[
\zeta=\zeta_\parallel+\zeta_\perp,\quad \zeta_\parallel \parallel V_\parallel,\quad \zeta_\perp \parallel V_\perp.
\] 
We have the explicit formula
\[\begin{cases}
\zeta_\parallel(t,x,\xi)&=\frac{(\xi-\eps U+\frac 12 V_\parallel)\cdot V_\parallel}{|V_\parallel|^2}V_\parallel=\frac{(\xi-\eps U)\cdot V_\parallel}{|V_\parallel|^2}V_\parallel+\frac 1 2 V_\parallel,\\
\zeta_{\perp}(t,x,\xi)&=(\xi-\eps U+\frac 12V_\parallel)-\zeta_\parallel=\xi-\eps U-\frac{(\xi-\eps U)\cdot V_\parallel}{|V_\parallel|^2}V_\parallel.
\end{cases}
\]
Now we have
\[\begin{cases}
\xi-\eps U+V&=V_\perp+\zeta_\perp+\lw(\zeta_\parallel+\frac 12 V_\parallel
\rw),\\
\xi-\eps U+V_\perp&=V_\perp+\zeta_\perp+\lw(\zeta_\parallel-\frac 12 V_\parallel
\rw).
\end{cases}
\]
Hence 
\[\bega 
&\mu_0^{\frac 12}(\xi-\eps U+V)\mu_0^{\frac 1 2} (\xi-\eps U+V_\perp)\\
%&=C_0 \exp\lw\{-\frac 1 4 \lw|V_\perp+\zeta_\perp+\lw(\zeta_\parallel+\frac 12 V_\parallel\rw)\rw|^2-\frac 1 4\lw|V_\perp+\zeta_\perp+\lw(\zeta_\parallel-\frac 12 V_\parallel\rw)\rw|^2\rw\}\\
%&=C_0 \exp\lw\{-\frac 1 2 \lw|V_\perp+\zeta_\perp\rw|^2-\frac 1 4 \lw|\zeta_\parallel+\frac 12 V_\parallel\rw|^2-\frac 1 4\lw|\zeta_\parallel-\frac 12 V_\parallel\rw|^2\rw\}\\
&=C_0 \exp\lw\{-\frac 12  \lw|V_\perp+\zeta_\perp\rw|^2-\frac 1 2 |\zeta_\parallel|^2-\frac 1 8 |V_\parallel|^2
\rw\}.
\enda 
\]
Hence we have 
\[
K_{2,1}f=C_0 \int_{\R^3}\frac{1}{|V_\parallel|}\lw\{\int_{V_\perp\in\R^2}e^{-\frac 1 2|V_\perp+\zeta_\perp|^2}dV_\perp\rw\}e^{-\frac 12|\zeta_\parallel|^2-\frac 1 8 |V_\parallel|^2 }f(\xi+V_\parallel) dV_\parallel.
\]
\end{proof} 
\begin{definition}
We define 
\[\bega
\gamma_-&=\{x\in\pt\Omega,\xi\in\R^3:\quad n(x)\cdot \xi<0\},\\
\gamma_+&=\{x\in\pt\Omega,\xi\in\R^3:\quad n(x)\cdot \xi>0\}.
\enda 
\]
When $\Omega=\T^2\times\R_+$, we have 
\[\bega
\gamma_-&=\T^2\times\{x_3=0\}\times\{\xi\in \R^3:\xi_3>0\},\\
\gamma_+&=\T^2\times\{x_3=0\}\times\{\xi\in \R^3:\xi_3<0\}.\\
\enda
\]
We define the boundary integration for a function $f(t,x,\xi)$ where $x\in \pt\Omega$ as follows
\[
\int_{\gamma_{\pm}}fd\gamma=\int_{\gamma_{\pm}}f|n(x)\cdot\xi|dS(x)d\xi,
\]
where $dS(x)$ denotes the standard surface measure on $\pt\Omega$. We also define 
\[
\nl f\nr_{L^p_\gamma}=\nl f\nr_{L^p_{\gamma_+}}+\nl
f\nr_{L^p_{\gamma_-}}
\]
to be the $L^p(\gamma)$ with respect to the measure $|n(x)\cdot\xi|dS(x)d\xi$.
We also define the boundary inner product over $\gamma_{\pm}$ in $\xi$ as 
\[
\la f,g\ra_{\gamma_\pm}(t,x)=\int_{\pm n(x)\cdot\xi>0}f(t,x,\xi)g(t,x,\xi)|n(x)\cdot\xi|d\xi.
\]
The projection operator $\P_{\gamma_+}$ onto the unit vector $\sqrt{c_\mu \mu_0(\xi)}$ with respect to $\la \cdot,\cdot\ra_{\gamma_+}$ is defined as follows
\beq\label{P-gamma}
(\P_{\gamma_+}f)(x,\xi)=c_\mu \sqrt{\mu_0(\xi)}\lw\{\int_{(x,\wtd \xi)\in \gamma_+}f(x,\wtd \xi)\sqrt{ \mu_0(\wtd \xi)}|n(x)\cdot\wtd \xi
|d\wtd \xi\rw\}.
\eeq
\end{definition}~\\
When $\Omega=\T^2\times\R_+$, this can be written as \beq\label{P-gamma-2}
(\P_{\gamma_+}f)(t,x,\xi)=c_\mu \sqrt{\mu_0(\xi)}\lw\{\int_{\wtd\xi_3<0}f(t,x,\wtd \xi)\sqrt{ \mu_0(\wtd \xi)}|\wtd \xi_3
|d\wtd \xi\rw\}.
\eeq
\begin{proposition} Let $F$ solves \eqref{F-eq} and $f,h$ defined in the expansion \eqref{F-exp} and \eqref{h-def}. Then $f$ and $h$ solve
\beq\label{f-eq}
\bega 
&\pt_t f+\frac 1 \eps \xi\cdot\nabla_x f+\frac 1{\kappa\eps^2}Lf+\frac{(\pt_t +\frac 1\eps \xi\cdot\nabla_x)\sqrt\mu}{\sqrt\mu}f\\
&=-\frac{1}{\eps\delta \sqrt\mu}\textbf{R}_a+\frac{\delta}{\kappa\eps}\Gamma(f,f)+\frac{2}{\kappa}\Gamma(\ff,f).\\
\enda
\eeq
with the boundary condition 
\beq\label{bdr-r}
f|_{\gamma_-}=\P_{\gamma_+} f-\frac\eps\delta r,
\eeq
where 
\beq\label{r-def}
r=(1-\P_{\gamma_+})\ff,
\eeq
and 
\beq\label{h-eq}
\pt_th+\frac 1\e \xi\cdot\nabla_x h+\frac{1}{\kappa\e^2}L_Mh=-\frac{1}{ \eps\delta\sqrt\mu_M}\textbf{R}_a
+\frac{\delta}{\kappa\eps}\Gamma_M (h,h)+\frac 2\kappa \frac{Q(\ff\sqrt\mu,h\sqrt{\mu_M})}{\sqrt{\mu_M}}. \eeq
\end{proposition}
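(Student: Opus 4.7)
The proposition is a direct algebraic verification obtained by plugging the expansions \eqref{F-exp} and \eqref{h-def} into the Boltzmann equation \eqref{F-eq} and the diffuse boundary condition \eqref{F-bdr}, combined with the defining identities of $L$, $L_M$, $\Gamma$, and $\Gamma_M$. There is no genuine analytic obstacle at this stage; the work is bookkeeping to identify which terms get absorbed into $\textbf{R}_a$ and which appear explicitly on the right-hand side of the remainder equation.

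For \eqref{f-eq}, I would substitute $F = \mu + \varepsilon^2\sqrt{\mu}\ff + \varepsilon\delta\sqrt{\mu}f$ into \eqref{F-eq}. By the very definition \eqref{Ra-def}, the transport contribution $(\partial_t + \frac{1}{\varepsilon}\xi\cdot\nabla_x)(\mu + \varepsilon^2\sqrt{\mu}\ff)$ minus $\frac{2}{\kappa}Q(\mu, \ff\sqrt{\mu}) + \frac{\varepsilon^2}{\kappa}Q(\sqrt{\mu}\ff, \sqrt{\mu}\ff)$ is exactly $\textbf{R}_a$, and dividing the full equation by $\varepsilon\delta\sqrt{\mu}$ produces the driving term $-\frac{1}{\varepsilon\delta\sqrt{\mu}}\textbf{R}_a$. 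The remaining transport piece $\varepsilon\delta(\partial_t + \frac{1}{\varepsilon}\xi\cdot\nabla_x)(\sqrt{\mu}f)$, expanded by the product rule and divided by $\varepsilon\delta\sqrt{\mu}$, furnishes $\partial_t f + \frac{1}{\varepsilon}\xi\cdot\nabla_x f$ along with the singular coefficient $\frac{(\partial_t + \frac{1}{\varepsilon}\xi\cdot\nabla_x)\sqrt{\mu}}{\sqrt{\mu}}f$ on the left-hand side. For the collision operator I would use the symmetric bilinear expansion $Q(\mu + G, \mu + G) = 2Q(\mu, G) + Q(G, G)$ with $G = \varepsilon^2\sqrt{\mu}\ff + \varepsilon\delta\sqrt{\mu}f$, together with the identity $2Q(\mu, \sqrt{\mu}g)/\sqrt{\mu} = -Lg$ (which follows from the microscopic conservation $\mu(\xi')\mu(\xi_\star') = \mu(\xi)\mu(\xi_\star)$ inside \eqref{L-def}). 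The $-\varepsilon\delta\sqrt{\mu}Lf$ term becomes $\frac{1}{\kappa\varepsilon^2}Lf$ on the LHS, while $-\varepsilon^2\sqrt{\mu}L\ff$ is already absorbed into $\textbf{R}_a$. Expanding $Q(G, G)$ and rewriting $Q(\sqrt{\mu}u, \sqrt{\mu}v)/\sqrt{\mu}$ in terms of $\Gamma$ as in \eqref{Gamma-def}, then dividing by $\varepsilon\delta\sqrt{\mu}$, yields precisely the $\frac{2}{\kappa}\Gamma(\ff, f)$ and $\frac{\delta}{\kappa\varepsilon}\Gamma(f, f)$ appearing on the right-hand side of \eqref{f-eq}.

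For the boundary condition \eqref{bdr-r}, I would substitute the same ansatz into \eqref{F-bdr}. At the no-slip boundary one has $U|_{\partial\Omega} = 0$ and hence $\mu|_{\partial\Omega} = \mu_0$, while the normalization constant $c_\mu$ is chosen exactly so that $c_\mu \mu_0(\xi)\int_{n\cdot\tilde\xi>0}\mu_0(\tilde\xi)|n\cdot\tilde\xi|\,d\tilde\xi = \mu_0(\xi)$ for $n\cdot\xi<0$; hence the $\mu$-parts cancel on both sides. The surviving terms, after dividing by $\varepsilon\delta\sqrt{\mu_0}$, give $f|_{\gamma_-} + \frac{\varepsilon}{\delta}\ff|_{\gamma_-} = \P_{\gamma_+}f + \frac{\varepsilon}{\delta}\P_{\gamma_+}\ff$, which rearranges into \eqref{bdr-r} with $r = (1 - \P_{\gamma_+})\ff$ as in \eqref{r-def}. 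The equation \eqref{h-eq} for the $L^\infty$-type expansion follows from the identical scheme applied to $F = \mu + \varepsilon^2\sqrt{\mu}\ff + \varepsilon\delta\sqrt{\mu_M}h$: the identities $L_M h = -\frac{2}{\sqrt{\mu_M}}Q(\mu, \sqrt{\mu_M}h)$ and $\Gamma_M$ from \eqref{GammaM} take the roles of $L$ and $\Gamma$; however the cross term $Q(\sqrt{\mu}\ff, \sqrt{\mu_M}h)/\sqrt{\mu_M}$ does not collapse into a pure $\Gamma_M$ form because $\ff$ is weighted by $\sqrt{\mu}$ rather than $\sqrt{\mu_M}$, and must therefore be kept explicitly as the final term $\frac{2}{\kappa}Q(\ff\sqrt{\mu}, h\sqrt{\mu_M})/\sqrt{\mu_M}$ on the right-hand side of \eqref{h-eq}. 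The whole derivation is purely algebraic and no estimates are needed at this step; the notational delicacy lies only in keeping track of the mismatch between $\sqrt{\mu}$ and $\sqrt{\mu_M}$ in the $L^\infty$ case.
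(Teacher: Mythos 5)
Your proposal is correct and follows essentially the same approach as the paper: substitute the expansion into \eqref{F-eq} and \eqref{F-bdr}, rearrange using bilinearity of $Q$, and invoke the identity $\tfrac{2}{\sqrt\mu}Q(\mu,\sqrt\mu g)=-Lg$ (valid because $\sqrt{\mu(\xi')}\sqrt{\mu(\xi_\star')}=\sqrt{\mu(\xi)}\sqrt{\mu(\xi_\star)}$), with the remaining approximate-solution terms grouped into $\textbf{R}_a$ by its very definition \eqref{Ra-def}. If anything you are more complete than the paper's own proof, which only writes out the bookkeeping for \eqref{f-eq} and declares \eqref{h-eq} similar; your explicit derivation of \eqref{bdr-r}, using the no-slip condition $U|_{\partial\Omega}=0$ to reduce $\mu$ to $\mu_0$ and invoking the normalization \eqref{cmu}, and your careful tracking of the $\sqrt\mu$ versus $\sqrt{\mu_M}$ mismatch that leaves the cross term $\tfrac{2}{\kappa}Q(\ff\sqrt\mu,h\sqrt{\mu_M})/\sqrt{\mu_M}$ unsymmetrized, correctly fill in the omitted parts.
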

\begin{proof}We show \eqref{f-eq}, as the proof of \eqref{h-eq} is similar.
Plugging $F=\mu+\eps^2 \ff\sqrt\mu+\eps\delta f\sqrt\mu$ into the equation \eqref{F-eq}, we get 
%\[\bega &(\pt_t +\eps^{-1}\xi\cdot\nabla_x)(\mu+\eps^2\ff\sqrt\mu+\eps\delta \sqrt\mu f)=\frac 1{\kappa\e^2}Q(\mu+\eps^2\ff\sqrt\mu+\eps\delta \sqrt\mu f,\mu+\eps^2\ff\sqrt\mu+\eps\delta \sqrt\mu f)\\&(\pt_t+\eps^{-1}\xi\cdot\nabla_x)(\mu+\eps^2\ff\sqrt\mu)+\eps\delta \sqrt\mu(\pt_t f+\eps^{-1}\xi\cdot\nabla_x f)+\eps\delta (\pt_t+\eps^{-1}\xi\cdot\nabla_x)\sqrt\mu f\\&=\frac 1{\kappa\e^2} Q(\mu+\eps^2\ff\sqrt\mu,\mu+\eps^2\ff\sqrt\mu)+\frac{1}{\kappa\e^2}2Q(\mu+\eps^2\ff\sqrt\mu, \eps\delta \sqrt\mu f)+\frac{1}{\kappa\e^2}(\eps\delta)^2Q(\sqrt\mu f,\sqrt\mu f).\enda\]This implies 
\[\bega 
&\eps\delta\sqrt\mu\lw(\pt_t f+\frac 1\e \xi\cdot\nabla_x f+\frac{(\pt_t+\frac1\e\xi\cdot\nabla_x)\sqrt\mu}{\sqrt\mu}f\rw)+(\pt_t+\eps^{-1}\xi\cdot\nabla_x)(\mu+\e^2 \ff\sqrt\mu)\\
&=\frac{2}{\kappa}Q(\sqrt\mu \ff,\mu)+\frac{\e^2}{\kappa}Q(\sqrt\mu \ff,\sqrt\mu \ff)+\frac{2\delta}{\kappa\eps}Q(\mu,\sqrt\mu f)\\
&\quad+\frac{2\delta\e}{\kappa}Q(\sqrt\mu \ff,\sqrt\mu f)+\frac{\delta^2}{\kappa}Q(\sqrt\mu f,\sqrt\mu f).
\enda
\]
Hence we obtain 
\[\bega 
&\pt_t f+\frac1\e \xi\cdot\nabla_x f+\frac{(\pt_t+\frac1\e\xi\cdot\nabla_x)\sqrt\mu}{\sqrt\mu}f+\frac{1}{\kappa\e^2}Lf\\
&=-\frac{1}{\eps\delta \sqrt\mu}\lw\{(\pt_t+\eps^{-1}\xi\cdot\nabla_x)(\mu+\e^2 \ff\sqrt\mu)-\frac 2\kappa Q(\sqrt\mu \ff,\mu)-\frac{\eps^2}{\kappa}Q(\sqrt\mu \ff,\sqrt\mu \ff)\rw\}\\
&\quad+\frac{2}{\kappa}\Gamma(\ff,f)+\frac{\delta}{\kappa\e}\Gamma(f,f).
\enda 
\]
The proof is complete.
\end{proof} 

\begin{proposition} Recalling \eqref{Ra-def}, we have \beq\label{R-a-def} \bega
\textbf{R}_a&=\eps \NS(U,P)\cdot\vp \mu-\frac{\eps^2}{\kappa}\mu^{\frac12}\Gamma(\ff,\ff)+\frac 1 3 \div U |\vp|^2\mu-\eps\kappa \pt_{i\ell}U_j\IP (\vp_\ell A_{ij})\sqrt\mu\\
&\quad+\eps ^2 \mu\lw\{(\pt_t P+U\cdot\nabla_x P)+\eps P(\pt_t U+U\cdot\nabla_x U)\cdot\vp+\frac 1 2 P \pt_i U_\ell \vp_i\vp_\ell
\rw\} \\ 
&\quad-\eps^2 \kappa \mu^{\frac12}\lw\{(\pt_t+U\cdot\nabla_x)(\pt_i U_j)+\eps (\pt_tU+U\cdot\nabla_x U)\cdot \vp \pt_i U_j+\pt_i U_j \pt_k U_\ell \vp_k\vp_\ell
\rw\}A_{ij} \\
&\quad-\eps^2 \kappa \mu^{\frac 12} \pt_i U_j\lw(\eps^2 \pt_t+(\eps^2 U+\vp)\cdot\nabla_x
\rw)A_{ij}.\\
\enda\eeq
\end{proposition}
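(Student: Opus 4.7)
The target \eqref{R-a-def} is an exact algebraic identity, to be verified by direct Chapman--Enskog expansion of \eqref{Ra-def}. Using $Q(\mu,\mu)=0$ and bilinearity, I would first split
\[
\textbf{R}_a = (\pt_t + \eps^{-1}\xi\cdot\nabla_x)\mu + \eps^2(\pt_t + \eps^{-1}\xi\cdot\nabla_x)(\sqrt{\mu}\ff) - \tfrac{2}{\kappa}Q(\mu, \sqrt{\mu}\ff) - \tfrac{\eps^2}{\kappa}Q(\sqrt{\mu}\ff, \sqrt{\mu}\ff),
\]
and then expand each piece by the chain rule in $\vp = \xi - \eps U$, matching outputs against \eqref{R-a-def} one line at a time.

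\textbf{Transport of $\mu$ and linearized-collision cancellation.} Since $\nabla_\vp \mu_0 = -\vp\mu_0$, the chain rule yields $(\pt_t + \eps^{-1}\xi\cdot\nabla_x)\mu = \eps(\pt_t U + U\cdot\nabla U)\cdot \vp\mu + \pt_k U_j \vp_k\vp_j\mu$. The substitution $\pt_t U + U\cdot\nabla U = \NS(U,P) - \nabla(P - \tfrac{\eta_0}{3}\div U) + \kappa\eta_0\Delta U$ from \eqref{def:NS} produces the term $\eps\NS(U,P)\cdot\vp\mu$ plus ancillary pressure/viscous remainders to be cancelled below. The symmetric tensor splits as $\pt_k U_j \vp_k\vp_j = \tfrac{1}{3}\div U |\vp|^2 + \pt_k U_j \hat A_{kj}(\vp)/\sqrt{\mu_0}$, giving the target's $\tfrac{1}{3}\div U|\vp|^2\mu$ plus a traceless piece. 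For the latter, the Boltzmann detailed-balance identity $\sqrt{\mu}(\xi)\sqrt{\mu}(\xi_\star) = \sqrt{\mu}(\xi')\sqrt{\mu}(\xi'_\star)$ applied to \eqref{L-def} yields $Lf = -(2/\sqrt{\mu})Q(\mu, \sqrt{\mu}f)$; combined with $L\sqrt{\mu} = 0$ and $LA_{ij} = \hat A_{ij}$ (valid because $L$ on functions of $\vp$ reduces to $L_0$ by the $\eps U$-invariance of the collisional rules), this gives $(2/\kappa)Q(\mu, \sqrt{\mu}\ff) = -(1/\kappa)\sqrt{\mu} L\ff = \pt_i U_j \hat A_{ij}\sqrt{\mu}$, exactly cancelling the traceless tensor.

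\textbf{Transport of $\eps^2\sqrt{\mu}\ff$ and quadratic collision.} Expanding $\eps^2(\pt_t + \eps^{-1}\xi\cdot\nabla_x)(P\mu)$ produces an order-$\eps$ piece $\eps\vp\cdot\nabla P\,\mu$ that cancels the $-\eps\nabla P\cdot\vp\mu$ ancillary from the NS substitution, together with higher-order pieces that reassemble into the line $\eps^2\mu\{(\pt_t P + U\cdot\nabla P) + \eps P(\pt_t U + U\cdot\nabla U)\cdot\vp + \tfrac{1}{2}P\pt_i U_\ell \vp_i\vp_\ell\}$ of \eqref{R-a-def}. For the Burnett piece $-\eps^2\kappa\pt_i U_j A_{ij}(\vp)\sqrt{\mu}$, the product rule separates derivatives acting on $\pt_i U_j$ and $\sqrt{\mu}$ (reassembling into the $-\eps^2\kappa\sqrt{\mu}\{\cdots\}A_{ij}$ line) from those acting on $A_{ij}(\vp)$ via the chain rule (reassembling into the last $-\eps^2\kappa\sqrt{\mu}\pt_i U_j(\eps^2\pt_t + (\eps^2 U + \vp)\cdot\nabla_x)A_{ij}$ line). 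The unique $O(\eps\kappa)$ leftover $-\eps\kappa \pt_{i\ell}U_j \vp_\ell A_{ij}\sqrt{\mu}$ is decomposed as $\P(\vp_\ell A_{ij}) + \IP(\vp_\ell A_{ij})$: the $\IP$ part gives the explicit $-\eps\kappa\pt_{i\ell}U_j \IP(\vp_\ell A_{ij})\sqrt{\mu}$ term, while the $\P$ part—evaluated via the Burnett-function identity $\int A_{ij}\hat A_{k\ell}\,d\vp = \eta_0(\delta_{ik}\delta_{j\ell}+\delta_{i\ell}\delta_{jk}-\tfrac{2}{3}\delta_{ij}\delta_{k\ell})$ and the contraction $\pt_{i\ell}U_j(\delta_{ik}\delta_{j\ell}+\delta_{i\ell}\delta_{jk}-\tfrac{2}{3}\delta_{ij}\delta_{k\ell}) = \Delta U_k + \tfrac{1}{3}\pt_k\div U$—absorbs the remaining NS ancillary viscous/compressibility corrections. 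Finally, $-(\eps^2/\kappa)Q(\sqrt{\mu}\ff, \sqrt{\mu}\ff) = -(\eps^2/\kappa)\sqrt{\mu}\Gamma(\ff,\ff)$ follows from the definition of $\Gamma$ in \eqref{Gamma-pm} and the same detailed-balance identity.

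\textbf{Main obstacle.} The delicate step is the projection analysis of $\vp_\ell A_{ij}$: one must verify that the tensor contraction $\pt_{i\ell}U_j \cdot \P(\vp_\ell A_{ij})$, once evaluated through the Burnett integrals, produces exactly the $\eta_0$-weighted combination $\eta_0\,\Delta U\cdot\vp + \tfrac{\eta_0}{3}\nabla\div U\cdot\vp$ with the correct signs and $\kappa$-prefactors to annihilate the NS ancillaries. This is the algebraic heart of why the second-order Chapman--Enskog corrector $A_{ij}$ encodes the viscosity in $\NS(U,P)$, and requires careful index bookkeeping through the symmetric-traceless structure of the Burnett functions and strict reconciliation with the normalization of $\eta_0$ and the measure in \eqref{Aij}.
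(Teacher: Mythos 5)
Your proposal is correct and takes essentially the same route as the paper's proof. The decomposition of $\textbf{R}_a$, the cancellation of the traceless tensor $\pt_iU_j\hat A_{ij}\sqrt\mu$ using $L\ff=-\kappa\pt_iU_j\hat A_{ij}$ together with the detailed-balance identity $\frac{2}{\sqrt\mu}Q(\mu,\sqrt\mu f)=-Lf$, the splitting of $\vp\cdot\nabla_x(\II-\P)\ff$ into $\P$ and $\IP$ parts, and the Burnett contraction $\langle\hat A_{km},A_{ij}\rangle=\eta_0(\delta_{ki}\delta_{mj}+\delta_{kj}\delta_{mi}-\tfrac{2}{3}\delta_{km}\delta_{ij})$ yielding $\eta_0(\triangle U_m+\tfrac{1}{3}\pt_m\div U)\vp_m\sqrt\mu$ all coincide with the paper's own computation in \eqref{mu-term}, \eqref{ran-26}, and \eqref{ran-27}.
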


\begin{proof} We have 
\[
\textbf{R}_a=\lw(\pt_t+\frac 1\eps \xi\cdot\nabla_x
\rw)\mu+\eps^2\lw(\pt_t+\frac 1\eps \xi\cdot\nabla_x
\rw)(\sqrt\mu \ff)-\frac 2 \kappa Q(\mu,\ff\sqrt\mu)-\frac {\eps^2}{\kappa} Q(\sqrt\mu \ff,\sqrt\mu \ff).
\]
First, we have 
\beq\label{mu-term}
\bega 
&\frac{(\pt_t+\eps^{-1}\xi\cdot\nabla_x)\mu}{\sqrt\mu}=\frac{(\pt_t+\eps^{-1} \vp\cdot\nabla_x)\mu}{\sqrt\mu}+\frac{(\pt_t+U\cdot\nabla_x)\mu}{\sqrt\mu}\\
&=\eps (\pt_t U+U\cdot\nabla_x U)\cdot\vp\sqrt\mu+\sum_{i,j}\pt_i U_j\vp_i\vp_j\sqrt\mu\\
&
=\eps (\pt_t U+U\cdot\nabla_x U)\cdot\vp\sqrt\mu+\pt_i U_j\hat A_{ij}(\vp)+\frac 1 3 \div U (|\vp|^2-3)\sqrt\mu+(\div U)\sqrt\mu.\\
\enda 
\eeq
Since $\frac 1 \kappa L\ff+\pt_i U_j\hat A_{ij}(\vp)=0$, we have 
\[\bega 
&\frac{2}{\kappa}Q(\mu,\ff\sqrt\mu)-(\pt_t+\e^{-1}\xi\cdot\nabla_x )\mu\\
&=-\eps (\pt_t U+U\cdot\nabla_x U)\cdot\vp \mu-\frac 1 3 (\nabla_x\cdot U)(|\vp|^2-3)\mu-(\nabla_x\cdot U)\mu. 
\enda \]
Hence 
\[\bega 
&(\pt_t+\frac 1\eps \xi\cdot\nabla_x)(\mu+\sqrt\mu\eps^2\ff)-\frac2\kappa Q(\mu,\ff\sqrt\mu)-\frac{\eps^2}{\kappa}Q(\sqrt\mu \ff,\sqrt\mu \ff)\\
&=\e^2\sqrt\mu\lw\{\pt_t\ff+U\cdot\nabla_x \ff+\frac{(\pt_t+\eps^{-1}\xi\cdot\nabla_x)\sqrt\mu}{\sqrt\mu}\ff
\rw\}\\
&\quad+\eps \lw\{\nabla_x \P \ff+(\pt_tU+U\cdot\nabla_x U)
\rw\}\cdot\vp\mu-\frac{\eps^2}{\kappa}\sqrt\mu \Gamma(\ff,\ff)\\
&\quad+\frac 1 3 \div U |\vp|^2\mu+\eps \vp \cdot\nabla_x \IP \ff\sqrt\mu.
\enda\]
Now we simplify the last term in the above. Now we have
\beq\label{ran-26}
\bega 
&\vp\cdot\nabla_x (\II-\P)\ff\\
&=\vp_k\pt_k\lw\{-\kappa \pt_i U_j A_{ij}(\vp)
\rw\}\\
&=-\kappa \vp_k \pt_{ik}U_j A_{ij}(\vp)-\kappa \pt_i U_j (\vp\cdot\nabla_x)A_{ij}\\
&=-\kappa\P (\pt_{ik}U_j\vp_k A_{ij})-\kappa \pt_{ik}U_j (\II-\P)(\vp_kA_{ij})-\kappa \pt_i U_j (\vp\cdot\nabla_x)A_{ij}.\\\enda 
\eeq
The first term is computed as follows
\beq\label{ran-27}
\bega 
\P(\pt_{ik}U_j\vp_k A_{ij})&=\sum_m \pt_{ik}U_j\la \vp_k\vp_m\sqrt\mu,A_{ij}\ra_{L^2_\vp}\vp_m\sqrt\mu=\pt_{ik}U_j\la \hat A_{km},A_{ij}\ra \vp_m\sqrt\mu\\
&=\eta_0\pt_{ik}U_j\lw(\delta_{ki}\delta_{mj}+\delta_{kj}\delta_{mi}-\frac 2 3 \delta_{km}\delta_{ij}
\rw)\vp_m\sqrt\mu\\
&=\eta_0(\triangle U_m+\frac 1 3 \pt_m(\div U))\vp_m\sqrt\mu.
\enda 
\eeq
Hence
\[\bega 
&\vp\cdot\nabla_x (\II-\P)\ff\\
&=-\kappa\eta_0\triangle U\cdot\vp\sqrt\mu-\frac 1 3\kappa\eta_0\vp\cdot\nabla_x(\div U)\sqrt\mu-\kappa \pt_{ik}U_j (\II-\P)(\vp_kA_{ij})-\kappa \pt_i U_j (\vp\cdot\nabla_x)A_{ij}.
\enda \]
Hence we obtain
\[\bega 
&(\pt_t+\frac 1\eps \xi\cdot\nabla_x)(\mu+\sqrt\mu\eps^2\ff)-\frac2\kappa Q(\mu,\ff\sqrt\mu)-\frac{\eps^2}{\kappa}Q(\sqrt\mu \ff,\sqrt\mu \ff)\\
&=\e^2\mu^{\frac12}\lw\{\pt_t\ff+U\cdot\nabla_x \ff+\frac{(\pt_t+\eps^{-1}\xi\cdot\nabla_x)\sqrt\mu}{\sqrt\mu}\ff
\rw\}+\eps \NS(U,P)\cdot\vp \mu\\
&-\frac{\eps^2}{\kappa}\mu^{\frac12}\Gamma(\ff,\ff)+\frac 1 3 \div U |\vp|^2\mu-\eps\kappa \pt_{ik}U_j\IP (\vp_kA_{ij})\sqrt\mu-\kappa \pt_i U_j(\vp\cdot\nabla_x)A_{ij}\sqrt\mu.
\enda\]
Lastly, we see that 
\[\bega 
&\pt_t \ff+U\cdot\nabla_x \ff+\frac{(\pt_t+\frac 1 \eps \xi\cdot\nabla_x)\sqrt\mu}{\sqrt\mu}\ff\\
&=\lw\{(\pt_t P+U\cdot\nabla_x P)+\eps P(\pt_t U+U\cdot\nabla_x U)\cdot\vp+\frac 1 2 P \pt_i U_k \vp_i\vp_k
\rw\}\sqrt\mu\\
&+A_{ij}(\vp)\lw\{-\kappa(\pt_t+U\cdot\nabla_x)(\pt_i U_j)-\eps\kappa(\pt_tU+U\cdot\nabla_x U)\cdot \vp \pt_i U_j-\kappa \pt_i U_j\pt_k U_l \vp_k\vp_l
\rw\}\\
&-\kappa \pt_i U_j(\pt_t+U\cdot\nabla_x)A_{ij}(\vp).
\enda 
\]
The proof is complete.
\end{proof}
\subsection{Equations and boundary conditions for derivatives}
Fix $\al \in \mathbb N_0^3$. Now we define 
\beq\label{fh-al-def}
f_\al=\pt^\al f,\qquad h_\al=\pt^\al h
\eeq
Applying $\pt^\al$ to the equation \eqref{f-eq} and let $f_\al=\pt^\al f$, we have 
\beq\label{f-al-eq}
\bega 
&\pt_t f_\al+\frac 1 \eps \xi\cdot\nabla_x f_\al+\frac 1 {\kappa\eps^2}Lf_\al+\frac{(\pt_t+\frac 1 \eps \xi\cdot\nabla_x)\sqrt\mu}{\sqrt\mu}f_\al\\
&=\frac 1 {\kappa\eps^2}[L,\pt^\al]f+g_\al+\frac{(\pt_t+\frac 1 \eps \xi\cdot\nabla_x)\sqrt\mu}{\sqrt\mu}f_\al-\pt^\al\lw\{\frac{(\pt_t+\frac 1 \eps \xi\cdot\nabla_x)\sqrt\mu}{\sqrt\mu}f\rw\}.
\enda 
\eeq
where 
\beq\label{g-alpha}
g_\al=\pt^\al\lw(-\frac{1}{\eps\delta \sqrt\mu}\textbf{R}_a+\frac{\delta}{\kappa\eps}\Gamma(f,f)+\frac{2}{\kappa}\Gamma(\ff,f)\rw),
\eeq
along with the boundary condition 
\beq\label{f-al-bdr-con}
f_\al|_{\gamma_-}=\P_\gamma f_\al-\frac \eps\delta \pt^\al r.
\eeq
where $r$ is given in \eqref{r-def}. 
Next, it is clear from \eqref{h-eq} that $h_\al$ and Lemma \ref{LM-lem} that $h_\al=\pt^\al h$ solves the equation
\beq\label{h-al-eq}
\pt_t h_\al+\frac 1 \eps\xi\cdot\nabla_x h_\al+\frac 1 {\kappa\e^2} L_Mh_\al= \wtd g_\al
\eeq
where 
\[
\wtd g_\al=\frac 1 {\kappa\eps^2}[L_M,\pt^\al]h+ \pt^\al\lw(-\frac{1}{ \eps\delta\sqrt\mu_M}\textbf{R}_a
+\frac{\delta}{\kappa\e}\Gamma_M (h,h)+\frac 2\kappa \frac{Q(\ff\sqrt\mu,h\sqrt{\mu_M})}{\sqrt{\mu_M}}\rw).
\]
Recalling the boundary condition for $f$ in \eqref{bdr-r} and the fact that $f=\frac{\sqrt{\mu_M}}{\sqrt\mu_0}h$ on the boundary and the calculation \eqref{ran-76}, we get 
\beq\label{h-al-bdr}
h_\al|_{\{x_3=0,\xi_3>0\}}=c_\mu w_1(\xi)\lw\{\int_{\R^3}h_\al(t,x,\xi_\star)w_2(\xi_\star)d\xi_\star\rw\}-\frac{\eps}\delta w_3(\xi)^{-1}\pt^\al r \eeq
where
\[
w_1(\xi)=e^{-\frac{|\xi|^2}{4}(2-T_M^{-1})},\qquad w_2(\xi)=1_{\{\xi_3<0\}}e^{-\frac{|\xi|^2}{4T_M}}|\xi_3|,\qquad w_3(\xi)=T_M^{\frac 3 2}e^{-\frac{|\xi|^2}{4}(T_M^{-1}-1)}.
\]
We note that $w_1,w_2$ and $w_3$ are decaying functions in $\xi$. ~\\
We now derive the boundary condition for $b$, where we recall \eqref{Pf-def}. We recall that 
\[
F=\mu+\eps^2 \ff\sqrt\mu +\eps \delta f\sqrt\mu.
\]
\begin{proposition} \label{bc-con12} Assume \eqref{Pf-def}. Then we have
\beq\label{bc-con}
b_3|_{\pt\Omega}=0.
\eeq
Moreover, recalling \eqref{PFal}, we have
\[
b_{\al,3}|_{x_3=0}=0.
\]
Here $\pt^\al=(\eps\pt_t)^{\al_0}\pt_{x_1}^{\al_1}\pt_{x_2}^{\al_2}$ for $\al=(\al_0,\al_1,\al_2)\in \mathbb N^3$.
\end{proposition}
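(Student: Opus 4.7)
My strategy is to test the diffuse-reflection boundary condition \eqref{bdr-r} against $\vp_3\sqrt\mu$. First observe that the Prandtl/Euler construction produces an exact wall matching: by the Prandtl no-slip identity $u_p^m|_{z=0}=-u_e^m|_{x_3=0}$ and the recursive Euler identity $v_e^m|_{x_3=0}=-v_p^m|_{z=0}$ built into \eqref{Euler}--\eqref{Prandtl}, the full fluid velocity $U$ vanishes on $\{x_3=0\}$ (the tail piece $\sqrt{\eta_0\kappa}^{N+1}v_p^{N+1}|_{z=0}$ sits at an order absorbed into $\NS(U)$). Consequently at the wall $\mu=\mu_0$ and $\vp=\xi$, so $b_3|_{x_3=0}=\int_{\R^3}f\sqrt{\mu_0}\,\xi_3\,d\xi$.

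Splitting this integral at $\xi_3=0$ and inserting the boundary condition into the incoming part $\xi_3>0$, the contribution from $\P_{\gamma_+}f$ equals $c_\mu\bigl(\int_{\xi_3>0}\mu_0\xi_3\,d\xi\bigr)\cdot\int_{\tilde\xi_3<0}f\sqrt{\mu_0}|\tilde\xi_3|\,d\tilde\xi=-\int_{\xi_3<0}f\sqrt{\mu_0}\xi_3\,d\xi$ by the normalization of $c_\mu$ combined with the evenness of $\mu_0$ in $\xi_3$. This precisely cancels the outgoing part of $b_3$, leaving
\[
b_3|_{x_3=0}=-\frac{\eps}{\delta}\int_{\R^3}\ff\sqrt{\mu_0}\,\xi_3\,d\xi\Big|_{x_3=0},
\]
where the full-space integral appears after collecting the $(1-\P_{\gamma_+})\ff$ contributions by the same normalization identity. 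Then I substitute \eqref{ff-def}: the pressure piece $P\int\mu_0\xi_3\,d\xi$ vanishes by symmetry of $\mu_0$, while each viscous term $\pt_iU_j\,\langle A_{ij},\xi_3\sqrt{\mu_0}\rangle_{L^2_\xi}$ vanishes because $A_{ij}=L^{-1}\hat A_{ij}\in(\ker L)^\perp$ whereas $\xi_3\sqrt{\mu_0}\in\ker L$. Hence $b_3|_{x_3=0}=0$.

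For the derivative statement $b_{\al,3}|_{x_3=0}=0$, note that $\pt^\al=(\eps\pt_t)^{\al_0}\pt_{x_1}^{\al_1}\pt_{x_2}^{\al_2}$ contains only temporal and tangential differentiations, hence commutes with the trace on $\{x_3=0\}$ and with the diffuse projection $\P_{\gamma_+}$. The no-slip identities $U|_{x_3=0}=0$ persist under $\pt^\al$ because $U|_{x_3=0}$ is identically zero as a function of $(t,x_\parallel)$, so the wall matching $\mu|_{x_3=0}=\mu_0$ continues to hold pointwise after differentiating. Applying $\pt^\al$ to \eqref{bdr-r} and repeating the above calculation yields $b_{\al,3}|_{x_3=0}=0$. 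The main technical point I expect is the exact wall-matching $\mu|_{\pt\Omega}=\mu_0$; without it the $\sqrt\mu$ appearing in front of $\P_{\gamma_+}$ would fail to collapse to $\sqrt{\mu_0}$, the $c_\mu$ normalization cancellation would no longer close exactly, and the identity $b_3=0$ would degrade to $O(\kappa^{(N+1)/2})$. This is precisely why no-slip is enforced at every order of the Prandtl/Euler ansatz.
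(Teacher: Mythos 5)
Your proof is correct but takes a genuinely different route from the paper's. The paper works with the full distribution $F$ and the raw boundary condition \eqref{F-bdr}: splitting $\int\pt^\al F\,\xi_3\,d\xi$ at $\xi_3=0$, substituting \eqref{F-bdr} on the incoming half, and invoking the normalization \eqref{cmu} gives $\int_{\R^3}\pt^\al F\,\xi_3\,d\xi\big|_{x_3=0}=0$ in a single stroke, with no structural information about $\mu$ or $\ff$ entering the cancellation at all; only afterward is $F$ decomposed and the wall matching $\mu|_{x_3=0}=\mu_0$ invoked to isolate $\eps\delta\,b_{\al,3}$. You instead work at the level of $f$ via the derived condition \eqref{bdr-r}, cancel the $\P_{\gamma_+}f$ piece against the outgoing part of $b_3$, collapse the remaining $r=(\II-\P_{\gamma_+})\ff$ contribution into the full-space integral $\int_{\R^3}\ff\sqrt{\mu_0}\,\xi_3\,d\xi$, and kill that by parity of $\mu_0$ plus the orthogonality $\langle A_{ij},\xi_3\sqrt{\mu_0}\rangle_{L^2_\xi}=0$. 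Both routes hinge on $U|_{x_3=0}=0$. The paper's is slicker in that the diffuse-reflection cancellation happens once, upstream, before any structure of $\mu$ or $\ff$ enters; on the other hand you make explicit the vanishing of the $\mu$ and $\ff$ contributions that the paper's final sentence ("combining $F=\mu+\dots$ and the identity, we get $\pt^\al b_3|_{x_3=0}=0$") leaves implicit, which is a genuine filling-in. Your remark about the uncompensated $\sqrt{\eta_0\kappa}^{N+1}v_p^{N+1}|_{z=0}$ tail and the resulting $O(\kappa^{(N+1)/2})$ degradation is a sharp observation; the paper's proof tacitly relies on exact wall matching just as yours does and does not comment on this point.
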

\begin{proof}First we will show that $\int_{\R^3}\pt^\al F\xi_3d\xi=0$. Let $z_\gamma(x)=\int_{\xi_3<0}F |\xi_3|d\xi_3$, then we see that 
\[
\pt^\al z_\gamma=\int_{\xi_3<0}\pt^\al F|\xi_3|d\xi_3.
\]
 The boundary condition implies
\[
\pt^\al F|_{x_3=0,\xi_3>0}=c_\mu \mu_0(\xi)\pt^\al z_\gamma(x).
\]
We have
\[\bega 
\int_{\xi_3}\pt^\al F\xi_3 d\xi_3&=\int_{-\infty}^0 \pt^\al F\xi_3 d\xi_3+\int_0^\infty\pt^\al F \xi_3 d\xi_3\\
&=-\int_{-\infty}^0 \pt^\al F|\xi_3|d\xi_3+\int_0^\infty c_\mu \mu_0(\xi)\pt^\al z_\gamma(x)\xi_3d\xi.\\
\enda
\]
Hence
\[\bega
\int_{\R^3}\pt^\al F \xi_3 d\xi_3&=-\pt^\al z_\gamma(x)+\pt^\al z_\gamma(x)c_\mu \int_{\xi_3>0} \mu_0(\xi)\xi_3 d\xi=0.
\enda 
\]
Here we use the fact that
\beq \label{cmu}
c_\mu \int_{\xi_3>0}\mu_0(\xi)|\xi_3|d\xi=1.
\eeq
Now combining $F=\mu+\eps\delta f\sqrt\mu+\eps^2\ff\sqrt\mu$ and the identity $\int_{\R^3}\pt^\al F|_{x_3=0}\xi_3d\xi_3=0$, we get $\pt^\al b_3|_{x_3=0}=0$. The proof is complete. 
\end{proof}
\subsection{Conservation Laws}
To begin, we first show the conservation law for $f$. 
\begin{proposition}  Recalling \eqref{PFal}, we have
\beq\label{law1} \bega 
\eps\pt_t a_\al+\nabla_x\cdot b_\al&=-\eps\pt_j (U_j a_\al)-\frac 1{\eps\delta}\pt^\al (\nabla_x\cdot U)\\
&\quad-\frac{\eps}{\delta}\pt^\al (U_i\pt_i U_i)-\frac{\eps^2}{\delta}\pt^\al(\pt_t P+\pt_{x_j}(U_j P))+\delta^{-1}\kappa\eps^2c_{ij}\pt_i U_j(\nabla_x\cdot U)\\
&\quad-\sum_{\substack{\beta+\gamma=\al\\|\gamma|\ge 1
}
}\frac{\al!}{\beta!\gamma!}\int_{\R^3}(\eps\pt_t+\xi\cdot\nabla_x)f_\beta\cdot \pt^{\gamma}\sqrt\mu d\xi\\
&\quad-\sum_{\substack{\beta+\gamma=\al\\|\gamma|\ge 1
}
}\frac{\al!}{\beta!\gamma!}\int_{\R^3}f_\beta (\eps\pt_t+\xi\cdot\nabla_x)(\pt^{\gamma}\sqrt\mu) d\xi.
\enda 
\eeq
where $c_{ij}$ is defined in \eqref{cij}.
\end{proposition}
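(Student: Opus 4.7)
The plan is to project equation \eqref{f-al-eq} onto the mass direction of the hydrodynamic null space, by testing against $\sqrt\mu$ and integrating in $\xi\in\R^3$. Three algebraic facts will drive the calculation. First, $\sqrt\mu\in\ker L$ and $L$ is self-adjoint, so $\int Lf_\al\sqrt\mu\,d\xi = 0$. Second, $\sqrt\mu$ is a collision invariant for $\Gamma$, so $\int\Gamma(g,h)\sqrt\mu\,d\xi=0$ for any admissible $g,h$. Third, the transport and local-Maxwellian correction telescope, yielding
\[
\int\!\Bigl(\pt_t + \tfrac{1}{\eps}\xi\cdot\nabla_x\Bigr)f_\al\sqrt\mu\,d\xi + \int\!\frac{(\pt_t+\tfrac{1}{\eps}\xi\cdot\nabla_x)\sqrt\mu}{\sqrt\mu}f_\al\sqrt\mu\,d\xi = \pt_t\!\int\! f_\al\sqrt\mu\,d\xi + \tfrac{1}{\eps}\nabla_x\!\cdot\!\int\!\xi f_\al\sqrt\mu\,d\xi.
\]
Writing $\int\xi f_\al\sqrt\mu\,d\xi = b_\al + \eps U\, a_\al$ via the change of variable $\vp = \xi - \eps U$, and multiplying through by $\eps$, the left-hand side of \eqref{f-al-eq} tested against $\sqrt\mu$ collapses to $\eps\pt_t a_\al + \nabla_x\cdot b_\al + \eps\pt_j(U_j a_\al)$. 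Transporting the last piece across the equality produces the first term $-\eps\pt_j(U_j a_\al)$ on the right-hand side of \eqref{law1}.

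For the sources on the right of \eqref{f-al-eq}, the $\Gamma(f,f)$ and $\Gamma(\ff,f)$ contributions inside $g_\al$ vanish in their $\gamma=0$ Leibniz piece by collision invariance. What remains are the Leibniz commutators of $\pt^\al$ with the $(t,x)$-dependent $\sqrt\mu$; these, together with the remainder produced when $\pt^\al$ is carried through $(\pt_t+\frac{1}{\eps}\xi\cdot\nabla_x)(f\sqrt\mu)$ and with the tested commutator $[L,\pt^\al]f$, should reassemble into exactly the two sums $\sum_{\beta+\gamma=\al,\,|\gamma|\ge 1}\frac{\al!}{\beta!\gamma!}\int (\eps\pt_t+\xi\cdot\nabla_x)f_\beta\cdot\pt^\gamma\sqrt\mu\,d\xi$ and the companion sum with $f_\beta$ and $(\eps\pt_t+\xi\cdot\nabla_x)\pt^\gamma\sqrt\mu$ swapped that appear in \eqref{law1}. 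The cleanest route is to first establish the $\al=0$ identity and then apply $\pt^\al$, identifying the mismatch generated when $\pt^\al$ falls on the implicit $\sqrt\mu$-weights with precisely these two sums.

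The remaining contribution is $-\frac{1}{\eps\delta}\pt^\al\int \textbf{R}_a\,d\xi$, which I would compute from the explicit decomposition \eqref{R-a-def} term by term in $\vp$: $\eps \NS(U,P)\cdot\vp\mu$ and the $\vp$-odd pieces of the pressure correction integrate to zero since $\int\vp\mu\,d\vp=0$; $\mu^{1/2}\Gamma(\ff,\ff)$ and $\IP(\vp_\ell A_{ij})\sqrt\mu$ integrate to zero by collision invariance and by orthogonality to $\ker L$ respectively; the Gaussian moments $\int|\vp|^2\mu\,d\vp=3$ and $\int\vp_i\vp_\ell\mu\,d\vp=\delta_{i\ell}$ produce the divergence term $\nabla_x\cdot U$ and the pressure-transport $\eps^2(\pt_t P + \pt_j(U_j P))$ upon combining the $\pt_tP + U\cdot\nabla_x P$ and $\frac{1}{2}P\pt_iU_\ell\vp_i\vp_\ell$ pieces with the $\frac{1}{3}\nabla_x\cdot U$ factor; finally, the $\pt_iU_j\,\pt_kU_\ell\,\vp_k\vp_\ell A_{ij}$ piece pairs through the stress integrals $\la A_{ij},\hat A_{k\ell}\ra$ computed as in \eqref{ran-27} to yield the $\delta^{-1}\kappa\eps^2 c_{ij}\pt_iU_j\,\nabla_x\cdot U$ contribution, where $c_{ij}$ is the tensor defined in \eqref{cij}. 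Applying the outer $\pt^\al$ to these $\xi$-independent fluid expressions produces the corresponding $\pt^\al$-decorated terms on the right of \eqref{law1}.

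The main technical obstacle is the bookkeeping of the Leibniz commutators. Because $\sqrt\mu$ inherits its $(t,x)$-dependence from $U(t,x)$, every derivative $\pt^\gamma$ that falls on $\sqrt\mu$ generates polynomial-in-$\vp$ coefficients paired with derivatives of the Prandtl fluid data, and I will need to verify carefully that all such contributions are packaged into precisely the symmetric pair of $|\gamma|\ge 1$ sums displayed in \eqref{law1}, with no leftover residues. I expect this combinatorial accounting, rather than any single algebraic cancellation, to be the delicate part of the argument; it is also the form in which the commutator terms will later be estimated in the analytic norms of Section \ref{func-space-BE}, which is why the identity is formulated in this particular symmetric form.
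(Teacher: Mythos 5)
Your strategy---project the remainder equation \eqref{f-al-eq} onto $\sqrt\mu$ and exploit $L\sqrt\mu=0$ together with collision invariance of $\Gamma$---is a genuine departure from the paper's proof and, as written, does not close. The paper never touches \eqref{f-al-eq} at all. It starts from the raw macroscopic identity $\int_{\R^3}(\eps\pt_t+\xi\cdot\nabla_x)\pt^\al F\,d\xi=0$, which follows from $\int Q(F,F)\,d\xi=0$ and therefore eliminates \emph{every} collision term in one stroke, before any expansion is introduced. Plugging in $F=\mu+\eps^2\ff\sqrt\mu+\eps\delta f\sqrt\mu$ and using the Leibniz rule $\pt^\al(f\sqrt\mu)=\sum_{\beta+\gamma=\al}\tfrac{\al!}{\beta!\gamma!}f_\beta\pt^\gamma\sqrt\mu$, the two $|\gamma|\ge1$ sums in \eqref{law1} drop out directly, with exactly the structure $\int(\eps\pt_t+\xi\cdot\nabla_x)f_\beta\cdot\pt^\gamma\sqrt\mu\,d\xi$ and $\int f_\beta(\eps\pt_t+\xi\cdot\nabla_x)\pt^\gamma\sqrt\mu\,d\xi$, while the $\mu$- and $\ff\sqrt\mu$-pieces reduce by \eqref{ran-37} and \eqref{ran-38} to the explicit fluid terms. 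No commutator $[L,\pt^\al]$, no $\pt^\al\Gamma$ Leibniz residues, and no quotient $W=\mu^{-1/2}(\pt_t+\eps^{-1}\xi\cdot\nabla_x)\sqrt\mu$ ever appear.

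By contrast, after testing \eqref{f-al-eq} against $\sqrt\mu$ and multiplying by $\eps$, your left-hand side does telescope to $\eps\pt_t a_\al+\nabla_x\cdot(b_\al+\eps U a_\al)$, but your right-hand side is
\[
\frac{1}{\kappa\eps}\int[L,\pt^\al]f\,\sqrt\mu\,d\xi + \eps\int g_\al\,\sqrt\mu\,d\xi
 - \eps\sum_{\substack{\beta+\gamma=\al\\|\gamma|\ge1}}\frac{\al!}{\beta!\gamma!}\int f_\beta\,(\pt^\gamma W)\,\sqrt\mu\,d\xi,
\]
and none of these pieces has the form appearing in \eqref{law1}. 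In particular $\pt^\gamma W$ is \emph{not} $(\eps\pt_t+\xi\cdot\nabla_x)\pt^\gamma\sqrt\mu/(\eps\sqrt\mu)$ (the quotient rule generates cross terms), and the $\gamma\ne0$ Leibniz pieces of $\pt^\al\Gamma(f,f)$ and $\pt^\al\Gamma(\ff,f)$ tested against $\sqrt\mu$ do not vanish---only the untouched $\gamma=0$ piece is a collision invariant. Matching your right-hand side to the right-hand side of \eqref{law1} is therefore a nontrivial rearrangement that implicitly requires reconstituting $(\eps\pt_t+\xi\cdot\nabla_x)f_\beta$ from the $L$- and $\Gamma$-commutator data, which is precisely the bookkeeping you say ``should reassemble'' but do not carry out. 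This is the gap: the identity \eqref{law1} is a pure consequence of mass conservation $\int Q\,d\xi=0$ plus the expansion of $F$, and the shortest route does not pass through the operator-level commutators at all.

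Two smaller slips in your paragraph on the source term: (i) the coefficient is $-\tfrac{1}{\delta}\int\textbf{R}_a\,d\xi$, not $-\tfrac{1}{\eps\delta}\pt^\al\int\textbf{R}_a\,d\xi$ (the $\eps$ from the $\eps\delta\sqrt\mu$ denominator is cancelled by the overall $\eps$ you multiplied through), and even that quantity is not simply $\pt^\al$ applied to the $\al=0$ integral, because of the $\sqrt\mu$-weight commutators; and (ii) the constant $c_{ij}$ does not come from the stress pairing $\la A_{ij},\hat A_{k\ell}\ra$ as in \eqref{ran-27}---it is $\int A_{ij}(\vp)\,d\vp$ and enters through the computation $\int(\eps\pt_t+\xi\cdot\nabla_x)A_{ij}(\vp)\,d\xi=\eps\,c_{ij}(\nabla_x\cdot U)$ used in \eqref{ran-38}, an integration by parts in $\vp$, not a Gram matrix of stress moments.
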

\begin{proof}
We use the equation \eqref{F-eq} and the expansion \eqref{F-exp}. Hence we get $\int_{\R^3}(\eps\pt_t F+\xi\cdot\nabla_x F)d\xi=0$. 
We have 
\[\bega 
&\eps\frac{d}{dt}\lw(\int_{\R^3} f\sqrt\mu d\xi\rw)+ \sum_j\pt_{x_j}\lw\{\int_{\R^3}\xi_j f\sqrt\mu\rw\}\\
&=-\frac{1}{\eps\delta}(\e\pt_t+\xi\cdot\nabla_x)\mu d\xi-\frac \e\delta \int_{\R^3}(\eps\pt_t+\xi\cdot\nabla_x)(\ff\sqrt\mu)d\xi.
\enda 
\]
This implies
\[
\eps \pt_t a+\nabla_x\cdot b+\eps\pt_j(U_ja)=-\frac{1}{\eps\delta}(\e\pt_t+\xi\cdot\nabla_x)\mu d\xi-\frac \e\delta \int_{\R^3}(\eps\pt_t+\xi\cdot\nabla_x)(\ff\sqrt\mu)d\xi.
\]
Next we compute the two terms on the right hand side of the above. We have, for any $\pt\in \{\pt_{x_j},\eps\pt_t\}$, 
\[
\pt \mu=-(\xi-\eps U)\cdot (-1)\eps \pt U \mu=\eps \pt U\cdot \vp \mu.
\]
Hence 
\[
\eps\pt_t\mu+\xi_i\pt_{x_i}\mu=\eps (\eps \pt_t U\cdot \vp) \mu+\xi_i\eps\pt_{x_i}U_j\vp_j\mu=\eps^2 \pt_t U\cdot\vp\mu+\vp_i\vp_j\pt_i U_j \eps \mu+\eps^2 U_i\pt_i U_j\vp_i\vp_j \mu.
\]
And thus
\beq\label{ran-37}
\int_{\R^3}\lw(\eps\pt_t\mu+\xi_i\pt_{x_i}\mu\rw) d\xi=(\nabla_x\cdot U)+\eps^2 \sum_{i} U_i\pt_i U_i.
\eeq
Next, we compute $\int_{\R^3}(\eps\pt_t+\xi\cdot\nabla_x)(\ff\sqrt\mu)d\xi$. Recalling \eqref{ff-def}, we have
\beq\label{ran-38}\bega 
&\int_{\R^3}(\eps\pt_t+\xi\cdot\nabla_x)(\ff\sqrt\mu)d\xi\\
&=\eps\pt_t \lw\{\int_{\R^3}P\mu d\xi
\rw\}+\pt_{x_j}\lw\{\int_{\R^3}\xi_j (P\mu)
\rw\}-\kappa \pt_i U_j \int_{\R^3}(\eps\pt_t+\xi\cdot\nabla_x)A_{ij}(\vp)d\xi\\
&=\eps\pt_t P+\pt_{x_j}(\eps U_j P)-\kappa\eps c_{ij}\pt_iU_j (\nabla_x\cdot U).\\
\enda
\eeq
where \beq\label{cij}
c_{ij}=\int_{\R^3}A_{ij}(\vp)d\vp.
\eeq
Now we show \eqref{law1}. We have 
\[\bega
\int_{\R^3} \xi\pt^\al F d\xi&=\eps\delta (b_\al+\eps Ua_\al)+\int_{\R^3}\xi \lw\{\pt^\al\mu+\eps^2\pt^\al(\ff\sqrt\mu)+\eps\delta\sum_{\beta<\al }\binom{\al}{\beta}f_\beta\pt^{\al-\beta}\sqrt\mu\rw\}d\xi,\\
\int_{\R^3}\pt^\al Fd\xi&=\eps\delta a_\al+\int_{\R^3} \lw\{\pt^\al\mu+\eps^2\pt^\al(\ff\sqrt\mu)+\eps\delta\sum_{\beta<\al}\binom{\al}{\beta}f_\beta\pt^{\al-\beta}\sqrt\mu\rw\}d\xi.\\
\enda
\]
Combining the above identities, we have
\[\bega 
\eps\pt_t a_\al+\nabla_x\cdot b_\al&=-\eps \pt_j(U_j a_\al)-\frac{1}{\eps\delta}\int_{\R^3}(\eps\pt_t+\xi\cdot\nabla_x)(\pt^\al \mu)d\xi-\frac{\eps}{\delta}\int_{\R^3}(\eps\pt_t+\xi\cdot\nabla_x)(\pt^\al (\ff\sqrt\mu))\\
&\quad-\sum_{\substack{\beta+\gamma=\al\\|\gamma|\ge 1
}
}\frac{\al!}{\beta!\gamma!}\int_{\R^3}(\eps\pt_t+\xi\cdot\nabla_x)f_\beta\cdot \pt^{\gamma}\sqrt\mu d\xi\\
&\quad-\sum_{\substack{\beta+\gamma=\al\\|\gamma|\ge 1
}
}\frac{\al!}{\beta!\gamma!}\int_{\R^3}f_\beta (\eps\pt_t+\xi\cdot\nabla_x)(\pt^{\gamma}\sqrt\mu) d\xi.
\enda 
\]
Combining the above with \eqref{ran-37} and \eqref{ran-38}, we get the result.\end{proof}
\subsection{$L^2$ estimate for $f$ in analytic spaces}
Our goal is to derive analytic energy estimates for $f$ which solves the equation
\beq\label{fgeq}
\pt_t f+\frac 1 \eps\xi\cdot\nabla_x f+\frac{1}{\kappa\eps^2}Lf+\frac{(\pt_t+\eps^{-1}\xi\cdot\nabla_x)\sqrt\mu}{\sqrt\mu}f=g
\eeq
for a general forcing $g$ with the boundary condition \eqref{bdr-r}. In this section, we take the differential operator $\pt\in \{\eps\pt_t,\pt_{x_1},\pt_{x_2}\}$. 
We define 
\beq\label{dEt-def}
\bega
&d_{E}(t)=\frac{\kappa\e^3}{\delta}\nl \NS(U,P)
\nr_{\tau_0,2}+\frac{1}{\eps\delta}\nl \div U
\nr_{\tau_0,2}+\frac{\kappa\eps}{\delta} \|| \nabla_x U
\||_{\tau_0,2}\\
&+\frac{\eps}{\delta}\lw\{\||\pt_t P+U\cdot\nabla_x P\||_{\tau_0,\infty}+\eps \|| P(\pt_t U+U\cdot\nabla_x U)\||_{\tau_0,\infty}+\|| P\pt_i U_j \||_{\tau_0,\infty}\rw\}\\
&+\frac{\eps\kappa}{\delta}\lw(\||  (\pt_t+U\cdot\nabla_x)(\pt_i U_j)\||_{\tau_0,2}+\eps\||  (\pt_t U+U\cdot\nabla_x U)(\pt_i U_j)\||_{\tau_0,2}+\|| \nabla_x U
\||_{\tau_0,\infty}\|| \nabla_x U
\||_{\tau_0,2}\rw)\\
&+\frac{\eps}{\delta}\||U\cdot\nabla_xU\||_{\tau_0,2}+\frac{\eps^2}{\delta}\||\pt_t P+\nabla_x\cdot(PU)\||_{\tau_0,2}+\sup_{\substack{\al\in \mathbb N_0^3\\|\al|\ge 1}}\frac{\tau_0^{|\al|}}{\al!}\nl\pt_{x_3}\pt^\al U
\nr_{L^\infty},
\enda
\eeq
which will appear in the $L^2$ analytic estimate of the Boltzmann solution.
%%%%%%%%%%%%%
%%%%%%
\begin{lemma}\label{decom-b3}
Fix $\al\in \mathbb N^3$. Then
\[
b_{\al,3}=\mathcal T^b_{\al}-\eps U_3a_\al-\frac \eps{2\delta}\pt^\al\{U_3^2\}-\frac\eps\delta \pt^\al (PU_3)
\]
where
\[\bega 
\nl \frac{\mathcal T^b_{\al}}{x_3}\nr_{L^2_x}&\lesssim  \nl f_\al\nr_{L^2}+\sum_{\pt\in\{\pt_{x_1},\pt_{x_2}\}}\nl \pt f_\al\nr_{L^2}+(\eps\delta)^{-1}\nl\pt^\al (\nabla_x\cdot U)\nr_{L^2}+\eps\delta^{-1}\sum_{i=1}^2\nl \pt^\al(U_i^2)\nr_{L^2}\\
&\quad+\eps^2\delta^{-1}\nl\pt^\al\lw(\pt_tP+\nabla_\parallel\cdot (PU_\parallel)\rw)
\nr_{L^2}+\delta^{-1}\eps^2\kappa \nl \pt^\al\lw(\pt_i U_j(\nabla_x\cdot U)
\rw)
\nr_{L^2} \\
&\quad+\sum_{\substack{\beta+\gamma=\al\\|\gamma|\ge1}}\frac{\al!}{\beta!\gamma!}\sum_{\pt\in \{\eps\pt_t,\pt_{x_1},\pt_{x_2}\}}\nl \pt f_\beta
\nr_{L^2}\cdot\nl e^{o(1)|\xi|^2}(1,\eps\pt_t,\nabla_x) \pt^{\gamma}\sqrt\mu
\nr_{L^\infty}.
\enda
\]
\end{lemma}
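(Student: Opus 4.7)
The plan is to combine the Dirichlet boundary condition $b_{\alpha,3}|_{x_3=0}=0$ from Proposition \ref{bc-con12} with the local conservation law \eqref{law1}, via a normal-direction Hardy inequality. The key structural observation is that since $\partial^\alpha$ involves only the tangential/temporal derivatives $\varepsilon\partial_t,\partial_{x_1},\partial_{x_2}$ and the no-slip condition for the Prandtl ansatz gives $U_3|_{x_3=0}=0$, every expression of the form $\partial^\alpha(U_3\cdot(\text{anything}))$ vanishes at $x_3=0$. Consequently,
\[
\mathcal{T}^b_\alpha = b_{\alpha,3}+\varepsilon U_3 a_\alpha+\tfrac{\varepsilon}{2\delta}\partial^\alpha(U_3^2)+\tfrac{\varepsilon}{\delta}\partial^\alpha(PU_3)
\]
vanishes identically on $\{x_3=0\}$. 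The one-dimensional Hardy inequality on $\mathbb R_+$, applied slice-wise in $x_\parallel\in\mathbb T^2$ and integrated via Fubini, then yields
\[
\|\mathcal{T}^b_\alpha/x_3\|_{L^2_x}\lesssim\|\partial_{x_3}\mathcal{T}^b_\alpha\|_{L^2_x},
\]
so it suffices to control the right-hand side in $L^2_x$.

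The next step is to compute $\partial_{x_3}\mathcal{T}^b_\alpha$ and substitute $\partial_{x_3}b_{\alpha,3}$ using \eqref{law1}, after splitting $\nabla_x\cdot b_\alpha=\partial_{x_1}b_{\alpha,1}+\partial_{x_2}b_{\alpha,2}+\partial_{x_3}b_{\alpha,3}$. The three explicit corrections in $\mathcal{T}^b_\alpha$ are chosen precisely so that $\partial_{x_3}$ of each cancels the unique $\partial_{x_3}$-carrying term of the matching type in \eqref{law1}: namely, $\varepsilon\partial_{x_3}(U_3 a_\alpha)$ cancels the $j=3$ piece of $-\varepsilon\partial_j(U_j a_\alpha)$; the identity $\tfrac{\varepsilon}{2\delta}\partial^\alpha\partial_{x_3}(U_3^2)=\tfrac{\varepsilon}{\delta}\partial^\alpha(U_3\partial_{x_3}U_3)$ cancels the $i=3$ piece of $-\tfrac{\varepsilon}{\delta}\partial^\alpha(U_i\partial_iU_i)$; and $\tfrac{\varepsilon}{\delta}\partial^\alpha\partial_{x_3}(PU_3)$ cancels (up to the recorded $\varepsilon$-power) the $j=3$ piece of $-\tfrac{\varepsilon^2}{\delta}\partial^\alpha\partial_{x_j}(U_jP)$. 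After this cancellation, every surviving summand carries at worst tangential derivatives on the fluid variables or is an interior error.

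The residue is then bounded piece by piece to recover the claimed inequality: the terms $\varepsilon\partial_t a_\alpha$ and $\partial_\parallel\cdot b_{\alpha,\parallel}$ reduce to $\|f_\alpha\|_{L^2}$ and $\sum_{i=1,2}\|\partial_{x_i}f_\alpha\|_{L^2}$ via $a_\alpha,b_\alpha=\int f_\alpha(1,\vp)\sqrt\mu\,d\xi$; the compressibility defect produces $(\varepsilon\delta)^{-1}\|\partial^\alpha(\nabla_x\cdot U)\|_{L^2}$; the tangential ($i=1,2$) components of $U_i\partial_iU_i$ and of $\partial_{x_i}(U_iP)$ yield the $\varepsilon\delta^{-1}\|\partial^\alpha(U_i^2)\|_{L^2}$ and $\varepsilon^2\delta^{-1}\|\partial^\alpha(\partial_tP+\nabla_\parallel\cdot(PU_\parallel))\|_{L^2}$ contributions; the viscous correction $\delta^{-1}\kappa\varepsilon^2 c_{ij}\partial_iU_j(\nabla_x\cdot U)$ matches the $\kappa\varepsilon^2\delta^{-1}$ term; and the last two commutator sums in \eqref{law1} expand by the Leibniz rule into the stated double sum over $\beta+\gamma=\alpha$ with $|\gamma|\ge1$, with the $\xi$-integrals estimated in $L^\infty_\xi$ against the weighted Maxwellian bounds established in Proposition \ref{local-1} and Lemma \ref{M-local-2}. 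The principal—purely combinatorial—obstacle is to verify that the three explicit corrections are exactly the unique set that eliminates every $\partial_{x_3}$-contribution from the RHS of \eqref{law1}, so that Hardy's inequality can be deployed cleanly; once that algebraic matching is confirmed, all residual estimates follow from the analytic fluid bounds established in Section 2.
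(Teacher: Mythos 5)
Your proposal shares the paper's central mechanism: the vanishing of $b_{\alpha,3}$ on $\{x_3=0\}$ (Proposition \ref{bc-con12}), the local conservation law \eqref{law1}, and a normal Hardy inequality. The reformulation of Hardy as a single application to $\mathcal T^b_\alpha$---after checking that all three explicit corrections vanish at $x_3=0$ thanks to $U_3|_{x_3=0}=0$ and the fact that $\partial^\alpha$ is purely tangential/temporal---is tidier than the paper's term-by-term application to $\int_0^{x_3}T_{\alpha,i}\,dx_3'$, and the algebraic matching of the explicit corrections against the $j=3$ pieces of the fluid transport, compressibility, and pressure-flux contributions in \eqref{law1} is correctly identified (the $\varepsilon$-power discrepancy you flag for the $PU_3$ piece is also visible between $T_{\alpha,1}$ and $T_{\alpha,4}$ in the paper's own proof).

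However, there is a genuine gap in your treatment of the kinetic commutator sums. The first commutator sum in \eqref{law1} contributes to $\partial_{x_3}\mathcal T^b_\alpha$ the term
\begin{equation*}
-\sum_{\substack{\beta+\gamma=\alpha\\|\gamma|\ge 1}}\frac{\alpha!}{\beta!\gamma!}\int_{\R^3}\xi_3\,\partial_{x_3}f_\beta\,\partial^\gamma\sqrt\mu\,d\xi,
\end{equation*}
and estimating this in $L^2_x$ produces $\|\partial_{x_3}f_\beta\|_{L^2_{x,\xi}}$. This is not among the allowed quantities in the claimed bound (the sum ranges only over $\partial\in\{\varepsilon\partial_t,\partial_{x_1},\partial_{x_2}\}$), nor is $\partial_{x_3}f$ controlled by any of the analytic norms $\mathcal E_f$, $\mathcal D_f$, $\mathcal H_{\rho,f}$ of \eqref{EDH}, so the estimate cannot be closed this way. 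The paper's proof resolves this precisely through the structure of $T_{\alpha,4}$: the offending term is rewritten as
\begin{equation*}
\xi_3\,\partial_{x_3}f_\beta\,\partial^\gamma\sqrt\mu = \partial_{x_3}\bigl(\xi_3 f_\beta\,\partial^\gamma\sqrt\mu\bigr) - \xi_3 f_\beta\,\partial_{x_3}\partial^\gamma\sqrt\mu,
\end{equation*}
and integrating $\int_0^{x_3}$ converts the total derivative into the boundary value $-\int_{\R^3}\xi_3 f_\beta\,\partial^\gamma\sqrt\mu\,d\xi$ (the contribution at $x_3=0$ vanishes since $\partial^\gamma\sqrt\mu|_{x_3=0}=0$ for $|\gamma|\ge1$), while the surviving integral carries $\partial_{x_3}$ only on $\partial^\gamma\sqrt\mu$, which is controlled by Lemma \ref{M-local-2}. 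Without this integration by parts in $x_3$---which must be incorporated into the construction of $\mathcal T^b_\alpha$ rather than applied after Hardy---the argument does not close.
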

\begin{proof}
We write 
\[\bega 
b_{3,\al}(t,x_\parallel,x_3)&=\int_0^{x_3}\pt_3 b_{\al,3}dx_3'.
\enda 
\]
Using the conservation law \eqref{law1}, we get 
\[\bega 
&\pt_3 b_{\al,3}=-\eps \pt_t a_\al-\nabla_\parallel\cdot b_{\al,\parallel}-\eps\pt_j (U_j a_\al)-(\eps\delta)^{-1}\pt^\al (\div U)-\frac 1 2 \eps\delta^{-1}\pt^\al \pt_i(U_i^2)\\
&-\eps^2\delta^{-1}\pt^\al\lw(\pt_t P+\nabla_x\cdot (UP)
\rw)\\
&-\sum_{\substack{\beta+\gamma=\al\\|\beta|\ge 1
}}\frac{\al!}{\beta!\gamma!}(\eps\pt_t+\xi\cdot\nabla_x)f_\gamma \cdot\pt^\beta \sqrt\mu d\xi-\sum_{\substack{\beta+\gamma=\al\\|\beta|\ge 1
}}\frac{\al!}{\beta!\gamma!} f_\gamma(\eps\pt_t+\xi\cdot\nabla_x)(\pt^\beta \sqrt\mu)d\xi\\
&=\sum_{i=1}^4T_{\al,i}.
\enda 
\]
where
\[\bega
T_{\al,1}&=-\eps\sum_{j=1}^2\pt_j (U_j a_\al)-\frac 1{\eps\delta}\pt^\al (\nabla_x\cdot U)-\frac{\eps}{2\delta}\sum_{j=1}^2\pt^\al (U_j^2)\\
&\quad-\frac{\eps^2}{\delta}\pt^\al(\pt_t P+\nabla_\parallel\cdot (PU_\parallel))+\delta^{-1}\kappa\eps^2 c_{ij}\pt^\al(\pt_i U_j\div U),\\
T_{\al,2}&=-\sum_{\substack{\beta+\gamma=\al\\|\gamma|\ge 1} }\frac{\al!}{\beta!\gamma!}\int_{\R^3}(\eps\pt_t+\xi_\parallel\cdot\nabla_{x_\parallel})f_\beta\cdot \pt^{\gamma}\sqrt\mu d\xi,\\
T_{\al,3}&=-\sum_{\substack{\beta+\gamma=\al\\|\gamma|\ge 1} }\frac{\al!}{\beta!\gamma!}\int_{\R^3}f_\beta (\eps\pt_t+\xi\cdot\nabla_x)(\pt^{\gamma}\sqrt\mu) d\xi,\\
T_{\al,4}&=-\sum_{\substack{\beta+\gamma=\al\\|\gamma|\ge 1} }\frac{\al!}{\beta!\gamma!}\int_{\R^3}\lw\{\xi_3\pt_{x_3}\lw(f_\beta \pt^{\gamma}\sqrt\mu
\rw) -\lw(\xi_3 f_\beta \pt_{x_3}\pt^\gamma \sqrt\mu \rw) \rw\}d\xi\\
&\quad-\eps \pt_{x_3}(U_3a_{\al})-\frac {\eps}{2\delta}\pt_3(\pt^\al(U_3^2))-\eps\delta^{-1}\pt^\al \pt_3(U_3P).\\
\enda
\]
As for $T_{\al,1}$, we have 
\[\bega 
\nl T_{\al,1}
\nr_{L^2_x}&\lesssim  \nl f_\al\nr_{L^2}+\sum_{\pt\in\{\pt_{x_1},\pt_{x_2}\}}\nl \pt f_\al\nr_{L^2}+(\eps\delta)^{-1}\nl\pt^\al (\nabla_x\cdot U)\nr_{L^2}+\eps\delta^{-1}\sum_{i=1}^2\nl \pt^\al(U_i^2)\nr_{L^2}\\
&\quad+\eps^2\delta^{-1}\nl\pt^\al\lw(\pt_tP+\nabla_\parallel\cdot (PU_\parallel)\rw)
\nr_{L^2}+\delta^{-1}\eps^2\kappa \nl \pt^\al\lw(\pt_i U_j(\nabla_x\cdot U)
\rw)
\nr_{L^2}.
\enda
\]
Now for $T_{\al,2}$, we have 
\[
\nl T_{\al,2}\nr_{L^2}\lesssim \sum_{\substack{\beta+\gamma=\al\\|\gamma|\ge1}}\frac{\al!}{\beta!\gamma!}\sum_{\pt\in \{\eps\pt_t,\pt_{x_1},\pt_{x_2}\}}\nl \pt f_\beta
\nr_{L^2}\cdot\nl e^{o(1)|\xi|^2} \pt^{\gamma}\sqrt\mu
\nr_{L^\infty}.
\]
For $T_{\al,3}$, we have 
\[ 
\nl T_{\al,3}\nr_{L^2_x}\lesssim \sum_{\substack{\beta+\gamma=\al\\|\gamma|\ge1}}\frac{\al!}{\beta!\gamma!}\nl f_\beta
\nr_{L^2_{x,\xi}}
\lw\{ \nl (\eps\pt_t) \pt^\gamma \sqrt\mu\nr_{L^\infty_x L^2_\xi}
+\nl \la \xi\ra\nabla_x\pt^{\gamma}\sqrt\mu
\nr_{L^\infty_x L^2_\xi}\rw\}.\]
For $T_{4,\al}$, using the fact that $U$ vanishes on the boundary, we have 
\[\bega 
\int_0^{x_3} T_{\al,4}dx_3'&=-\sum_{\substack{\beta+\gamma=\al\\|\gamma|\ge1}}\frac{\al!}{\beta!\gamma!}\int_{\R^3}\xi_3 f_\beta\pt^{\gamma}\sqrt\mu d\xi\\
&\quad+\frac 1 2 \sum_{\substack{\beta+\gamma=\al\\|\gamma|\ge1}}\frac{\al!}{\beta!\gamma!}\int_{\R^3}\int_0^{x_3}\xi_3 f_\beta \pt^{\gamma}\lw\{ \lw(\eps \pt_{x_3'}U\cdot \vp
\rw)\sqrt\mu\rw\} dx_3'd\xi\\
&\quad-\eps U_3a_\al-\frac \eps{2\delta}\pt^\al\lw\{U_3^2\rw\}-\frac\eps\delta \pt^\al (PU_3).
\enda 
\]
Using the Hardy inequality, we have 
\[\bega 
&\nl \frac 1{x_3}\int_0^{x_3}T_{\al,4}dx_3'
\nr_{L^2}\\
&\lesssim \sum_{\substack{\beta+\gamma=\al\\|\gamma|\ge1}}\frac{\al!}{\beta!\gamma!}\nl  f_\beta\nr_{L^2}\nl e^{o(1)|\xi|^2}\pt^{\gamma}\sqrt\mu\nr_{L^\infty}+\eps \nl  f_\al
\nr_{L^2}+\eps^2 \delta^{-1}+\eps\delta^{-1}\nl\pt^\al(PU_3)
\nr_{L^2}.
\enda 
\]
The proof is complete.
\end{proof}
%%%%%%%
%%%%%%%%
\begin{proposition}\label{bulk-est}
 There holds
 \[\bega 
&\int_{\Omega\times\R^3}\frac{(\pt_t+\frac 1 \eps \xi\cdot\nabla_x)\sqrt\mu}{\sqrt\mu}|f_\al|^2dxd\xi\\
&\lesssim   o(1)\frac{1}{\kappa\e^2}\nl\sqrt\nu \IP f_\al
\nr_{L^2_{x,\xi}}^2+e^{-\frac{\rho}{2\eps^2}} \nl\nabla_x U\nr_{L^2_x}\nl e^{\rho|\xi|^2} f_\al
\nr_{L^\infty_{x,\xi}}\nl  f_\al
\nr_{L^2}
+\nl f_\al
\nr_{L^2_{x,\xi}}^2\\
&\quad+ \nl \frac{\mathcal T_\al^b}{x_3}\nr_{L^2}\nl f_\al\nr_{L^2} +\eps\kappa^{-\frac 12}\delta^{-1} 
\lw( \nl\pt^\al (U_3)^2\nr_{L^2}+\nl \pt^\al (PU_3)\nr_{L^2}
\rw)\nl f_\al\nr_{L^2},
\enda\]
where $\mathcal T_\al^b$ satisfies the bound in Lemma \ref{decom-b3}.
\end{proposition}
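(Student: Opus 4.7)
The plan is to begin with the direct identity
\[
\frac{(\pt_t+\eps^{-1}\xi\cdot\nabla_x)\sqrt\mu}{\sqrt\mu}
=\tfrac{\eps}{2}(\pt_tU+U\cdot\nabla_xU)\cdot\vp
+\tfrac12\pt_{x_j}U_i\,\vp_i\vp_j,
\]
coming from $\pt\sqrt\mu/\sqrt\mu=(\eps/2)\vp\cdot\pt U$ together with $\xi_j=\vp_j+\eps U_j$. The first summand has an $\eps$ prefactor and at most linear growth in $\vp$, so after multiplying by $|f_\al|^2$ and integrating it contributes $\lesssim \|f_\al\|_{L^2}^2$. The entire difficulty therefore lies in
\[
I_\al:=\tfrac12\int_{\Omega\times\R^3}\pt_{x_j}U_i\,\vp_i\vp_j\,|f_\al|^2\,dxd\xi,
\]
for which only $j=3$ is singular, with $\pt_{x_3}U\sim \kappa^{-1/2}$; for $j\in\{1,2\}$, analyticity of the fluid gives $\|\pt_{x_j}U\|_{L^\infty}\lesssim 1$.

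I would then decompose $f_\al=\P f_\al+\IP f_\al$, expanding $|f_\al|^2$ into three pieces. For $|\IP f_\al|^2$, use $|\vp|^2\lesssim \la\vp\ra^2$ and an $L^\infty_\xi$--$L^2_\xi$ split: since $|\vp|^2 e^{-2\rho|\xi|^2}\in L^1_\xi$ for small $\rho$, combined with $\|\pt_{x_3}U\|_{L^\infty}\lesssim \kappa^{-1/2}$ and Young's inequality, this produces $\kappa^{-1/2}\|\sqrt\nu\IP f_\al\|_{L^2}^2$, absorbable into $o(1)(\kappa\eps^2)^{-1}\|\sqrt\nu\IP f_\al\|^2$ thanks to $\kappa^{1/2}\eps^2\to 0$. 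For the cross term $\int\pt_{x_j}U_i\,\vp_i\vp_j\,\P f_\al\,\IP f_\al$, place $\vp_i\vp_j\P f_\al$ in a weighted $L^\infty_\xi$ norm and $\IP f_\al$ in $L^2_\xi$; writing $\sqrt\mu(\xi)=\mu_0^{1/2}(\xi-\eps U)$ and completing the square in $e^{\rho|\xi|^2}\sqrt\mu$ furnishes the small prefactor $e^{-\rho/(2\eps^2)}$, and Cauchy--Schwarz in $x$ using $\|\nabla_xU\|_{L^2}$ yields the announced contribution $e^{-\rho/(2\eps^2)}\|\nabla_xU\|_{L^2}\|e^{\rho|\xi|^2}f_\al\|_{L^\infty}\|f_\al\|_{L^2}$.

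The genuine obstacle is the $|\P f_\al|^2$ contribution. Substituting $\P f_\al=(a_\al+b_\al\cdot\vp+c_\al\tfrac{|\vp|^2-3}{2})\sqrt\mu$, parity kills all odd $\vp$-moments, and the diagonal $i=j$ pieces collapse into $(\nabla_x\cdot U)(a_\al^2+|b_\al|^2+c_\al^2)$, which is controlled by $\|f_\al\|^2$ since $\|\nabla_x\cdot U\|_{L^\infty}=O(\kappa^N)$ from Theorem \ref{thm-ana-fluid}. What remains are the singular terms $\sum_{i=1}^{3}\int_\Omega \pt_{x_3}U_i\,b_{\al,i}\,b_{\al,3}\,dx$. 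To close these, I invoke the Prandtl structure: since $U_i=u_e^i(x_3)+u_p^i(x_3/\sqrt{\eta_0\kappa})+O(\sqrt\kappa)$ with $u_p^i$ exponentially decaying in $z$, the quantity $x_3\pt_{x_3}U_i=x_3\pt_{x_3}u_e^i+z\pt_z u_p^i+O(\sqrt\kappa)$ is $L^\infty$-bounded uniformly in $\kappa$. Factoring $\pt_{x_3}U_i=(x_3\pt_{x_3}U_i)/x_3$ and applying the Hardy inequality (legitimate since $b_{\al,3}|_{x_3=0}=0$ by Proposition \ref{bc-con12}) gives
\[
\lw|\int \pt_{x_3}U_i\,b_{\al,i}\,b_{\al,3}\,dx\rw|\lesssim \|f_\al\|_{L^2}\lw\|\tfrac{b_{\al,3}}{x_3}\rw\|_{L^2}.
\]
Lemma \ref{decom-b3} then decomposes $b_{\al,3}=\mathcal T^b_\al-\eps U_3a_\al-\tfrac{\eps}{2\delta}\pt^\al(U_3^2)-\tfrac{\eps}{\delta}\pt^\al(PU_3)$. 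The $\mathcal T^b_\al/x_3$ piece is bounded directly by the lemma, producing the $\|\mathcal T^b_\al/x_3\|_{L^2}\|f_\al\|_{L^2}$ contribution; the remaining pieces are divided by $x_3$ using $\|U_3/x_3\|_{L^\infty}\lesssim 1$, a consequence of $U_3|_{x_3=0}=O(\kappa^{(N+1)/2})$ from the Prandtl matching together with the approximate incompressibility $\pt_{x_3}U_3=-\nabla_{x_\parallel}\cdot U_\parallel+O(\kappa^N)$; a further extraction of the boundary-layer scale $\sqrt\kappa$ supplies the remaining $\eps\kappa^{-1/2}\delta^{-1}$ factor.

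The main obstacle is the tension between the singular scaling $\pt_{x_3}U\sim\kappa^{-1/2}$ and the fact that the weight $|\vp|^2$ behaves like $\nu^2$ rather than $\nu$, so the $b_{\al,3}^2$-type contribution cannot be absorbed into the dissipation. The triple coincidence of (i) the Prandtl identity $x_3\pt_{x_3}U\in L^\infty$, (ii) the diffuse-boundary identity $b_{\al,3}|_{x_3=0}=0$, and (iii) the Hardy inequality routed through the local conservation law of Lemma \ref{decom-b3}, is precisely what rescues the estimate; removing any one of the three would collapse the entire $L^2$ scheme.
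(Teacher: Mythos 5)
Your overall scaffolding matches the paper: the exact identity for $(\pt_t+\eps^{-1}\xi\cdot\nabla_x)\sqrt\mu/\sqrt\mu$, the decomposition of $|f_\al|^2$ into $|\P f_\al|^2$, the cross term, and $|\IP f_\al|^2$, the parity argument reducing the macroscopic part to $\sum_j\int\pt_3U_j\,b_{\al,j}b_{\al,3}$, and the Hardy argument routed through Proposition~\ref{bc-con12} and Lemma~\ref{decom-b3} with $\|x_3\pt_3U\|_{L^\infty}\lesssim 1$. Those points are correct and well-identified.

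However, there is a genuine gap in the microscopic estimate. You claim that $\kappa^{-1/2}\int|\vp|^2|\IP f_\al|^2\,d\xi\lesssim\kappa^{-1/2}\|\sqrt\nu\IP f_\al\|_{L^2}^2$ via an $L^\infty_\xi$--$L^2_\xi$ split. This cannot hold as stated: $\nu(\xi)\sim\la\vp\ra$ is of first order in $|\vp|$, so $|\vp|^2\sim\nu^2$, not $\nu$, and the weight $|\vp|^2$ cannot be traded for one power of $\nu$ over the whole velocity space. What the paper does instead is to truncate at the scale $|\vp|=\eps^{-1}$: on $\{|\vp|\le\eps^{-1}\}$ one has $|\vp|^2\lesssim\eps^{-1}\nu$, so $\kappa^{-1/2}\eps^{-1}\|\sqrt\nu\IP f_\al\|^2$ is absorbed into $o(1)(\kappa\eps^2)^{-1}\|\sqrt\nu\IP f_\al\|^2$ precisely because $\kappa^{1/2}\eps\ll 1$; on $\{|\vp|\ge\eps^{-1}\}$ one pulls out $\|e^{\rho|\xi|^2}f_\al\|_{L^\infty}$ and the Gaussian tail of $e^{-\rho|\xi|^2}$ on this set supplies the decisive $e^{-\rho/(2\eps^2)}$ prefactor in front of $\|\nabla_xU\|_{L^2}\|e^{\rho|\xi|^2}f_\al\|_{L^\infty}\|f_\al\|_{L^2}$.

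Relatedly, your explanation of where the $e^{-\rho/(2\eps^2)}$ factor comes from is incorrect. You attribute it to ``completing the square in $e^{\rho|\xi|^2}\sqrt\mu$'' in the cross term $\int\pt_jU_i\,\vp_i\vp_j\,\P f_\al\,\IP f_\al$. But completing the square only shows that $e^{\rho|\xi|^2}\sqrt\mu\lesssim e^{-c|\xi|^2}$ for small $\rho$, which is a uniform, $\eps$-independent bound; it produces no smallness in $\eps$. The $e^{-\rho/(2\eps^2)}$ decay is an $\eps$-smallness and comes exclusively from restricting the integral to the large-velocity region $|\vp|\ge\eps^{-1}$, not from a Gaussian rearrangement. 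The cross term and $|\P f_\al|^2$ parts, in the paper's argument, do not require this prefactor at all and are controlled instead by Cauchy--Schwarz, the dissipation, and the macroscopic structure of $\P f_\al$ leading to the $b_{\al,i}b_{\al,3}$ term.

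Your treatment of the remainder pieces $\eps U_3 a_\al$, $\tfrac{\eps}{2\delta}\pt^\al(U_3^2)$, $\tfrac{\eps}{\delta}\pt^\al(PU_3)$ by dividing through by $x_3$ and using $\|U_3/x_3\|_{L^\infty}\lesssim\sqrt\kappa$ is plausible but more elaborate than needed; the paper simply pairs them against the un-Hardy'd $b_{\al,j}$ using $\|\pt_3U_j\|_{L^\infty}\lesssim\kappa^{-1/2}$ and $\|U_3\|_{L^\infty}\lesssim\sqrt\kappa$, which is cleaner and gives the stated $\eps\|f_\al\|^2+\eps\kappa^{-1/2}\delta^{-1}(\cdots)\|f_\al\|$ directly.
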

\begin{proof}
We have
\beq\label{ran-55}
\bega 
&\int_{\Omega\times\R^3}\frac{(\pt_t+\frac 1 \eps \xi\cdot\nabla_x)\sqrt\mu}{\sqrt\mu}|f_\al |^2dx d\xi=\mathcal I_1+\mathcal I_2,\\
\enda 
\eeq where
\[\bega 
\mathcal I_1&=\int_{\Omega\times\R^3}\frac{(\pt_t+\frac 1 \eps \xi\cdot\nabla_x)\sqrt\mu}{\sqrt\mu}|\IP f_\al |^2dx d\xi,\\
\mathcal I_2&=\int_{\Omega\times\R^3}\frac{(\pt_t+\frac 1 \eps \xi\cdot\nabla_x)\sqrt\mu}{\sqrt\mu}\lw(|\P f_\al |^2+2\P f _\al \cdot \IP f_\al 
\rw) dx d\xi.\\
\enda \]
\underline{Bounding $\mathcal I_1$}:
We have 
\beq\label{bulk-mu}
\frac{(\pt_t+\frac 1 \eps \xi\cdot\nabla_x)\sqrt\mu}{\sqrt\mu}=\frac 12 \pt_i U_j \vp_i\vp_j+\eps(\pt_t U+U\cdot\nabla_x U)\cdot\vp.
\eeq
This implies 
\[\bega 
\mathcal I_1&\lesssim \int_{\Omega\times\R^3}|\nabla_x U||\vp|^2 |\IP f_\al |^2dxd\xi\\
&\quad+\int_{\Omega\times\R^3}|\eps (\pt_t U+U\cdot\nabla_x U)| |\vp| |\IP f_\al |^2dxd\xi\\
&\lesssim\int_{\Omega\times\R^3}|\nabla_x U||\vp|^2 |\IP f_\al |^2dxd\xi+o(1)\frac{1}{\kappa\eps^2}\nl \sqrt\nu \IP f_\al
\nr_{L^2_{x,\xi}}^2.
\enda 
\]
Now we bound the term
\[\int_{\Omega\times\R^3}|\nabla_x U||\vp|^2 |\IP f_\al|^2dxd\xi.
\]
We have 
\[\bega 
&\int_{\Omega\times\R^3}|\nabla_x U||\vp|^2 |\IP f_\al |^2dxd\xi\\
&=\int_{|\vp|\le \eps^{-1}}|\nabla_x U||\vp|^2 |\IP f_\al |^2dxd\xi+\int_{|\vp|\ge \eps^{-1}}|\nabla_x U||\vp|^2 |\IP f_\al|^2dxd\xi\\
&\lesssim o(1)\frac{1}{\kappa\e^2}\nl\sqrt\nu \IP f_\al
\nr_{L^2_{x,\xi}}^2+\int_{|\vp|\ge \eps^{-1}}|\nabla_x U||\vp|^2 |\IP f_\al|^2dxd\xi.\enda 
\]
It suffices to bound the second term in the above. 
We have 
\[\bega 
&\int_{|\vp|\ge \eps^{-1}}|\nabla_x U||\vp|^2 |\IP f_\al |^2dxd\xi \\
&\lesssim \nl e^{\rho|\xi|^2} f
\nr_{L^\infty_{x,\xi}}\int_{|\vp|\ge \eps^{-1}}|\nabla_x U||\vp|^2 e^{-\rho|\xi|^2} |\IP f_\al |dx d\xi\\
&\lesssim e^{-\frac{\rho}{2\eps^2}} \nl e^{\rho|\xi|^2} f
\nr_{L^\infty_{x,\xi}}\nl e^{-\frac \rho {10}|\xi|^2}\nabla_x U 
\nr_{L^2_{x,\xi}} \nl  f_\al
\nr_{L^2_{x,\xi}}\\
&\lesssim e^{-\frac{\rho}{2\eps^2}} \nl\nabla_x U\nr_{L^2_x}\nl e^{\rho|\xi|^2} f_\al
\nr_{L^\infty_{x,\xi}}\nl  f_\al
\nr_{L^2}.
\enda
\]
Hence we obtain 
\[\bega 
\mathcal I_1&\lesssim o(1)\frac{1}{\kappa\e^2}\nl\sqrt\nu \IP f_\al
\nr_{L^2_{x,\xi}}^2+e^{-\frac{\rho}{2\eps^2}} \nl\nabla_x U\nr_{L^2_x}\nl e^{\rho|\xi|^2} f_\al
\nr_{L^\infty_{x,\xi}}\nl  f_\al
\nr_{L^2}.
\enda 
\]
\underline{Bounding $\mathcal I_2$}:
By a direct calculation, we have 
\[\bega 
&\mathcal I_1 =\pt_i U_j b_{i,\al} b_{j,\al}\\
&+\eps a_\al  b_\al\cdot (\pt_t U+ U\cdot\nabla_x U)+\eps c_\al b_\al\cdot (\pt_t U+U\cdot \nabla_x U)+2a_\al \pt_i U_j \la \IP f_\al,\hat A_{ij}\ra_{L^2_{\xi}}\\
&+b_{m,\al}\lw\{ \eps(\pt_t U_k+U_i\pt_i U_k)\la \hat A_{mk},(\II-\P)f_\al \ra_{L^2_\xi}+\pt_iU_k\la \vp_i\vp_k\vp_m\sqrt\mu, (\II-\P)f_\al \ra_{L^2_\xi}
\rw\}\\
&+\frac 1 2 c_\al \lw\{\eps(\pt_t U_k+U\cdot\nabla U_k)\la \vp_k(|\vp|^2-3)\sqrt\mu,\IP f_\al \ra_{L^2_\xi}\rw\}\\
&+\frac 1 2c_\al \lw\{\pt_i U_k \la \vp_i\vp_k(|\vp|^2-3)\sqrt\mu,(\II-\P)f_\al 
\ra_{L^2_\xi}\rw\}\\
&+\lw\la
\frac{(\pt_t+\frac 1 \eps \xi\cdot\nabla_x)\sqrt\mu}{\sqrt\mu},|\IP f_\al|^2
\rw\ra_{L^2_\xi}.
\enda 
\]
This implies that \[\bega 
\mathcal I_2&
%\lesssim  \sum_{i,j}\lw|\int_{\Omega}\pt_i U_j b_{\al,i} b_{\al,j}
%\rw|+\nl f_\al
%\nr_{L^2_{x,\xi}}^2+o(1)\frac 1 {\kappa\eps^2}\nl\sqrt\nu \IP f_\al
%\nr_{L^2_{x,\xi}}^2\\
&\lesssim \sum_{j=1}^2 \lw|\int_\Omega \pt_3 U_j b_{\al,3}b_{\al,j}\rw|+\nl f_\al
\nr_{L^2_{x,\xi}}^2+o(1)\frac 1 {\kappa\eps^2}\nl\sqrt\nu \IP f_\al
\nr_{L^2_{x,\xi}}^2.\enda
\]
It suffices to bound the term $\int_\Omega \pt_3 U_j b_{\al,3}b_{\al,j}$ for $j\in \{1,2\}$. Using Lemma \ref{decom-b3}, we have
\[\bega 
&\int_\Omega \pt_3 U_j b_{\al,3}b_{\al,j}=\int_\Omega \pt_3 U_j(\mathcal T_{\al}^b-\eps U_3a_\al-\frac \eps{2\delta}\pt^\al(U_3^2)-\frac\eps\delta \pt^\al (PU_3))b_{\al,j}\\
&\lesssim\nl x_3\pt_3 U_j\nr_{L^\infty} \nl \frac{\mathcal T_\al^b}{x_3}\nr_{L^2}\nl f_\al\nr_{L^2} +\eps \nl f_\al\nr_{L^2}^2\\
&+\eps\kappa^{-\frac 12}\delta^{-1} 
\lw( \nl\pt^\al (U_3)^2\nr_{L^2}+\nl \pt^\al (PU_3)\nr_{L^2}
\rw)\nl f_\al\nr_{L^2}.
\enda 
\]
The proof is complete.
\end{proof}As a corollary, we have 
\begin{corollary}\label{bulk-est1}
 \[\bega
&\lw|\sum_{\al\in \mathbb N_0^3}\AA_\al(t)^2 \int_{\Omega\times \R^3}\frac{(\pt_t+\frac 1 \eps \xi\cdot\nabla_x)\sqrt\mu}{\sqrt\mu}|f_\al|^2dxd\xi
\rw|\\
&\lesssim o(1)\mathcal D_f(t)^2+\mathcal E_f(t)^2+e^{-\frac{\rho}{4\e^2}}\mathcal H_f(t)\mathcal E_f(t)+o(1)\mathcal Y_f(t)^2.
\enda 
\]
\end{corollary}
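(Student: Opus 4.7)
The plan is to start from the pointwise-in-$\al$ inequality of Proposition \ref{bulk-est}, multiply both sides by $\AA_\al(t)^2$, and sum over $\al\in\mathbb N_0^3$, translating each resulting sum into one of the analytic norms defined in \eqref{EDH} and \eqref{dEt-def}. The dissipative term sums directly to $o(1)\mathcal D_f(t)^2$ and the term $\nl f_\al\nr_{L^2_{x,\xi}}^2$ sums to $\mathcal E_f(t)^2$ by definition. For the $L^\infty$ contribution $e^{-\rho/(2\e^2)}\nl\nabla_xU\nr_{L^2_x}\nl e^{\rho|\xi|^2}f_\al\nr_{L^\infty}\nl f_\al\nr_{L^2}$, Cauchy--Schwarz in $\al$ together with the uniform bound on $\nl\nabla_xU\nr_{L^2_x}$ yields the factor $\mathcal H_{\rho,f}\mathcal E_f$, and the prefactor is weakened from $e^{-\rho/(2\e^2)}$ to $e^{-\rho/(4\e^2)}$ to absorb $\nl\nabla_xU\nr_{L^2_x}$ into the implied constant. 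The last line of Proposition \ref{bulk-est}, $\e\kappa^{-1/2}\delta^{-1}(\nl\pt^\al U_3^2\nr+\nl\pt^\al(PU_3)\nr)\nl f_\al\nr$, is controlled pointwise in $\al$ by $d_E(t)$ via the Leibniz rule in the analytic norms and Theorem \ref{thm-ana-fluid}, and pairs with $\mathcal E_f$ by Cauchy--Schwarz to give an $O(\mathcal E_f^2)$ contribution.

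The crucial term is $\nl x_3\pt_3U_j\nr_{L^\infty}\nl\mathcal T_\al^b/x_3\nr_{L^2}\nl f_\al\nr_{L^2}$, for which I substitute the six-term bound of Lemma \ref{decom-b3}. The purely fluid summands $(\e\delta)^{-1}\nl\pt^\al\div U\nr$, $\e\delta^{-1}\nl\pt^\al U_i^2\nr$, $\e^2\delta^{-1}\nl\pt^\al(\pt_tP+\nabla_\parallel\cdot(PU_\parallel))\nr$ and $\e^2\kappa\delta^{-1}\nl\pt^\al(\pt_iU_j\div U)\nr$ are dominated pointwise in $\al$ by $d_E(t)$ and, after pairing with $\nl f_\al\nr$, yield an $O(\mathcal E_f^2)$ contribution. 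The $\nl f_\al\nr$ summand itself also gives $\mathcal E_f^2$. The main obstacle is the tangential-derivative loss in $\sum_{\pt\in\{\pt_{x_1},\pt_{x_2}\}}\nl\pt f_\al\nr_{L^2}=\sum_i\nl f_{\al+e_i}\nr_{L^2}$, which formally requires one more derivative than $\mathcal E_f$ controls. I would exploit the ratio
\[
\frac{\AA_\al(t)}{\AA_{\al+e_i}(t)}=\frac{(\al_i+1)\la\al\ra^9}{\tau(t)\la\al+e_i\ra^9}\lesssim \frac{|\al|+1}{\tau(t)},
\]
so that $\AA_\al\nl f_{\al+e_i}\nr\lesssim\sqrt{(|\al|+1)/\tau(t)}\cdot\sqrt{(|\al|+1)/\tau(t)}\AA_{\al+e_i}\nl f_{\al+e_i}\nr$; after Young's inequality this delivers an $o(1)\mathcal Y_f(t)^2$ contribution via \eqref{EDH}, with the small prefactor gained from the splitting.

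The remaining convolution pieces $\sum_{\beta+\gamma=\al,|\gamma|\ge 1}\frac{\al!}{\beta!\gamma!}\nl\pt f_\beta\nr_{L^2}\nl e^{o(1)|\xi|^2}(1,\e\pt_t,\nabla_x)\pt^\gamma\sqrt\mu\nr_{L^\infty}$ are handled by a standard analytic convolution estimate: Proposition \ref{local-1} and Lemma \ref{M-local-2} ensure that the Maxwellian factor, weighted by $\tau_0^{|\gamma|}\la\gamma\ra^9/\gamma!$, sums to $O(1)$ in $\gamma$. After regrouping with $\AA_\al$ using $\la\al\ra^9\lesssim\la\beta\ra^9\la\gamma\ra^9$, the double sum becomes a convolution of an $\mathcal E_f$-type (in fact $\mathcal Y_f$-type for the derivative-lost piece) sequence against a summable one, producing a further $o(1)\mathcal Y_f^2+\mathcal E_f^2$ contribution by the same mechanism as above. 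Collecting all pieces gives exactly the four buckets in the statement. The hardest part, as flagged, is the single tangential-derivative loss arising from the conservation law \eqref{law1} via Lemma \ref{decom-b3}: the norm $\mathcal Y_f$ in \eqref{EDH} was introduced with the gain $\sqrt{(|\al|+1)/\tau}$ precisely to pay this cost while keeping the final coefficient arbitrarily small.
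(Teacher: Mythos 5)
Your proposal follows the paper's own argument closely: multiply the pointwise bound of Proposition \ref{bulk-est} by $\AA_\al(t)^2$, sum in $\al$, use Lemma \ref{decom-b3} to expand $\nl \mathcal T_\al^b/x_3\nr_{L^2}$, absorb the tangential-derivative loss through the ratio $\AA_\al/\AA_{\al+e_i}\lesssim(|\al|+1)/\tau$ into $\mathcal Y_f^2$, and handle the convolution piece by the discrete Young inequality. This is exactly the paper's route (which uses the explicit algebraic inequality $\frac{\al!\AA_\al}{\gamma!\AA_\gamma}\lesssim\tau^{-1}\beta!\min_i\AA_{\beta+e_i}$ for the convolution). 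One imprecision worth flagging: you assert the $\gamma$-sum of the Maxwellian weights is $O(1)$ and then attribute the $o(1)$ prefactor to the Young splitting; in fact the Young/Cauchy--Schwarz step on its own only delivers $O(1)\mathcal Y_f^2$, and the smallness of the coefficient on $\mathcal I_2$ comes from Proposition \ref{local-1} and Lemma \ref{M-local-2}, which give that the weighted Maxwellian sum is $O(\eps\kappa^{\pm1/2})\ll 1$, not merely $O(1)$. This does not affect the validity of the final estimate, but it misidentifies the source of the small constant.
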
 
\begin{proof} From the estimate in the above lemma \ref{bulk-est}, we have
\[\bega 
&\sum_\al A_\al^2 \int_{\Omega\times\R^3}\frac{(\pt_t+\frac 1 \eps \xi\cdot\nabla_x)\sqrt\mu}{\sqrt\mu}|f_\al|^2dxd\xi\\
&\lesssim o(1)\mathcal D_f(t)^2+\mathcal E_f(t)^2+e^{-\frac{\rho}{4\e^2}}\mathcal H_f(t)\mathcal E_f(t)+\sum_\al \AA_\al^2 \nl f_\al\nr_{L^2} \nl \frac{\mathcal T_\al^b}{x_3}\nr_{L^2}.\enda 
\]
Hence it suffices to estimate 
\[
\mathcal I=\sum_\al \AA_\al^2 \nl f_\al\nr_{L^2} \nl \frac{\mathcal T_\al^b}{x_3}\nr_{L^2}.\]
Now using Lemma \ref{decom-b3}, we get
\[
\mathcal I\lesssim \mathcal I_1+\mathcal I_2+\mathcal E_f(t)^2,
\]
where
\[\bega 
\mathcal I_1&=\sum_\al \AA_\al^2 \nl f_\al\nr_{L^2}\sum_{\pt\in\{\pt_{x_1},\pt_{x_2}\}}\nl \pt f_\al\nr_{L^2},\\
\mathcal I_2&=\sum_\al \AA_\al^2 \nl f_\al\nr_{L^2}\sum_{\substack{\beta+\gamma=\al\\|\gamma|\ge1}}\frac{\al!}{\beta!\gamma!}\sum_{\pt\in \{\eps\pt_t,\pt_{x_1},\pt_{x_2}\}}\nl \pt f_\beta
\nr_{L^2}\cdot\nl e^{o(1)|\xi|^2}(1,\eps\pt_t,\nabla_x) \pt^{\gamma}\sqrt\mu
\nr_{L^\infty}.
\enda 
\]
\underline{Bounding $\mathcal I_1$}:
Using the fact that 
\[
\frac{\tau}{\sqrt{(|\al|+1)(|\al|+2)
}}\max\lw\{\frac{\AA_{\al}}{\AA_{(\al_0+1,\al_1,\al_2)}},\frac{\AA_{\al}}{\AA_{(\al_0,\al_1+1,\al_2)}},\frac{\AA_{\al}}{\AA_{(\al_0,\al_1,\al_2+1)}}\rw\}\le 1,
\]
we obtain 
$
\mathcal I_1\lesssim \mathcal Y_f(t)^2.
$~\\
\underline{Bounding $\mathcal I_2$}: First, we note that for any non-negative sequences $\{x_\al\},\{y_\al\},\{z_\al\}$, we have 
\[\sum_\al \AA_\al^2 x_\al \sum_{\substack{\beta+\gamma=\al\\|\gamma|\ge1}}\frac{\al!}{\beta!\gamma!}y_\beta z_\gamma\lesssim \lw(\sum_\al \frac{|\al|+1}{\tau} \AA_\al^2 x_\al^2 \rw)^{\frac12}\lw(\sum_\al \frac{|\al|+1}{\tau} \AA_\al^2 y_\al^2
\rw)^{\frac 12}\sum_{|\gamma|\ge 1} \frac{\tau^{|\gamma|}}{\gamma!}z_\gamma\]
by the discrete Young inequality and the fact that 
\[
\frac{\al!\AA_\al}{\gamma!\AA_\gamma}\lesssim \tau^{-1}\beta! \min\lw\{\AA_{(\beta_0+1,\beta_1,\beta_2)}, \AA_{(\beta_0,\beta_1+1,\beta_2)}, \AA_{(\beta_0,\beta_1,\beta_2+1)}\rw\}.
\]
This, combining with the bound of $\mu$ in Lemma \eqref{local-1}, gives us the bound $\mathcal I_2\lesssim o(1)\mathcal Y_f(t)^2$. The proof is complete.
\end{proof}
%%%%%%%%%%%%%%%
\begin{proposition}\label{com-est} [Commutator terms] There holds 
\[
\bega
\frac 1  {\kappa\eps^2}\sum_{\al\in \mathbb N_0^3}\AA_\al^2(t)\la [L,\pt^\al]f, f_\al
\ra_{L^2_{x,\xi}}&\lesssim o(1)\mathcal D_f(t)^2+\mathcal E_f(t)\mathcal D_f(t),\\
\frac{1}{\kappa\e^2}\nl \nl \nu^{-\frac 1 2}\AA_\al [L,\pt^\al]f
\nr _{L^2_{x,\xi}}\nr_{\ell_\al^2}&\lesssim \mathcal D_f(t)+  \mathcal E_f(t),\\
\frac 1  {\kappa\eps^2}\sum_{\al\in \mathbb N_0^3}\AA_\al^2(t)\la [L,\pt_t \pt^\al]f, \pt_tf_\al
\ra_{L^2_{x,\xi}}&\lesssim \kappa^{-\frac 12} \mathcal D_f(t)^2+\kappa^{-\frac 12}\mathcal E_f(t)\mathcal D_f(t)\\
&\quad+o(1)\mathcal D_{\pt_tf}(t)^2+\mathcal E_{\pt_t}(t)\mathcal D_{\pt_tf}(t),\\
\frac{1}{\kappa\e^2}\nl \nl \nu^{-\frac 1 2}\AA_\al [L,\pt_t\pt^\al]f
\nr _{L^2_{x,\xi}}\nr_{\ell_\al^2}&\lesssim\kappa^{-\frac 12} \mathcal D_f(t)+  \kappa^{-\frac 12} \mathcal E_f(t)+\mathcal D_{\pt_tf}(t)+  \mathcal E_{\pt_tf}(t).\\
\enda
\]
\end{proposition}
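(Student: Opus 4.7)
The plan is to expand the commutator via the Leibniz rule, exploiting that $L$ depends on $(t,x)$ only through the local Maxwellian $\mu=\mu_0(\xi-\eps U)$. Writing
\[
[L,\pt^\al]f = -\sum_{\substack{\beta+\gamma=\al\\|\gamma|\ge1}}\binom{\al}{\beta}(\pt^\gamma L)\pt^\beta f,
\]
we identify $(\pt^\gamma L)$, using the decomposition $L=\nu+K_1-K_2$ from Proposition \ref{L-decom-max}, as a linear operator whose kernel is a polynomial in $\pt^{\gamma_1}(\eps U),\ldots,\pt^{\gamma_p}(\eps U)$ (with $\gamma_1+\cdots+\gamma_p=\gamma$) multiplying polynomial-times-Gaussian factors in $\vp=\xi-\eps U$. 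Since any non-trivial derivative of $\mu$ brings down at least one factor of $\eps\pt^{\gamma_j} U$, as quantified by Lemma \ref{M-local-2} and Proposition \ref{local-1}, every summand inherits an $\eps$-smallness compared to the raw $\frac{1}{\kappa\eps^2}$ prefactor.

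\textbf{Extracting the $\IP$ structure.} The central structural point is that $L\P f=0$ holds as an identity in $(t,x)$; differentiating it yields the recursive relation $(\pt^\gamma L)(\P f)=-\sum_{\gamma'<\gamma}\binom{\gamma}{\gamma'}(\pt^{\gamma-\gamma'}L)(\pt^{\gamma'}\P f)$, which inductively confines the action of $(\pt^\gamma L)$ on hydrodynamic modes to expressions of the same type but with strictly smaller $\gamma$. Consequently, after testing against $f_\al$ and re-organizing, only the $\IP$-portion of $\pt^\beta f$ survives to produce a genuine bilinear bound
\[
\bigl|\la(\pt^\gamma L)\pt^\beta f, f_\al\ra\bigr| \lesssim \|\eps\pt^\gamma U\|_{L^\infty_x}\,\|\nu^{1/2}\IP \pt^\beta f\|_{L^2_{x,\xi}}\,\|\nu^{1/2}f_\al\|_{L^2_{x,\xi}}
\]
up to harmless macroscopic corrections that pair with $\P f_\al$ and contribute only to $\mathcal E_f$. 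Multiplication by $\AA_\al^2/(\kappa\eps^2)$ turns the prefactor, together with the $\eps$ from $\eps\pt^\gamma U$, into $\frac{1}{\kappa\eps}$, which reassembles as the product $\bigl(\eps^{-1}\kappa^{-1/2}\|\nu^{1/2}\IP \pt^\beta f\|\bigr)\cdot\bigl(\kappa^{-1/2}\|\nu^{1/2}f_\al\|\bigr)$. Applying the discrete Young inequality to the convolution $\beta+\gamma=\al$, and using the boundedness of $\sum_\gamma\tau_0^{|\gamma|}\|\pt^\gamma U\|_{L^\infty_x}/\gamma!$ from the analyticity of the fluid solution (Theorem \ref{thm-ana-fluid}), yields the first inequality after splitting the final $\|\nu^{1/2}f_\al\|_{L^2}$ into its $\P$ and $\IP$ parts: the former gives the $\mathcal E_f\mathcal D_f$ contribution, while the latter, via Young's inequality, is absorbed into $o(1)\mathcal D_f^2$.

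\textbf{Dual-norm and $\pt_t$ variants.} The second estimate is obtained by applying the same bilinear bound in the dual form $\|\nu^{-1/2}(\pt^\gamma L)\pt^\beta f\|_{L^2_{x,\xi}}\lesssim \|\eps\pt^\gamma U\|_{L^\infty_x}\|\nu^{1/2}\IP \pt^\beta f\|_{L^2}+\|\eps\pt^\gamma U\|_{L^\infty_x}\|\pt^\beta f\|_{L^2}$, summing via discrete Young, and producing $\mathcal D_f+\mathcal E_f$. The $\pt_t\pt^\al$ variants follow the same template: the extra $\pt_t$ either differentiates $L$, producing one additional $\eps\pt_t U$ factor handled as above, or differentiates $\pt^\al f$, feeding directly into $\mathcal E_{\pt_t f}$ and $\mathcal D_{\pt_t f}$. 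The loss $\kappa^{-1/2}$ in the first two terms of the $\pt_t$-estimate reflects that the bare $\pt_t$ is not weighted by $\eps$ in the coefficient $\AA_\al$; in the regime $\eps\sim\kappa^N$ with $N$ large, the resulting $\eps^{-1}$ cost is dominated by $\kappa^{-1/2}$ after a single use of Cauchy--Schwarz against $\mathcal D_f$.

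\textbf{Main obstacle.} The principal difficulty is tracking the $(t,x)$-dependence of the projector $\P$ itself: since $\P$ projects onto the kernel of $L$, which shifts with $U$, commuting $\pt$ past $\P$ generates additional macroscopic error terms that do not immediately present themselves as $\IP f$ and must instead be absorbed into $\mathcal E_f$ via the recursive identity for $(\pt^\gamma L)(\P\cdot)$. Maintaining consistency of all the combinatorial bookkeeping with the weights $\AA_\al$ in \eqref{coe-def}, while preserving the $o(1)$ smallness in the leading $\mathcal D_f^2$ term, is the technical heart of the estimate.
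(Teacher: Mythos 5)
Your overall strategy coincides with the paper's: Leibniz-expand $[L,\pt^\al]f$ so that every surviving term carries at least one derivative of $\sqrt\mu$, use Lemma \ref{local-1}/Lemma \ref{M-local-2} to extract the $\eps\kappa^{1/2}$ (resp.\ the degraded) analytic scale of those derivatives, split the remaining $\pt^\vr f$ into macroscopic and microscopic parts, and close with discrete Young. The paper does this via an explicit eight-term decomposition $T_{\al,1},\ldots,T_{\al,8}$ distinguishing $\IP f_\vr$ from $[\P,\pt^\vr]f$, and then invokes the bilinear bounds of Propositions \ref{BGR}--\ref{BGR2} together with the projector-commutator bound in Proposition \ref{PD}. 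Your proposal is recognizably the same argument at this level.

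That said, there are three concrete gaps. First, the ``recursive identity'' $(\pt^\gamma L)(\P f)=-\sum_{\gamma'<\gamma}\binom{\gamma}{\gamma'}(\pt^{\gamma-\gamma'}L)(\pt^{\gamma'}\P f)$ does not by itself kill the macroscopic modes: when the recursion terminates at $\gamma'=0$ one is left with $L(\pt^{\gamma}\P f)$, and since $\pt^\gamma\P f\ne\P\pt^\gamma f$, this is $-L([\P,\pt^\gamma]f)$, which still must be estimated via the commutator bound for $[\P,\pt^\gamma]$ (Proposition \ref{PD}) --- exactly what the paper does directly. The recursive device is therefore a detour that circles back to the same ingredient; it is not an alternative mechanism. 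Second, the displayed bilinear bound is too coarse as written: the $\P$-portion of $\pt^\beta f$ does contribute, and its pairing with $\IP f_\al$ does not vanish, so the claim that the macroscopic corrections ``pair with $\P f_\al$ and contribute only to $\mathcal E_f$'' is not right; they give both $\mathcal E_f\mathcal D_f$ and $\mathcal E_f^2$ contributions, which the paper's derivation exhibits explicitly. Third, and most substantively, your account of the $\kappa^{-1/2}$ loss in the $\pt_t$-commutator estimates is wrong. You write that ``the resulting $\eps^{-1}$ cost is dominated by $\kappa^{-1/2}$,'' but in the regime $\eps\sim\kappa^N$ with $N$ large one has $\eps^{-1}\gg\kappa^{-1/2}$, so the inequality runs the other way. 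The correct source of the degradation is the anisotropy in Lemma \ref{M-local-2}: the analytic bound for $\pt^\al\pt_t(\mu^{p_0})$ is $O(1)$ rather than $O(\eps\kappa^{1/2})$, so the case where the extra $\pt_t$ lands on $\sqrt\mu$ loses a $\kappa^{-1/2}$ scale relative to the first estimate, while the case where it lands on $f$ produces $\mathcal D_{\pt_tf}$ and $\mathcal E_{\pt_tf}$ at the original scale; this two-case split is what the paper's proof appeals to. A minor point: the analyticity of $U$ you invoke is a standing hypothesis on the fluid construction, not the content of Theorem \ref{thm-ana-fluid}, which estimates the Navier--Stokes residual and divergence.
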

\begin{proof}Recalling \eqref{L-def} and the fact that $\pt^\al \IP =\IP \pt^\al+[\P,\pt^\al]$, we decompose 
$
[L,\pt^\al]f=\sum_{i=1}^8 T_{\al,i},
$
where 
\beq\label{L-alpha-com}
\bega 
T_{\al,1}&=\sum_{\substack{\beta+\gamma+\vr =\al\\\vr<\al
}
}\frac{\al!}{\beta!\gamma!\vr!}\int_{\R^3}\int_{|\w|=1}|(\xi-\xi_\star)\cdot \w|\pt^\beta\sqrt\mu(\xi_\star)\pt^\gamma\sqrt\mu(\xi') \IP f_\vr(\xi_\star')d\w d\xi_\star,\\
T_{\al,2}&=\sum_{\substack{\beta+\gamma+\vr =\al\\\vr<\al
}
}\frac{\al!}{\beta!\gamma!\vr!}\int_{\R^3}\int_{|\w|=1}|(\xi-\xi_\star)\cdot \w|\pt^\beta\sqrt\mu(\xi_\star)\pt^\gamma\sqrt\mu(\xi') [\P,\pt^\vr] f(\xi_\star')d\w d\xi_\star,\\
T_{\al,3}&=\sum_{\substack{\beta+\gamma+\vr =\al\\\vr<\al
}
}\frac{\al!}{\beta!\gamma!\vr!}\int_{\R^3}\int_{|\w|=1}|(\xi-\xi_\star)\cdot \w|\pt^\beta \sqrt\mu(\xi_\star)\pt^\gamma \sqrt\mu(\xi_\star')\IP f_\vr(\xi')d\w d\xi_\star,\\
T_{\al,4}&=\sum_{\substack{\beta+\gamma+\rho =\al\\\vr<\al
}
}\frac{\al!}{\beta!\gamma!\vr!}\int_{\R^3}\int_{|\w|=1}|(\xi-\xi_\star)\cdot \w|\pt^\beta \sqrt\mu(\xi_\star)\pt^\gamma \sqrt\mu(\xi_\star')[\P,\pt^\vr]f(\xi')d\w d\xi_\star,\\
T_{\al,5}&=-\sum_{\substack{\beta+\gamma+\vr =\al\\\vr<\al
}
}\frac{\al!}{\beta!\gamma!\vr!}\int_{\R^3}\int_{|\w|=1}|(\xi-\xi_\star)\cdot \w|\pt^\beta\sqrt\mu(\xi_\star) \pt^\gamma \sqrt\mu(\xi)\IP f_\vr (\xi_\star)d\w d\xi_\star,\\
T_{\al,6}&=-\sum_{\substack{\beta+\gamma+\vr =\al\\\vr<\al
}
}\frac{\al!}{\beta!\gamma!\vr!}\int_{\R^3}\int_{|\w|=1}|(\xi-\xi_\star)\cdot \w|\pt^\beta\sqrt\mu(\xi_\star) \pt^\gamma \sqrt\mu(\xi)[\P,\pt^\vr] f(\xi_\star)d\w d\xi_\star,\\
T_{\al,7}&= -\sum_{\substack{\beta+\gamma=\al\\\gamma<\al
}
}\frac{\al!}{\beta!\gamma!}\int_{\R^3}\int_{|\w|=1}|(\xi-\xi_\star)\cdot \w|\pt^\beta\mu(\xi_\star) \IP   f_\gamma (\xi)d\w d\xi_\star,\\
T_{\al,8}&= -\sum_{\substack{\beta+\gamma=\al\\\gamma<\al
}
}\frac{\al!}{\beta!\gamma!}\int_{\R^3}\int_{|\w|=1}|(\xi-\xi_\star)\cdot \w|\pt^\beta\mu(\xi_\star) [\P,   \pt^\gamma] f (\xi)d\w d\xi_\star.
\enda 
\eeq
We have 
\[\bega 
&\AA_\al^2 \la [L,\pt^\al]f ,f_\al
\ra_{L^2_{x,\xi}}\\
&=\sum_{i=1}^8\la \AA_\al T_{\al,i},\AA_\al f_\al
\ra_{L^2_{x,\xi}}\\
&\lesssim \sum_{i=1}^8\lw\{ \nl\nu^{-\frac 1 2}\AA_\al T_{\al,i}\nr_{L^2_{x,\xi}} \nl \sqrt\nu \AA_\al \IP  f_\al
\nr_{L^2_{x,\xi}}+\nl\nu^{-\frac 1 2}\AA_\al T_{\al,i}\nr_{L^2_{x,\xi}} \nl  \AA_\al  f_\al
\nr_{L^2_{x,\xi}}\rw\}.
\enda 
\]
This implies 
\[\bega 
&\frac{1}{\kappa\eps^2}\sum_\al \AA_\al^2 \la [L,\pt^\al]f , f_\al
\ra_{L^2_{x,\xi}}\\
&\lesssim\max_{1\le i\le 8} \lw\{\sum_\al  \nl\nu^{-\frac 1 2}\AA_\al T_{\al,i}\nr_{L^2_{x,\xi}}^2
\rw\}^{\frac 1 2}\lw( \eps^{-1}\kappa^{-\frac 1 2} \mathcal D_f(t)+\eps^{-2}\kappa^{-1}\mathcal E_f(t)
\rw).
\enda 
\]
We now bound each of the term $T_{\al,i}$. Using Proposition \ref{BGR}, we have 
\[\bega 
\nl \nu^{-\frac 12}T_{\al,1}
\nr _{L^2_{x,\xi}}&\lesssim \sum_{\substack{\gamma+\vr=\al\\|\gamma|\ge 1
}}\AA_\gamma \lw\|e^{p_0|\xi|^2}\pt^\gamma\sqrt\mu
\rw\|_{L^\infty}\lw\|\sqrt\nu \IP \AA_\vr f_{\vr}
\rw\|_{L^2_{x,\xi}}\\
&\quad+\sum_{\substack{\beta+\gamma+\vr=\al\\
|\beta|\ge 1
}
}\AA_\beta \nl e^{p_0|\xi|^2}\pt^\beta\sqrt\mu\nr _{L^\infty}\cdot \AA_\gamma\nl e^{p_0|\xi|^2}\pt^\gamma\sqrt\mu\nr _{L^\infty}\cdot\lw\|\sqrt\nu \IP \AA_\vr f_{\vr}
\rw\|_{L^2_{x,\xi}},\\
\nl \nu^{-\frac 12}T_{\al,2} \nr _{L^2_{x,\xi}}&\lesssim \sum_{\substack{\gamma+\vr=\al\\|\gamma|\ge 1
}
}\AA_\gamma \nl e^{p_0|\xi|^2}\pt^\gamma\sqrt\mu\nr _{L^\infty}\AA_\vr \lw\|\sqrt\nu [\P,\pt^\vr] f
\rw\|_{L^2_{x,\xi}}\\
&\quad+\sum_{\substack{\beta+\gamma+\vr=\al\\
|\beta|\ge 1
}
}\AA_\beta \nl e^{p_0|\xi|^2}\pt^\beta\sqrt\mu\nr _{L^\infty}\cdot \AA_\gamma\nl e^{p_0|\xi|^2}\pt^\gamma\sqrt\mu\nr _{L^\infty} \AA_\vr \lw\|\sqrt\nu [\P,\pt^\vr] f
\rw\|_{L^2_{x,\xi}}.
\enda \]
Now using the discrete Young inequality, Lemma \ref{local-1} and Proposition \ref{PD} , we have 
\[\bega 
\nl 
\nl \nu^{-\frac 12}T_{\al,1}
\nr _{L^2_{x,\xi}}
\nr_{\ell^2_\al}+\nl 
\nl \nu^{-\frac 12}T_{\al,2}
\nr _{L^2_{x,\xi}}
\nr_{\ell^2_\al}&\lesssim \eps^2\kappa\mathcal D_f(t)+ \eps^2 \kappa \mathcal E_f(t).\\
\enda 
\]
Similarly, using Proposition \ref{BGR1}, Lemma \ref{local-1},  and Proposition \ref{PD}, we obtain
\[\bega 
\nl 
\nl \nu^{-\frac 12}T_{\al,1}
\nr _{L^2_{x,\xi}}
\nr_{\ell^2_\al}+\nl 
\nl \nu^{-\frac 12}T_{\al,2}
\nr _{L^2_{x,\xi}}
\nr_{\ell^2_\al}&\lesssim  \eps^2\kappa\mathcal D_f(t)+ \eps^2 \kappa \mathcal E_f(t).
\enda \] 
As for $T_{\al,5}$ and $T_{\al,6}$, we use Proposition \ref{BGR2} to get 
\[
\bega 
&\nl T_{\al,5}
\nr _{L^2_{x,\xi}}+\nl T_{\al,6}
\nr _{L^2_{x,\xi}}\\
&\lesssim\sum_{\substack{\beta+\gamma+\vr=\al\\|\beta|+|\gamma|\ge 1
}
}
\lw\{
\AA_\vr \nl \sqrt\nu \IP f_\vr 
 \nr_{L^2_{x,\xi}}+\AA_\vr \nl \sqrt\nu[\P,\pt^\vr] f
 \nr_{L^2_{x,\xi}} \rw\}\\
 &\times \nl\AA_\beta\lw(\int_{\xi_\star}\nu(\xi_\star)^{-1}|\xi-\xi_\star|^2 |\pt^\beta\sqrt\mu(\xi_\star)|^2d\xi_\star\rw)^{1/2}\AA_\gamma\pt^\gamma\sqrt\mu 
 \nr_{L^\infty_x L^2_\xi}.\\
   \enda 
\]
Now using the discrete Young inequality, Lemma \ref{local-1}, Proposition \ref{PD} and Proposition \ref{BGR2}, we have 
\[
\bega 
\max_{5\le m\le 8}\nl 
\nl \nu^{-\frac 12}T_{\al,m}
\nr _{L^2_{x,\xi}}
\nr_{\ell^2_\al}&\lesssim   \eps^2\kappa\mathcal D_f(t)+ \eps^2 \kappa \mathcal E_f(t).\enda
\]
Hence we get 
\[\bega 
\frac{1}{\kappa\eps^2}\sum_\al \AA_\al^2 \la [L,\pt^\al]f , f_\al
\ra_{L^2_{x,\xi}}&\lesssim \lw(\eps^2\kappa\mathcal D_f(t)+ \eps^2 \kappa \mathcal E_f(t)\rw)\lw( \eps^{-1}\kappa^{-\frac 1 2} \mathcal D_f(t)+\eps^{-2}\kappa^{-1}\mathcal E_f(t)
\rw)\\
&\lesssim \eps\kappa^{\frac 12}\mathcal D_f(t)^2+\eps \kappa
\mathcal D_f(t)\mathcal E_f(t)+ \mathcal E_f(t)^2.
\enda
\] The commutative estimate for time derivative is similar, where there are two cases: $\pt_t$ hits $\sqrt\mu$, which gives a scale of $\eps\kappa^{-\frac12}$, and when $\pt_t$ hits $f$, which gives us the same bound for $f$ without any derivatives.
This completes the proof.
\end{proof}%%%%%%%

\begin{lemma}\label{bulk-ran-1}
There holds, for $\eps\ll \kappa$
\[\pt^\al\lw\{
\frac{(\pt_t+\eps^{-1}\xi\cdot\nabla_x)\sqrt\mu}{\sqrt\mu}
\rw\}=\frac 1 2 \sum_{i,\ell}
\pt_i \pt^\al U_\ell \vp_i \vp_\ell+J_\al\]
where 
\[
\sum_{\al}\frac{\tau_0^{|\al|}}{\al!} |J_\al|\lesssim o(1)\la\vp\ra.
\]
\end{lemma}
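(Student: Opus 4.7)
The plan is to differentiate the explicit pointwise identity \eqref{bulk-mu},
\[
\frac{(\pt_t+\eps^{-1}\xi\cdot\nabla_x)\sqrt\mu}{\sqrt\mu} \;=\; \tfrac{1}{2}\pt_i U_j\,\vp_i \vp_j \;+\; \eps(\pt_t U + U\cdot\nabla_x U)\cdot\vp,
\]
by applying $\pt^\al=(\eps\pt_t)^{\al_0}\pt_{x_1}^{\al_1}\pt_{x_2}^{\al_2}$ through the Leibniz rule and isolating the single term in the resulting expansion in which every derivative lands on the coefficient $\pt_iU_j$ and no derivative touches either factor $\vp_i$ or $\vp_j$. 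Since $\vp=\xi-\eps U(t,x)$ itself depends on $(t,x)$, any derivative that does fall on a $\vp$-factor satisfies $\pt^\gamma\vp_k=-\eps\,\pt^\gamma U_k$ for $|\gamma|\ge 1$ and hence produces an explicit $\eps$; moreover the second summand $\eps(\pt_tU+U\cdot\nabla_xU)\cdot\vp$ is already preceded by an explicit $\eps$. The clean term yields exactly the asserted main piece $\tfrac12\pt_i\pt^\al U_\ell\,\vp_i\vp_\ell$ (using that $\pt_{x_i}$ commutes with $\pt^\al$), while every other piece, each carrying at least one $\eps$, is placed into $J_\al$.

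To control the analytic sum $\sum_{\al}\tfrac{\tau_0^{|\al|}}{\al!}|J_\al|$, I would use the standard factorization of analytic norms of products,
\[
\sum_{\al}\tfrac{\tau_0^{|\al|}}{\al!}\sum_{\beta+\gamma=\al}\tbinom{\al}{\beta}|X_\beta|\,|Y_\gamma|=\Bigl(\textstyle\sum_\beta\tfrac{\tau_0^{|\beta|}}{\beta!}|X_\beta|\Bigr)\Bigl(\sum_\gamma\tfrac{\tau_0^{|\gamma|}}{\gamma!}|Y_\gamma|\Bigr),
\]
at a slightly shrunk radius if necessary to absorb the $\la\al\ra^9$ weights, combined with the analyticity bounds of Lemma \ref{M-local-2}: the tangential and time factors $\pt^\al\nabla_{x_\parallel}U$, $\pt^\al(\eps\pt_tU)$, and $\pt^\al U$ are all $O(1)$ in analytic norm, while $\pt^\al\pt_{x_3}U$ contributes one factor of $\kappa^{-1/2}$. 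Because $\pt^\al$ contains only $\eps\pt_t,\pt_{x_1},\pt_{x_2}$, any $\pt_{x_3}$ appearing inside a term of $J_\al$ must originate from the original factor $\pt_iU_j$ with $i=3$, in which case the explicit $\eps$ guaranteed above always accompanies it. The worst-case estimate is thus of size $(\eps+\eps\kappa^{-1/2})\la\vp\ra$, which under the regime $\eps\ll\kappa$ is $o(1)\la\vp\ra$.

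The main obstacle is bookkeeping: one must verify that no configuration of the Leibniz expansion produces a term of size $\kappa^{-1}$ or worse, which would defeat the $o(1)$ claim. This is ruled out precisely because $\pt_{x_3}$ does not appear in $\pt^\al$, so at most one $\pt_{x_3}$ can occur per term of $J_\al$, and it is always multiplied by at least one explicit $\eps$. The surviving polynomial in $\vp$ is at most linear, since every term of $J_\al$ has by construction ``used up'' at least one $\vp$-factor by differentiation (or was already linear in $\vp$ to begin with), which accounts for the $\la\vp\ra$ on the right.
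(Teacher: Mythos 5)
Your proposal matches the paper's argument essentially step for step: Leibniz-expand the identity \eqref{bulk-mu}, isolate the term in which all tangential/time derivatives land on the coefficient $\pt_i U_\ell$, note that every other configuration carries at least one explicit $\eps$ (from $\pt^\gamma\vp_k=-\eps\pt^\gamma U_k$ or from the prefactor of the second summand), and then close by observing that only the original $\pt_iU_\ell$ factor can contribute a $\pt_{x_3}$ (hence at most one $\kappa^{-1/2}$), so the worst term scales like $\eps\kappa^{-1/2}\la\vp\ra=o(1)\la\vp\ra$ under $\eps\ll\kappa$. This is precisely the decomposition and bookkeeping used in the paper, including the observation that the remainder is at most linear in $\vp$.
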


\begin{proof} Recalling \eqref{bulk-mu}, we get \[\bega 
&\pt^\al\lw\{
\frac{(\pt_t+\eps^{-1}\xi\cdot\nabla_x)\sqrt\mu}{\sqrt\mu}
\rw\}\\
&=\frac 1 2 \pt_i \pt^\al U_\ell \vp_i \vp_\ell +\frac 1 2\sum_{\substack{
\beta+\gamma+\vr=\al\\
|\gamma|\ge 1\\|\vr|\ge 1
}
}\frac{\al!}{\beta!\gamma!\vr!}\pt^\beta \pt_i U_\ell \pt^\gamma \vp_i\pt^\vr \vp_\ell \\
&\quad+\frac 1 2 \sum_{\substack{
\beta+\gamma=\al\\
|\gamma|\ge 1\\
}
}\frac{\al!}{\beta!\gamma!}\pt^\beta \pt_i U_\ell  \pt^\gamma \vp_i\vp_\ell +\frac 1 2 \sum_{\substack{
\beta+\gamma=\al\\
|\gamma|\ge 1\\
}
}\frac{\al!}{\beta!\gamma!}\pt^\beta \pt_i U_\ell  \pt^\gamma \vp_\ell \vp_i\\
&\quad+\eps\sum_{\substack{\beta+\gamma=\al\\|\gamma|\ge 1
}
}\frac{\al!}{\beta!\gamma!}\pt^\beta(\pt_t U+U\cdot\nabla_x U)\cdot \pt^\gamma \vp+\eps \pt^\al (\pt_t U+U\cdot\nabla_x U)\cdot \vp\\
&=\frac 1 2 \pt_i \pt^\al U_\ell\vp_i \vp_\ell +J_\al.
\enda
\]
where \[\bega 
J_\al&\lesssim \sum_{\substack{
\beta+\gamma+\vr=\al\\
|\gamma|\ge 1\\|\vr|\ge 1
}
}\frac{\al!}{\beta!\gamma!\vr!}|\pt^\beta \pt_i U_\ell |\cdot  \eps |\pt^\gamma U_i| \cdot \eps |\pt^\vr U_\ell |+\sum_{\substack{
\beta+\gamma=\al\\
|\gamma|\ge 1\\
}
}\frac{\al!}{\beta!\gamma!}|\pt^\beta \pt_i U_\ell | \cdot\eps | \pt^\gamma U_i| |\vp|\\
&\quad+\sum_{\substack{
\beta+\gamma=\al\\
|\gamma|\ge 1\\
}
}\frac{\al!}{\beta!\gamma!}|\pt^\beta \pt_i U_\ell | \cdot \eps| \pt^\gamma U_\ell | |\vp|+\eps \sum_{\substack{\beta+\gamma=\al\\|\gamma|\ge 1
}
}\frac{\al!}{\beta!\gamma!}|\pt^\beta(\pt_t U+U\cdot\nabla_x U)| \cdot \eps |\pt^\gamma U|\\
&\quad+\eps |\pt^\al (\pt_t U+U\cdot\nabla_x U)| |\vp|.\enda 
\]

This implies 
\[\bega 
\sum_{|\al|\ge 1}\frac{\tau^{|\al|}}{\al!}J_\al &\lesssim  \eps^2 \nl U
\nr_{\tau_0,\infty}^2\nl \nabla_x U
\nr_{\tau_0,\infty}+\eps \nl \nabla_x U
\nr_{\tau_0,\infty}\nl U\nr_{\tau_0,\infty}\\
&+\eps \lw(
\nl \nabla_x U
\nr_{\tau_0,\infty} \nl U
\nr _{\tau_0,\infty}+\nl \pt_t U
\nr _{\tau_0,\infty}
\rw)
|\vp| \\
&\quad+\eps^2 \nl U
\nr _{\tau_0,\infty}\nl \pt_t U+U\cdot\nabla_x U
\nr _{\tau_0,\infty}\lesssim \eps^{\frac 12} \la\vp\ra.\enda 
\]
This completes the proof.\end{proof}

\begin{proposition} \label{bulk-com}There holds 
\[\bega
&\sum_{\al\in \mathbb N_0^3}\AA_\al(t)^2 \lw\la \frac{(\pt_t+\frac 1 \eps \xi\cdot\nabla_x)\sqrt\mu}{\sqrt\mu}f_\al-\pt^\al\lw\{\frac{(\pt_t+\frac 1 \eps \xi\cdot\nabla_x)\sqrt\mu}{\sqrt\mu}f\rw\},f_\al\rw\ra_{L^2_{x,\xi}}\\
&\lesssim \mathcal E_f(t)^2+o(1)\mathcal D_f(t)^2+\sum_{\al} \AA_\al^2 \nl f_\al\nr_{L^2} \sum_{\substack{\beta+\gamma=\al\\|\gamma|\ge 1
}}\frac{\al!}{\beta!\gamma!}\nl x_3^{-1}T_\beta^b\nr_{L^2} \nl x_3\pt_3\pt^\gamma U_\parallel\nr_{L^\infty}\\
&\quad+\sum_\al \AA_\al^2\nl x_3^{-1}T_\al^b
\nr_{L^2}\sum_{\substack{\beta+\gamma=\al\\|\gamma|\ge 1
}}\frac{\al!}{\beta!\gamma!} \nl f_\beta\nr_{L^2} \nl x_3\pt_3\pt^\gamma U_\parallel\nr_{L^\infty},\enda 
\]
where $T_\al^b$ satisfies the bound in Lemma \ref{decom-b3}. 
\end{proposition}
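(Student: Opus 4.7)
\medskip

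\noindent\textbf{Proposal of proof of Proposition \ref{bulk-com}.}
The plan is to reduce the commutator to the same bilinear moment structure that was treated in Proposition \ref{bulk-est}, with the higher-derivative weights $\pt^\gamma$ supplying the new combinatorial factors, and then to isolate the unique dangerous direction $\pt_3 \pt^\gamma U_\parallel$ and handle it via Lemma \ref{decom-b3} and the Hardy inequality.

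First I apply the Leibniz rule to rewrite
\[
\frac{(\pt_t+\eps^{-1}\xi\cdot\nabla_x)\sqrt\mu}{\sqrt\mu}f_\al
-\pt^\al\!\left\{\frac{(\pt_t+\eps^{-1}\xi\cdot\nabla_x)\sqrt\mu}{\sqrt\mu}f\right\}
=-\sum_{\substack{\beta+\gamma=\al\\|\gamma|\ge1}}\frac{\al!}{\beta!\gamma!}\,
\pt^\gamma\!\left\{\frac{(\pt_t+\eps^{-1}\xi\cdot\nabla_x)\sqrt\mu}{\sqrt\mu}\right\}f_\beta,
\]
and then invoke Lemma \ref{bulk-ran-1} to split the $\gamma$-derivative of the bulk coefficient into its principal part $\tfrac12 \pt_i\pt^\gamma U_\ell\,\vp_i\vp_\ell$ plus a remainder $J_\gamma$ satisfying $\sum_\gamma \frac{\tau_0^{|\gamma|}}{\gamma!}|J_\gamma|\lesssim o(1)\la\vp\ra$. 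After pairing against $\AA_\al^2 f_\al$ in $L^2_{x,\xi}$, the $J_\gamma$-piece contributes a convolution in multi-indices that, by the discrete Young inequality and the algebra property of the analytic norms, is controlled by $o(1)\la\vp\ra$ against two copies of $f$; using the $\la \vp\ra\lesssim \nu^{1/2}$ bound on $\IP f$ and the Gaussian tail of $\P f$, this piece is absorbed in $o(1)\mathcal D_f(t)^2 + \mathcal E_f(t)^2$.

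Next I address the principal moment piece $\sum_{i,\ell}\int \pt_i\pt^\gamma U_\ell\,\vp_i\vp_\ell f_\beta f_\al$. Writing $f_\al=\P f_\al+\IP f_\al$ and similarly for $f_\beta$, the two $\IP$-copies and all cross terms are handled exactly as in Proposition \ref{bulk-est}: split the $\xi$-integration at $|\vp|\le\eps^{-1}$ and $|\vp|\ge\eps^{-1}$, and use the Gaussian decay in the far region together with Lemma \ref{local-1} for the bulk coefficient, ending up in the budget $o(1)\mathcal D_f(t)^2+\mathcal E_f(t)^2+e^{-\rho/(4\eps^2)}\mathcal H_f\mathcal E_f$. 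This leaves only the fully projected moments $\int\pt_i\pt^\gamma U_\ell\,\vp_i\vp_\ell \P f_\beta\P f_\al\,d\xi\,dx$, which by the explicit form \eqref{PFal} reduce to a linear combination of $a_\beta b_{\al,\cdot},\,b_{\beta,i}b_{\al,\ell},\,c_\beta b_{\al,\cdot}$, and the only terms carrying a factor of size $\kappa^{-1/2}$ are those of the form $\pt_3\pt^\gamma U_k\, b_{\beta,3}\,b_{\al,k}$ (or the symmetric version), since for any other choice of derivative on $U$ we have $\pt_i\pt^\gamma U_\ell\lesssim 1$ uniformly (the $x_3$-derivative is the only one that sees the boundary-layer fast variable).

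Finally, for the dangerous $\pt_3\pt^\gamma U_k\, b_{\beta,3}\,b_{\al,k}$ contribution I write $\pt_3\pt^\gamma U_k = x_3^{-1}\cdot (x_3\pt_3\pt^\gamma U_k)$, use the key fact that $z\pt_z$ preserves the Prandtl ansatz so $\|x_3\pt_3\pt^\gamma U_\parallel\|_{L^\infty}$ is controlled analytically, and apply Lemma \ref{decom-b3} to replace $b_{\beta,3}$ by $\mathcal T^b_\beta - \eps U_3 a_\beta - \tfrac{\eps}{2\delta}\pt^\beta(U_3^2)-\tfrac{\eps}{\delta}\pt^\beta(PU_3)$. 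The last three terms vanish at $x_3=0$ thanks to the no-slip condition on $U$ (so Hardy applies directly and gives pieces absorbed into $\mathcal E_f(t)^2$ or $d_E(t)$-type contributions already present in the statement), while the $\mathcal T^b_\beta$ term produces precisely $\|x_3^{-1}\mathcal T^b_\beta\|_{L^2}\|x_3\pt_3\pt^\gamma U_\parallel\|_{L^\infty}$, yielding the first of the two remaining sums in the proposition; swapping the roles of $\al$ and $\beta$ yields the second. Summing in $\al$ against $\AA_\al^2$, the discrete Young inequality together with the analytic algebra bound rearranges the convolution in $(\beta,\gamma)$ into the product form in the statement.

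The main obstacle is the last step: the factor $\pt_3\pt^\gamma U_\parallel$ in the principal moment is a priori of size $\kappa^{-1/2}$, which would blow up after summing the analytic norms. The pivotal observation that saves the estimate is that, in the presence of the Hardy weight supplied by the conservation law for $b_{\beta,3}$ (Lemma \ref{decom-b3}) and the boundary vanishing of $b_3$ (Proposition \ref{bc-con12}), one can trade $\pt_3\pt^\gamma U_\parallel$ for $x_3\pt_3\pt^\gamma U_\parallel\lesssim 1$; ensuring that the combinatorial Leibniz sum is compatible with the analytic weights $\AA_\al$ and does not lose a derivative beyond what is already accounted for in $\mathcal Y_f$ is the technical heart of the argument.
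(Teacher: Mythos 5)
Your proof proposal follows essentially the same approach as the paper's: Leibniz expansion of the commutator, splitting the bulk coefficient $\pt^\gamma\{\cdot\}$ via Lemma \ref{bulk-ran-1} into the principal moment $\tfrac12\pt_i\pt^\gamma U_\ell\vp_i\vp_\ell$ plus the $J_\gamma$ remainder, then decomposing the moment pairing into $\P/\IP$ pieces, isolating the only problematic contribution $\int\pt_3\pt^\gamma U_\parallel(b_{\al,\cdot}b_{\beta,3}+b_{\al,3}b_{\beta,\cdot})$, and controlling it through Lemma \ref{decom-b3} together with the $x_3\pt_3\pt^\gamma U_\parallel$ weight. The symmetric swap of $\alpha$ and $\beta$ to generate the second sum also matches.

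One small discrepancy: for the cross terms with at least one $\IP$ factor ($T_{\al,2},T_{\al,3},T_{\al,4}$ in the paper's notation), you invoke the $|\vp|\le\eps^{-1}$ split from Proposition \ref{bulk-est} and thereby introduce an $e^{-\rho/(4\eps^2)}\mathcal H_f\mathcal E_f$ term into the budget. The paper instead bounds these directly by the dissipation (the $\eps\sqrt\kappa$ gain in $\mathcal D_f$ absorbs the $\kappa^{-1/2}$ growth of $\|\pt_3\pt^\gamma U\|_{L^\infty}$), and the proposition's statement does not carry the $\mathcal H_f$ term. Your version yields a cosmetically weaker bound (the extra exponentially small $\mathcal H_f\mathcal E_f$ term) but is harmless for the downstream application; in fact for the term $T_{\al,4}$ with two $\IP$ projections and a $\vp_i\vp_k$ weight, some form of truncation (or an $L^\infty$ ingredient) is genuinely needed, so your more explicit accounting is defensible even if it differs slightly from the proposition's stated right-hand side.
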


\begin{proof}In the proof of this proposition, let us denote 
\[
S_\al=\AA_\al^2 \lw\la \frac{(\pt_t+\frac 1 \eps \xi\cdot\nabla_x)\sqrt\mu}{\sqrt\mu}f_\al-\pt^\al\lw\{\frac{(\pt_t+\frac 1 \eps \xi\cdot\nabla_x)\sqrt\mu}{\sqrt\mu}f\rw\},f_\al\rw\ra_{L^2_{x,\xi}}.
\]
We have 
\beq\label{ran-888}
\bega
&\frac{(\pt_t+\frac 1 \eps \xi\cdot\nabla_x)\sqrt\mu}{\sqrt\mu}f_\al-\pt^\al\lw\{\frac{(\pt_t+\frac 1 \eps \xi\cdot\nabla_x)\sqrt\mu}{\sqrt\mu}f\rw\}\\
&=-\sum_{\substack{\beta+\gamma=\al\\\beta<\al}}\frac{\al!}{\beta!\gamma!}\pt^\gamma\lw\{\frac{(\pt_t+\frac 1 \eps \xi\cdot\nabla_x)\sqrt\mu}{\sqrt\mu}\rw\}\pt^\beta f.\\
\enda
\eeq
Combining the above with Lemma \ref{bulk-ran-1}, we have 
\[S_\al=S_{\al,1}+S_{\al,2}\]
where 
\beq\label{ran-213}
\bega 
S_{\al,1}&=-\AA_\al^2\sum_{\substack{\beta+\gamma=\al\\\beta<\al}}\frac{\al!}{\beta!\gamma!}\lw\la J_\gamma,
f_\beta f_\al
\rw\ra_{L^2_{x,\xi}},\\
S_{\al,2}&=-\frac 1 2\AA_\al^2\sum_{\substack{\beta+\gamma=\al\\\beta<\al}}\frac{\al!}{\beta!\gamma!}\sum_{i,k}\la \pt_i \pt^\gamma U_k\vp_i\vp_k,f_\beta f_\al \ra_{L^2_{x,\xi}}.\enda \eeq
First we have 
\[\bega 
\la J_\gamma, f_\beta f_\al\ra_{L^2_{x,\xi}}&\lesssim \nl \nu^{-\frac 1 2}J_\gamma f_\beta\nr_{L^2_{x,\xi}}\nl \sqrt\nu f_\al \nr_{L^2_{x,\xi}}\\
&\lesssim \nl \nu^{-1}J_\gamma\nr_{L^\infty_{x,\xi}}\nl \sqrt\nu f_\beta\nr_{L^2_{x,\xi}}\nl \sqrt\nu f_\al\nr_{L^2_{x,\xi}}.
\enda 
\]
Hence
\[
\nl S_{\al,1}\nr_{\ell_\al^1}\lesssim \nl \AA_\al\sqrt\nu  f_\al\nr_{\ell_\al^\infty} \nl \nu^{-1}\AA_\al J_\al \nr_{\ell_\al^2}\nl\AA_\al\sqrt\nu  f_\al\nr_{\ell_\al^2}\lesssim  \mathcal E_f(t)^2+o(1)\mathcal D_f(t)^2.
\]
Next we bound $S_{\al,2}$. We have 
\[\bega 
\la \vp_i\vp_k,f_\beta f_\al\ra_{L^2_\xi}&=\la \vp_i\vp_k, \P f_\beta \P f_\al\ra_{L^2_\xi}+\la \vp_i\vp_k, \P f_\beta \IP f_\al\ra_{L^2_\xi}\\
&\quad+\la \vp_i\vp_k, \P f_\al \IP f_\beta\ra_{L^2_\xi}+\la \vp_i\vp_k, \IP f_\beta \IP f_\al\ra_{L^2_\xi}.\\
\enda 
\]
This gives us the decomposition
\[
S_{\al,2}=-\frac 1 2 \sum_{k=1}^4 T_{\al,k}
\]
where 
\[\bega
T_{\al,1}&= \AA_\al^2 \sum_{\substack{\beta+\gamma=\al\\|\gamma|\ge 1}}\frac{\al!}{\beta!\gamma!}\la\pt_i\pt^\gamma U_k,\vp_i\vp_k \P f_\al \P f_\beta\ra_{L^2_{x,\xi}},\\
T_{\al,2}&=\AA_\al^2 \sum_{\substack{\beta+\gamma=\al\\|\gamma|\ge 1}}\frac{\al!}{\beta!\gamma!}\la\pt_i\pt^\gamma U_k,\vp_i\vp_k \P f_\beta \IP f_\al\ra_{L^2_{x,\xi}},\\
T_{\al,3}&=\AA_\al^2 \sum_{\substack{\beta+\gamma=\al\\|\gamma|\ge 1}}\frac{\al!}{\beta!\gamma!}\la\pt_i\pt^\gamma U_k,\vp_i\vp_k \P f_\al \IP f_\beta\ra_{L^2_{x,\xi}},\\
T_{\al,4}&=\AA_\al^2 \sum_{\substack{\beta+\gamma=\al\\|\gamma|\ge 1}}\frac{\al!}{\beta!\gamma!}\la \pt_i\pt^\gamma U_k \vp_i\vp_k \IP f_\beta \IP f_\al\ra_{L^2_\xi}.
\enda
\]
First we bound $\nl T_{\al,1}\nr_{\ell_\al^1}$. To this end, we find 
\[\bega 
&\la\vp_i\vp_k, \P f_\al \P f_\beta\ra_{L^2_{\xi}}=a_\al a_\beta \delta_{ik}+3(a_\al c_\beta+a_\beta c_\al) \delta_{ik}\\
&\quad+b_{\al,m}b_{\beta,n}\lw(\delta_{ik}\delta_{mn}+\delta_{im}\delta_{kn}+\delta_{in}\delta_{km}
\rw)+c_\al c_\beta \delta_{ik}\lw\la |\vp|^2\mu,\lw(\frac{|\vp|^2-3}{2}\rw)^2\rw\ra.
\enda 
\]
This implies
\[\bega 
&\sum_{i,k}\pt_i\pt^\gamma U_k\la \vp_i\vp_k,\P f_\al,\P f_\beta\ra_{L^2_\xi}\\
&=a_\al a_\beta \pt^\gamma(\nabla_x\cdot U)+O(1) (a_\al c_\beta+a_\beta c_\al)\pt^\gamma(\nabla_x\cdot U) \\
&\quad+\pt^\gamma(\nabla_x\cdot U)(b_\al\cdot b_\beta)+\pt^\gamma \lw(\pt_k U_i+\pt_k U_i\rw) b_{\al,i}b_{\beta,k}\\
&\quad+O(1)c_\al c_\beta \pt^\gamma(\nabla_x\cdot U).\\
\enda\]
Hence
\[\bega 
\sum_{i,k}\pt_i\pt^\gamma U_k\la \vp_i\vp_k,\P f_\al,\P f_\beta\ra_{L^2_{x,\xi}}&\lesssim \nl  f_\al
\nr_{L^2}\nl f_\beta\nr_{L^2}\nl \pt^\gamma(\div U)\nr_{L^\infty}\\
&\quad+\lw|\pt^\gamma \lw(\pt_k U_i+\pt_i U_k\rw) b_{\al,i}b_{\beta,k}
\rw|.
\enda \]
Hence we get 
\[
\nl T_{\al,1}\nr_{\ell_\al^1}\lesssim \mathcal E_f(t)^2+\nl \AA_\al^2
\sum_{\substack{\beta+\gamma=\al\\|\gamma|\ge 1
}}\max_{1\le i\le 2}\lw|\int_\Omega \pt^\gamma \pt_3 U_i(b_{\al,i}b_{\beta,3}+b_{\al,3}b_{\beta,i})dx
\rw|
\nr_{\ell_\al^2}.
\]
Now for $T_{\al,2},T_{\al,3}$ and $T_{\al,4}$, it is straight forward that 
\[
\max_{1\le i\le 2}\nl T_{\al,i}\nr_{\ell_\al^2} \lesssim o(1)\mathcal D_f(t)\mathcal E_f(t),\qquad \nl T_{\al,4}\nr_{\ell_\al^2}\lesssim o(1)\mathcal D_f(t)^2.
\]
Here, the key is that we have used the dissipation gain of scale $\eps\sqrt\kappa$ to absorb the growth in $\kappa$ for the analytic norm of $\pt_{x_3}U$, which can only grow at most $\kappa^{-\frac 12}$ for a general Prandtl flow.
Now we will bound the most difficult term, which is 
\[\nl \AA_\al^2
\sum_{\substack{\beta+\gamma=\al\\|\gamma|\ge 1
}} \lw|
\int_{\Omega} \pt_3\pt^\gamma  U_1(b_{\al,1}b_{\beta,3}+b_{\al,3}b_{\beta,1})dx
\rw|
\nr_{\ell_\al^2}.\]
\begin{remark}
We note that since $|\gamma|\ge 1$ and $U$ is a shear flow, we have $\pt^\gamma \pt_{x_3}U_1\sim \sqrt\kappa$, which is a good term. For a general flow that is not a shear flow, this is not the case. However, using the same technique as in the proof of Proposition \ref{bulk-est}, we will show that we do not shear condition for this term, by making use of the analyticity and conservation laws. This observation will be important for our future works.
\end{remark}
Now to treat the above term, we use Lemma \ref{decom-b3} and decompose
\[\bega 
\pt_3\pt^\gamma U_1\lw(b_{\al,1}b_{\beta,3}+b_{\al,3}b_{\beta,1}
\rw)&=(\pt_3 \pt^\gamma U_1) b_{\al,1}\lw\{\mathcal T^b_{\beta}-\eps U_3a_\beta-\frac \eps{2\delta}\pt^\beta\{U_3^2\}-\frac\eps\delta \pt^\beta(PU_3)
\rw\}\\
&\quad+(\pt_3 \pt^\gamma U_1) b_{\beta,1}\lw\{\mathcal T^b_{\al}-\eps U_3a_\al-\frac \eps{2\delta}\pt^\al\{U_3^2\}-\frac\eps\delta \pt^\al (PU_3)\rw\}.\\
 \enda \]
 Hence
 \[\bega 
 &\int_\Omega\pt_3\pt^\gamma U_1\lw(b_{\al,1}b_{\beta,3}+b_{\al,3}b_{\beta,1}
\rw) \\
&\lesssim \nl \pt^\gamma \pt_3 U_1\nr_{L^\infty}\nl f_\al\nr_{L^2}\lw(\eps \nl f_\beta\nr_{L^2}+\eps \delta^{-1}\nl\pt^\beta(U_3^2)
\nr_{L^2}+\eps\delta^{-1}\nl \pt^\beta(PU_2)\nr_{L^2}
\rw)\\
&\quad+\nl \pt^\gamma \pt_3 U_1\nr_{L^\infty}\nl f_\beta\nr_{L^2}\lw(\eps \nl f_\al\nr_{L^2}+\eps \delta^{-1}\nl\pt^\al(U_3^2)
\nr_{L^2}+\eps\delta^{-1}\nl \pt^\al(PU_2)\nr_{L^2}
\rw)\\
&\quad+\nl x_3\pt_3\pt^\gamma U_1\nr_{L^\infty}\lw\{\nl f_\al\nr_{L^2}\nl x_3^{-1}T_\beta^b
\nr_{L^2}+\nl f_\beta\nr_{L^2}\nl x_3^{-1}T_\al^b
\nr_{L^2}\rw\}.
\enda \]
The proof is complete.\end{proof}

\begin{theorem} \label{l2-ana}Recall \eqref{EDH}. Let $f$ be the solution to the equation \eqref{fgeq} with the boundary condition \eqref{bdr-r} and \eqref{r-def}. There exists a constant $T>0$ independent of $\kappa$ and $\eps$ such that, for all $0\le t\le T$, there holds
\[\bega
&\frac{d}{dt}\mathcal E_f(t)^2+O(1)\mathcal D(t)^2+O(1)\frac 1 \eps\sum_{\al\in \mathbb N_0^3}\AA_\al^2\nl (\II-P_{\gamma_+})\pt^\al f
\nr_{L^2_{x,\xi}}^2\\
&\lesssim\eps\delta^{-2}\sum_{\al\in \mathbb N_0^3}\AA_\al^2\nl \pt^\al r
\nr_{L^2_{\gamma_-}}^2+ \sum_{\al\in \mathbb N_0^3}\AA_\al^2\lw|\la g_\al,f_\al \ra_{L^2_{x,\xi}}\rw|+\lw\{\frac{1}{\eps\delta}\mathcal E_{\nabla_x\cdot U}(t)+1\rw\}\mathcal E_f(t).\\
\enda 
\]
Here the radius of analyticity is given by 
\[\tau(t)=\tau_0-2\tau_0 T t.\]
\end{theorem}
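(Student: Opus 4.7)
The plan is a weighted $L^2$ energy estimate performed derivative-by-derivative and then summed with the analytic weights $\AA_\al(t)$. First, I apply $\pt^\al$ to \eqref{fgeq}, obtaining \eqref{f-al-eq}, multiply by $\AA_\al(t)^2 f_\al$, integrate on $\Omega\times\R^3$, and sum in $\al\in\mathbb N_0^3$. The time derivative yields $\tfrac{1}{2}\tfrac{d}{dt}\mathcal E_f(t)^2$ plus the decreasing-radius contribution $-\dot\tau(t)\mathcal Y_f(t)^2$; since $\dot\tau=-M_0<0$ this produces a \emph{good} term $+M_0 \mathcal Y_f(t)^2$ that will absorb all losses of one tangential/time derivative coming from commutators. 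The linearized operator produces the coercive dissipation
\[
\frac{1}{\kappa\eps^2}\sum_\al \AA_\al^2\la Lf_\al,f_\al\ra_{L^2_{x,\xi}}\gtrsim \frac{1}{\kappa\eps^2}\sum_\al \AA_\al^2\nl\sqrt\nu\,\IP f_\al\nr_{L^2_{x,\xi}}^2 =\mathcal D_f(t)^2.
\]

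Second, I treat the transport term via integration by parts in $x$. The boundary integral collects $\int_{\gamma_+}|f_\al|^2 - \int_{\gamma_-}|f_\al|^2$; inserting the boundary condition $f_\al|_{\gamma_-}=\P_{\gamma_+}f_\al-\tfrac{\eps}{\delta}\pt^\al r$ and using that $\P_{\gamma_+}$ is an $L^2_{\gamma_+}$-orthogonal projection onto $\sqrt{c_\mu\mu_0}$ gives
\[
\frac{1}{\eps}\sum_\al \AA_\al^2\lw\{\nl(\II-\P_{\gamma_+})\pt^\al f\nr_{L^2_{\gamma_+}}^2 - C\tfrac{\eps^2}{\delta^2}\nl\pt^\al r\nr_{L^2_{\gamma_-}}^2\rw\},
\]
which accounts for both the boundary dissipation and the boundary source on the right-hand side of the claimed inequality.

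Third, the singular bulk term $\sum_\al \AA_\al^2\la \tfrac{(\pt_t+\eps^{-1}\xi\cdot\nabla_x)\sqrt\mu}{\sqrt\mu}f_\al,f_\al\ra$ is handled by Proposition \ref{bulk-est} together with Corollary \ref{bulk-est1}, while the corresponding commutator
\[
\sum_\al \AA_\al^2\lw\la \pt^\al\lw\{\tfrac{(\pt_t+\eps^{-1}\xi\cdot\nabla_x)\sqrt\mu}{\sqrt\mu}f\rw\}-\tfrac{(\pt_t+\eps^{-1}\xi\cdot\nabla_x)\sqrt\mu}{\sqrt\mu}f_\al,f_\al\rw\ra
\]
is controlled by Proposition \ref{bulk-com}. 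The dangerous $\pt_3 U_1\sim\kappa^{-1/2}$ factor in the shear direction is paired with $b_{\al,3}$ and rewritten via the conservation law of Lemma \ref{decom-b3}, so that the Hardy inequality (valid thanks to $b_3|_{x_3=0}=0$ in Proposition \ref{bc-con12}) converts $b_{\al,3}/x_3$ into a bounded quantity of $\pt f_\al$-type. Since $|\gamma|\ge 1$ forces at least one tangential derivative of $U_1\sim u_e^0+u_p^0$, all such pieces gain $\sqrt\kappa$; this, combined with the analytic Young inequality and the bound on $\nabla_x\cdot U$, reduces everything to $o(1)\mathcal D_f^2+\mathcal E_f^2+\tfrac{1}{\eps\delta}\mathcal E_{\nabla_x\cdot U}\mathcal E_f$, plus a $\mathcal Y_f^2$ contribution absorbed by the $M_0\mathcal Y_f^2$ generated by the shrinking radius.

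Fourth, the $L$-commutator $\tfrac{1}{\kappa\eps^2}[L,\pt^\al]f$ is handled directly by Proposition \ref{com-est}, giving $o(1)\mathcal D_f^2+\mathcal E_f\mathcal D_f$. The forcing pairs into $\sum_\al \AA_\al^2|\la g_\al,f_\al\ra|$, which is kept as is on the right-hand side. Choosing $M_0$ large enough to dominate the $\mathcal Y_f^2$ terms produced above, and absorbing the $o(1)\mathcal D_f^2$ into the coercive dissipation, we obtain the stated inequality on $[0,T]$ with $T$ depending only on $\tau_0$ and $M_0$ (hence independent of $\eps,\kappa$). The main obstacle I anticipate is the bookkeeping of the singular bulk term: ensuring that every $\pt_{x_3}U$-type factor arising from the Leibniz expansion in \eqref{ran-888} is either (i) paired with a component having a Hardy-type vanishing through $b_3$ via Lemma \ref{decom-b3}, or (ii) carries a tangential derivative on $U$ contributing $\sqrt\kappa$, so that no net $\kappa^{-1/2}$ survives after summation against $\AA_\al^2$ and closure through $\mathcal D_f,\mathcal Y_f$.
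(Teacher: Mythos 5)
Your overall strategy matches the paper's: weight the derivative-by-derivative energy identity by $\AA_\al(t)^2$, sum in $\al$, use the shrinking-radius term $-\tau'\mathcal Y_f^2>0$ to absorb the commutator/loss-of-derivative contributions, and delegate the singular bulk term, its commutator, and the $L$-commutator to Propositions \ref{bulk-est}, \ref{bulk-com}, \ref{com-est} and Corollary \ref{bulk-est1}. However, there is a genuine gap in your treatment of the transport/boundary term. Integration by parts and the boundary condition give
\begin{equation*}
-\frac 1 \eps \la \xi\cdot\nabla_x f_\al,f_\al\ra_{L^2_{x,\xi}}
=\frac 1 {2\e}\int_{\gamma_+}\lw|(\II-\P_{\gamma_+})f_\al \rw|^2 d\gamma
-\frac{\eps}{2\delta^2}\|\pt^\al r\|_{L^2_{\gamma_-}}^2
+\frac{1}{\delta}\int_{\gamma_-}\lw(\P_{\gamma_+}f_\al \rw)\, \pt^\al r\,d\gamma,
\end{equation*}
and your display simply drops the last, $\delta^{-1}$-size cross term. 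You justify the step by ``$\P_{\gamma_+}$ is an $L^2_{\gamma_+}$-orthogonal projection,'' but orthogonality lives only on $\gamma_+$ (where $\xi_3<0$), while the cross integral is taken over $\gamma_-$ (where $\xi_3>0$); it is not a pairing against which $r=(\II-\P_{\gamma_+})\ff$ is orthogonal. If the cross term did not vanish one would face $\delta^{-1}\|\P_{\gamma_+}f_\al\|_{L^2_{\gamma_-}}\|\pt^\al r\|_{L^2_{\gamma_-}}$, and the $\gamma_+$ dissipation gives no control on $\P_{\gamma_+}f_\al$; with $\delta\sim\kappa^M$ this would destroy the estimate.

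The paper closes this by an explicit computation showing $\int_{\gamma_-}\P_{\gamma_+}f_\al\cdot\pt^\al r\,d\gamma=0$, using the reflection symmetry of $\ff$ at the boundary: since $U|_{x_3=0}=0$ and $\pt^\al$ contains only tangential/time derivatives, $\pt^\al\ff|_{x_3=0}=\pt^\al P|_{x_3=0}\sqrt{\mu_0}-\kappa\,\pt^\al\pt_3 U_j\,A_{3j}(\xi)$; then $A_{3j}$ is odd in $\xi_j$ for $j\neq 3$ and $A_{33}$ is even in $\xi_3$, while $\sqrt{\mu_0}$ is even in $\xi_3$, which forces $\int_{\xi_3>0}\sqrt{\mu_0}\,\pt^\al\ff\,|\xi_3|\,d\xi=\int_{\xi_3<0}\sqrt{\mu_0}\,\pt^\al\ff\,|\xi_3|\,d\xi$ and hence the cross term to vanish after using $c_\mu\int_{\xi_3>0}\mu_0|\xi_3|\,d\xi=1$. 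This parity mechanism is a structural property of the Chapman--Enskog corrector $\ff$ and the no-slip boundary; it is not a consequence of the projection's orthogonality, so you should supply it explicitly. The rest of your argument (coercivity of $L$, absorption of $\mathcal Y_f^2$ by the decreasing radius, and the role of Lemma \ref{decom-b3} and Proposition \ref{bc-con12} inside the bulk estimates) is aligned with the paper.
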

\begin{proof} 
Recalling \eqref{EDH}, we have
\beq\label{ran-70}
\frac 1 2 \frac{d}{dt}\mathcal E_f(t)^2=\sum_{\al\in \mathbb N_0^3} \AA_\al(t)^2
\la f_\al,\pt_t f_\al\ra_{L^2}+\tau'(t)\mathcal Y_f(t)^2.
\eeq
Here we used the fact that 
\[
\frac 1 2\frac d {dt}\AA_\al(t)^2=\tau'(t)\frac{|\al|+1}{\tau} \AA_\al(t)^2.
\]
Using the equation \eqref{f-al-eq} and \eqref{ran-70}, we have
 \[\bega 
&\frac 1 2\frac{d}{dt}\mathcal E_f(t)-\tau'(t)\mathcal Y_f(t)^2 \\
&=-\sum_{\al\in \mathbb N_0^3}\AA_\al(t)^2 \frac 1 \eps \la \xi\cdot\nabla_x f_\al,f_\al\ra_{L^2_{x,\xi}}-\sum_{\al\in \mathbb N_0^3}\AA_\al(t)^2 \frac 1 {\kappa\e^2}\la Lf_\al,f_\al\ra_{L^2_{x,\xi}}\\
&\quad-\sum_{\al\in \mathbb N_0^3}\AA_\al(t)^2 \int_{\Omega\times \R^3}\frac{(\pt_t+\frac 1 \eps \xi\cdot\nabla_x)\sqrt\mu}{\sqrt\mu}|f_\al|^2dxd\xi\\
&\quad+\frac 1 {\kappa\eps^2}\sum_{\al\in \mathbb N_0^3}\AA_\al(t)^2 \la [L,\pt^\al]f,f_\al
\ra_{L^2_{x,\xi}}+\sum_{\al\in \mathbb N_0^3}\AA_\al(t)^2\la g_\al,f_\al \ra_{L^2_{x,\xi}}\\
&\quad+\sum_{\al\in \mathbb N_0^3}\AA_\al(t)^2 \lw\la \frac{(\pt_t+\frac 1 \eps \xi\cdot\nabla_x)\sqrt\mu}{\sqrt\mu}f_\al-\pt^\al\lw\{\frac{(\pt_t+\frac 1 \eps \xi\cdot\nabla_x)\sqrt\mu}{\sqrt\mu}f\rw\},f_\al\rw\ra.\\
\enda
\]
It suffices to bound the boundary term, all the other terms are treated in Propositions \ref{bulk-com}, \ref{com-est}, Corollary \ref{bulk-est1}.
We have 
\[\bega 
&-\frac 1 \eps \la \xi\cdot\nabla_x f_\al,f_\al\ra_{L^2_{x,\xi}}\\
&=\frac 1 {2\eps}\int_{\pt\Omega\times\R^3}|f_\al|^2 d\gamma =\frac 1{2\e}\int_{\gamma_+}|f_\al|^2d\gamma-\frac 1 {2\e}\int_{\gamma_-}|f_\al |^2d\gamma\\
&=\frac 1{2\e}\int_{\gamma_+}|f_\al|^2d\gamma-\frac 1{2\e}\int_{\gamma_-}\lw|\P_{\gamma_+}f_\al -\frac{\e}{\delta}\pt^\al r
\rw|^2d\gamma\\
&=\frac 1 {2\e}\int_{\gamma_+}\lw(|f_\al |^2-|\P_{\gamma_+}f_\al |^2
\rw)d\gamma-\frac{\eps}{2\delta^2}\|\pt^\al r\|_{L^2_{\gamma_-}}^2+\frac{1}{\delta}\int_{\gamma_-}\lw(\P_{\gamma_+}f_\al \rw) \pt^\al rd\gamma\\
&=\frac 1 {2\e}\int_{\gamma_+}\lw|(\II-\P_{\gamma_+})f_\al \rw|^2
d\gamma-\frac{\eps}{2\delta^2}\|\pt^\al r\|_{L^2_{\gamma_-}}^2+\frac{1}{\delta}\int_{\gamma_-}\lw(\P_{\gamma_+}f_\al \rw) \pt^\al rd\gamma.\enda 
\]
Now we will show that the third term in the above is zero:
\[
 \int_{\gamma_-}\P_{\gamma_+}f_\al \cdot \pt^\al r d\gamma=0.
\]
To this end, we note that
\[
\int_{\gamma_-}\P_{\gamma_+}f_\al \cdot \pt^\al  r d\gamma=\int_{\xi_3>0}z_{f_\al}(x)c_\mu \sqrt{\mu_0(\xi)}\cdot (\pt^\al \ff(t,x,\xi)-z_{\pt^\al \ff}(x)c_\mu\sqrt{\mu_0(\xi)}) |\xi_3| dS(x)d\xi,
\]
where 
\[
z_{f_\al}(x)=\int_{\xi_3<0}f_\al (t,x,\xi)\sqrt{\mu_0(\xi)}|\xi_3|d\xi,\quad z_{\pt^\al \ff}(x)=\int_{\xi_3<0}\pt^\al \ff(t,x,\xi)\sqrt{\mu_0(\xi)}|\xi_3|d\xi.\]
Hence
\beq\label{ran-71}
\bega
&\int_{\gamma_-}\P_{\gamma_+}f_\al \cdot \pt^\al r d\gamma\\
&=\int_{\pt\Omega}c_\mu z_{f_\al} (x)\lw\{ 
\int_{\xi_3>0} \sqrt{\mu_0(\xi)}\pt^\al \ff(t,x,\xi) |\xi_3|d\xi-z_{\pt^\al \ff}(x)c_\mu\int_{\xi_3>0}\mu_0(\xi)|\xi_3|d\xi\rw\} dS(x).
\enda 
\eeq
Since $\pt^\al \ff(t,x,\xi)|_{x_3=0}=\pt^\al P|_{x_3=0}\sqrt\mu_0(\xi)-\kappa\pt^\al \pt_i u_j A_{ij}(\xi).$
We note that $A_{i3}(\xi)$ is odd in $\xi_1,\xi_2$ for $i\neq 3$ and $A_{33}(\xi)$ is even in $\xi_3$. Therefore 
\[
z_{\pt^\al \ff}(x)=\int_{\xi_3<0}\pt^\al \ff (t,x,\xi)\sqrt{\mu_0(\xi)}|\xi_3|d\xi=\int_{\xi_3>0}\pt^\al \ff (t,x,\xi)\sqrt{\mu_0(\xi)}|\xi_3|d\xi.
\]
Hence from \eqref{ran-71} and above identity, we get
\[
\int_{\gamma_-}\lw(\P_{\gamma_+}f_\al\rw) \pt^\al rd\gamma=0.
\]
In the end, we have
\beq\label{ran-72}\bega
-\sum_{\al\in \mathbb N_0^3}\AA_\al(t)^2 \frac 1 \eps \la \xi\cdot\nabla_x f_\al,f_\al\ra_{L^2_{x,\xi}}&\ge \frac 1 {2\e}\sum_{\al\in \mathbb N_0^3}\AA_\al(t)^2
\int_{\gamma_+}\lw|(\II-\P_{\gamma_+})f_\al \rw|^2
d\gamma\\
&\quad -\frac{\e}{2\delta^2}\sum_{\al\in \mathbb N_0^3}\AA_\al(t)^2\|\pt^\al r\|_{L^2_{\gamma_-}}^2.\enda
\eeq
The proof is complete.
\end{proof}
\subsection{The $L^2$ nonlinear energy estimate
}
In this section, we estimate the term $ \sum_{\al\in \mathbb N_0^3}\AA_\al^2\lw|\la g_\al,f_\al \ra_{L^2_{x,\xi}}\rw|$ appearing in Theorem \ref{l2-ana}. We estimate the term $-\frac 1{\eps\delta\sqrt\mu}\textbf{R}_a$ in Proposition \ref{Ra-term}, the term $\frac 2 {\kappa}\Gamma(\ff,f)$ in Proposition \ref{est-Gammaf2f} 
nonlinear term in Proposition \ref{G-ff}.

     \begin{proposition}\label{G-ff}[Estimate of the nonlinear term] Recalling the norms  in \eqref{EDH}, 
     there holds 
     \beq\bega 
&\frac{\delta}{\kappa\eps}\int_0^t \sum_\al \AA_\al(t)^2 \la\pt^\al \Gamma(f,f),\pt^\al f\ra_{L^2_{x,\xi}}ds\\
&\lesssim \delta\kappa^{-\frac 12}\lw(\nl \mathcal D_f\nr_{L^2_t}+\mathcal E_f(t)\rw) \nl \nl \AA_\al \P f_\al
\nr_{L^\infty_tL^6_x}
\nr_{\ell_\al^{2}} \nl \nl \AA_\al \P f_\al
\nr_{L^2_t L^3_x} 
\nr_{\ell_\al^2}\\
&\quad+\delta \eps\nl \mathcal D_f\nr_{L^2_t}^2\sup_{0\le s\le t} \mathcal H_{\rho,f}(s).\enda
          \eeq
     \end{proposition}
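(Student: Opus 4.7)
The plan is to exploit two structural facts. First, $\Gamma(f,f)\perp\ker L$ pointwise in $(t,x)$, so pairing against $f_\al$ reduces to pairing against $\IP f_\al$, which is precisely what the dissipation norm $\mathcal D_f$ controls. Second, the bilinear structure of $\Gamma$ together with the analytic weight $\AA_\al(t) = \tau^{|\al|}\la\al\ra^9/\al!$ (whose factor $\la\al\ra^9$ makes discrete convolutions summable in $\ell^2_\al$) allows the Leibniz expansion of $\pt^\al$ to close. Concretely, I would write
\[
\pt^\al \Gamma(f,f) \;=\; \sum_{\beta+\gamma+\sigma=\al}\frac{\al!}{\beta!\gamma!\sigma!}\,\Gamma_\sigma(f_\beta,f_\gamma),
\]
where $\Gamma_\sigma$ denotes the bilinear form $\Gamma$ with its implicit $\sqrt\mu$-factor replaced by $\pt^\sigma\sqrt\mu$, then absorb the $\sigma$-sum using the analytic pointwise bound $\sum_{|\sigma|\ge 1}\AA_\sigma\nl e^{p_0|\xi|^2}\pt^\sigma\sqrt\mu\nr_{L^\infty}\lesssim\eps\sqrt\kappa$ supplied by Proposition \ref{local-1}.

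Next I would split each argument as $f_\beta = \P f_\beta + \IP f_\beta$, generating $\P\P$, mixed $\P\IP$, and $\IP\IP$ pieces. For the $\P\P$ piece I apply the pointwise-in-$(t,x)$ estimate
\[
|\la \Gamma(g,h),\IP f_\al\ra_{L^2_\xi}|\;\lesssim\; \nl g\nr_{L^2_\xi}\nl h\nr_{L^2_\xi}\nl\sqrt\nu\,\IP f_\al\nr_{L^2_\xi},
\]
then Hölder in $x$ with the split $L^6_x\cdot L^3_x\cdot L^2_x$ and Hölder in $t$ placing the $\mathcal D_f$ factor in $L^2_t$, one $\P f$ factor in $L^\infty_t L^6_x$, and the other in $L^2_t L^3_x$; the convolution in $(\beta,\gamma)$ is summed by discrete Young, justified by the $\la\al\ra^9$ weight. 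The prefactor tracks as $\frac{\delta}{\kappa\eps}\cdot \eps\sqrt\kappa = \delta\kappa^{-1/2}$ (the $\eps\sqrt\kappa$ coming from $\nl\sqrt\nu\,\IP f_\al\nr_{L^2}\lesssim \eps\sqrt\kappa\,\AA_\al^{-1}\mathcal D_f$ by definition of $\mathcal D_f$), which matches the first RHS term. The mixed $\P\IP$ contributions are treated identically by placing the $\IP$ factor in $L^2_{x,\xi}$ and bounding it by $\mathcal E_f(t)$, producing the $\mathcal E_f(t)$ summand inside the parentheses.

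For the $\IP\IP$ piece I would instead use the weighted $L^\infty_\xi$ estimate
\[
|\la\Gamma(g,h),\IP f_\al\ra_{L^2_\xi}|\;\lesssim\; \nl e^{\rho|\xi|^2}g\nr_{L^\infty_\xi}\nl\sqrt\nu\,h\nr_{L^2_\xi}\nl\sqrt\nu\,\IP f_\al\nr_{L^2_\xi},
\]
placing the $L^\infty$ factor on whichever index is ``outermost'' in the convolution so that the two remaining $L^2$ factors pair into $\nl\mathcal D_f\nr_{L^2_t}^2$; the prefactor then reads $\frac{\delta}{\kappa\eps}(\eps\sqrt\kappa)^2 = \delta\eps$, matching the second RHS term. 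The main obstacle I foresee is the bookkeeping of the $\sigma$-sum when derivatives land on $\sqrt\mu$: one must verify that the analytic weight on $\sqrt\mu$ from Proposition \ref{local-1} is summable against $\AA_\al$ without spoiling the $\eps\sqrt\kappa$ gain needed to separate the two RHS terms. A secondary care point is that $\P$ depends on $(t,x)$ through $U$, so when invoking the orthogonality $\la\Gamma(\cdot,\cdot),\P k\ra_{L^2_\xi}=0$ one must use $\P$ at the ambient $(t,x)$ where the inner product is evaluated; no additional commutator is generated at that step.
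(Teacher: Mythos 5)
Your decomposition into $\P\P$, $\P\IP$, $\IP\IP$ pieces, the Leibniz expansion including $\Gamma_\sigma$ with $\pt^\sigma\sqrt\mu$, the $L^6_x\cdot L^3_x\cdot L^2_x$ H\"older for $\P\P$, the $L^\infty_{x,\xi}\cdot L^2\cdot L^2$ treatment for $\IP\IP$, and the prefactor bookkeeping using $\nl\sqrt\nu\,\AA_\al\IP f_\al\nr_{L^2_{x,\xi}}\lesssim\eps\kappa^{1/2}\mathcal D_f$ and Proposition~\ref{local-1} all match the paper's argument. However, your account of where the $\mathcal E_f(t)$ summand comes from is wrong, and the mechanism you propose does not close.

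You attribute the $\mathcal E_f(t)$ term to the mixed $\P\IP$ piece by ``placing the $\IP$ factor in $L^2_{x,\xi}$ and bounding it by $\mathcal E_f(t)$.'' Two things fail here. First, the H\"older does not close: if $\IP f_\gamma$ sits in $L^2_x$ and $\sqrt\nu\,\IP f_\al$ in $L^2_x$, the remaining $\P f_\beta$ must sit in $L^\infty_x$, which is not controlled by any of $\mathcal E_f$, $\mathcal D_f$, $\mathcal H_{\rho,f}$, or the $L^6,L^3$ moment norms. Second, the form of the target term rules this out: the $\mathcal E_f(t)$ summand sits inside the same parentheses as $\nl\mathcal D_f\nr_{L^2_t}$ and is multiplied by $\nl\nl\AA_\al\P f_\al\nr_{L^\infty_tL^6_x}\nr_{\ell^2_\al}\nl\nl\AA_\al\P f_\al\nr_{L^2_tL^3_x}\nr_{\ell^2_\al}$, i.e.\ by \emph{two} macroscopic factors. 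A mixed $\P\IP$ term produces at most one $\P$ factor and so cannot generate this product. In the paper the mixed piece is lumped with $\IP\IP$ via $\nl\sqrt\nu\IP f_\beta\nr_{L^2}\nl e^{\rho|\xi|^2}f_\gamma\nr_{L^\infty}$ (with $f_\gamma$ allowed to be either $\P$ or $\IP$) and contributes only to the $\delta\eps\nl\mathcal D_f\nr^2_{L^2_t}\mathcal H_{\rho,f}$ term.

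The $\mathcal E_f(t)$ summand actually originates from the commutator sum $|\sigma|\ge 1$ with \emph{both} arguments macroscopic. This is the one place where your opening observation that ``$\Gamma(f,f)\perp\ker L$ so pairing against $f_\al$ reduces to pairing against $\IP f_\al$'' breaks down: once $\pt^\sigma$ hits $\sqrt\mu$ the resulting integrand $\Gamma_\sigma(\P f_\gamma,\P f_\vr)$ is no longer orthogonal to $\ker L$, so the pairing must be taken against the full $f_\al$, controlled in $L^2_{x,\xi}$ by $\mathcal E_f(t)$ rather than by $\eps\kappa^{1/2}\mathcal D_f$. The missing $\eps\kappa^{1/2}$ that would ordinarily come from the dissipation norm is then supplied by $\sum_{|\sigma|\ge1}\AA_\sigma\nl e^{p_0|\xi|^2}\pt^\sigma\sqrt\mu\nr_{L^\infty}\lesssim\eps\kappa^{1/2}$, so the prefactor again lands on $\delta\kappa^{-1/2}$, now multiplying $\mathcal E_f(t)$ rather than $\nl\mathcal D_f\nr_{L^2_t}$. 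Your closing caveat that ``no additional commutator is generated'' from the $(t,x)$-dependence of $\P$ is correct, but you overlooked that the commutator terms in $\pt^\al$ itself are precisely where the orthogonality---and with it your reduction to $\IP f_\al$---is lost.
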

     \begin{proof}     
Recalling the definition of $\Gamma(f,f)$, we have 
   \[\bega 
 \pt^\al \Gamma(f,f)
 &=\sum_{\beta+\gamma=\al}\frac{\al!}{\beta!\gamma!}\Gamma(f_\beta, f_\gamma)\\
 &\quad+\sum_{\substack{\beta+\gamma+\vr=\al\\|\beta|\ge 1}}\frac{\al!}{\beta!\gamma!\vr!}\int_{\R^3\times\S^2}|(\xi-\xi_\star)\cdot\w|\pt^\beta \sqrt\mu(\xi_\star)\lw( f_\gamma(\xi')f_\vr(\xi_\star')+f_\gamma(\xi') f_\vr(\xi_\star')
 \rw)d\w d\xi_\star\\
 &\quad+\sum_{\substack{\beta+\gamma+\vr=\al\\|\beta|\ge 1}}\frac{\al!}{\beta!\gamma!\vr!}\int_{\R^3\times\S^2}|(\xi-\xi_\star)\cdot\w|\pt^\beta \sqrt\mu(\xi_\star)\lw( f_\gamma (\xi)f_\vr(\xi_\star)+f_\gamma(\xi)f_\vr(\xi_\star)
 \rw)d\w d\xi_\star.\\
   \enda 
 \]
 This implies
\[\bega
&\la \pt^\al \Gamma(f,f),f_\al\ra_{L^2_{t,x,\xi}}\\
&\lesssim \sum_{\beta+\gamma=\al}\frac{\al!}{\beta!\gamma!}\nl \P f_\beta
 \nr_{L^\infty_tL^6_x}\nl \P f_\gamma\nr_{L^2_tL^3_x}\nl \sqrt\nu \IP f_\al\nr_{L^2_{t,x,\xi}}\\
 &+\nl \sqrt\nu \IP f_\al\nr_{L^2_{t,x,\xi}}\sum_{\beta+\gamma=\al}\frac{\al!}{\beta!\gamma!}\nl\sqrt\nu \IP f_\beta
 \nr_{L^2_{t,x,\xi}}\nl e^{\rho|\xi|^2}f_\gamma\nr_{L^\infty_{t,x,\xi}}
 \\
 &+ \nl f_\al\nr_{L^2}\sum_{\substack{\beta+\gamma+\vr=\al\\|\beta|\ge 1}}\frac{\al!}{\beta!\gamma!\vr!}\nl e^{o(1)|\xi|^2}\pt^\beta\sqrt\mu\nr_{L^\infty_{t,x,\xi}} \nl \P f_\gamma 
 \nr_ {L^\infty_t L^6_x}\nl \P f_\vr\nr_{L^2_t L^3_x}\\
 &+\nl \sqrt\nu \IP f_\al
 \nr_{L^2_{t,x,\xi}}\sum_{\substack{\beta+\gamma+\vr=\al\\|\beta|\ge 1}}\frac{\al!}{\beta!\gamma!\vr!}\nl e^{o(1)|\xi|^2}\pt^\beta\sqrt\mu\nr_{L^\infty_{t,x,\xi}} \nl\sqrt\nu \IP f_\gamma\nr_{L^2_{t,x,\xi}}\nl e^{\rho|\xi|^2}f_\vr\nr_{L^\infty_{t,x,\xi}}.
   \enda 
 \]
 Using the discrete Young inequality, the estimate of the local Maxwellian in Lemma \ref{local-1} and the fact that
    \beq\label{algebra-est}
   \begin{cases}
   \frac{\al!\AA_\al}{\beta!\AA_\beta\gamma!\AA_\gamma}&\lesssim \la \gamma\ra^{-9}1_{\{|\beta|\ge  |\gamma|\}}+\la \beta\ra^{-9}1_{\{|\gamma|\ge |\beta|\}},\\
   \frac{\al!\AA_\al}{\beta!\AA_\beta\gamma!\AA_\gamma \vr!\AA_\vr}&\lesssim \la \gamma\ra^{-9}1_{\{|\vr|\ge |\gamma|
   \}}+\la \vr\ra^{-9}1_{\{|\gamma|\ge |\vr|
   \}
   },
   \end{cases}   \eeq
 we obtain 
  \[\bega 
&\nl  \la \AA_\al^2(t)\pt^\al \Gamma(f,f),f_\al\ra_{L^2_{t,x,\xi}}\nr_{\ell_\al^1}\\
&\lesssim \kappa^{\frac 12}\eps \mathcal D_f(t)\nl \nl \AA_\al \P f_\al
\nr_{L^\infty_tL^6_x}
\nr_{\ell_\al^2} \nl \nl \AA_\al \P f_\al
\nr_{L^2_t L^3_x} 
\nr_{\ell_\al^2}+\eps^2\kappa\mathcal D_f(t)^2 \mathcal H_{\rho,f}(t)\\
&\quad+\eps \kappa^{\frac12} \mathcal E_f(t)\nl \nl \AA_\al \P f_\al
\nr_{L^\infty_tL^6_x}
\nr_{\ell_\al^2} \nl \nl \AA_\al \P f_\al
\nr_{L^2_t L^3_x} 
\nr_{\ell_\al^2}.
\enda
 \] The proof is complete.
    \end{proof}

     Next we bound the error term in both the $L^2$ analytic and $L^\infty$ analytic estimates. To simplify the statement of the theorem, we define following functions: 
     \[\bega
    & d_D(t)=\frac{\eps^2\kappa^{-\frac 12}}{\delta} 
\lw(\||P
\||_{\tau_0,\infty}+\kappa \||\nabla_x U
\||_{\tau_0,\infty}
\rw) ^2+\frac{\eps\kappa^{\frac 32}}{\delta}
\||\nabla_x^2 U
\||_{\tau_0,\infty},\\
&d_{E}(t)=\frac{\kappa\e^3}{\delta}\nl \NS(U,P)
\nr_{\tau_0,2}+\frac{1}{\eps\delta}\nl \div U
\nr_{\tau_0,2}+\frac{\kappa\eps}{\delta} \|| \nabla_x U
\||_{\tau_0,2}\\
&+\frac{\eps}{\delta}\lw\{\||\pt_t P+U\cdot\nabla_x P\||_{\tau_0,\infty}+\eps \|| P(\pt_t U+U\cdot\nabla_x U)\||_{\tau_0,\infty}+\|| P\pt_i U_j \||_{\tau_0,\infty}\rw\}\\
&+\frac{\eps\kappa}{\delta}\lw(\||  (\pt_t+U\cdot\nabla_x)(\pt_i U_j)\||_{\tau_0,2}+\eps\||  (\pt_t U+U\cdot\nabla_x U)(\pt_i U_j)\||_{\tau_0,2}+\|| \nabla_x U
\||_{\tau_0,\infty}\|| \nabla_x U
\||_{\tau_0,2}\rw).
\enda
     \]

\begin{proposition} [Estimate of the forcing term]  \label{Ra-term} Recalling the formula of $\textbf{R}_a$  in \eqref{R-a-def}, as long as $|T_M^{-1}-1|\ll 1$, we have 
\[\bega 
&\sum_{\al\in\mathbb N_0^3}\int_0^t\AA_\al^2(t)\lw
\la \pt^\al \lw(\frac{1}{\eps\delta\sqrt\mu}\textbf{R}_a
\rw),f_\al
\rw\ra_{L^2_{x,\xi}}ds\lesssim \nl \mathcal E_f\nr_{L^2_t}^2+\nl\mathcal D_f\nr_{L^2_t}^2,
\enda 
\]
under the assumption
\[
d_E(t)+d_D(t)\lesssim 1.
\]
\end{proposition}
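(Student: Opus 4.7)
The plan is to decompose $\textbf{R}_a$ from \eqref{R-a-def} into its twelve summands and treat each one after distributing $\pt^\al$ via the Leibniz rule. Every summand has the schematic form $g(t,x)\,\phi(\vp)\,\sqrt\mu$ (polynomial velocity factor) or $g(t,x)\,\phi(\vp)\,A_{ij}(\vp)$ (microscopic factor). After dividing by $\eps\delta\sqrt\mu$ and differentiating, I would separate derivatives falling on $g$, on $\phi$ (which depends on $(t,x)$ through the local-Maxwellian velocity $U$), and on $\sqrt\mu$, and then take the $L^2_{x,\xi}$ inner product with $f_\al=\P f_\al+(\II-\P)f_\al$. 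The polynomial-in-$\vp$ pieces pair naturally with $\P f_\al$ to produce the scalar components $(a_\al,b_\al,c_\al)$, while the $A_{ij}$-valued pieces, lying in the range of $\II-\P$ by construction, pair only with $(\II-\P)f_\al$ and absorb a $\nu^{1/2}$ weight on the test side. The $\vp$-moments are summable in $|\al|$ by Proposition \ref{local-1} and Lemma \ref{M-local-2}, and the $x$-dependent analytic weights are recombined via the discrete Young inequality together with \eqref{algebra-est}.

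Next I would match each resulting analytic coefficient to a summand of $d_E(t)$ or $d_D(t)$ and close by Cauchy--Schwarz. The $\eps\NS(U,P)\cdot\vp\mu$ contribution, paired with $b_\al$, yields $\delta^{-1}|\NS(U,P)\cdot b_\al|$, which after $\ell^2_\al$-summation is dominated by $\lesssim d_E(t)\,\mathcal E_f(t)$; the $\frac{1}{3}\div U\,|\vp|^2\mu$ term matches the $\frac{1}{\eps\delta}\||\div U\||_{\tau_0,2}$ piece of $d_E$; the three cubic $P$-terms match the $L^\infty$ block of $d_E$; and the viscous transport residues involving $A_{ij}$, namely $\eps\kappa\,\pt_{i\ell}U_j\IP(\vp_\ell A_{ij})\sqrt\mu$ and its relatives, match the $\frac{\eps\kappa}{\delta}$ block of $d_E$ after extracting the $\eps\sqrt\kappa$ gain built into $\mathcal D_f$. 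Integrating over $s\in[0,t]$ and applying Cauchy--Schwarz in time produces $\lesssim\|\mathcal E_f\|_{L^2_t}^2+\|\mathcal D_f\|_{L^2_t}^2$ for this block of terms.

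The main obstacle is the kernel term $-\frac{\eps^2}{\kappa}\mu^{1/2}\Gamma(\ff,\ff)$, which after division by $\eps\delta\sqrt\mu$ carries the prefactor $\frac{\eps}{\kappa\delta}$ and a bilinear structure in $\ff=P\sqrt\mu-\kappa\sum\pt_iU_jA_{ij}$. Expanding the square and applying $\pt^\al$ yields trilinear analytic convolutions of $P$, $\kappa\nabla U$ and $\pt^\sigma\sqrt\mu$; the delicate bookkeeping is that pairing with $\nu^{1/2}(\II-\P)f_\al$ extracts the $\eps\sqrt\kappa$ factor from the definition of $\mathcal D_f$, which combines with $\frac{\eps}{\kappa\delta}$ to yield exactly the coefficient $\frac{\eps^2\kappa^{-1/2}}{\delta}$ heading $d_D(t)$. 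A parallel reduction on the transport-on-$A_{ij}$ piece $\eps^2\kappa\mu^{1/2}\pt_iU_j(\vp\cdot\nabla_x)A_{ij}$ matches the $\frac{\eps\kappa^{3/2}}{\delta}\||\nabla_x^2U\||_{\tau_0,\infty}$ summand of $d_D$. Once every piece has been matched against $d_E$ or $d_D$, the hypothesis $d_E(t)+d_D(t)\lesssim 1$ closes the estimate.
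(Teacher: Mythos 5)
Your proposal follows essentially the same route as the paper: decompose $\textbf{R}_a$ via \eqref{ran-765}, Leibniz-differentiate, pair macroscopic pieces against $\P f_\al$ and microscopic pieces against $\nu^{1/2}(\II-\P)f_\al$ to extract $\mathcal E_f$ and (with the $\eps\kappa^{1/2}$ gain) $\mathcal D_f$, recombine the analytic coefficients via \eqref{algebra-est}, match them to the blocks of $d_E$ and $d_D$, and close by Cauchy--Schwarz in time; this is exactly the mechanism behind the paper's use of Proposition~\ref{bilinear-1}, Proposition~\ref{VIPG}, Corollary~\ref{corol-Aij}, and Lemma~\ref{local-1}. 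One small misattribution: the $\frac{\eps\kappa^{3/2}}{\delta}\||\nabla_x^2U\||_{\tau_0,\infty}$ summand of $d_D$ comes from the $-\frac{\kappa}{\delta}\pt_{i\ell}U_j\,\IP(\vp_\ell A_{ij})$ term (treated via Proposition~\ref{VIPG}), not from the transport-on-$A_{ij}$ piece $\eps^2\kappa\,\mu^{1/2}\pt_iU_j(\vp\cdot\nabla_x)A_{ij}$, which only brings down $\eps\nabla_x U$ through $\nabla_x A_{ij}$ and is absorbed into the $\frac{\eps\kappa}{\delta}\||\nabla_xU\||_{\tau_0,\infty}\||\nabla_xU\||_{\tau_0,2}$ block of $d_E$; this does not affect the validity of your argument.
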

\begin{proof}
Now we bound $L^2$ norm. From \eqref{R-a-def}, we have  \beq\label{ran-765}
\bega 
\frac {1}{\eps\delta\sqrt\mu}\textbf{R}_a&=\frac{1}{\delta} \NS(U,P)\cdot\vp \sqrt\mu-\frac{\eps}{\kappa\delta}\Gamma(\ff,\ff)+\frac 1 {3\eps\delta} \div U |\vp|^2\sqrt \mu-\frac \kappa \delta \pt_{i\ell}U_j\IP (\vp_\ell A_{ij})\\
&\quad+\frac \eps \delta \sqrt\mu\lw\{(\pt_t P+U\cdot\nabla_x P)+\eps P(\pt_t U+U\cdot\nabla_x U)\cdot\vp+\frac 1 2 P \pt_i U_\ell \vp_i\vp_\ell
\rw\} \\ 
&\quad-\frac{ \eps \kappa}{\delta} \lw\{(\pt_t+U\cdot\nabla_x)(\pt_i U_j)+\eps (\pt_tU+U\cdot\nabla_x U)\cdot \vp \pt_i U_j+\pt_i U_j \pt_k U_\ell \vp_k\vp_\ell
\rw\}A_{ij} \\
&\quad-\frac{\kappa\eps}{\delta}  \pt_i U_j\lw(\eps^2 \pt_t+(\eps^2 U+\vp)\cdot\nabla_x
\rw)A_{ij}.\\
\enda
\eeq
First we see that
\[\bega 
&\frac{1}{\delta}\sum_{\al}\AA_\al^2(t)\la \pt^\al\lw\{\NS(U,P)\cdot\vp \sqrt\mu\rw\},f_\al\ra_{L^2_{x,\xi}}\\
&\lesssim \frac{1}{\delta} \lw\{\sum_{\al}\AA_\al^2(t)\nl \NS(U,P)\cdot\vp \sqrt\mu
\nr_{L^2_{x,\xi}}^2\rw\}^{\frac12}\mathcal E_f(t).
\enda 
\]
Using Lemma \ref{local-1} for the analytic pointwise bound of $\mu$, we get
\[
\sum_{\al\in\mathbb N_0^3}\frac{\tau_0^{|\al|}}{\al!}\nl
\pt^\al\lw\{ \NS(U,P)\cdot\vp \sqrt\mu \rw\}
\nr_{L^2_x}
\lesssim  \nl \NS(U,P)
\nr_{\tau_0,2} e^{-\frac 1 {10} |\xi|^2}.
\]
Combining the above two inequalities, we get
\[
\frac 1 \delta \sum_\al\AA_\al^2 \la \pt^\al\lw\{\NS(U,P)\cdot\vp \sqrt\mu\rw\},f_\al\ra_{L^2_{x,\xi}}\lesssim \frac 1 \delta
 \nl \NS(U,P)
\nr_{\tau_0,2}\mathcal E_f(t).\]
Next we bound the second term coming from \eqref{ran-765}. Using the trilinear estimate in Proposition \ref{bilinear-1} and then Lemma \ref{local-1}, we have 
\[\bega
&\frac{\eps}{\kappa\delta}\int_0^t\sum_\al\AA_\al^2 \la\pt^\al \Gamma(\ff,\ff),f_\al\ra_{L^2_{x,\xi}}ds\\
&\lesssim \eps^2 \delta^{-1}\kappa^{-\frac 12} \sum_\al \frac{\tau_0^{2|\al|}}{\al!^2}\nl \nu \pt^\al \ff\nr_{L^\infty_xL^2_\xi}^2\lw(\nl \mathcal D_f\nr_{L^2_t}+\nl \mathcal E_f\nr_{L^2_t}\rw)\\
&\lesssim \eps^2\delta^{-1}\kappa^{-\frac 12}\lw(\||P
\||_{\tau_0,\infty}+\kappa \||\nabla_x U
\||_{\tau_0,\infty}
\rw) ^2\lw(\nl \mathcal D_f\nr_{L^2_t}+\nl \mathcal E_f\nr_{L^2_t}\rw).\enda
\]
Similarly, we get 
\[
\frac 1 {3\eps\delta} \int_0^t\sum_{\al}\AA_\al^2
\la
\pt^\al \lw\{
\div U |\vp|^2\sqrt \mu
\rw\}
,f_\al
\ra_{L^2_{x,\xi}}ds\lesssim \frac{1}{\eps\delta}\nl \div U
\nr_{\tau_0,2}\nl \mathcal E_f\nr_{L^2_t}.
\]
Next we bound 
\[
\frac \kappa \delta   \sum_{\al}\AA_\al^2
\la
\pt^\al \lw\{
\pt_{i\ell}U_j\IP (\vp_\ell A_{ij})
\rw\}
,f_\al
\ra_{L^2_{x,\xi}}.
\]
Applying Proposition \ref{VIPG} to the above term and then use  Corollary \ref{corol-Aij}, we get
\[\bega
\frac \kappa \delta   \sum_{\al}\AA_\al^2
\la
\pt^\al \lw\{
\pt_{i\ell}U_j\IP (\vp_\ell A_{ij})
\rw\}
,f_\al
\ra_{L^2_{x,\xi}}\lesssim \frac{\eps\kappa^{\frac 32}}{\delta}
\||\nabla_x^2 U
\||_{\tau_0,\infty}\lw(\mathcal E_f(t)+\mathcal D_f(t)
\rw).\enda 
\]
Finally, the last three terms can be bounded by the same argument as in the above terms. We skip the details. The proof is complete.

\end{proof}

\begin{proposition}\label{est-Gammaf2f} There holds 
\[
\int_0^t \sum_{\al\in\mathbb N_0^3}\AA_\al^2
\lw
\la \pt^\al
\lw(\frac 2 \kappa \Gamma(\ff,f)
\rw) ,f_\al
\rw\ra_{L^2_{x,\xi}}ds \lesssim o(1)\mathcal D_f(t)^2+\nl \mathcal E_f\nr_{L^2_t}^2+\frac{\eps^2}{\kappa}\mathcal H_{\rho,f}(t)^2.\]
\end{proposition}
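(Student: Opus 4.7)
The strategy rests on two structural features. First, the bilinear operator $\Gamma(g,h)$ lies in $(\ker L)^\perp$ pointwise in $(t,x)$, so the pairing with $f_\al$ only sees the microscopic part modulo commutators:
\[
\langle \pt^\al \Gamma(\ff,f), f_\al\rangle = \langle \pt^\al \Gamma(\ff,f), \IP f_\al\rangle + \big\langle [\pt^\al, \IP]\Gamma(\ff,f), \P f_\al\big\rangle,
\]
with the commutator absorbed exactly as in Proposition \ref{com-est}. Second, $\ff = P\sqrt\mu - \kappa\, \pt_i U_j A_{ij}$ is uniformly bounded in the Gaussian-weighted analytic $L^\infty_{x,\xi}$ norm, and its space-time derivatives are small by Proposition \ref{local-1} and Lemma \ref{M-local-2}; in particular $\sum_{|\beta|\geq 1}\tfrac{\tau_0^{|\beta|}}{\beta!}\|e^{\rho|\xi|^2}\pt^\beta \ff\|_{L^\infty_{x,\xi}} \lesssim 1$.

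I apply the Leibniz rule to $\pt^\al \Gamma(\ff,f)$, tracking the extra $\sqrt\mu(\xi_\star)$ in the kernel of $\Gamma$ (see \eqref{Gamma-pm}); each derivative hitting that factor costs only $\e\sqrt\kappa$ by Proposition \ref{local-1} and is absorbed into lower-order pieces. The remaining Leibniz summands have the form $\Gamma(\pt^\beta \ff, \pt^\gamma f)$ with $\beta+\gamma=\al$, to which I apply the standard bilinear estimate
\[
|\langle \Gamma(g_1,g_2), w\rangle| \lesssim \|e^{\rho|\xi|^2} g_1\|_{L^\infty_{x,\xi}}\,\|\nu^{1/2} g_2\|_{L^2_{x,\xi}}\,\|\nu^{1/2} w\|_{L^2_{x,\xi}},
\]
together with its symmetric counterpart. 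For terms where most derivatives fall on $f$, I put $\pt^\beta \ff$ in $L^\infty_\xi$, split $\|\nu^{1/2}\pt^\gamma f\|\leq \|\P\pt^\gamma f\|+\|\nu^{1/2}\IP\pt^\gamma f\|$, and sum against $\AA_\al^2$ via the Young-type inequalities \eqref{algebra-est}; paired with $\|\nu^{1/2}\IP f_\al\|\lesssim \e\sqrt\kappa\,\mathcal D_f$ and multiplied by $\tfrac{2}{\kappa}$, this produces
\[
\tfrac{1}{\kappa}(\mathcal E_f + \e\sqrt\kappa\,\mathcal D_f)\cdot \e\sqrt\kappa\,\mathcal D_f \;\lesssim\; \tfrac{\e}{\sqrt\kappa}\,\mathcal E_f\,\mathcal D_f + \e^2\,\mathcal D_f^2,
\]
which after AM--GM and time integration contributes $\|\mathcal E_f\|_{L^2_t}^2 + o(1)\mathcal D_f(t)^2$. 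For terms where most derivatives fall on $\ff$, I swap roles and place $f$ in $L^\infty_{x,\xi}$ via $\mathcal H_{\rho,f}$; the analogous computation yields $\tfrac{\e}{\sqrt\kappa}\mathcal H_{\rho,f}\,\mathcal D_f \lesssim \tfrac{\e^2}{\kappa}\mathcal H_{\rho,f}^2 + o(1)\mathcal D_f^2$, matching the last term of the claim.

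The main obstacle is the singular prefactor $\tfrac{1}{\kappa}$: without the orthogonality $\P\Gamma = 0$ there would be no way to absorb it. The decisive cancellation is that this orthogonality trades one factor $\|f_\al\|$ for $\|\nu^{1/2}\IP f_\al\|$, inserting exactly one dissipation weight $\e\sqrt\kappa$ and reducing the worst term to order $\tfrac{\e}{\sqrt\kappa}$, which is $o(1)$ in the hydrodynamic regime $\e\sim \kappa^N$. The summation over $\al$, the commutator bookkeeping, and the analytic Young inequalities are then routine given the earlier bounds in the paper.
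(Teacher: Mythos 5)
Your proposal is correct and follows essentially the same route as the paper: apply the trilinear estimate from Proposition \ref{bilinear-1} (which already separates the ``good'' term paired against $\nu^{1/2}\IP f_\al$ from the ``commutator'' terms where $\pt^\beta$ hits the Maxwellian kernel, $|\beta|\ge 1$), use the analytic smallness $\|\pt^\beta\sqrt\mu\|\lesssim\eps\sqrt\kappa$ and the boundedness of $\ff$ in the Gaussian-weighted analytic norm (via Proposition \ref{corol-Aij}), then close with the discrete Young inequality and AM--GM in the same energy/dissipation combinations. One minor slip: the commutator $\langle[\pt^\al,\IP]\Gamma(\ff,f),\P f_\al\rangle$ is of the type controlled by Lemma \ref{com-1}/Proposition \ref{PD} (or, equivalently, by the $|\beta|\ge1$ piece of Proposition \ref{bilinear-1}), not Proposition \ref{com-est}, which is about $[L,\pt^\al]$; the substance is unchanged since both rely on the $\eps\sqrt\kappa$ gain from $\pt^\beta\sqrt\mu$.
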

\begin{proof}Using Proposition \ref{bilinear-1}, discrete Young inequality and then Lemma \ref{local-1}, we have 
\[\bega&\sum_{\al\in\mathbb N_0^3}\AA_\al^2
\lw
\la \pt^\al
\lw(\frac 2 \kappa \Gamma(\ff,f)
\rw) ,f_\al
\rw\ra_{L^2_{x,\xi}}\\
&\lesssim \frac 1 \kappa \sup_{\al}\frac{\tau_0^{|\al|}}{\al!}\nl \nu\pt^\al \ff
\nr_{L^\infty_xL^2_\xi} \cdot \lw\{\sqrt\kappa\eps \mathcal D_f(t)\mathcal E_f(t)+\kappa\eps^2 \mathcal D_f(t)^2+\eps\kappa^{\frac 12}\mathcal E_f(t)^2+\eps\kappa^{\frac 12}\mathcal H_{\rho,f}(t)\mathcal E_f(t)
\rw\}.\\
\enda
\]
It suffices to bound the terms involving $\ff$ in the above. We note that
\[\bega
&\frac{\tau_0^{|\al|}}{\al!}\nl \nu \pt^\al \ff\nr_{L^\infty_x L^2_\xi}\\
&\lesssim \sum_{\beta+\gamma=\al}\lw(\frac{\tau_0^{|\beta|}}{\beta!}\nl \pt^\beta P\nr_{L^\infty}\frac{\tau_0^{|\gamma|}}{\gamma!}\nl\nu \pt^\gamma\sqrt\mu\nr_{L^\infty_xL^2_\xi}+\frac{\tau_0^{|\beta|}}{\beta!}\kappa\nl \pt^\beta\nabla_x U
\nr_{L^\infty}\frac{\tau_0^{|\gamma|}}{\gamma!}\nl\nu\pt^\gamma A_{ij}
\nr_{L^\infty_x L^2_\xi}
\rw)\\
&\lesssim 1
\enda 
\]
by Proposition \ref{corol-Aij}.
The proof is complete.
   \end{proof}
 Combining the above bounds on the forcing term and  Theorem \ref{l2-ana}, we get
   
   \[\bega 
   &\mathcal E_f(t)^2+O(1)\int_0^t\mathcal D_f(s)^2ds +O(1)\eps^{-1}\nl \nl \AA_\al (\II-\P_{\gamma_+})f_\al
   \nr _{L^2_{t,x,\xi}}\nr_{\ell_\al^2}^2\\
   &\lesssim \sum_{\al}\int_0^t \AA_\al^2 \nl\pt^\al r\nr_{L^2_{\gamma_-}}^2+\lw\{\frac 1{\eps\delta}\nl|\nabla_x\cdot U\nr|_{\tau_0,2}+\delta^{-1}\||\NS(U)\||_{\tau_0,2}\rw\}\mathcal E_f(t) \\
   &\quad+\eps\kappa^{\frac 12}\lw(\nl \mathcal D_f\nr_{L^2_t}+\mathcal E_f(t)\rw) \nl \nl \AA_\al \P f_\al
\nr_{L^\infty_tL^6_x}
\nr_{\ell_\al^2} \nl \nl \AA_\al \P f_\al
\nr_{L^2_t L^3_x} 
\nr_{\ell_\al^2}+\eps^2\kappa\nl \mathcal D_f\nr_{L^2_t}^2 \mathcal H_{\rho,f}(t) .  \enda 
   \]
 \subsection{$L^\infty$ estimates}
In this section, we derive the $L^\infty$ for $f$ solving \eqref{f-eq}, or $F$ solving \eqref{F-eq}. 
\[
f=\frac{F-\mu-\eps^2 \ff\sqrt\mu}{\eps\delta\sqrt\mu}=\frac{\sqrt{\mu_M}}{\sqrt\mu} h.
\]
We now focus on bounding the analytic $L^\infty$ norm of $h$. We define
\beq\label{Hh-norm}
\mathcal H_{h}(t)=\lw\{\sum_{\al\in\mathbb N_0^3}\AA_\al^2(t)\nl \pt^\al h\nr_{L^\infty_{t,x,\xi}}^2\rw\}^{\frac12}.
\eeq
We note that 
\beq\label{ran-76}
\begin{cases}
\frac{\sqrt{\mu_M}}{\sqrt\mu}&= T_M^{-\frac32}e^{-\frac{|\xi|^2}{4T_M}+\frac{|\xi|^2}{4}-\frac 1 2\eps U\cdot\xi +\frac 1 4 \eps^2 |U|^2}=T_M^{-\frac 3 2} e^{-\frac{|\xi|^2}{4}(T_M^{-1}-1)}e^{\frac 1 4\eps^2|U|^2}e^{-\frac 1 2\eps U\cdot\xi}
\\
\frac{\sqrt{\mu_M}}{\sqrt\mu_0}&=T_M^{-\frac 3 2}e^{-\frac{|\xi|^2}{4}(T_M^{-1}-1)}.\end{cases}
\eeq
Here we note that $T_M^{-1}>1$.
Recalling the definition of the analytic $L^\infty$ norm of $f$ in \eqref{EDH}, we show the following lemma:

\begin{lemma}\label{fh-lem} For $\rho>0$ and $T_M\in (0,1)$ such that
\[
\rho\ll |T_M^{-1}-1|,
\]
there holds 
\[
\mathcal H_{ \rho,f}(t)\lesssim \mathcal H_{h}(t).
\]
\end{lemma}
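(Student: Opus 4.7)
The plan is to prove the pointwise identity $f = \tfrac{\sqrt{\mu_M}}{\sqrt{\mu}}h$, apply Leibniz in $\pt^\al$, and then use that the weight $\tfrac{\sqrt{\mu_M}}{\sqrt{\mu}}$ has a Gaussian tail strong enough to absorb $e^{\rho|\xi|^2}$. Concretely, from the definition of $f$ and $h$ (expansions \eqref{F-exp} and \eqref{h-def}) one has
\[
\pt^\al f = \sum_{\beta+\gamma=\al}\binom{\al}{\beta}\pt^\beta\!\left(\tfrac{\sqrt{\mu_M}}{\sqrt{\mu}}\right)\pt^\gamma h,
\]
so the first step is to bound $e^{\rho|\xi|^2}\pt^\beta(\sqrt{\mu_M}/\sqrt{\mu})$ in analytic norm.

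Using \eqref{ran-76}, write
\[
e^{\rho|\xi|^2}\frac{\sqrt{\mu_M}}{\sqrt\mu} = T_M^{-3/2}\, e^{-(\frac{1}{4}(T_M^{-1}-1)-\rho)|\xi|^2}\, e^{\frac14 \eps^2|U|^2}\, e^{-\frac12\eps U\cdot\xi}.
\]
By the assumption $\rho\ll |T_M^{-1}-1|$ the first exponential still has a nontrivial Gaussian decay in $\xi$, while the remaining two factors are analytic in $(t,x_\parallel)$ through $U$ (which is analytic and bounded by the Prandtl analysis of Section 2). One then mimics Proposition \ref{local-1} and Lemma \ref{M-local-2}: the exponential structure combined with Proposition \ref{composite} on composition of analytic functions (and the exponential analyticity lemma) gives
\[
\mathcal{M}(t):=\sum_{\beta\in\mathbb N_0^3}\AA_\beta(t)\left\| e^{\rho|\xi|^2}\pt^\beta\!\left(\tfrac{\sqrt{\mu_M}}{\sqrt{\mu}}\right)\right\|_{L^\infty_{t,x,\xi}} \lesssim 1,
\]
since any $\xi$-polynomial introduced by differentiating $e^{-\frac12\eps U\cdot\xi}$ is absorbed by the leftover Gaussian.

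The second step is the combinatorial passage from Leibniz to a convolution in $\ell^2_\alpha$. Using the product inequality $\AA_\al\binom{\al}{\beta}\lesssim \AA_\beta\AA_\gamma$ (which follows from $\langle\al\rangle^9\lesssim \langle\beta\rangle^9\langle\gamma\rangle^9$ and $\al!/(\beta!\gamma!)$ cancellations, exactly the algebra used in \eqref{algebra-est}), we obtain
\[
\AA_\al(t)\left\|e^{\rho|\xi|^2}\pt^\al f\right\|_{L^\infty_{t,x,\xi}} \lesssim \sum_{\beta+\gamma=\al} \AA_\beta\left\|e^{\rho|\xi|^2}\pt^\beta\!\left(\tfrac{\sqrt\mu_M}{\sqrt\mu}\right)\right\|_{L^\infty}\cdot \AA_\gamma\|\pt^\gamma h\|_{L^\infty_{t,x,\xi}}.
\]
Applying discrete Young's inequality $\ell^1_\al\ast \ell^2_\al\hookrightarrow \ell^2_\al$, taking $\ell^2$ norm in $\al$ on the left, and $\ell^1$ (respectively $\ell^2$) in $\beta$ (respectively $\gamma$) on the right yields
\[
\mathcal H_{\rho,f}(t) \lesssim \mathcal{M}(t)\cdot \mathcal{H}_h(t) \lesssim \mathcal H_h(t),
\]
which is the desired bound.

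The main obstacle is the analytic estimate $\mathcal{M}(t)\lesssim 1$: one must carefully track how derivatives of $e^{-\frac12\eps U\cdot\xi}$ and $e^{\frac14\eps^2|U|^2}$ produce $\xi$-polynomials whose growth, combined with the analyticity losses coming from $U$ (which for the Prandtl layer has $\pt_{x_3}U \sim \kappa^{-1/2}$), still stay under the Gaussian $e^{-(\frac14(T_M^{-1}-1)-\rho)|\xi|^2}$. However, the analyticity norms used here involve only $\eps\pt_t,\pt_{x_1},\pt_{x_2}$ (see \eqref{multi-i}), for which $\pt U$ remains $O(1)$, so no $\kappa$-loss occurs; the $\xi$-polynomial growth is controlled exactly as in the proof of Proposition \ref{local-1} once $\rho\ll |T_M^{-1}-1|$ is imposed.
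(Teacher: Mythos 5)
Your proposal is correct and follows essentially the same route as the paper's proof: start from the pointwise identity $f=\tfrac{\sqrt{\mu_M}}{\sqrt{\mu}}h$, apply Leibniz, and show that the analytic-in-$(t,x_\parallel)$ norm of the weight $e^{\rho|\xi|^2}\tfrac{\sqrt{\mu_M}}{\sqrt{\mu}}$ is $O(1)$ because the leftover Gaussian $e^{-(\frac14(T_M^{-1}-1)-\rho)|\xi|^2}$ dominates the $e^{\frac12\eps U\cdot\xi}$-type growth produced by the $x_\parallel$- and $t$-derivatives of $U$. The paper carries out the same step via a three-fold Leibniz in \eqref{ran-77} and invokes Lemma \ref{exp-ana} for the $e^{\frac12\eps U\cdot\xi}$ factor (which is the exact content you attribute to ``mimicking Proposition \ref{local-1}''), and then closes with the smallness condition $\rho+\eps|U|_{L^\infty}\ll|T_M^{-1}-1|$, which for small $\eps$ reduces to the hypothesis you stated. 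Your observation that only $\{\eps\pt_t,\pt_{x_1},\pt_{x_2}\}$ appear so that $\pt_{x_3}U\sim\kappa^{-1/2}$ plays no role, and your use of the submultiplicativity $\AA_\al\binom{\al}{\beta}\lesssim\AA_\beta\AA_\gamma$ together with discrete Young in $\ell^1_\beta\ast\ell^2_\gamma$, are both correct and match the paper's underlying algebra.
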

\begin{proof}
 We recall that $f=\frac{\sqrt\mu_M}{\sqrt\mu} h$. Using \eqref{ran-76}, we have, for each $\al\in\mathbb N_0^3$:
 \beq\label{ran-77}
 \bega 
e^{\rho|\xi|^2}\frac 1{\al!}\pt^\al f
&\lesssim e^{-\frac{|\xi|^2}{4}(T_M^{-1}-1-4\rho_0)}\sum_{\beta+\gamma+\vr=\al}\frac{1}{\beta!}\lw|\pt^\beta\lw\{e^{\frac 1 2 \eps U\cdot\xi}\rw\}\rw|\cdot \frac{1}{\gamma!}\lw|\pt^\gamma\lw(e^{\frac 1 4 \eps^2|U|^2}
\rw) \rw|\cdot \frac{1}{\vr!}\lw|\pt^{\vr}h\rw|.\\
\enda \eeq
Using Lemma \ref{exp-ana} we get
\beq\label{ran-77-2}
\sup_{\beta\in\mathbb N_0^3}\frac{\tau^{|\beta|}}{\beta!}\lw|\pt^\beta\lw\{e^{\frac 1 2 \eps U_\ell\xi_\ell}\rw\}\rw|\lesssim e^{\frac 1 2 \eps U_\ell\xi_\ell}+\eps\kappa^{\frac 12}e^{\frac 1 2 \eps U_\ell\xi_\ell+4\eps\kappa^{\frac12}|\xi_\ell|}\lesssim e^{\eps |U|_{L^\infty}|\xi|}.
\eeq
Combining \eqref{ran-77}, \eqref{ran-77-2} and the fact that $e^{\frac 1 4\eps^2|U|^2}$ is analytic in $x_\parallel$, we get the result, as long as \[
\rho+\eps |U|_{L^\infty}\ll |T_M^{-1}-1|.
\]
The proof is complete.
\end{proof}
We recall the equation for $h_\al=\pt^\al h$ in \eqref{h-al-eq}. We define the backward exit time and position as
\[
\tb(x,\xi)=\sup\lw\{\tau\ge 0:x-\tau \frac{\xi}\eps\in \bar \Omega\rw\},\qquad \xb(x,\xi)=x-\frac{\tb(x,\xi)}{\eps}\xi.\]
When $\Omega=\mathbb T^2\times\R_+$, one gets the explicit formula
\[
\tb(x,\xi)=\eps\frac{x_3}{\xi_3},\qquad \xb(x,\xi)=x-\frac{x_3}{\xi_3}\xi. \]
We also define the following notations
\[\begin{cases}
-\int_s^t \nu_M&=-\int_s^t 
\nu_M(\tau,x-\frac{t-\tau}{\e}\xi,\xi)d\tau, \\
-\int_s^{t-\tb}\nu_M&=-\int_s^{t-\tb}\nu_M(\tau,\xb-\frac{t-\tb-\tau}{\e}\xi_\star,\xi_\star)d\tau.\end{cases}
\]
In the next proposition, we give a complete trajectory presentation for $h_\al$ solving the equation \eqref{h-al-eq} with the boundary condition \eqref{h-al-bdr}.
To this end, we define the following operators
\beq\label{i-0}
\mathcal I(h_{\al,0})=\mathcal I_1(h_{\al,0})+\mathcal I_2(h_{\al,0}),
\eeq
where
\beq\label{i-1}
\mathcal I_1(h_{\al,0})=1_{\{t\le \tb
\}
}
e^{-\frac 1 {\kappa\eps^2}\int_0^t 
\nu_M
}h_{\al,0}\lw(x-\frac t \e \xi,\xi
\rw),
\eeq
\beq\label{i-2}
\bega 
\mathcal I_2(h_{\al,0})&=1_{\{t\ge \tb
\}
}e^{-\frac 1 {\kappa\eps^2}\int_{t-\tb}^t \nu_M
}w_1(\xi)\\
&\times\int_{\R^3}e^{-\frac 1 {\kappa\eps^2}
\int_0^{t-\tb}
\nu_M 
} h_{\al,0}\lw(\xb-\frac{t-\tb}{\e}\xi_\star,\xi_\star\rw)w_2(\xi_\star)d\xi_\star. \enda 
\eeq
The operator $\mathcal M(h_\al)$ is defined as 
\beq\label{M0}
\mM (h_\al)=\mM_1(h_\al)+\mM_2(h_\al)
\eeq
where
\beq \label{M1}
\mM_1(h_\al)=\frac{1}{\kappa\eps^2}\int_{\max\{0,t-\tb
\}
}^t e^{-\frac 1 {\kappa\eps^2}\int_s^t 
\nu_M
}
K_M h_\al\lw(s,x-\frac{t-s}{\e}\xi,\xi
\rw)ds,
\eeq 
\beq\label{M2}
\bega 
\mM_2(h_\al)&=1_{\{t\ge \tb
\}
}e^{-\frac 1 {\kappa\eps^2}\int_{t-\tb}^t\nu_M 
}w_1(\xi)\\
&\quad\times \frac 1{\kappa\eps^2}\int_{\R^3}\int_0^{t-\tb}
e^{-\frac 1 {\kappa\eps^2}\int_s^{t-\tb}\nu_M
} K_{M}h_\al\lw(s,\xb-\frac{t-\tb-s}{\e}\xi_\star,\xi_\star
\rw)w_2(\xi_\star) ds d\xi_\star.
\enda
\eeq
The operator $\mF$ is defined as
\beq\label{F0}
\mathcal F(\wtd g_\al)=\mathcal F_1(\wtd g_\al)+\mathcal F_2(\wtd g_\al),
\eeq
where 
\beq\label{f1}
\mathcal F_1 (\wtd g_\al)=\int_{\max\{0,t-\tb
\}
}^t e^{-\frac 1 {\kappa\eps^2}\int_s^t 
\nu_M
}
 \wtd g_\al\lw(s,x-\frac{t-s}{\e}\xi,\xi
\rw)ds,
\eeq
\beq\label{f2}
\bega
&\mathcal F_2(\wtd g_\al)=1_{\{t> \tb
\}
}e^{-\frac 1 {\kappa\eps^2}\int_{t-\tb}^t \nu_M
}
w_1(\xi)\\
&\quad\times \int_{\xi_\star}
\int_0^{t-\tb}e^{-\frac 1 {\kappa\eps^2}\int_s^{t-\tb}\nu_M}\wtd g_\al\lw(s,\xb-\frac{t-\tb-s}{\e}\xi_\star,\xi_\star
\rw)w_2(\xi_\star)ds d\xi_\star.\\
\enda 
\eeq
The boundary term is given by 
\beq\label{B0}
\mB(r_\al)=1_{\{t\ge \tb 
\}
}w_3(\xi)^{-1}e^{-\frac 1 {\kappa\eps^2}\int_{t-\tb}^t \nu_M 
} \pt^\al r (t-\tb,\xb,\xi).
\eeq

\begin{proposition} \label{ran-790} Let $h_\al$ solves \eqref{h-al-eq} with the boundary condition \eqref{h-al-bdr} and initial data $h_{\al,0}$. There holds 
\[
h_\al(t,x,\xi)=\mathcal I(h_{\al,0})+\mathcal M (h_\al)+\mathcal F(\wtd g_
\al)-\frac{\eps}\delta T_M^{\frac 3 2} \mathcal B(r_\al)
\]
where $\mI,\mM,\mF$ are defined in \eqref{i-0},\eqref{M0} and \eqref{F0}.
\end{proposition}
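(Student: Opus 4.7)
The plan is to apply the standard method of characteristics (Duhamel's formula) to the transport--collision equation \eqref{h-al-eq} after isolating the multiplicative part of $L_M$, and then to iterate once at the boundary using the diffusive reflection condition \eqref{h-al-bdr}. First I would use Lemma \ref{LM-lem} to write $L_M h_\al = \nu_M h_\al - K_M h_\al$, so that \eqref{h-al-eq} becomes
\[
\pt_t h_\al + \tfrac{1}{\eps}\xi\cdot\nabla_x h_\al + \tfrac{1}{\kappa\eps^2}\nu_M h_\al = \tfrac{1}{\kappa\eps^2}K_M h_\al + \wtd g_\al.
\]
Along the backward characteristic $s \mapsto (s, x - \tfrac{t-s}{\eps}\xi, \xi)$, this is a scalar ODE in $s$ with integrating factor $\exp\!\bigl(\tfrac{1}{\kappa\eps^2}\int_s^t \nu_M\bigr)$, so integrating from $s = \max\{0, t-\tb\}$ up to $s = t$ gives
\[
h_\al(t,x,\xi) = e^{-\frac{1}{\kappa\eps^2}\int_{\max\{0,t-\tb\}}^t \nu_M} h_\al\bigl(\max\{0,t-\tb\}, x - \tfrac{t - \max\{0,t-\tb\}}{\eps}\xi,\xi\bigr) + \mathcal M_1(h_\al) + \mathcal F_1(\wtd g_\al),
\]
by the definitions \eqref{M1} and \eqref{f1}.

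Next I would split into the two cases according to whether the characteristic hits the initial time or the boundary. On $\{t \le \tb\}$ the first term is exactly $\mathcal I_1(h_{\al,0})$ as in \eqref{i-1}, and the contribution to the right-hand side is therefore $\mathcal I_1(h_{\al,0}) + \mathcal M_1(h_\al) + \mathcal F_1(\wtd g_\al)$, which is accounted for. On $\{t > \tb\}$ the first term becomes
\[
e^{-\frac{1}{\kappa\eps^2}\int_{t-\tb}^t \nu_M}\, h_\al(t-\tb, \xb, \xi)\bigr|_{\xi_3 > 0},
\]
so I would invoke the boundary condition \eqref{h-al-bdr} to replace $h_\al(t-\tb, \xb, \xi)$ by
\[
c_\mu w_1(\xi)\!\int_{\R^3} h_\al(t-\tb,\xb,\xi_\star) w_2(\xi_\star)\,d\xi_\star \; - \;\tfrac{\eps}{\delta}\, w_3(\xi)^{-1}\pt^\al r(t-\tb,\xb,\xi),
\]
the second piece being exactly $-\tfrac{\eps}{\delta}\mathcal B(r_\al)$ modulo the normalisation constant $T_M^{3/2}$ that one picks up from $w_3$ and $c_\mu$ (and which I would verify by the explicit Gaussian integration as in \eqref{cmu}).

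For the remaining bulk boundary integral, I would run the same Duhamel argument a second time on the inside integrand $h_\al(t-\tb, \xb, \xi_\star)$, along the backward characteristic $s\mapsto(s,\xb - \tfrac{t-\tb-s}{\eps}\xi_\star,\xi_\star)$ for $s\in [0, t-\tb]$. Since $\xi_\star$ ranges over $\{\xi_{\star,3} < 0\}$ (through the support of $w_2$), this characteristic travels into the interior of $\Omega$ for $s < t-\tb$ and reaches the initial time at $s = 0$. Integrating the scalar ODE with the same integrating factor and then multiplying by $c_\mu w_1(\xi) e^{-(1/\kappa\eps^2)\int_{t-\tb}^t \nu_M} w_2(\xi_\star)$ before integrating in $\xi_\star$ yields precisely the three remaining operators $\mathcal I_2(h_{\al,0})$, $\mathcal M_2(h_\al)$, $\mathcal F_2(\wtd g_\al)$ defined in \eqref{i-2},\eqref{M2},\eqref{f2}. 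Combining everything gives the stated formula.

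The step that requires the most care is the interchange of integrations and the verification that the second Duhamel iteration closes after a single bounce, which is legitimate here because $w_2$ restricts $\xi_\star$ to the incoming half-space $\xi_{\star,3} < 0$ at the flat boundary $x_3 = 0$, so every such characteristic leaves $\partial\Omega$ immediately and meets the initial slice $\{s = 0\}$ before returning; this is standard for the half-space $\Omega = \T^2 \times \R_+$ but needs to be invoked explicitly. The remaining bookkeeping is tracking the constants $c_\mu$ and $T_M^{3/2}$ from $w_3$ as they combine through the diffusive kernel, which I would do by direct computation using \eqref{cmu} and the explicit formulas for $w_1, w_2, w_3$.
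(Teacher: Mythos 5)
Your proposal follows essentially the same route as the paper: a single Duhamel integration against the multiplicative weight $\nu_M$, a case split on whether the backward characteristic hits $\{t=0\}$ or $\pt\Omega$, substitution of the diffuse boundary condition \eqref{h-al-bdr}, and a second Duhamel pass from the boundary point to produce $\mathcal I_2,\mathcal M_2,\mathcal F_2$. The only loose end you flag — the constant $T_M^{3/2}$ — is not something you "pick up from $w_3$ and $c_\mu$" during the second pass: direct substitution of \eqref{h-al-bdr} into the boundary term of the first Duhamel formula gives $-\frac{\eps}{\delta}\mathcal B(r_\al)$ exactly, and the extra $T_M^{3/2}$ in the stated identity is a normalization inconsistency in the paper (indeed the paper's own in-proof version of the boundary condition, \eqref{h-al-bdr1h-al-bdr}, already drops the $c_\mu$ present in \eqref{h-al-bdr}), so you should chase the constant through the definition $f=\tfrac{\sqrt{\mu_M}}{\sqrt{\mu_0}}h$ and \eqref{ran-76} rather than attribute it to the Gaussian integration in \eqref{cmu}.
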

\begin{proof} Denoting $\tb=\tb(x,\xi),\xb=\xb(x,\xi)$. Using the Duhamel principle for the equation
\[\bega
&\pt_t h_\al+\frac 1 \eps \xi\cdot\nabla_x h_\al+\frac1 {\eps^2\kappa} \nu_Mh_\al =\frac 1 {\kappa\eps^2}K_M h_\al+\wtd g_\al \\
&h_\al|_{\xi_3>0}=c_\mu w_1(\xi)\int_{\R^3}h_\al (t,x,\xi_\star)w_2(\xi_\star)d\xi_\star-\frac \eps \delta w_3(\xi)^{-1}\pt^\al r,
\enda
\]
we get
\beq\label{Duh}
\bega
h_\al(t,x,\xi)&=1_{\{t<\tb
\}
}
e^{-\frac 1 {\kappa\eps^2}\int_0^t 
\nu_M
}h_{\al,0}\lw(x-\frac t \e \xi,\xi
\rw)\\
&\quad+\frac{1}{\kappa\eps^2}\int_{\max\{0,t-\tb
\}
}^t e^{-\frac 1 {\kappa\eps^2}\int_s^t 
\nu_M 
}
K_{M}h_\al\lw(s,x-\frac{t-s}{\e}\xi,\xi
\rw)ds\\
&\quad+\int_{\max\{0,t-\tb
\}
}^t e^{-\frac 1 {\kappa\eps^2}\int_s^t 
\nu_M
}
\wtd g_\al\lw(s,x-\frac{t-s}{\e}\xi,\xi
\rw)ds\\
&\quad+1_{\{t>\tb
\}
}e^{-\frac 1 {\kappa\eps^2}\int_{t-\tb}^t \nu_M
}
h_\al\lw(t-\tb,\xb,\xi
\rw).
\enda
\eeq
From the equation \eqref{Duh}, we obtain the term $\mI_{1}$ and $\mF_1$ directly. We now write  the boundary term on the forth line in \eqref{Duh}, using the boundary condition \eqref{h-al-bdr}. We have 
\beq\label{h-al-bdr1h-al-bdr}
\bega
&h_\al(t-\tb,\xb,\xi)=w_1(\xi)\int_{\xi_{\star}}h_\al(t-\tb,\xb,\xi_\star)w_2(\xi_\star)d\xi_\star-\frac \eps\delta w_3(\xi)^{-1} \pt^\al r(t-\tb,\xb,\xi).
\enda 
\eeq
Now using \eqref{Duh},  we have 
\[
\bega 
h_\al(t-\tb,\xb,\xi_\star)&=
e^{-\frac 1 {\kappa\eps^2}
\int_0^{t-\tb}
\nu_M
}
h_{\al,0}\lw(\xb-\frac{t-\tb}{\e}\xi_\star,\xi_\star\rw)\\
&\quad+\frac 1{\kappa\eps^2}\int_0^{t-\tb}
e^{-\frac 1 {\kappa\eps^2}\int_s^{t-\tb}\nu_M
}K_{M}h_\al
\lw(s,\xb-\frac{t-\tb-s}{\e}\xi_\star,\xi_\star
\rw)ds\\
&\quad+\int_0^{t-\tb}e^{-\frac 1 {\kappa\eps^2}\int_s^{t-\tb}\nu_M}\wtd g_\al\lw(s,\xb-\frac{t-\tb-s}{\e}\xi_\star,\xi_\star
\rw)ds.\\
\enda 
\]
Plugging the above into the last line of the equation \eqref{Duh}, we get 
the term $\mI_2$ and $\mF_2$ in the decomposition of $\mI$ and $\mF$.
The third term in the above corresponds to $\mM_2$ in the decomposition of $\mM$. The proof is complete.
\end{proof}

\begin{proposition}\label{ran-801} Recall $\mM_1,\mM_2$ in \eqref{M1},  \eqref{M2}, and $\mF,\mF_1$, $\mF_2$ in \eqref{F0}, \eqref{f1}, \eqref{f2}. 
There hold
\[\bega 
|\mM_1(t)|+|\mM_2(t)|&\lesssim o(1)\sup_{0\le s\le t} \nl h_\al(s)
\nr _{L^\infty_{x,\xi}}+\nl h_\al|_{t=0}
\nr _{L^\infty_{x,\xi}}\\
&\quad+(\eps\kappa)^{-\frac 12} \sup_{0\le s\le t}\nl \P f_\al(s)
     \nr_{L^6_x}+(\eps\kappa)^{-\frac 3 2}\sup_{0\le s\le t} \nl \IP f_\al
     \nr_{L^2_{x,\xi}}\\
     &\quad+\sum_{\substack{\beta+\gamma=\al\\|\beta|\ge 1
}}\frac{\al!}{\beta!\gamma!}\sup_{|\xi_{\star\star}|\lesssim 1}\nl \pt^\beta\lw\{e^{-\frac 1 4 \eps^2 |U|^2+\frac 1 2 \eps U\cdot\xi_{\star\star}}
\rw\}\nr _{L^\infty_x}\sup_{0\le s\le t} \nl h_\gamma( s)
\nr_{L^\infty_{x,\xi}}\\
&\quad+\la\xi\ra^{-1}  \kappa\e^2 \sup_{0\le s\le t}\nl \la\xi\ra^{-1}\wtd g_\al(s)
\nr _{L^\infty_{x,\xi}}+\la\xi\ra^{-1} \frac\eps\delta 
\nl w_3^{-1}\pt^\al r
\nr_{L^\infty},
   \enda 
\]
and 
\[
\mF(t)\lesssim \kappa\eps^2\la\xi\ra^{-1}\sup_{0\le s\le t}\nl \la\xi\ra^{-1}\wtd g_\al(s)
\nr _{L^\infty_{x,\xi}}.
\]
\end{proposition}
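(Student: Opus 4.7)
\medskip

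\noindent\textbf{Proof proposal.} The plan is to combine the standard $L^2$--$L^\infty$ double Duhamel iteration of Guo-type with the weighted Maxwellian change of variables from Lemma~\ref{LM-lem} and Proposition~\ref{L-decom-max}, adapted to the analytic setting. Start with the $\mF$ bound, which is the easier of the two. From Lemma~\ref{LM-lem} we have $\nu_M\gtrsim\nu_0(\xi)=\la\xi\ra$, so along both trajectories appearing in \eqref{f1} and \eqref{f2},
\[
\int_{\max\{0,t-\tb\}}^{t} e^{-\frac{1}{\kappa\eps^2}\int_s^{t}\nu_M}\, ds
\;\lesssim\; \frac{\kappa\eps^2}{\nu_0(\xi)}\lesssim \kappa\eps^2\la\xi\ra^{-1},
\]
and the same for the $\xi_\star$ trajectory after pulling the $w_1(\xi)\int w_2(\xi_\star)\,d\xi_\star$ weights (which are $O(1)$). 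Inserting $\sup_s\|\la\xi_\star\ra^{-1}\wtd g_\al(s)\|_{L^\infty_{x,\xi}}$ gives the stated bound for $\mF$ directly.

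For $\mM_1+\mM_2$, decompose $K_M = K_{\eps_0}+K_c$ using Lemma~\ref{LM-lem}. The $K_{\eps_0}$ contribution is pointwise $\lesssim \eps_0\nu_0(\xi)\|h_\al\|_{L^\infty_\xi}$, which after integration against $e^{-\nu_M(t-s)/(\kappa\eps^2)}/(\kappa\eps^2)$ yields exactly $o(1)\sup_{0\le s\le t}\|h_\al(s)\|_{L^\infty_{x,\xi}}$ (choosing $\eps_0$ small). The main work is the $K_c$ piece; here I substitute the Duhamel formula from Proposition~\ref{ran-790} a \emph{second} time into $h_\al$ inside $K_c$. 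This produces six natural pieces: $K_c\mI(h_{\al,0})$, $K_c\mF(\wtd g_\al)$, $K_c\mB(r_\al)$, $K_c(K_{\eps_0}h_\al)$, the double-$K_c$ term $K_c(K_c h_\al)$, and a commutator coming from $\pt^\beta$ falling on the weight $\sqrt{\mu_M/\mu}= T_M^{-3/2}e^{-\frac{|\xi|^2}{4}(T_M^{-1}-1)}e^{\frac14\eps^2|U|^2}e^{-\frac12\eps U\cdot\xi}$ hidden in the relation $f=\sqrt{\mu_M/\mu}\,h$, as in Lemma~\ref{fh-lem}. The first three pieces are estimated by the same exponential-decay argument as $\mF$, picking up $\|h_{\al,0}\|_{L^\infty}$, the $\wtd g_\al$ contribution, and the $(\eps/\delta)\|w_3^{-1}\pt^\al r\|_{L^\infty}$ boundary contribution respectively. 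The $K_c K_{\eps_0}$ piece is absorbed into $o(1)\sup\|h_\al\|_{L^\infty}$.

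The critical term is the double-$K_c$ term, which carries the $L^\infty\to L^p$ gain. The strategy, following Guo's framework, is to exploit the concentration of $k_{\rho_0}$: for $|\xi|,|\xi_\star|\leq N$, the map
$(s,\xi_\star)\mapsto \xb-\frac{t-\tb-s}{\eps}\xi_\star$ (or its bulk analogue $(s,\xi_\star)\mapsto x-\frac{t-s}{\eps}\xi_\star$) has Jacobian of order $\eps^{-3}(t-s)^3$ on a time window where the integrand is non-negligible, which has effective length $\sim \kappa\eps^2$. Splitting $|\xi|>N$ off (exponentially small in $N$) and restricting to $t-s\geq \eps\kappa$ (the complementary strip contributes via the exponential damping), a Cauchy--Schwarz / change of variables converts the $L^\infty$ norm of the inner $h_\al$ into an $L^2_{x,\xi}$ norm, with a prefactor of order $(\kappa\eps)^{-3/2}$ for the $\IP f_\al$ piece and $(\kappa\eps)^{-1/2}$ for the $\P f_\al$ piece after Sobolev upgrade $L^2\to L^6$ on the macroscopic coefficients $(a_\al,b_\al,c_\al)$. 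The weight conversion $h_\al = \sqrt{\mu/\mu_M}\, f_\al + (\text{commutator})$ produces exactly the sum $\sum_{\beta+\gamma=\al,|\beta|\geq 1}\frac{\al!}{\beta!\gamma!}\sup_{|\xi_{\star\star}|\lesssim 1}\|\pt^\beta\{e^{-\frac14\eps^2|U|^2+\frac12\eps U\cdot\xi_{\star\star}}\}\|_{L^\infty_x}\sup\|h_\gamma\|_{L^\infty}$.

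The main obstacle is the $K_c K_c$ concentration step: one must handle near-grazing trajectories (where $\tb$ is small or $\xi_3\approx 0$) where the change of variables degenerates. The standard remedy is an almost-grazing stopping-time / velocity truncation argument---cutting off $|\xi_3|\leq \delta_1$ with $\delta_1\to 0$ compensated by the $w_1(\xi)w_2(\xi_\star)$ Gaussian weights and the exponential time decay $e^{-\nu_0\tb/(\kappa\eps^2)}$ (negligible when $\tb\gtrsim \kappa\eps^2$)---which ultimately produces the claimed $(\eps\kappa)^{-1/2}$ and $(\eps\kappa)^{-3/2}$ powers. Once these scalings are established, combining everything yields the stated inequality for $|\mM_1|+|\mM_2|$, with the $\la\xi\ra^{-1}\kappa\eps^2$ and $\la\xi\ra^{-1}\eps/\delta$ factors on the forcing and boundary terms coming directly from the same $\int e^{-\nu_M(\cdot)/(\kappa\eps^2)}$ computation used for $\mF$.
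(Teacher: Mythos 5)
Your outline reproduces the paper's own proof strategy in its essential structure: decompose $K_M=K_{\eps_0}+K_c$ via Lemma~\ref{LM-lem} so that $K_{\eps_0}$ yields the $o(1)$ term, iterate Duhamel a second time inside $K_c$, change variables on the double-$K_c$ term to convert $L^\infty$ into spatial $L^p$ norms, and track the commutator produced by the weight $\sqrt{\mu_M/\mu}$ in passing from $h_\al$ to $f_\al$. This is the same route as the paper (which organizes the terms as $\mathcal S_1,\dots,\mathcal S_5$), so I won't enumerate further overlap.

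A few details in your write-up need fixing to make this airtight. First, the small-time cutoff in the change of variables is on the \emph{inner} Duhamel time $s-\wtd s$, not on $t-s$, and the threshold is $\delta_1\kappa\eps^2$ (the mean-free time in this scaling), not $\eps\kappa$; with your threshold the Jacobian $|\eps/(s-\wtd s)|^3$ gives $\kappa^{-3}$ rather than the required $(\kappa\eps)^{-3}$. Moreover the complementary region $|s-\wtd s|\le\delta_1\kappa\eps^2$ is controlled by the smallness of the time measure relative to $\kappa\eps^2$ (giving the factor $\delta_1$), not by exponential damping. Second, the factor $(\eps\kappa)^{-1/2}\|\P f_\al\|_{L^6_x}$ arises from a $6/5$--$6$ H\"older pairing in $\xi_\star$ after the change of variables; it is not a Sobolev upgrade $L^2\to L^6$ of the macroscopic coefficients. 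The $L^6_x$ norm is a separate bootstrap quantity estimated in Section~\ref{L6-sec}. Third, the near-grazing concern you flag for the boundary reflection term ($\mathcal S_4$) is handled in the paper not by an explicit grazing cutoff but by a third Duhamel iteration (on $h_\al(s-\tb,\xb,\xi_{\star\star})$) feeding into the same change of variables as $\mathcal S_{2,2}$; the Gaussian weights $w_1,w_2$ render that inner Duhamel integral absolutely convergent without requiring a stopping-time argument.
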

\begin{proof} Using Lemma \ref{LM-lem}, we get 
\beq\label{ran-79}\bega 
\mM_1&\lesssim \eps_0 \nl h_\al\nr_{L^\infty_{t,x,\xi}}+ C_{\eps_0}\frac 1{\kappa\eps^2}\int_{\max\{0,t-\tb\}}^t  e^{-c_0\la\xi\ra\frac{t-s}{\kappa\eps^2}}\int_{\xi_\star}k_{\rho_0}(\xi,\xi_\star)
|h_\al| \lw(s,\wtd x,\xi_\star
\rw)d\xi_\star ds\\
\enda
\eeq%eqref= equation reference
where $\wtd x=x-\frac{t-s}{\eps}\xi$. 
Now using \eqref{Duh} again,  we have 
\beq\label{ran-261}
\bega 
h_\al\lw(s,\wtd x,\xi_\star\rw)
&=1_{\lw\{s<\tb\lw(\wtd x,\xi_\star\rw)\rw\}}e^{-\int_0^s \frac 1 {\kappa\eps^2}\nu_M} h_{\al,0}\lw(\wtd x-\frac{s}{\eps}\xi_\star,\xi_\star\rw)\\
&\quad+\frac{1}{\kappa\eps^2}\int_{\max\{0,s-\tb (\wtd x,\xi_\star)\}}^s e^{-\frac 1{\kappa\eps^2}\int_{\wtd s}^s \nu_M
}K_{M}h_\al\lw(\wtd s,\wtd x-\frac{s-\wtd s}{\e}\xi_\star,\xi_\star\rw)d\wtd s\\
&\quad+\int_{\max\{0,s-\tb(\wtd x,\xi_\star
)
\}
}^s
e^{-\frac{1}{\kappa\eps^2}\int_{\wtd s}^s \nu_M
}\wtd g_\al\lw(\wtd s,\wtd x-\frac{s-\wtd s}{\e}\xi_\star,\xi_\star
\rw)d\wtd s\\
&\quad+1_{\{s\ge \tb(\wtd x,\xi_\star)
\}
}
e^{-\frac{1}{\kappa\eps^2}
\int^s_{s-\tb\lw(\wtd x,\xi_\star\rw)} \nu_M} h_\al\lw(s-\tb\lw(\wtd x,\xi_\star
\rw),\xb\lw(\wtd x,\xi_\star\rw),\xi_\star
\rw),
\enda 
\eeq
where 
\[
-\int_{\wtd s}^s \nu_M=-\int_{\wtd s}^s \nu_M(\tau,\wtd x-\frac{s-\tau}{\eps}\xi_\star,\xi_\star)d\tau.
\]
Now we rewrite the last term in the above, using the boundary condition \eqref{h-al-bdr}.
Now we note that using the identity \eqref{h-al-bdr} for $x\to \wtd x$ and $\xi\to\xi_\star$ and $t\to s$, the forth term in the above right hand side can be written as
\beq\label{ran-7999}
\bega 
h_\al(s-\tb(\wtd x,\xi_\star),\xb(\wtd x,\xi_\star),\xi_\star)&=w_1(\xi_\star)\int_{\xi_{\star\star}}h_\al(s-\tb(\wtd x,\xi_\star),\xb(\wtd x,\xi_\star),\xi_{\star\star})w_2(\xi_{\star\star})d\xi_{\star\star}\\
&\quad- \frac \eps\delta  w_3(\xi_\star)^{-1}\pt^\al r(s-\tb(\wtd x,\xi_\star),\xb(\wtd x,\xi_\star),\xi_\star).\enda
\eeq
Combining \eqref{ran-261} and \eqref{ran-7999}, we can bound $h_\al(s, \wtd x,\xi_\star)$ as follows
\beq\label{ran-88}
\bega
|h_\al|(s, \wtd x,\xi_\star)
&\lesssim e^{-c_0\la \xi_\star \ra \frac{s}{\kappa\eps^2}}\|h_{\al,0}\|_{L^\infty_{x,\xi}}+\eps_0 \nl h_{\al,0}\nr_{L^\infty_{x,\xi}}\\
&\quad +\frac 1 {\kappa\e^2}\int_0^s  e^{-c_0\la \xi_\star\ra \frac{s-\wtd s}{\kappa\eps^2}
}\int_{\xi_{\star\star}}k_{\rho_0}(\xi_\star,\xi_{\star\star})
|h_\al|\lw(\wtd s,\wtd x-\frac{s-\wtd s}{\e}\xi_\star,\xi_{\star\star}\rw)d\xi_{\star\star} d\wtd s  \\
&\quad+\int_0^s e^{-c_0\la \xi_\star\ra\frac{s-\wtd s}{\kappa\eps^2}
}| \wtd g_\al|\lw(\wtd s,\wtd x-\frac{s-\wtd s}{\e}\xi_\star,\xi_\star\rw)d\wtd s\\
&\quad+w_1(\xi_\star)\int_{\xi_{\star\star}}|h_\al|\lw(s-\tb\lw(\wtd x,\xi_\star\rw),\xb\lw(\wtd x,\xi_\star\rw),\xi_{\star\star}\rw)w_2(\xi_{\star\star})d\xi_{\star\star}\\
&\quad+\frac \eps\delta w_3(\xi_\star)^{-1}|\pt^\al r|\lw(s-\tb\lw(\wtd x,\xi_\star\rw),\xb\lw(\wtd x,\xi_\star\rw),\xi_\star\rw).
\enda 
\eeq
Combining \eqref{ran-79} and \eqref{ran-88}, we have 
\[\bega 
&\mM_1\lesssim \sum_{i=1}^5 \mathcal S_i,
\enda 
 \]
 where 
 \beq\label{ran-617}
\bega 
\mathcal S_1&= \frac 1{\kappa\e^2}\int_0^t e^{-c_0\la \xi\ra \frac{t-s}{\kappa\eps^2}}\int_{\xi_\star}k_{\rho_0}(\xi,\xi_\star)
 e^{-c_0\la \xi_\star\ra \frac{s}{\kappa\eps^2}}\|h_{\al,0}\|_{L^\infty_{x,\xi}}d\xi_\star ds,\\
 \mathcal S_2&=\frac 1 {\kappa\eps^2}\int_0^t e^{-c_0\la \xi\ra \frac{t-s}{\kappa\eps^2}} \int_{\xi_\star}k_{\rho_0}(\xi,\xi_\star)\frac 1 {\kappa\e^2}\int_0^s  e^{-c_0\la\xi_\star\ra \frac{s-\wtd s}{\kappa\eps^2}
}\\
&\times\int_{\xi_{\star\star}}k_{\rho_0}(\xi_\star,\xi_{\star\star})|h_\al|\lw(\wtd s,\wtd x-\frac{s-\wtd s}{\e}\xi_\star,\xi_{\star\star}\rw)d\xi_{\star\star} d\wtd s d\xi_\star ds, \\
\enda\eeq
\beq\bega
\mathcal S_3&=\frac 1 {\kappa\eps^2}\int_0^t e^{-c_0\la\xi\ra \frac{t-s}{\kappa\eps^2}} \int_{\xi_\star}k_{\rho_0} (\xi,\xi_\star)\int_0^s e^{-c_0\la \xi_\star\ra \frac{s-\wtd s}{\kappa\eps^2}
}|\wtd g_\al|\lw(\wtd s,\wtd x-\frac{s-\wtd s}{\e}\xi_\star,\xi_\star\rw)d\wtd s d\xi_\star ds,\\
\mathcal S_4&=\frac 1 {\kappa\eps^2}\int_{\max\{0,t-\tb
\}
}^t e^{-c_0\la\xi\ra \frac{t-s}{\kappa\eps^2}} \int_{\xi_\star}k_{\rho_0}(\xi,\xi_\star)1_{\{s\ge \tb(\wtd x,\xi_\star)
\}
}\\
&\quad\times e^{-c_0\la \xi_\star\ra \frac{\tb(\wtd x,\xi_\star)}{\kappa\eps^2}
} \lw\{w_1(\xi_\star)\int_{\xi_{\star\star}}|h_\al|\lw(s-\tb(\wtd x,\xi_\star),\xb(\wtd x,\xi_\star),\xi_{\star\star}\rw)w_2(\xi_{\star\star})\rw\}d\xi_{\star\star} d\xi_\star ds,\\
\mathcal S_5&= \frac 1{\kappa\e^2}\int_0^t e^{-c_0\la \xi\ra \frac{t-s}{\kappa\eps^2}}\int_{\xi_\star}k_{\rho_0}(\xi,\xi_\star)\cdot \frac \eps \delta w_3(\xi_\star)^{-1}|\pt^\al r|\lw(s-\tb(\wtd x,\xi_\star),\xb(\wtd x,\xi_\star),\xi_\star\rw)d\xi_\star ds.
\enda
  \eeq
  \textbf{Treating $\mathcal S_1$.}
  We have 
  \[
  \mathcal S_1\lesssim \nl h_{\al,0}\nr_{L^\infty_{x,\xi}}\cdot  \frac 1{\kappa\e^2} \int_0^t e^{-c_0\frac{t-s}{\kappa\e^2}}ds\lesssim \nl h_{\al,0}
  \nr_{L^\infty_{x,\xi}}.
   \]
    \textbf{Treating $\mathcal S_2$.}  We have 
   \beq\label{ran-821}
    \bega
     \mathcal S_2&=\frac 1 {(\kappa\eps^2)^2}\int_0^t \int_0^s e^{
     -c_0\la \xi\ra \frac{t-s}{\kappa\eps^2}-c_0\la\xi_\star\ra \frac{s-\wtd s}{\kappa\eps^2} }\\
     &\times\int_{\R^6}k_{\rho_0}(\xi,\xi_\star) k_{\rho_0}(\xi_\star,\xi_{\star\star})|h_\al|\lw(\wtd s,\wtd x-\frac{s-\wtd s}{\e}\xi_\star,\xi_{\star\star}\rw)d\xi_{\star\star}  d\xi_\star d\wtd s ds.
    \enda
    \eeq
     We take $\delta_1>0$ to be small and $\bar N>0$ to be large. Define \[
     U_{\bar N}=\lw\{(\xi_\star,\xi_{\star\star} )\in\R^6:\quad \frac 1 {\bar N}<|\xi_\star-\xi_{\star\star} |<\bar N,\quad |\xi_\star|<\bar N\rw\}.
     \]
      We decompose the integral $ \int_0^s \int_{\xi_\star}\int_{\xi_{\star\star} } d\xi_{\star\star} d\xi_\star d\wtd s$ in $\mmA_2$ as follows
    \[\bega 
  \int_0^s \int_{\xi_\star}\int_{\xi_{\star\star} } d\xi_{\star\star}  d\xi_\star d\wtd s&=\int_{|\wtd s-s|\le \delta_1\kappa\e^2}\int_{\xi_\star}\int_{\wtd \xi_\star} d\xi_{\star\star}  d\xi_\star d\wtd s+ \int_{|\wtd s-s|\ge \delta_1\kappa\e^2}\int_{U_{\bar N}} d\xi_{\star\star} d\xi_\star d\wtd s\\
    &\quad+\int_{|\wtd s-s|\ge \delta_1\kappa\e^2}\int_{U_{\bar N}^c} d\xi_{\star\star}  d\xi_\star d\wtd s ds.       \enda
            \]
    and we have the corresponding decomposition 
    \[
  \mathcal S_2=\mathcal S_{2,1}+\mathcal S_{2,2}+\mathcal S_{2,3}.
    \]
       Using the fact that 
           \[
           \int_0^t \frac{1}{\kappa\e^2}e^{-c_0\la\xi\ra \frac{t-s}{\kappa\e^2}}ds\lesssim 1,\quad \frac 1{\kappa\e^2}\int_{|\wtd s-s|\le \delta_1\kappa\eps^2}d\wtd s\lesssim \delta_1,\quad \int_{\R^6 }k_{\rho}(\xi,\xi_\star) k_{\rho}(\xi_\star,\xi_{\star\star} ) d\xi_{\star\star}  d\xi_\star\lesssim 1,         \]
            we get the upper bound 
            \[
          \mathcal S_{2,1}\lesssim \delta_1\nl h_\al\nr_{L^\infty_{t,x,\xi}}.
            \]
 As for $\mathcal S_{2,3}$, using the fact that, as $\bar N\to\infty$: 
    \[
    \int_{|\xi_\star|\ge \bar N}k_{\rho}(\xi_\star,\xi_{\star\star} )d\xi_{\star\star} \lesssim \frac 1{\bar N},\quad \sup_{\xi_\star\in \R^3}\int_{\{|\xi_\star-\xi_{\star\star} |\le \frac 1{\bar N}\}\cup\{|\xi_\star-\xi_{\star\star} |\ge \bar N
    \}}k_{\rho}(\xi_\star,\xi_{\star\star} )d\xi_{\star\star} \to 0,    \]
    We have
    \[
 \mathcal S_{2,3}\le C_{\delta_1} c_{\bar N}\nl h_\al\nr_{L^\infty_{t, x,\xi}}.
    \]
    where $c_{\bar N}\to 0$ as $\bar N\to \infty$.
    Finally for $\mathcal A_{2,2}$, first if $|\xi-\xi_\star|\ge \bar N$ or $|\xi|\ge \bar N$, we have 
    \[
    \int_{\xi_\star}k_{\rho_0}(\xi,\xi_\star)d\xi_\star\lesssim \frac 1{\bar N},\qquad \sup_{\xi\in\R^3}\int_{|\xi-\xi_\star|\ge \bar N}k_{\rho_0}(\xi,\xi_\star)d\xi_\star \to 0\qquad\text{as}\quad \bar N\to \infty,
    \]
    giving an upper bound $C_{\delta_1}c_{\bar N}\nl h_\al\nr_{L^\infty_{t,x,\xi}}$.
We now consider $|\xi|\le \bar N$ and $|\xi-\xi_\star|\le \bar N$.  We have\[\bega 
&\int_{\{\frac 1 {\bar N}<|\xi_\star-\xi_{\star\star} |<\bar N,|\xi_\star|<\bar N\}}k_{\rho}(\xi,\xi_\star) k_{\rho}(\xi_\star,\xi_{\star\star} )|h_\al|\lw(\wtd s,\wtd x-\frac{s-\wtd s}{\e}\xi_\star,\xi_{\star\star} \rw)d\xi_{\star\star} d\xi_\star \\
&\lesssim \int_{\{|\xi_{\star\star} |< 2\bar N, |\xi_\star|<\bar N,|\xi_\star-\xi_{\star\star} |>\bar N^{-1}\}}k_{\rho_0}(\xi,\xi_\star)k_{\rho_0}(\xi_\star,\xi_{\star\star} )|h_\al|\lw(\wtd s,\wtd x-\frac{s-\wtd s}{\e}\xi_\star,\xi_{\star\star} \rw)d\xi_{\star\star}  d\xi_\star \\
&\lesssim \int_{\{|\xi_{\star\star} |< 2\bar N, |\xi_\star|<\bar N,|\xi_\star-\xi_{\star\star} |>\bar N^{-1}\}}k_{\rho_0}(\xi,\xi_\star)k_{\rho_0}(\xi_\star,\xi_{\star\star} )\\
&\quad 
\lw|\pt^\al \lw(e^{\frac{|\xi_{\star\star}|^2}{4}(T_M^{-1}-1)}e^{-\frac{1}{4}\eps^2|U|^2}e^{\frac 1 2 \eps U\cdot\xi_{\star\star}} f
\rw)
\rw|\lw(\wtd s,\wtd x-\frac{s-\wtd s}{\e}\xi_\star,\xi_{\star\star} \rw)d\xi_{\star\star}  d\xi_\star \\
&\lesssim  \int_{\{|\xi_{\star\star} |< 2\bar N, |\xi_\star|<\bar N,|\xi_\star-\xi_{\star\star} |>\bar N^{-1}\}}k_{\rho_0}(\xi,\xi_\star)k_{\rho_0}(\xi_\star,\xi_{\star\star} )|f_\al|  \lw(\wtd s,\wtd x-\frac{s-\wtd s}{\e}\xi_\star,\xi_{\star\star} \rw)d\xi_{\star\star}  d\xi_\star\\
&\quad+\sum_{\substack{\beta+\gamma=\al\\|\beta|\ge 1
}}\frac{\al!}{\beta!\gamma!} \int_{\{|\xi_{\star\star} |< 2\bar N, |\xi_\star|<\bar N,|\xi_\star-\xi_{\star\star} |>\bar N^{-1}\}}k_{\rho_0}(\xi,\xi_\star)k_{\rho_0}(\xi_\star,\xi_{\star\star} )\\
&\quad\times \lw|\pt^\beta\lw\{e^{-\frac 1 4 \eps^2 |U|^2+\frac 1 2 \eps U\cdot\xi_{\star\star}}
\rw\}(\wtd s,\wtd x-\frac{s-\wtd s}{\eps}\xi_\star)
\rw||f_\gamma|  \lw(\wtd s,\wtd x-\frac{s-\wtd s}{\e}\xi_\star,\xi_{\star\star} \rw)d\xi_{\star\star}  d\xi_\star\\
&\lesssim (\eps\kappa)^{-\frac 12}\nl \P f_\al(\wtd s)
\nr_{L^6_x}+(\eps\kappa)^{-\frac 32}\nl \IP f_\al(\wtd s)
\nr _{L^2_{x,\xi}}\\
&\quad+\sum_{\substack{\beta+\gamma=\al\\|\beta|\ge 1
}}\frac{\al!}{\beta!\gamma!}\sup_{|\xi_{\star\star}|\le 2\bar N}\nl \pt^\beta\lw\{e^{-\frac 1 4 \eps^2 |U|^2+\frac 1 2 \eps U\cdot\xi_{\star\star}}
\rw\}\nr _{L^\infty_x}\nl h_\gamma(\wtd s)
\nr_{L^\infty_{x,\xi}}.
\enda\]
Here we use the change of variables 
$
y=x-\frac{t-s}{\eps}\xi-\frac{s-\wtd s}{\eps}\xi_\star
$ with $$|dy|=|\frac{\eps}{s-\wtd s}   |^3 |d\xi_\star|\lesssim \frac{\eps^3}{(\delta_1\kappa\eps^2)^3}|d\xi_\star|.$$
Hence we get
\beq\label{ran-116}
\bega 
\mathcal S_{2,2}
     &\lesssim (\eps\kappa)^{-\frac 12} \sup_{0\le s\le t}\nl \P f_\al(s)
     \nr_{L^6_x}+(\eps\kappa)^{-\frac 3 2}\sup_{0\le s\le t} \nl \IP f_\al
     \nr_{L^2_{x,\xi}}\\
     &\quad+\sum_{\substack{\beta+\gamma=\al\\|\beta|\ge 1
}}\frac{\al!}{\beta!\gamma!}\sup_{|\xi_{\star\star}|\le 2\bar N}\nl \pt^\beta\lw\{e^{-\frac 1 4 \eps^2 |U|^2+\frac 1 2 \eps U\cdot\xi_{\star\star}}
\rw\}\nr _{L^\infty_x}\sup_{0\le s\le t} \nl h_\gamma( s)
\nr_{L^\infty_{x,\xi}}  .   \enda 
     \eeq
\textbf{Treating $\mathcal S_3$.}
Recalling the definition of $\mathcal S_3$ in \eqref{ran-617}, we obtain
\[
\mathcal S_3\lesssim \la\xi\ra^{-1}  \kappa\e^2 \sup_{0\le s\le t}\nl \la\xi\ra^{-1}\wtd g_\al(s)
\nr _{L^\infty_{x,\xi}}.
\]
\textbf{Treating $\mathcal S_4$.} We recall from \eqref{ran-617} that
\beq\label{ran-84}
\bega
\mathcal S_4&=\frac 1 {\kappa\eps^2}\int_{\max\{0,t-\tb
\}
}^t e^{-c_0\la\xi\ra \frac{t-s}{\kappa\eps^2}} \int_{\xi_\star}k_{\rho_0}(\xi,\xi_\star)1_{\{s\ge \tb(\wtd x,\xi_\star)
\}
} \\
&\quad\times e^{-c_0\la \xi_\star\ra \frac{\tb(\wtd x,\xi_\star)}{\kappa\eps^2}
} \lw\{w_1(\xi_\star)\int_{\xi_{\star\star}}|h_\al|\lw(s-\tb(\wtd x,\xi_\star),\xb(\wtd x,\xi_\star),\xi_{\star\star}\rw)w_2(\xi_{\star\star})\rw\}d\xi_{\star\star} d\xi_\star ds.\\
\enda\eeq
Now using \eqref{Duh}, we get
\beq\label{ran-85}
\bega 
&|h_\al|(s-\tb(\wtd x,\xi_\star),\xb(\wtd x,\xi_\star),\xi_{\star\star})\\
&\lesssim \nl h_{\al,0}\nr_{L^\infty_{x,\xi}}\\
&+\frac{1}{\kappa\eps^2}\int_0^{s-\tb(\wtd x,\xi_\star)}e^{-c_0\la\xi_{\star\star}\ra \frac{s-\tb(\wtd x,\xi_\star)-\wtd s}{\kappa\eps^2}}|K_{M}h_\al|\lw(\wtd s,\xb(\wtd x,\xi_\star)-\frac{s-\tb(\wtd x,\xi_\star)-\wtd s}{\eps}\xi_{\star\star},\xi_{\star\star}\rw) d\wtd s\\
&+\int_0^{s-\tb(X,\xi_\star)}e^{-c_0\la\xi_{\star\star}
\ra \frac{s-\tb(X,\xi_\star)-\wtd s}{\kappa\eps^2}}| \wtd g_\al|\lw(\wtd s,\xb(\wtd x,\xi_\star)-\frac{s-\tb(\wtd x,\xi_\star)-\wtd s}{\eps}\xi_{\star\star},\xi_{\star\star}\rw) d\wtd s.\enda 
\eeq
Combining \eqref{ran-84} and \eqref{ran-85}, we get 
\[
\mathcal S_{4}\lesssim \nl h_{\al,0}
\nr _{L^\infty_{x,\xi}}+\mathcal S_{4,1}+\mathcal S_{4,2},
\]
where 
\beq\label{ran-86}
\bega
&\mathcal S_{4,1}=\frac 1 {\kappa\eps^2}\int_{\max\{0,t-\tb
\}
}^t e^{-c_0\la\xi\ra \frac{t-s}{4\kappa\eps^2}} \int_{\xi_\star}k_{\rho_0}(\xi,\xi_\star)e^{-c_0\la\xi_\star\ra \frac{ \tb(\wtd x,\xi_\star)}{\kappa\e^2}}w_1(\xi_\star)\frac{1}{\kappa\eps^2}\\
&\int_{\xi_{\star\star}}\int_0^{s-\tb(X,\xi_\star)}e^{-c_0\la \xi_{\star\star}\ra \frac{s-\tb(X,\xi_\star)-\wtd s}{\kappa\eps^2}}|K_M h_\al|\lw(\wtd s,\xb(\wtd x,\xi_\star)-\frac{s-\tb(\wtd x,\xi_\star)-\wtd s}{\eps}\xi_{\star\star},\xi_{\star\star}\rw) d\wtd s\\
&\times w_2(\xi_{\star\star})d\xi_{\star\star} d\xi_\star ds\\
\mathcal S_{4,2}&=\frac 1 {\kappa\eps^2}\int_{\max\{0,t-\tb\}
}^t e^{-c_0\la\xi\ra \frac{t-s}{\kappa\eps^2}} \int_{\xi_\star}k_{\rho_0}(\xi,\xi_\star)e^{-c_0\la \xi_\star\ra \frac{ \tb(\wtd x,\xi_\star)}{\kappa\e^2}}w_1(\xi_\star),\\
&\times\int_{\xi_{\star\star}}\int_0^{s-\tb(\wtd x,\xi_\star)}e^{-c_0\la\xi_{\star\star}\ra\frac{s-\tb(\wtd x,\xi_\star)-\wtd s}{\kappa\eps^2}}| \wtd g_\al|\lw(\wtd s,\xb(\wtd x,\xi_\star)-\frac{s-\tb(\wtd x,\xi_\star)-\wtd s}{\eps}\xi_{\star\star},\xi_{\star\star}\rw) d\wtd s\\
&\times w_2(\xi_{\star\star})d\xi_{\star\star}d\xi_\star ds.
\enda
\eeq
We bound $\mathcal S_{4,1}$. Using Lemma \ref{LM-lem}, we get
\[\bega 
&K_Mh_\al\lw(\wtd s,\xb(\wtd x,\xi_\star)-\frac{s-\tb(\wtd x,\xi_\star)-\wtd s}{\eps}\xi_{\star\star},\xi_{\star\star}\rw)\\
&\lesssim \eps_0\la \xi_{\star\star}\ra \nl h_\al(\wtd s)\nr_{L^\infty_{x,\xi}}\\
&\quad+\int_{\xi_{\star\star\star} }k_{\rho_0}(\xi_{\star\star},\xi_{\star\star\star})|h_\al|\lw(\wtd s,\xb(\wtd x,\xi_\star)-\frac{s-\tb(\wtd x,\xi_\star)-\wtd s}{\eps}\xi_{\star\star},\xi_{\star\star\star}\rw)d\xi_{\star\star\star}.
\enda 
\]
Combining the above with the definition of $\mmA_{4,2}$ in \eqref{ran-86}, we get
\[\bega 
&\mathcal S_{4,1}\lesssim \frac 1 {\kappa\eps^2}\int_{\max\{0,t-\tb
\}
}^t e^{-c_0\la \xi\ra \frac{t-s}{\kappa\eps^2}} \int_{\xi_\star}\int_{\xi_{\star\star}}k_{\rho_0}(\xi,\xi_\star)k_{\rho_0}(\xi_{\star\star},\xi_{\star\star\star})e^{-c_0\la \xi_\star\ra \frac{\tb(\wtd x,\xi_\star)}{\kappa\e^2}}w_1(\xi_\star)\\
&\times\frac{1}{\kappa\eps^2}\int_{\xi_{\star\star\star}}\int_0^{s-\tb(X,\xi_\star)}e^{-c_0\la \xi_{\star\star}\ra \frac{s-\tb(X,\xi_\star)-\wtd s}{\kappa\eps^2}}h_\al\lw(\wtd s,\xb(\wtd x,\xi_\star)-\frac{s-\tb(\wtd x,\xi_\star)-\wtd s}{\eps}\xi_{\star\star},\xi_{\star\star\star}\rw)\\
&\times w_2(\xi_{\star\star})d\xi_{\star\star\star}d\xi_{\star\star} d\xi_\star d\wtd sds,\qquad \wtd x =x-\frac{t-s}{\eps}\xi.\\
\enda\]
Similarly as in the estimate of the term $\mathcal S_2$ in \eqref{ran-821}, we obtain
\[\bega 
&\mathcal S_{4,1}\lesssim o(1)\sup_{0\le s\le t}\nl h_\al(s)\nr_{L^\infty_{x,\xi}}+(\eps\kappa)^{-\frac 12} \sup_{0\le s\le t}\nl \P f_\al(s)
     \nr_{L^6_x}+(\eps\kappa)^{-\frac 3 2}\sup_{0\le s\le t} \nl \IP f_\al
     \nr_{L^2_{x,\xi}}\\
     &+\sum_{\substack{\beta+\gamma=\al\\|\beta|\ge 1
}}\frac{\al!}{\beta!\gamma!}\sup_{|\xi_{\star\star}|\le 2\bar N}\nl \pt^\beta\lw\{e^{-\frac 1 4 \eps^2 |U|^2+\frac 1 2 \eps U\cdot\xi_{\star\star}}
\rw\}\nr _{L^\infty_x}\sup_{0\le s\le t} \nl h_\gamma( s)
\nr_{L^\infty_{x,\xi}}.   \enda 
\]
As for the term $\mathcal S_{4,2}$, we get 
\[
\mathcal S_{4,3} \lesssim  \la\xi\ra^{-1}  \kappa\e^2 \sup_{0\le s\le t}\nl \la\xi\ra^{-1}\wtd g_\al(s)
\nr _{L^\infty_{x,\xi}}.
\]
\textbf{Treating $\mathcal S_5$.} Recalling $\mathcal S_5$ in \eqref{ran-617}, we have
\[
\mathcal S_5\lesssim \la\xi\ra^{-1} \frac\eps\delta 
\nl w_3^{-1}\pt^\al r
\nr_{L^\infty}.
\]
The bounds for $\mM_2$ is similar and we skip the details.  The proof is complete.
\end{proof}
Combining the propositions below, we have

\begin{theorem} Let $h_\al$ solve \eqref{h-al-eq} with the boundary condition \eqref{h-al-bdr}. Recall the analytic norms in \eqref{EDH}. There holds 
\[\bega 
\nl h_\al(t)\nr_{L^\infty_{x,\xi}}&\lesssim \|h_{\al,0}\|_{L^\infty_{x,\xi}}+(\eps\kappa)^{-\frac 12} \sup_{0\le s\le t}\nl \P f_\al(s)
     \nr_{L^6_x}+(\eps\kappa)^{-\frac 3 2}\sup_{0\le s\le t} \nl \IP f_\al
     \nr_{L^2_{x,\xi}}\\
     &\quad+\sum_{\substack{\beta+\gamma=\al\\|\beta|\ge 1
}}\frac{\al!}{\beta!\gamma!}\sup_{|\xi_{\star\star}|\lesssim 1}\nl \pt^\beta\lw\{e^{-\frac 1 4 \eps^2 |U|^2+\frac 1 2 \eps U\cdot\xi_{\star\star}}
\rw\}\nr _{L^\infty_x}\sup_{0\le s\le t} \nl h_\gamma( s)
\nr_{L^\infty_{x,\xi}}\\
&\quad+\kappa\e^2 \sup_{0\le s\le t}\nl \la\xi\ra^{-1}\wtd g_\al(s)
\nr _{L^\infty_{x,\xi}}+\frac\eps\delta 
\nl w_3^{-1}\pt^\al r
\nr_{L^\infty_{x,\xi}},
\enda\]
and we have the analytic estimate 
\[\bega 
\mathcal H_h(t)&\lesssim \mathcal H_h(0)+(\eps\kappa)^{-\frac 12}\nl \nl \AA_\al \P f_\al
\nr_{L^\infty(0,t,L^6_x)}
\nr _{\ell_\al^2}+(\eps\kappa)^{-\frac 3 2}\nl 
\nl \AA_\al \IP f_\al
\nr _{L^\infty(0,t,L^2_{x,\xi})}
\nr _{\ell_\al^2} \\
&\quad+\kappa\e^2\nl 
\nl
\la\xi\ra^{-1} \AA_\al \wtd g_\al
\nr_{L^\infty((0,t)\times\Omega\times\R^3)} 
\nr _{\ell_\al^2}+\frac\eps\delta\nl \nl w_3^{-1}\AA_\al \pt^\al r
\nr_{L^\infty(0,t,L^\infty_{x,\xi})}
\nr_{\ell_\al^2} .
\enda 
\]
\end{theorem}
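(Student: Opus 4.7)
The strategy is to combine the trajectory representation in Proposition \ref{ran-790} with the pointwise dispersive bounds in Proposition \ref{ran-801}, then sum the result against the analytic weights $\AA_\al(t)$. Starting from
\[
h_\al(t,x,\xi) = \mI(h_{\al,0}) + \mM(h_\al) + \mF(\wtd g_\al) - \tfrac{\eps}{\delta} T_M^{3/2}\,\mB(r_\al),
\]
I would estimate each of the four operators separately in $L^\infty_{x,\xi}$. The operator $\mI = \mI_1+\mI_2$ is controlled by $\|h_{\al,0}\|_{L^\infty_{x,\xi}}$ via the exponential damping $e^{-c_0 \la\xi\ra t/(\kappa\eps^2)}$ together with the fact that $w_1\int w_2\, d\xi_\star \lesssim 1$. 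The boundary contribution $\mB(r_\al)$ is immediately bounded by $\tfrac{\eps}{\delta}\|w_3^{-1}\pt^\al r\|_{L^\infty}$ thanks to the pointwise formula \eqref{B0} and the same exponential damping. The source operator $\mF$ is bounded by $\kappa\eps^2\la\xi\ra^{-1}\sup_{0\le s\le t}\|\la\xi\ra^{-1}\wtd g_\al(s)\|_{L^\infty}$, as stated in the second half of Proposition \ref{ran-801}.

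The main work is the term $\mM(h_\al)$, which encodes the kernel part $K_M$ of $L_M$. Proposition \ref{ran-801} already reduces it to a sum of contributions: an absorbable $o(1)\sup\|h_\al\|_{L^\infty}$ (absorbed to the left-hand side), the bulk $L^6$--$L^2$ pieces $(\eps\kappa)^{-1/2}\sup\|\P f_\al\|_{L^6_x}+(\eps\kappa)^{-3/2}\sup\|\IP f_\al\|_{L^2_{x,\xi}}$, a forcing piece $\kappa\eps^2\sup\|\la\xi\ra^{-1}\wtd g_\al\|_{L^\infty}$, a boundary piece $\tfrac{\eps}{\delta}\|w_3^{-1}\pt^\al r\|_{L^\infty}$, and the commutator sum
\[
\sum_{\substack{\beta+\gamma=\al\\ |\beta|\ge 1}} \frac{\al!}{\beta!\gamma!}\,\sup_{|\xi_{\star\star}|\lesssim 1}\nl \pt^\beta\bigl\{e^{-\frac14\eps^2|U|^2 + \frac12\eps U\cdot \xi_{\star\star}}\bigr\}\nr_{L^\infty_x}\sup_{0\le s\le t}\nl h_\gamma(s)\nr_{L^\infty_{x,\xi}}
\]
which arises from converting back from $f$ to $h$ via the ratio $\sqrt{\mu_M}/\sqrt{\mu}$ in \eqref{ran-76}. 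Combining these yields the first displayed estimate.

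For the analytic statement, I would multiply the pointwise bound by $\AA_\al(t)$ and take the $\ell^2_\al$ norm. Each linear-in-$h$ term on the right is already indexed by $\al$, so its $\ell^2_\al$ norm produces directly $\mathcal H_h(0)$, $(\eps\kappa)^{-1/2}\|\AA_\al\P f_\al\|_{L^\infty_t L^6_x\ell^2_\al}$, $(\eps\kappa)^{-3/2}\|\AA_\al \IP f_\al\|_{L^\infty_t L^2_{x,\xi}\ell^2_\al}$, $\kappa\eps^2\|\la\xi\ra^{-1}\AA_\al\wtd g_\al\|_{\ell^2_\al L^\infty}$ and $\tfrac{\eps}{\delta}\|w_3^{-1}\AA_\al\pt^\al r\|_{\ell^2_\al L^\infty}$. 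For the commutator convolution, I would use the discrete Young inequality together with the algebra bound \eqref{algebra-est}
\[
\frac{\al!\,\AA_\al}{\beta!\,\AA_\beta\,\gamma!\,\AA_\gamma}\lesssim \la\gamma\ra^{-9}\bb1_{\{|\beta|\ge|\gamma|\}} + \la\beta\ra^{-9}\bb1_{\{|\gamma|\ge|\beta|\}},
\]
and the analytic bound on $e^{\frac12\eps U\cdot\xi_{\star\star}}e^{-\frac14\eps^2|U|^2}$ coming from Proposition \ref{local-1} combined with Lemma \ref{fh-lem}. Since $|\beta|\ge 1$ and every $x_\parallel$-derivative of $U$ is $O(1)$ in the Prandtl shear profile, that factor is $O(\eps\sqrt\kappa)$ and hence yields an $o(1)$ coefficient that can be absorbed into $\mathcal H_h(t)$ on the left.

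The main obstacle is the double iteration of the Duhamel formula inside $\mM$ that produces the $(\eps\kappa)^{-3/2}$ weight: one must handle the near-grazing region $|\xi_\star-\xi_{\star\star}|\ll 1$, the large-velocity region $|\xi_\star|\gg 1$, and the short-time slab $|\wtd s-s|\le \delta_1\kappa\eps^2$ separately, converting the remaining integrals into $L^6_x$ and $L^2_{x,\xi}$ norms of $f_\al$ via the change of variables $y=\wtd x-\tfrac{s-\wtd s}{\eps}\xi_\star$ with Jacobian $(\eps/(s-\wtd s))^3$. This is exactly the place where the dangerous $(\eps\kappa)^{-3/2}$ scaling originates, and it must be compensated by the $L^2$ dissipation gain $\mathcal D_f \sim (\eps\sqrt\kappa)^{-1}\IP f$ in the $L^2$ analytic theorem of Theorem \ref{l2-ana} when the two estimates are eventually coupled.
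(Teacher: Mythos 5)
Your plan is correct and essentially identical to the paper's argument: the theorem is obtained by combining the Duhamel/trajectory representation of Proposition~\ref{ran-790} with the pointwise bounds of Proposition~\ref{ran-801}, absorbing the $o(1)\sup_s\|h_\al(s)\|_{L^\infty}$ contribution from $\mathcal M$ into the left-hand side, and then passing to the analytic estimate by weighting with $\AA_\al$, applying the discrete Young inequality with \eqref{algebra-est}, and absorbing the commutator-sum term using the smallness $\sup_{|\beta|\ge 1}\frac{\tau_0^{|\beta|}}{\beta!}\|\pt^\beta e^{\frac12\eps U\cdot\xi_{\star\star}-\frac14\eps^2|U|^2}\|_{L^\infty_x}\lesssim\eps\kappa^{1/2}$ supplied by Lemma~\ref{exp-ana} (cf.\ \eqref{ran-77-2}), not Lemma~\ref{fh-lem} directly. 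The one minor inaccuracy is that citation; otherwise the decomposition of $\mathcal I,\mathcal M,\mathcal F,\mathcal B$, the handling of the double-Duhamel iteration, and the origin of the $(\eps\kappa)^{-1/2}$ and $(\eps\kappa)^{-3/2}$ weights are all as in the paper.
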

We now recall from \eqref{h-al-eq} that 
\[
\wtd g_\al=\frac 1 {\kappa\eps^2}[L_M,\pt^\al]h+ \pt^\al\lw(-\frac{1}{ \eps\delta\sqrt\mu_M}\textbf{R}_a
+\frac{\delta}{\kappa\e}\Gamma_M (h,h)+\frac 2\kappa \frac{Q(\ff\sqrt\mu,h\sqrt{\mu_M})}{\sqrt{\mu_M}}\rw).
\]
\begin{definition} We define the following quantity
\[\bega
&d_{\infty}(t)=\frac{\kappa \eps^2}{\delta}\nl \NS(U,P)
\nr_{\tau_0,\infty}+\frac {\kappa\e} {\delta} \nl \div U
\nr_{\tau_0,\infty}+\frac {\e^3}{\delta}(\|| P\||_{\tau_0,\infty}+\kappa\nl \nabla_x U
\nr_{\tau_0,\infty})^2\\
&+\frac{\eps ^2\kappa}\delta \lw\{
 \||\pt_t P+U\cdot\nabla_x P\||_{\tau_0,\infty}+\eps \|| P(\pt_t U+U\cdot\nabla_x U)\||_{\tau_0,\infty}+\|| P\pt_i U_j \||_{\tau_0,\infty}
 \rw\}\\
 &+\frac{\eps^3 \kappa^2 }\delta \lw(\||  (\pt_t+U\cdot\nabla_x)(\pt_i U_j)\||_{\tau_0,\infty}+\eps\||  (\pt_t U+U\cdot\nabla_x U)(\pt_i U_j)\||_{\tau_0,\infty}+\|| \nabla_x U
\||_{\tau_0,\infty}^2\rw)\\
&+ \frac{\kappa^2\eps^3 }\delta \|| \pt_i U_j
\||_{\tau_0,\infty}\lw(1+\eps^2 \|| U
\||_{\tau_0,\infty}
\rw)+ \frac{\eps^2 \kappa^2}\delta \nl \nabla_x^2 U
\nr_{\tau_0,\infty}
 \\
&+ \frac{\eps}{\delta} \lw\{\nl \AA_\al 
\nl \pt^\al P
\nr_{L^\infty(\pt\Omega)}
\nr_{\ell_\al^2} 
+\sum_{j=1}^2 \nl \AA_\al 
\nl \pt^\al \pt_{x_3}U_j
\nr_{L^\infty(\pt\Omega)}
\nr_{\ell_\al^2}
\rw\}.
\enda\]
\end{definition}
We have $d_\infty(t)\lesssim \eps^{0.99}$ for suitable fluid solutions.
Next we bound each term in the above. Next, we show the bound for $[L_M,\pt^\al]$.

\begin{lemma} There holds 
\[
\nl
\nl \la\xi\ra^{-1}\AA_\al [L_M,\pt^\al] h
\nr_{L^\infty(0,t,L^\infty_{x,\xi})}
\nr_{\ell_\al^2}\lesssim \eps \kappa^{\frac 12} \mathcal H_h(t),
\]
and 
\beq\label{ran-120}
\frac{\delta}{\kappa\eps}\nl 
\nl \AA_\al 
\la \xi\ra^{-1}\pt^\al \Gamma_M(h,h)
\nr _{L^\infty(0,t,L^\infty_{x,\xi})}
\nr_{\ell_\al^2}\lesssim \frac{\delta}{\kappa\eps}\mathcal H_h(t)^2.
\eeq
\end{lemma}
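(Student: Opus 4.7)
The plan is to treat the two bounds separately, modelling the first on the commutator estimate of Proposition~\ref{com-est} and the second on the standard Grad bilinear bound used in Proposition~\ref{G-ff}. For the first estimate, observe that in $L_Mh = -\frac{2}{\sqrt{\mu_M}}Q(\mu,\sqrt{\mu_M}h)$ only the local Maxwellian $\mu=\mu_0(\xi-\eps U)$ carries $(t,x)$-dependence, so a Leibniz expansion of $[L_M,\pt^\al]h = \pt^\al L_M h - L_M \pt^\al h$ produces finitely many Grad-type collision integrals, each containing a factor $\pt^\beta\sqrt{\mu}$ (evaluated at one of $\xi,\xi_\star,\xi',\xi'_\star$) with $|\beta|\ge 1$. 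This is precisely the decomposition $T_{\al,1},\dots,T_{\al,8}$ of~\eqref{L-alpha-com}, now with $\sqrt{\mu_M}$ replacing one copy of $\sqrt\mu$ inside the Grad kernel and at least one surviving factor $\pt^\beta\sqrt\mu$, $|\beta|\ge 1$.

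I will then estimate each piece pointwise in $(x,\xi)$ by pulling the Maxwellian derivative out through the analytic bound of Proposition~\ref{local-1},
\[
\sum_{|\beta|\ge 1}\frac{\tau_0^{|\beta|}}{\beta!}\nl e^{p_0|\xi|^2}\pt^\beta\sqrt\mu\nr_{L^\infty_{x,\xi}}\lesssim \eps\sqrt\kappa,
\]
and by absorbing the residual kernel $|(\xi-\xi_\star)\cdot\w|\sqrt{\mu_M}(\xi_\star)$ via the $L^\infty$ Grad bound $\la\xi\ra^{-1}\int_{\R^3}k_{\rho_0}(\xi,\xi_\star)\,|h_\gamma(\xi_\star)|\,d\xi_\star\lesssim \nl h_\gamma\nr_{L^\infty_\xi}$, exactly as in Proposition~\ref{com-est}. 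Each term is thereby dominated pointwise by $\eps\sqrt\kappa\sum_{\beta+\gamma=\al,\,|\beta|\ge 1}(\text{analytic weight})\,\nl h_\gamma\nr_{L^\infty_{x,\xi}}$; applying the discrete Young inequality together with the algebraic identity~\eqref{algebra-est} then delivers the $\ell^2_\al$ bound $\eps\sqrt\kappa\,\mathcal H_h(t)$.

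For the second estimate, $\mu_M$ is independent of $(t,x)$, so $\Gamma_M$ commutes with $\pt^\al$ and Leibniz yields the clean expansion $\pt^\al\Gamma_M(h,h) = \sum_{\beta+\gamma=\al}\binom{\al}{\beta}\Gamma_M(h_\beta,h_\gamma)$ with no commutator term. The weighted Grad $L^\infty$ bound $\la\xi\ra^{-1}|\Gamma_M(f,g)|\lesssim \nl f\nr_{L^\infty_\xi}\nl g\nr_{L^\infty_\xi}$, followed by discrete Young on the convolution $\sum_{\beta+\gamma=\al}\AA_\beta\nl h_\beta\nr_{L^\infty_{x,\xi}}\AA_\gamma\nl h_\gamma\nr_{L^\infty_{x,\xi}}$ (invoking~\eqref{algebra-est} once more to distribute the analytic weights), yields $\mathcal H_h(t)^2$, which is~\eqref{ran-120} after multiplying by $\delta/(\kappa\eps)$.

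The main obstacle is bookkeeping in the commutator step: the factors $\pt^\beta\sqrt\mu(\xi')$ and $\pt^\beta\sqrt\mu(\xi'_\star)$ are evaluated at post-collisional velocities, so one must verify that Proposition~\ref{local-1} still applies uniformly at these arguments. This is in fact immediate, since the $(t,x)$-dependence of $\sqrt\mu$ enters only through $U(t,x)$ and not through the velocity argument, so the $L^\infty_{x,\xi}$ supremum in Proposition~\ref{local-1} remains valid whatever velocity $\sqrt\mu$ is evaluated at. Once this point is dispensed with, the remainder is a direct transcription of the bilinear estimates of Proposition~\ref{com-est} into the $L^\infty$ framework.
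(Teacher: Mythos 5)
Your proposal takes essentially the same route as the paper: Leibniz-expand the commutator, harvest an $\eps\sqrt\kappa$ gain from each $\pt^\beta$, $|\beta|\ge 1$, that hits the $(t,x)$-dependent local Maxwellian, then sum with discrete Young; the second bound is exactly the Leibniz-plus-bilinear-Grad argument you describe. One bookkeeping imprecision is worth flagging. The $(t,x)$-dependent factors that $\pt^\beta$ hits in $[L_M,\pt^\al]h$ are not $\sqrt\mu$ but rather $\mu/\sqrt{\mu_M}$ (evaluated at $\xi$, $\xi'$, or $\xi_\star'$) and $\mu$ (at $\xi_\star$ in the loss term), since $L_M h=-\frac{2}{\sqrt{\mu_M}}Q(\mu,\sqrt{\mu_M}h)$; the paper records $\frac{\mu}{\sqrt{\mu_M}}=T_M^{3/4}e^{-\frac 14|\xi|^2+\eps U\cdot\xi-\frac 12\eps^2|U|^2}$ explicitly to verify it retains Gaussian decay and the $\eps\sqrt\kappa$-small analytic derivative bound in $x$. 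The same remark applies to your appeal to the decomposition $T_{\al,1},\dots,T_{\al,8}$ of \eqref{L-alpha-com}: that eight-term split was built around the $\IP$ versus $[\P,\pt^\vr]$ decomposition needed for the $L^2$ energy pairing in Proposition~\ref{com-est}, which is unnecessary here; the $L_M$ commutator in the $L^\infty$ setting has a simpler four-term structure indexed by which of the three collision velocities (or the loss factor $\mu(\xi_\star)$) carries the derivative. Neither point changes the outcome, and the rest of your argument is sound.
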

\begin{proof} From \eqref{GammaM}, we obtain 
\[\bega 
&\frac 12 L_Mh\\
&=
\frac{\mu}{\sqrt{\mu_M}}(\xi)\int_{\R^3\times\S^2}|(\xi-\xi_\star)\cdot\w|\sqrt{\mu_M(\xi_\star)}h(\xi_\star)d\xi_\star d\w\\
&+h(\xi)\int_{\R^3\times\S^2}|(\xi-\xi_\star)\cdot\w|\mu(\xi_\star)d\w d\xi_\star\\
&-\int_{\R^3\times\S^2}|(\xi-\xi_\star)\cdot\w|\sqrt{\mu_M(\xi_\star)}\lw(\frac{\mu}{\sqrt{\mu_M}}(\xi')h(\xi_\star')+h(\xi')\frac{\mu}{\sqrt{\mu_M}}(\xi_\star')\rw)d\w d\xi_\star.\\
\enda
\]
Since $[L,\pt^\al]f=0$ for $|\al|=0$, we only need to consider $|\al|\ge 1$. 
We have
\[
\bega
[L_M,\pt^\al]h
&=2\sum_{\substack{\beta+\gamma =\al\\\gamma<\al
}
}\frac{\al!}{\beta!\gamma!}\int_{\R^3\times\S^2}|(\xi-\xi_\star)\cdot \w|\sqrt{\mu_M(\xi_\star)}\pt^\beta\lw\{\frac{ \mu}{\sqrt{\mu_M}}\rw\}(\xi')\pt^\gamma h (\xi'_\star)d\w d\xi_\star\\
&\quad+ 2\sum_{\substack{\beta+\gamma =\al\\\gamma<\al
}
}\frac{\al!}{\beta!\gamma!}\int_{\R^3\times\S^2}|(\xi-\xi_\star)\cdot \w|\sqrt{\mu_M(\xi_\star)}\pt^\beta\lw\{\frac{\mu}{\sqrt{\mu_M}}\rw\}(\xi'_\star)\pt^\gamma h (\xi')d\w d\xi_\star\\
&\quad-2\sum_{\substack{\beta+\gamma =\al\\\gamma<\al
}
}\frac{\al!}{\beta!\gamma!} \pt^\gamma h(\xi)\int_{\R^3\times\S^2}|(\xi-\xi_\star)\cdot\w|\pt^\beta\mu(\xi_\star)d\w d\xi_\star\\
&\quad-2\sum_{\substack{\beta+\gamma=\al\\\gamma<\al
}
}\frac{\al!}{\beta!\gamma!}\pt^\beta\lw\{\frac{\mu}{\sqrt{\mu_M}}\rw\}(\xi)\int_{\R^3\times\S^2}|(\xi-\xi_\star)\cdot\w|\sqrt{\mu_M(\xi_\star)}\pt^\gamma h(\xi_\star)d\xi_\star d\w.\\
\enda \]
By a direct calculation, we get 
\beq\label{ran-120}
\frac{\mu}{\sqrt{\mu_M}}=T_M^{3/4}e^{-\frac{1}{2}|\xi-\eps U|^2+\frac{|\xi|^2}{4}}=T_M^{\frac 3 4}e^{-\frac 1 4|\xi|^2+\eps U\cdot\xi-\frac 1 2 \eps^2|U|^2}.
\eeq
Hence we get 
\[
\la\xi\ra^{-1}[L_M,\pt^\al]h\lesssim \sum_{\substack{\beta+\gamma =\al\\\gamma<\al
}
}\frac{\al!}{\beta!\gamma!}\nl \pt^\gamma h
\nr_{L^\infty_\xi}\lw\{ \nl \frac{\pt^\beta\mu}{\sqrt\mu_M}\nr_{L^\infty_{\xi}}+\nl \la\xi\ra\pt^\beta \mu\nr_{L^1_\xi}\rw\}.
\]
Using Proposition \eqref{local-1} and the discrete Young inequality, we have the result.
Next we show \eqref{ran-120}. We have 
\[
\la\xi\ra^{-1}\pt^\al \Gamma_M(h,h)=\sum_{\beta+\gamma=\al}\frac{\al!}{\beta!\gamma!}\la\xi\ra^{-1}\Gamma_M(h_\beta ,h_\gamma)\lesssim \sum_{\beta+\gamma=\al}\frac{\al!}{\beta!\gamma!} \nl h_\beta 
\nr_{L^\infty_\xi} \nl h_\gamma\nr_{L^\infty_\xi} .
\]
Using the discrete Young inequality and \eqref{algebra-est}, we obtain \eqref{ran-120}.
\end{proof}

\begin{proposition} Recalling $\textbf{R}_a$ in \eqref{R-a-def},  there holds 
\[
\kappa\eps^2\cdot \frac 1{\eps\delta} \nl
\nl \frac 1 \nu \pt^\al \lw(\mu_M^{-\frac12}\textbf{R}_a
\rw)
\nr_{L^\infty_{x,\xi}}
\nr
_{\ell_\al^2} \lesssim d_\infty(t).
\]
\end{proposition}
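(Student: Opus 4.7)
The plan is to decompose $\textbf{R}_a$ according to \eqref{R-a-def} into its eight constituent pieces and estimate each one separately in the analytic $L^\infty$ norm $\||\cdot\||_{\tau_0,\infty}$ of the fluid factors, after absorbing the Gaussian ratio $\mu/\sqrt{\mu_M}$ into a velocity weight. The prefactor $\kappa\eps^2/(\eps\delta)=\kappa\eps/\delta$ exactly matches the scaling in $d_\infty(t)$: e.g.\ the term $\eps\NS(U,P)\cdot\vp\,\mu$ produces a contribution of size $\frac{\kappa\eps^2}{\delta}\||\NS(U,P)\||_{\tau_0,\infty}$, the term $\frac{1}{3}\div U\,|\vp|^2\mu$ produces $\frac{\kappa\eps}{\delta}\||\div U\||_{\tau_0,\infty}$, and so on, each matching a summand of $d_\infty(t)$.

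The key observation is that
\[
\frac{\mu}{\sqrt{\mu_M}}=T_M^{3/4}e^{-\tfrac14|\xi|^2+\eps U\cdot\xi-\tfrac12\eps^2|U|^2}
\]
by \eqref{ran-120}, which is bounded by $e^{-c|\xi|^2}$ uniformly in the range where $\e|U|$ is small. Consequently $\nu^{-1}\mu_M^{-1/2}\cdot(\text{polynomial in }\vp)\cdot\mu$ and the analogous expressions with $A_{ij}$ or $\IP(\vp_\ell A_{ij})$ are bounded pointwise in $\xi$ (the $A_{ij}$'s decay by Proposition \ref{L-decom-max} and Corollary \ref{corol-Aij}). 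Thus for each term I can peel off a factor that is a product of (i) a fluid coefficient, and (ii) an analytic-in-$x_\parallel$ velocity-decaying function coming from $\mu$, $\sqrt\mu$, $A_{ij}$, or $\Gamma(\ff,\ff)$-type bilinears.

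I would then apply $\pt^\al$ via the Leibniz rule, distributing derivatives between the fluid coefficient and the velocity factor, and take $\AA_\al$-weighted $\ell^2_\al$ norms. For the derivatives landing on $\mu^{s_0}$ or $\sqrt{\mu_M}$, Proposition \ref{local-1} and Lemma \ref{M-local-2} give analytic bounds with a small gain factor $\e\kappa^{1/2}$ (or $\eps$, or $1$, depending on the coordinate). For the derivatives landing on the fluid coefficient, I use the definitions of the analytic norms $\||\cdot\||_{\tau_0,\infty}$ that appear in $d_\infty(t)$. The discrete Young inequality on convolutions of sequences, combined with the algebraic estimate \eqref{algebra-est}, closes the $\ell^2_\al$ summation and turns the Leibniz sum into a product of two analytic norms, one of them being a fluid analytic norm and the other a controlled velocity-Gaussian constant. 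The nonlinear piece $-\frac{\eps^2}{\kappa}\mu^{1/2}\Gamma(\ff,\ff)$ is handled by the trilinear analytic bound (as used in Proposition \ref{est-Gammaf2f}), combined with $\||\pt^\al\ff\||_{\tau_0,\infty}\lesssim \||P\||_{\tau_0,\infty}+\kappa\||\nabla_xU\||_{\tau_0,\infty}$, giving the $\eps^3/\delta\cdot(\||P\||_{\tau_0,\infty}+\kappa\||\nabla_xU\||_{\tau_0,\infty})^2$ contribution.

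The main technical obstacle is the last two terms of \eqref{R-a-def} involving $(\vp\cdot\nabla_x)A_{ij}$ and $\eps^2(\pt_t+U\cdot\nabla_x)A_{ij}$: the $\vp$-weight competes with the polynomial growth of $A_{ij}=L^{-1}\hat A_{ij}$. Here I would use $\nu^{-1}\cdot|\vp|A_{ij}\cdot\mu_M^{-1/2}\lesssim e^{-c|\xi|^2}$ by the bound \eqref{L-1} from Proposition \ref{L-decom-max} and the Gaussian decay of $\hat A_{ij}$; the transport derivatives $\pt_t A_{ij}$ and $\nabla_x A_{ij}$ only involve $x$-dependence through the local Maxwellian $\mu(\xi-\eps U)$, so their analytic norms reduce to analytic norms of $\eps\pt U$ via Lemma \ref{M-local-2}, producing the $\kappa^2\eps^3/\delta\cdot\||\pt_iU_j\||_{\tau_0,\infty}(1+\eps^2\||U\||_{\tau_0,\infty})$ term in $d_\infty(t)$. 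Summing all eight contributions with their correct $\kappa,\eps,\delta$ prefactors reproduces exactly the defining expression of $d_\infty(t)$, completing the bound.
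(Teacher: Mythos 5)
Your proposal is correct and follows essentially the same route as the paper: decompose $\textbf{R}_a$ term by term via \eqref{R-a-def}, factor out the Gaussian ratio using \eqref{ran-76}/\eqref{MMM}, distribute $\pt^\al$ by Leibniz between the fluid coefficient and the velocity-dependent factor, invoke the analyticity estimates for Maxwellians and for $A_{ij}$ (Corollary \ref{corol-Aij}, Proposition \ref{ana-gamma}), and sum with the discrete Young inequality to reproduce $d_\infty(t)$. The only minor imprecisions are bookkeeping: the paper cites Lemmas \ref{exp-ana} and \ref{exp-ana-ele} directly (rather than the derived Proposition \ref{local-1}/Lemma \ref{M-local-2}) to control the exponential factors $e^{\eps U\cdot\xi/2}$ and $e^{\pm c|\xi|^2}$, and for $\Gamma(\ff,\ff)$ it uses Proposition \ref{ana-gamma} (not the argument of Proposition \ref{est-Gammaf2f}, which is the $L^2$ analogue for $\Gamma(\ff,f)$); moreover one must distinguish carefully between $\mu\mu_M^{-1/2}$ (which decays in $\xi$) and $\mu^{1/2}\mu_M^{-1/2}$ (which grows like $e^{\frac14(T_M^{-1}-1)|\xi|^2}$ since $T_M<1$), so for the $A_{ij}$ and $\Gamma(\ff,\ff)$ pieces the decay is supplied entirely by the $\rho$-Gaussian decay of those factors with $|T_M^{-1}-1|\ll\rho$, as you note at the end.
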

\begin{proof} We have 
\[\bega 
&\frac{\kappa\e^2}{\eps\delta}\mu_M^{-\frac12}\textbf{R}_a\\
&=\frac{\kappa \eps^2}{\delta} \NS(U,P)\cdot\vp \mu \mu_M^{-\frac12}-\frac {\e^3}{\delta}\mu^{\frac12}\mu_M^{-\frac12}\Gamma(\ff,\ff)+\frac {\kappa\e} {3\delta} \div U |\vp|^2\mu\mu_M^{-\frac12}\\
&-\frac{\eps^2 \kappa^2}\delta \pt_{i\ell}U_j\IP (\vp_\ell A_{ij})\mu^{\frac12}\mu_M^{-\frac12}\\
&+\frac{\eps ^2\kappa}\delta \cdot  \mu\mu_M^{-\frac12}\lw\{(\pt_t P+U\cdot\nabla_x P)+\eps P(\pt_t U+U\cdot\nabla_x U)\cdot\vp+\frac 1 2 P \pt_i U_\ell \vp_i\vp_\ell
\rw\} \\ 
&-\frac{\eps^3 \kappa^2 }\delta\mu^{\frac12}\mu_M^{-\frac12}\lw\{(\pt_t+U\cdot\nabla_x)(\pt_i U_j)+\eps (\pt_tU+U\cdot\nabla_x U)\cdot \vp \pt_i U_j+\pt_i U_j \pt_k U_\ell \vp_k\vp_\ell
\rw\}A_{ij} \\
&-\frac{\kappa^2\eps^3 }\delta \mu^{\frac 12}\mu_M^{-\frac12} \pt_i U_j\lw(\eps^2 \pt_t+(\eps^2 U+\vp)\cdot\nabla_x
\rw)A_{ij}.\\
\enda
\]
We note that 
\beq\label{MMM}
\begin{cases}
\mu^{\frac 12}\mu_M^{-\frac12}&=T_M^{\frac 3 4}e^{\frac{|\xi|^2}{4}(T_M^{-1}-1)}e^{\frac 1 2\eps U\cdot\xi}e^{-\frac 1 4\eps^2|U|^2},\\
\mu \mu_M^{-\frac 12}&=T_M^{\frac 34}(2\pi)^{-\frac 34}e^{-\frac 1 4 (2-T_M^{-1})|\xi|^2}e^{-\frac 1 {2T_M}\eps U\cdot\xi}e^{\frac {\eps^2|U|^2}{4T_M}}.
\end{cases}
\eeq
First we bound 
\[
\frac{\kappa \eps^2}{\delta} \NS(U,P)\cdot\vp \mu \mu_M^{-\frac12}=T_M^{-\frac 3 2}(2\pi)^{-\frac 3 4}\frac{\kappa \eps^2}{\delta}\NS(U,P)\cdot \vp e^{-\frac 1 4 (2-T_M^{-1})|\xi|^2}e^{-\frac 1 {2T_M}\eps U\cdot\xi}e^{\frac {\eps^2|U|^2}{4T_M}}.
\]
Using the product rule and Lemma \ref{exp-ana} and Lemma \ref{exp-ana-ele} for the function $e^{-\frac 1 4 (2-T_M^{-1})|\xi|^2}e^{-\frac 1 {2T_M}\eps U\cdot\xi}$, we get 
\[
\nl
\AA_\al \lw|\pt^\al\lw\{\kappa\e^3 \NS(U,P)\cdot\vp \mu \mu_M^{-\frac12}\rw\}
\rw|
\nr_{\ell_\al^2}\lesssim \frac{\kappa \eps^2}{\delta}\nl \NS(U,P)
\nr_{\tau_0,\infty} e^{-\frac 1 {10} (2-T_M^{-1})|\xi|^2}.
\]
Using Proposition \ref{ana-gamma} and recalling \eqref{ff-def}, we have, for $|T^{-1}_M-1|\ll 1$, we have 
\[\bega
&\nl
\AA_\al \lw|e^{\frac 1 {10} |\xi|^2(T_M^{-1}-1)} \pt^\al \Gamma(\ff,\ff)
\rw|
\nr_{\ell_\al^2}\\
&\lesssim\nl 
\AA_\al \nl e^{o(1)|T_M^{-1}-1||\xi|^2} \pt^\al \ff
\nr _{L^\infty_\xi}\nr_{\ell_\al^2}^2
\lesssim \lw\{\|| P\||_{\tau_0,\infty}+\kappa\nl \nabla_x U
\nr_{\tau_0,\infty} \rw\}^2.
\enda 
\]
Hence
\[\frac {\e^3}{\delta}\nl\AA_\al
 \lw|
 \pt^\al\lw\{
\mu^{\frac12}\mu_M^{-\frac12}\Gamma(\ff,\ff)
\rw\}
\rw|
\nr_{\ell_\al^2}\lesssim\frac {\e^3}{\delta} e^{-\frac 1{10} |\xi|^2(T_M^{-1}-1)}\lw\{\|| P\||_{\tau_0,\infty}+\kappa\nl \nabla_x U
\nr_{\tau_0,\infty} \rw\}^2.\] 
Now we bound 
\[\frac {\kappa\e} {3\delta} \div U |\vp|^2\mu\mu_M^{-\frac12}=\frac 1 3 T_M^{\frac 34}(2\pi)^{-\frac 34}\cdot \frac {\kappa\e} {\delta} \div U |\vp|^2e^{-\frac 1 4 (2-T_M^{-1})|\xi|^2}e^{-\frac 1 {2T_M}\eps U\cdot\xi}e^{\frac {\eps^2|U|^2}{4T_M}}.
\]
Similarly as above, using Lemma \ref{exp-ana-ele} and Lemma \ref{exp-ana} for the function $e^{-\frac 1 4 (2-T_M^{-1})|\xi|^2}e^{-\frac 1 {2T_M}\eps U\cdot\xi}$, we have
\[
\frac {\kappa\e} {\delta}
\nl
\AA_\al
 \lw| \pt^\al
\lw\{\div U |\vp|^2\mu\mu_M^{-\frac12}
\rw\}
\rw|
\nr_{\ell_\al^2}\lesssim\frac {\kappa\e} {\delta} \nl \div U
\nr_{\tau_0,\infty} e^{-\frac 1 {10}(2-T_M^{-1})|\xi|^2}.
\]
Next we bound 
\[\bega 
&\frac{\eps^2 \kappa^2}\delta \pt_{i\ell}U_j\IP (\vp_\ell A_{ij})\mu^{\frac12}\mu_M^{-\frac12}\\
&=T_M^{\frac 3 4}\cdot \frac{\eps^2 \kappa^2}\delta\pt_{i\ell}U_j e^{\frac{|\xi|^2}{4}(T_M^{-1}-1)}e^{\frac 1 2\eps U\cdot\xi}e^{-\frac 1 4\eps^2|U|^2} \IP (\vp_\ell A_{ij}(\vp)).\enda 
\]
Since the function $e^{-\frac 14\eps^2|U|^2}$ is analytic, it suffices to bound\[
\frac{\eps^2 \kappa^2}\delta \nl\AA_\al
\lw|e^{\frac{|\xi|^2}{4}(T_M^{-1}-1)}\pt^\al\lw\{e^{\frac 1 2\eps U\cdot \xi} \pt_{i\ell}U_j \vp_\ell A_{ij}\rw\}
\rw|
\nr_{\ell_\al^2}.
\]
As in the estimate \eqref{ran-128} and the product rule, we have, for $|T_M^{-1}-1|\ll 1$: 
\[
\frac{\eps^2 \kappa^2}\delta \nl\AA_\al
\lw|e^{\frac{|\xi|^2}{4}(T_M^{-1}-1)}\pt^\al\lw\{e^{\frac 1 2\eps U\cdot \xi} \pt_{i\ell}U_j \vp_\ell A_{ij}\rw\}
\rw|
\nr_{\ell_\al^2}\lesssim \frac{\eps^2 \kappa^2}\delta \nl \nabla_x^2 U
\nr_{\tau_0,\infty} e^{-\frac 1{10}(T_M^{-1}-1)|\xi|^2}.
\]
Here we use the fact that $A_{ij}$ is analytic decay exponentially, proven in Corollary \ref{corol-Aij}.
Next, we consider 
\[\frac{\eps ^2\kappa}\delta   \mu\mu_M^{-\frac12}\lw\{(\pt_t P+U\cdot\nabla_x P)+\eps P(\pt_t U+U\cdot\nabla_x U)\cdot\vp+\frac 1 2 P \pt_i U_\ell \vp_i\vp_\ell
\rw\}.  \]
Similarly, recalling \eqref{MMM},  we have
\[
\bega 
&\frac{\eps ^2\kappa}\delta 
\nl \AA_\al
\lw|
\pt^\al\lw(
\mu\mu_M^{-\frac12}\lw\{(\pt_t P+U\cdot\nabla_x P)+\eps P(\pt_t U+U\cdot\nabla_x U)\cdot\vp+\frac 1 2 P \pt_i U_\ell \vp_i\vp_\ell
\rw\}
\rw)
\rw|
\nr_{\ell_\al^2} \\
&\lesssim\frac{\eps ^2\kappa}\delta \lw\{\||\pt_t P+U\cdot\nabla_x P\||_{\tau_0,\infty}+\eps \|| P(\pt_t U+U\cdot\nabla_x U)\||_{\tau_0,\infty}+\|| P\pt_i U_j \||_{\tau_0,\infty}\rw\}\\
&\times e^{-\frac 1 {10}(2-T_M^{-1})|\xi|^2}.
\enda 
\]
Next we bound \[\bega 
&\frac{\eps^3 \kappa^2 }\delta \mu^{\frac12}\mu_M^{-\frac12}\lw\{(\pt_t+U\cdot\nabla_x)(\pt_i U_j)+\eps (\pt_tU+U\cdot\nabla_x U)\cdot \vp \pt_i U_j+\pt_i U_j \pt_k U_\ell \vp_k\vp_\ell
\rw\}A_{ij}\\
&=T_M^{\frac 3 4}\cdot \frac{\eps^3 \kappa^2 }\delta e^{\frac{|\xi|^2}{4}(T_M^{-1}-1)}e^{\frac 1 2\eps U\cdot\xi}A_{ij}\\
&\times e^{-\frac 1 4\eps^2|U|^2} \lw\{(\pt_t+U\cdot\nabla_x)(\pt_i U_j)+\eps (\pt_tU+U\cdot\nabla_x U)\cdot \vp \pt_i U_j+\pt_i U_j \pt_k U_\ell \vp_k\vp_\ell
\rw\},\enda 
\]
where we recall \eqref{MMM}. Similarly, we get
 \[\bega 
&\frac{\eps^3 \kappa^2 }\delta
\nl 
\AA_\al \pt^\al \lw\{
\mu^{\frac12}\mu_M^{-\frac12}\lw\{(\pt_t+U\cdot\nabla_x)(\pt_i U_j)+\eps (\pt_tU+U\cdot\nabla_x U)\cdot \vp \pt_i U_j+\pt_i U_j \pt_k U_\ell \vp_k\vp_\ell
\rw\}A_{ij}\rw\}
\nr_{\ell_\al^2}\\
&\lesssim\frac{\eps^3 \kappa^2 }
\delta  e^{-\frac 1 {10}(T_M^{-1}-1)|\xi|^2}\\
&\quad\times 
\lw(\||  (\pt_t+U\cdot\nabla_x)(\pt_i U_j)\||_{\tau_0,\infty}+\eps\||  (\pt_t U+U\cdot\nabla_x U)(\pt_i U_j)\||_{\tau_0,\infty}+\|| \nabla_x U
\||_{\tau_0,\infty}^2\rw).\enda
\]
Next we bound the analytic norm of 
\[\bega 
&\frac{\kappa^2\eps^3 }\delta  \mu^{\frac 12}\mu_M^{-\frac12} \pt_i U_j\lw(\eps^2 \pt_t+(\eps^2 U+\vp)\cdot\nabla_x
\rw)A_{ij}\\
&=T_M^{\frac 3 4}\cdot \frac{\kappa^2\eps^3 }\delta e^{\frac{|\xi|^2}{4}(T_M^{-1}-1)}e^{\frac 1 2\eps U\cdot\xi}e^{-\frac 1 4\eps^2|U|^2}\pt_i U_j\lw(\eps^2 \pt_t+(\eps^2 U+\vp)\cdot\nabla_x
\rw)A_{ij}.
\enda
\]
Using Corollary \ref{corol-Aij}, we get
\[\bega 
&\frac{\kappa^2\eps^3 }\delta \nl\AA_\al
\lw|
\pt^\al \lw\{ \mu^{\frac 12}\mu_M^{-\frac12} \pt_i U_j\lw(\eps^2 \pt_t+(\eps^2 U+\vp)\cdot\nabla_x
\rw)A_{ij}
\rw\}
\rw|
\nr_{\ell_\al^2}\\
&\lesssim \frac{\kappa^2\eps^3 }\delta \|| \pt_i U_j
\||_{\tau_0,\infty}\lw(1+\eps^2 \|| U
\||_{\tau_0,\infty}
\rw)e^{-\frac{1}{10}(T_M^{-1}-1)|\xi|^2}.
\enda 
\]
The proof is complete.\end{proof}

\begin{lemma} Recalling $r$ in \eqref{r-def} and \eqref{ff-def}.  There holds 
\[\frac\eps\delta \nl \nl w_3^{-1}\AA_\al \pt^\al r
\nr_{L^\infty(0,t,L^\infty_{x,\xi})}
\nr_{\ell_\al^2}\lesssim \frac{\eps}{\delta} \lw\{\nl \AA_\al 
\nl \pt^\al P
\nr_{L^\infty(\pt\Omega)}
\nr_{\ell_\al^2} 
+\sum_{j=1}^2 \nl \AA_\al 
\nl \pt^\al \pt_{x_3}U_j
\nr_{L^\infty(\pt\Omega)}
\nr_{\ell_\al^2}
\rw\}.
\]
\end{lemma}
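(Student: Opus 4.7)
The plan is to compute $\pt^\al r|_{\pt\Omega}$ explicitly using the structure of $\ff$ and the fact that the Prandtl velocity $U$ vanishes (essentially) on the boundary. Since $\pt^\al = (\eps\pt_t)^{\al_0}\pt_{x_1}^{\al_1}\pt_{x_2}^{\al_2}$ acts only on $(t, x_\parallel)$ while $\P_{\gamma_+}$ is an integral operator in $\xi$, the two commute, so
\[
\pt^\al r = (\II - \P_{\gamma_+})\,\pt^\al \ff.
\]
Applying Leibniz to $\ff = P\sqrt\mu - \kappa\sum_{i,j}\pt_i U_j\,A_{ij}(\vp)$ and restricting to $x_3=0$, we use the no-slip construction $U_\parallel|_{\pt\Omega}=0$ exactly (with $U_3|_{\pt\Omega}=O(\kappa^{(N+1)/2})$, negligible), so that any purely tangential derivative $\pt^\beta U|_{\pt\Omega}$ with $|\beta|\geq 1$ vanishes up to a $\kappa$-small error. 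Via the chain rule this forces $\pt^{\al-\beta}\sqrt\mu|_{\pt\Omega}$ and $\pt^{\al-\beta}A_{ij}(\vp)|_{\pt\Omega}$ to vanish for $|\al-\beta|\geq 1$. The only surviving Leibniz term has $\pt^\al$ acting entirely on the scalar coefficients $P$ and $\pt_i U_j$; moreover $\pt^\al\pt_{x_i}U_j|_{\pt\Omega}=0$ for $i=1,2$ (still tangential derivatives of $U_j|_{\pt\Omega}=0$), so only $i=3$ survives:
\[
\pt^\al \ff\big|_{\pt\Omega} = (\pt^\al P)\,\sqrt{\mu_0}(\xi) - \kappa\sum_{j=1}^{3}(\pt^\al\pt_{x_3}U_j)\,A_{3j}(\xi) + \mathcal R_\al(t,x,\xi),
\]
where $\mathcal R_\al$ collects the small corrections coming from $U_3|_{\pt\Omega}\neq 0$.

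Next I apply $\II-\P_{\gamma_+}$ to each piece. The first term is annihilated because $\P_{\gamma_+}\sqrt{\mu_0}=\sqrt{\mu_0}$ by the normalization \eqref{cmu}. For the second, the parity identities ``$A_{3j}(\xi)$ odd in $\xi_j$'' for $j=1,2$ give $\P_{\gamma_+}A_{3j}=0$, so these survive as $A_{3j}(\xi)$; the $j=3$ contribution is absorbed into the $j=1,2$ terms by (approximate) incompressibility at the boundary. The residual $(\II-\P_{\gamma_+})\mathcal R_\al$ is linear in $P$ (its driving factor is $P(\sqrt\mu-\sqrt{\mu_0})$ at the boundary), so it is bounded by $\|\pt^\al P\|_{L^\infty(\pt\Omega)}$ times a Gaussian function of $\xi$. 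This is precisely why both $\pt^\al P$ and $\pt^\al\pt_{x_3}U_j$ appear on the right-hand side of the claim.

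For the weighted pointwise bound in $\xi$, recall $w_3(\xi)^{-1}=T_M^{-3/2}e^{|\xi|^2(T_M^{-1}-1)/4}$. By Corollary \ref{corol-Aij}, $A_{3j}(\xi)$ decays like $e^{-|\xi|^2/4}$ times a polynomial, so under the hypothesis $T_M\in(1/2,1)$ from \eqref{mu-M-def} the product $w_3^{-1}A_{3j}$ decays as $e^{-(2-T_M^{-1})|\xi|^2/4}$ and is uniformly bounded in $\xi$. The same weight calculation bounds the $\xi$-dependence of $(\II-\P_{\gamma_+})\mathcal R_\al$. Taking $\sup_\xi$ on the boundary and then assembling with the analyticity coefficients $\AA_\al(t)^2$, the $\ell^2_\al$-triangle inequality yields the claimed bound.

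The main obstacle will be the clean bookkeeping of $\mathcal R_\al$: verifying that no Leibniz term surviving the boundary restriction requires a derivative higher than $\pt^\al P$ or $\pt^\al \pt_{x_3}U_j$, and that every resulting $\xi$-factor decays fast enough to absorb the growth of $w_3^{-1}$. A secondary point is showing that the $\pt^\al\pt_{x_3}U_3$ contribution can be traded (via divergence) for $\pt^\al\pt_{x_3}U_{1,2}$ without losing regularity, so that only the two tangential components appear on the right-hand side.
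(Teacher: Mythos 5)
Your proposal is correct and follows essentially the same route as the paper: exploit no-slip to write $\ff|_{x_3=0}=P|_{x_3=0}\sqrt{\mu_0}-\kappa\sum_{j=1}^2\pt_{x_3}U_j|_{x_3=0}A_{3j}(\xi)$, note that on $\{x_3=0\}$ the $\xi$-factors $\sqrt{\mu_0}$ and $A_{3j}(\xi)$ are independent of $(t,x_\parallel)$ so $\pt^\al$ passes directly onto the scalars $P$ and $\pt_{x_3}U_j$, apply $\II-\P_{\gamma_+}$, and check that $w_3^{-1}$ times a Gaussian in $\xi$ is bounded since $T_M^{-1}-1<1$. Two remarks worth making, neither fatal. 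First, the paper's proof is shorter because it takes $U|_{\pt\Omega}=0$ exactly (both tangential and normal components, as stressed in the introduction to make $\mu|_{\pt\Omega}=\mu_0$), so your corrector $\mathcal R_\al$ and the accompanying hedging about $U_3|_{\pt\Omega}=O(\kappa^{(N+1)/2})$ are not needed; the $j=3$ contribution likewise vanishes because $\pt_{x_3}U_3|_{\pt\Omega}=-\nabla_\parallel\cdot U_\parallel|_{\pt\Omega}=0$ rather than being ``traded'' for the tangential components. Second, your observation that $\P_{\gamma_+}$ fixes $\sqrt{\mu_0}$ (so the $\pt^\al P$ contribution actually cancels exactly, and the $A_{3j}$ part vanishes under $\P_{\gamma_+}$ by parity) is sharper than what the paper uses here: the paper simply bounds $|\pt^\al r|\le|\pt^\al\ff|+|\P_{\gamma_+}\pt^\al\ff|$ and keeps both the $P$ and the $\pt_{x_3}U$ terms. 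So the $\pt^\al P$ term on the right-hand side is a generous over-estimate in the paper, not a consequence of a boundary residual; attributing it to $\mathcal R_\al$ is a misdiagnosis, though it does not affect the validity of the stated inequality.
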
 

\begin{proof}
We have 
\[
\ff|_{x_3=0}=P|_{x_3=0}\sqrt\mu_0-\sum_{j=1}^2 \pt_{x_3}U_j|_{x_3=0}A_{3j}(\xi),
\]
and 
\[
\P_{\gamma_+}\ff=c_\mu\sqrt\mu_0 \lw\{P|_{x_3=0}\int_{\xi_3<0}\mu_0(\xi)|\xi_3|d\xi -\sum_{j=1}^2 \pt_{x_3}U_j|_{x_3=0}\int_{\xi_3<0}A_{3j}(\xi)\sqrt\mu_0(\xi)|\xi_3|d\xi\rw\}.
\]
This implies 
\[
\pt^\al r\lesssim \lw(|\pt^\al P(t,x_1,x_2,0)|+\sum_{j=1}^2 |\pt_{x_3}\pt^\al U_j(t,x_1,x_2,0)|\rw)\sqrt\mu_0 .
\]
The result follows.
\end{proof}

Combining all of the above estimates, we obtain the theorem:

\begin{theorem}\label{l-inf} Let $f$ solve \eqref{f-eq} with the boundary condition \eqref{bdr-r}. There holds
\[\bega 
\mathcal H_{\rho,f}(t)&\lesssim \mathcal H_h|_{t=0}+(\eps\kappa)^{-\frac 12}\nl \nl \AA_\al \P f_\al
\nr_{L^\infty(0,t,L^6_x)}
\nr _{\ell_\al^2}\\
&\quad+(\eps\kappa)^{-\frac 3 2}\nl 
\nl \AA_\al \IP f_\al
\nr _{L^\infty(0,t,L^2_{x,\xi})}
\nr _{\ell_\al^2}+\eps^{0.99}.\enda 
\]
\end{theorem}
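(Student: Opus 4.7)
The plan is to combine the trajectory representation of $h_\al$ along backward characteristics (Proposition \ref{ran-790}) with the pointwise kernel bound of Proposition \ref{ran-801} and the subsequent lemmas on the commutator $[L_M,\pt^\al]h$, the remainder $\textbf{R}_a$, the nonlinearity $\Gamma_M(h,h)$, and the boundary datum $r$; then to sum the resulting pointwise estimates against the analytic weights $\AA_\al^2$. The very first step is to reduce a bound on $\mathcal H_{\rho,f}(t)$ to a bound on $\mathcal H_h(t)$ via Lemma \ref{fh-lem}, which is legitimate once $\rho+\eps\|U\|_{L^\infty}\ll |T_M^{-1}-1|$. This puts the target precisely in the form of the theorem stated immediately before Theorem \ref{l-inf}.

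For each $\al$ the representation $h_\al=\mathcal I(h_{\al,0})+\mathcal M(h_\al)+\mathcal F(\wtd g_\al)-(\eps/\delta)T_M^{3/2}\mathcal B(r_\al)$ splits the analysis into four pieces. The initial term contributes only $\|h_{\al,0}\|_{L^\infty_{x,\xi}}$. Proposition \ref{ran-801} controls the kernel term $\mathcal M(h_\al)$ in $L^\infty_{x,\xi}$ by: (i) a small multiple $o(1)\sup_s\|h_\al(s)\|_{L^\infty_{x,\xi}}$ coming from the $K_{\eps_0}$ part of $K_M$; (ii) an initial-data contribution; (iii) the crucial bulk term $(\eps\kappa)^{-1/2}\sup_s\|\P f_\al(s)\|_{L^6_x}+(\eps\kappa)^{-3/2}\sup_s\|\IP f_\al(s)\|_{L^2_{x,\xi}}$, produced by the double-Duhamel iteration and a change of variables on the bounded velocity set $U_{\bar N}$; (iv) the cross-mode sum $\sum_{\beta+\gamma=\al,\,|\beta|\ge 1}(\al!/\beta!\gamma!)\|\pt^\beta\{e^{-\eps^2|U|^2/4+\eps U\cdot\xi_{\star\star}/2}\}\|_{L^\infty}\sup_s\|h_\gamma(s)\|_{L^\infty}$; (v) the weighted $L^\infty$ norm of $\wtd g_\al$ scaled by $\kappa\eps^2$; and (vi) the boundary term $(\eps/\delta)\|w_3^{-1}\pt^\al r\|_{L^\infty}$. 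The forcing operator $\mathcal F(\wtd g_\al)$ enters only through a piece of the same shape as (v).

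The closure is then a matter of multiplying by $\AA_\al$, squaring, taking the $\ell^2_\al$ norm, and applying the discrete Young inequality with the combinatorial estimate \eqref{algebra-est}. Because the analytic norm of $e^{\frac12\eps U\cdot\xi}e^{-\frac14\eps^2|U|^2}$ is of size $O(\eps\sqrt\kappa)=o(1)$ by Lemma \ref{exp-ana}--\ref{exp-ana-ele} together with the derivative hypothesis on $U$ used in Lemma \ref{M-local-2}, the cross-mode sum in (iv) collapses to $o(1)\mathcal H_h(t)$ and is absorbed, along with (i), into the left-hand side. The commutator $(\kappa\eps^2)^{-1}[L_M,\pt^\al]h$ entering $\wtd g_\al$ contributes a factor $\eps\sqrt\kappa\,\mathcal H_h(t)$ via the preceding lemma; once multiplied by the $\kappa\eps^2$ prefactor in (v) this is again $o(1)\mathcal H_h(t)$ and absorbed. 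The nonlinearity $\Gamma_M(h,h)$ yields a quadratic $\mathcal H_h(t)^2$ with a small prefactor $\delta/(\kappa\eps)$, controllable under the bootstrap. The $\textbf{R}_a$, mixed $Q(\ff\sqrt\mu,h\sqrt{\mu_M})/\sqrt{\mu_M}$, and boundary contributions fit inside the quantity $d_\infty(t)$ bounded by the lemmas immediately preceding Theorem \ref{l-inf}, and for the Prandtl layers under consideration one has $d_\infty(t)\lesssim\eps^{0.99}$.

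The main obstacle is the cross-mode coupling at all orders $|\gamma|<|\al|$ introduced both by the commutator $[L_M,\pt^\al]$ and by the ratio $\sqrt{\mu_M}/\sqrt{\mu}$, which depends analytically on $x$ only through $U$. A naive termwise estimate loses an analytic derivative of $U$ at each step and destroys the bound; the closure hinges on the combined use of the pointwise analytic bounds on the local Maxwellian (Propositions \ref{local-1} and \ref{ran-801}), the smallness $\sum_{|\beta|\ge 1}(\tau_0^{|\beta|}/\beta!)\|\pt^\beta(\eps U)\|_{L^\infty}\lesssim\eps\sqrt\kappa$, and the algebraic identity \eqref{algebra-est} which allows the discrete Young inequality to collapse every cross-mode sum to an $o(1)$ multiple of $\mathcal H_h(t)$. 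The singular factors $(\eps\kappa)^{-1/2}$ and $(\eps\kappa)^{-3/2}$ in front of the $\P f_\al$ and $\IP f_\al$ norms are not an obstacle in themselves, as they are designed to pair with the matching $(\eps\sqrt\kappa)^{-1}$ dissipation gain $\mathcal D_f$ in the $L^2$ estimate of Theorem \ref{l2-ana} during the final bootstrap.
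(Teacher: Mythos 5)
Your proposal follows essentially the same route as the paper's own argument: reduce $\mathcal H_{\rho,f}(t)$ to $\mathcal H_h(t)$ via Lemma \ref{fh-lem}, invoke the Duhamel/trajectory representation and kernel estimates of Propositions \ref{ran-790}--\ref{ran-801} to obtain the analytic $L^\infty$ bound with the $(\eps\kappa)^{-1/2}\|\P f_\al\|_{L^6}$, $(\eps\kappa)^{-3/2}\|\IP f_\al\|_{L^2}$ and $\kappa\eps^2\|\la\xi\ra^{-1}\wtd g_\al\|_{L^\infty}$ terms, bound the commutator/nonlinear/$\mathbf{R}_a$/boundary pieces of $\wtd g_\al$ by the intermediate lemmas so the last two collect into $d_\infty(t)\lesssim\eps^{0.99}$, and absorb the cross-mode sums and the $o(1)\mathcal H_h$ pieces into the left side via the analytic smallness of $\pt^\beta(\eps U)$ and the estimate \eqref{algebra-est}. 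This is precisely the paper's decomposition and closure; no essential deviation.
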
 
\subsection{Analytic $L^\infty_t L^6_x$ estimates}\label{L6-sec}
In this section we derive the $L^6$ analytic estimate for\[
\nl \nl \AA_\al \P f_\al
\nr_{L^\infty(0,t,L^6_x)}
\nr _{\ell_\al^2},
\] where 
 $f$ solves \eqref{f-eq} with the boundary condition \eqref{bdr-r}. 
We recall the equation \eqref{f-al-eq} with the boundary condition \eqref{f-al-bdr-con}.
We have 
\[
\pt_t f_\al +\frac 1 \e \xi \cdot\nabla_x f_\al +\frac 1 {\kappa\eps^2}Lf_\al =G_\al,
\]
where 
\beq\label{G-al}\bega
G_\al&=\frac 1 {\kappa\eps^2}[L,\pt^\al]f-\pt^\al\lw\{\frac{(\pt_t+\frac 1 \eps \xi\cdot\nabla_x)\sqrt\mu}{\sqrt\mu}f\rw\}\\
&+\pt^\al\lw(-\frac{1}{\eps\delta \sqrt\mu}\textbf{R}_a+\frac{\delta}{\kappa\eps}\Gamma(f,f)+\frac{2}{\kappa}\Gamma(\ff,f)\rw),
\enda 
\eeq
along with the boundary condition 
\[
f_\al|_{\gamma_-}=\P_{\gamma_+} f_\al-\frac \eps\delta \pt^\al r.
\]
We recall 
\[
\P f_\al =a_\al(t,x)\sqrt\mu+b_\al(t,x)\cdot \vp\sqrt\mu+c_\al(t,x)\frac{|\vp|^2-3}{2}\sqrt\mu.
\]
We have, for any test function $\psi$:
\beq\label{test-L6}
\bega
\la \xi\cdot\nabla_x\psi, \P f_\al\ra_{L^2_{x,\xi}}-\la f_\al,\psi\ra_{L^2_\gamma}&=-\la \xi\cdot\nabla_x \psi,\IP f_\al\ra_{L^2_{x,\xi}}+\la \eps\pt_t f_\al ,\psi\ra_{L^2_{x,\xi}}\\
&\quad+\frac 1 {\kappa\eps}\la L\psi,\IP f_\al\ra_{L^2_{x,\xi}}-\la \eps G_\al,\psi\ra_{L^2_{x,\xi}},\enda \eeq
where
\[
\la f_\al,\psi\ra_{L^2_\gamma}=-\int_{\T^2}\xi_3 \lw(f_\al \psi\rw)|_{x_3=0}dx_1dx_2d\xi.
\]

\begin{proposition}\label{L6-P} There holds 
\[\bega 
\|\P f_\al \|_{L^6_x}&\lesssim o(1)\nl \P f_\al 
\nr_{L^2_x}+ 
\nl e^{\rho|\xi|^2}f_\al\nr_{L^\infty_{x,\xi}}^{\frac 2 3}\nl\IP f_\al\nr_{L^2_{x,\xi}}^{\frac 13}+\nl \eps\pt_t f_\al 
\nr_{L^2_{x,\xi}}\\
&+(\kappa\e)^{-1}\nl \IP f_\al
\nr _{L^2_{x,\xi}}+\nl (\II-\P_{\gamma_+})f_\al\nr_{L^2_{\gamma_+}}^{\frac 12}\nl e^{\rho|\xi|^2}f_\al\nr_{L^\infty_{x,\xi}}^{\frac 12}\\
&+\frac \eps \delta\|\pt^\al r \|_{L^4_{\gamma_-}}+\eps \nl \la\xi\ra^{-9}G_\al 
\nr_{L^2_{x,\xi}}.\enda
\]
\end{proposition}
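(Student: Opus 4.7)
\textbf{Proof proposal for Proposition \ref{L6-P}.}

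The plan is to run a standard test-function duality argument on the weak identity \eqref{test-L6} to extract each moment $a_\al, b_{\al,j}, c_\al$ in $L^6_x$. By $L^6$--$L^{6/5}$ duality, it suffices to bound $\int_\Omega M_\al(x) g(x)\,dx$ for each scalar component $M_\al\in\{a_\al,b_{\al,1},b_{\al,2},b_{\al,3},c_\al\}$ against an arbitrary $g\in L^{6/5}(\Omega)$ with $\|g\|_{L^{6/5}}\le 1$. For each such $g$, I would introduce an auxiliary function $\phi=\phi_M(x)$ solving an elliptic problem on $\Omega=\T^2\times\R_+$ (e.g.\ $-\Delta\phi=g$ with Neumann data, and \emph{Dirichlet} data $\phi|_{x_3=0}=0$ when extracting $b_{\al,3}$, to exploit Proposition \ref{bc-con12}). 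Standard Calder\'on--Zygmund and Sobolev estimates on the half-space give
\[
\|\phi\|_{L^6_x}+\|\nabla\phi\|_{L^2_x}+\|\nabla\phi\|_{L^{6/5}_x}+\|\nabla^2\phi\|_{L^{6/5}_x}\lesssim \|g\|_{L^{6/5}_x}\lesssim 1.
\]
With these, I would build velocity-polynomial test functions of Guo type, $\psi_{M}(x,\xi)=\Psi_{M}(\xi)\phi_M(x)$, where $\Psi_{a}\propto(|\xi|^2-\beta_a)\sqrt\mu$, $\Psi_{b,j}\propto \xi_j(|\xi|^2-\beta_b)\sqrt\mu$, $\Psi_{c}\propto (|\xi|^2-\beta_c)\sqrt\mu$, chosen so that $\int \xi\cdot\nabla_x\psi_M\,\P f_\al\,d\xi = c_M\, M_\al\,g + \nabla\cdot(\text{other-moment flux})$ modulo lower order; the velocity moments of $\Psi_M$ against $\{\sqrt\mu,\xi_i\sqrt\mu,|\vp|^2\sqrt\mu\}$ are selected so the diagonal $M_\al g$ term survives and the remaining moments decouple, following the Esposito--Guo--Kim--Marra framework.

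Having fixed $\psi_M$, substitute into \eqref{test-L6}. The left-hand side produces $\int M_\al g\,dx$ up to the remaining moment terms which are absorbed into $o(1)\|\P f_\al\|_{L^2_x}$ by choosing the constants $\beta_a,\beta_b,\beta_c$ appropriately and paying a small multiple of $\|\P f_\al\|_{L^2_x}\|g\|_{L^{6/5}}$. The right-hand side is then estimated term by term:
\begin{itemize}
\item[(i)] For $\la\xi\cdot\nabla_x\psi_M,\IP f_\al\ra$, Cauchy--Schwarz in $\xi$ plus H\"older in $x$ give $\|\nabla\phi_M\|_{L^{6/5}_x}\,\|\IP f_\al\|_{L^6_xL^2_\xi}$; the mixed norm is interpolated as $\|\IP f_\al\|_{L^6_xL^2_\xi}\lesssim\|\IP f_\al\|_{L^\infty_xL^2_\xi}^{2/3}\|\IP f_\al\|_{L^2_{x,\xi}}^{1/3}$, and exponential $\xi$-decay of $\Psi_M$ converts the $L^\infty_xL^2_\xi$ norm into $\|e^{\rho|\xi|^2}f_\al\|_{L^\infty_{x,\xi}}$, producing the interpolation term on the right-hand side.
\item[(ii)] $\la\eps\pt_t f_\al,\psi_M\ra\lesssim \eps\|\pt_t f_\al\|_{L^2_{x,\xi}}\|\phi_M\|_{L^2}\lesssim \|\eps\pt_t f_\al\|_{L^2_{x,\xi}}$.
\item[(iii)] $\frac{1}{\kappa\eps}\la L\psi_M,\IP f_\al\ra$: using $|L\psi_M|\lesssim\la\xi\ra|\Psi_M||\phi_M|$ and $\|\phi_M\|_{L^{6}}\lesssim 1$, this yields $(\kappa\eps)^{-1}\|\IP f_\al\|_{L^2_{x,\xi}}$.
\item[(iv)] $\la\eps G_\al,\psi_M\ra\lesssim \eps\|\la\xi\ra^{-9}G_\al\|_{L^2_{x,\xi}}\|\la\xi\ra^{9}\Psi_M\|_{L^2_\xi}\|\phi_M\|_{L^2_x}$, giving the $G_\al$ term.
\end{itemize}

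The boundary contribution $\la f_\al,\psi_M\ra_{L^2_\gamma}$ is handled last. Split $f_\al|_{\gamma_-}=\P_{\gamma_+}f_\al-\frac{\eps}{\delta}\pt^\al r$ via \eqref{f-al-bdr-con}. On $\gamma_+$ write $f_\al=\P_{\gamma_+}f_\al+(\II-\P_{\gamma_+})f_\al$; the $\P_{\gamma_+}f_\al$ pieces on $\gamma_-$ and $\gamma_+$ cancel by the symmetry identity already used in the proof of Theorem \ref{l2-ana} (that piece matches Gaussian moments and $c_\mu\int_{\xi_3>0}\mu_0|\xi_3|d\xi=1$). The nonprojected part is bounded by H\"older on $\gamma_+$, giving $\|(\II-\P_{\gamma_+})f_\al\|_{L^2_{\gamma_+}}\|\psi_M\|_{L^2_{\gamma_+}}$; interpolating $\psi_M$ at the boundary through $L^\infty\cap L^2\to L^4$ against $\|e^{\rho|\xi|^2}f_\al\|_{L^\infty}^{1/2}$ and using trace estimates $\|\phi_M|_{\pt\Omega}\|_{L^4}\lesssim 1$ yields the interpolation boundary term. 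The $r$ contribution is handled by H\"older on $\gamma_-$: $\frac{\eps}{\delta}\|\pt^\al r\|_{L^4_{\gamma_-}}\|\psi_M\|_{L^{4/3}_{\gamma_-}}\lesssim \frac{\eps}{\delta}\|\pt^\al r\|_{L^4_{\gamma_-}}$. The key obstacle is a clean choice of $\psi_M$ compatible with the no-flux boundary identity $b_{\al,3}|_{x_3=0}=0$: for $b_{\al,3}$ one must take $\phi_{b_3}$ with Dirichlet boundary condition so that the boundary integrand has no uncontrollable Gaussian piece, and for the other moments Neumann data with mean-zero compatibility are required; once this choice is made all the remaining ingredients are elliptic regularity on the half-space plus interpolation. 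Combining the five components and taking a supremum over $g$ with $\|g\|_{L^{6/5}}\le1$ yields the stated bound.
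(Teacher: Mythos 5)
Your overall architecture --- pairing the weak form \eqref{test-L6} against Guo-type test functions built from an auxiliary elliptic solve, using half-space elliptic regularity, and killing the $\P_\gamma$ boundary piece by parity and the choice of boundary data --- matches the paper's, and the duality variant ($-\triangle\phi=g$ for arbitrary $g$ with $\|g\|_{L^{6/5}}\le 1$, supremum at the end) is a legitimate alternative to the paper's fifth-power trick $-\triangle\phi=a_\al^5$. However, there is a genuine gap in the test functions themselves. As written, $\psi_M(x,\xi)=\Psi_M(\xi)\phi_M(x)$ with scalar $\Psi_a\propto(|\xi|^2-\beta_a)\sqrt\mu$ gives $\xi\cdot\nabla_x\psi_a=(|\xi|^2-\beta_a)\sqrt\mu\,\xi\cdot\nabla\phi_a$ to leading order; paired against the even-in-$\vp$ parts $a_\al\sqrt\mu$ and $c_\al\frac{|\vp|^2-3}{2}\sqrt\mu$ of $\P f_\al$ this vanishes by parity, and the only surviving moment is $\propto b_\al\cdot\nabla\phi_a$. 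So your $\psi_a$ does not extract $a_\al$; it extracts $b_\al$. Symmetrically, $\Psi_{b,j}\propto\xi_j(|\xi|^2-\beta_b)\sqrt\mu$ carries one extra $\xi$, so $\xi\cdot\nabla_x\psi_{b,j}$ is even and pairs with $a_\al,c_\al$, not $b_\al$. The correct construction must carry one additional $\vp$-factor coupled to a gradient, as in the paper: $\psi_a=(|\vp|^2-\beta_a)\sqrt\mu\,\sum_j\vp_j\pt_{x_j}\phi_a$, so that $\xi\cdot\nabla_x\psi_a=(|\vp|^2-\beta_a)\sqrt\mu\,\vp_i\vp_j\pt_{ij}\phi_a+\text{l.o.t.}$ actually produces $a_\al\triangle\phi_a$, with $\beta_a=10$ chosen so the $c_\al$-moment decouples and $\beta_c=5$ symmetrically. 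This shift changes every elliptic norm downstream of your items (ii)--(iv): the right quantities are $\|\nabla\phi_M\|_{L^2}$ and $\|\nabla^2\phi_M\|_{L^{6/5}}$, not $\|\phi_M\|_{L^2}$, and in fact $\|\phi_M\|_{L^2}$ is not controlled by $\|g\|_{L^{6/5}}\le 1$ on the unbounded domain $\T^2\times\R_+$ at all.

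There is a second, structural, subtlety your sketch glosses over. The $b_\al$ estimate cannot be run with one test function per component: the pairing produces $\la\pt_{ij}(-\triangle_D)^{-1}b_{\al,j}^5,b_{\al,i}\ra$-type cross terms, not $\|b_{\al,j}\|_{L^6}^6$ directly. The paper takes the two-index family $\psi_b^{i,j}=(\vp_i^2-1)\sqrt\mu\,\pt_j\phi_b^j$ (with Dirichlet data for every $\phi_b^j$, not only $j=3$) together with the auxiliary family $\psi=|\vp|^2\vp_i\vp_j\sqrt\mu\,\pt_j\phi_b^j$ for $i\neq j$, and recombines the resulting cross terms before isolating $\|b_{\al,j}\|_{L^6}^6$. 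Finally, a smaller mismatch: the cancellation of the $\P_\gamma f_\al$ boundary contribution is not the identity $\int_{\gamma_-}\P_{\gamma_+}f_\al\cdot\pt^\al r\,d\gamma=0$ cited from Theorem \ref{l2-ana}; it is the separate direct computation \eqref{ran-971}, which uses the Neumann data $\pt_{x_3}\phi_a|_{x_3=0}=0$, the odd parity of $\xi_\parallel\xi_3(|\xi|^2-\beta)\mu_0$ in $\xi_\parallel$, and the moment identity $\int\xi_3^2(|\xi|^2-\beta_c)\mu_0\,d\xi=0$ for the $c$-block.
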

\begin{proof} 
We will estimate $\nl a_\al
\nr _{L^6}, \nl b_\al
\nr _{L^6}$ and $\nl c_\al
\nr _{L^6}$ respectively. ~\\
\underline{Estimating $\nl a_\al\nr_{L^6}$ and $\nl c_\al
\nr_{L^6_x}$ }:~\\
We choose the test function $\psi$ to be 
\[\begin{cases}
\psi_a(t,x,\xi)&=(|\vp |^2-\beta_a)\sqrt\mu
\sum_{j=1}^3\vp_j\pt_{x_j} \phi_a,\qquad \beta_a=10,\\
\psi_c(t,x,\xi)&=(|\vp|^2-\beta_c)\sqrt\mu \sum_{j=1}^3 \vp_j\pt_{x_j}\phi_c,\qquad \beta_c=5,
\end{cases}
\]
where $\phi_a=\phi_a(t,x),\phi_c=\phi_c(t,x)$ solve the elliptic equation 
\beq\label{ran-95}
\begin{cases}
&-\triangle\phi_a=a_\al^5,\quad \pt_{x_3}\phi_a|_{x_3=0}=0,\\
&-\triangle\phi_c=c_\al^5,\quad \phi_c|_{x_3=0}=0.\\
\end{cases}
\eeq
It is standard that  $\phi\in\{\phi_a,\phi_c\}$ satisfy the following standard elliptic estimates 
\beq\label{elliptic-phi-ac}
\nl \nabla_x\phi \nr_{L^{4/3}(\pt\Omega)}+\nl \nabla_x^2 \phi \nr_{L^{6/5}(\Omega)}+\nl \nabla_x\phi\nr_{L^2(\Omega)}+\nl\phi \nr_{L^6(\Omega)}\lesssim 
\nl \P f_\al \nr_{L^6(\Omega)}^5.
\eeq
Now we have, for $\psi\in \{\psi_a,\psi_c\}$ and $\beta\in \{\beta_a,\beta_c\}$, we have 
\[
\xi\cdot\nabla_x \psi=\sum_{i,j}\vp_i\vp_j (|\vp|^2-\beta)\sqrt\mu \pt_{x_ix_j}\phi+R_{\phi},\]
where
\[R_{\phi}=
\sum_{i,j}\xi_i\pt_{x_j}\lw((|\vp|^2-\beta)\sqrt\mu \vp_j
\rw)\pt_{x_i}\phi+\eps \sum_{i,j}U_i\lw\{\vp_j(|\vp|^2-\beta)\sqrt\mu\rw\}\pt_{x_ix_j}\phi.
\]
This implies, for $\psi\in \{\psi_a,\psi_c\}$:
\beq\label{ran-96}
\bega 
\la\xi\cdot\nabla_x \psi,\P f_\al\ra_{L^2_{x,\xi}}&=\frac 1 3 \lw\{ \la |\vp|^2(|\vp|^2-\beta ),\mu\ra_{L^2_\vp}\rw\}\la a_\al, \triangle \phi \ra_{L^2_x}\\
&\quad+\frac 1 {6} \la |\vp|^2(|\vp|^2-3)(|\vp|^2-\beta),\mu\ra_{L^2_\vp}\la c_\al,\triangle\phi\ra_{L^2_x}+\la R_{\phi}, \P f_\al\ra_{L^2_{x,\xi}}.
\enda 
\eeq
Combining the above with \eqref{ran-95}, 
\eqref{test-L6} and the fact that
\[
\begin{cases}
&\la 
|\vp|^2(|\vp|^2-\beta_a),\mu\ra_{L^2_\vp}\neq 0,\qquad \la |\vp|^2(|\vp|^2-3)(|\vp|^2-\beta_a),\mu\ra_{L^2_\vp}=0,\\
&\la 
|\vp|^2(|\vp|^2-\beta_c),\mu\ra_{L^2_\vp}= 0,\qquad \la |\vp|^2(|\vp|^2-3)(|\vp|^2-\beta_c),\mu\ra_{L^2_\vp}\neq 0,\\\end{cases}
\]
we get
\[
\bega 
&\nl a_\al(t)
\nr _{L^6_x}^6+\nl c_\al(t)
\nr _{L^6_x}^6\\
&\lesssim \max_{\psi\in\{\psi_a,\psi_c\}}\lw\{
 \lw|\la R_{\phi}, \P f_\al\ra_{L^2_{x,\xi}}\rw|+\lw|\la f_\al,\psi\ra_{L^2_\gamma}\rw|+\lw| \la \xi\cdot\nabla_x \psi,\IP f_\al\ra_{L^2_{x,\xi}}\rw|+\la \eps\pt_t f_\al ,\psi \ra_{L^2_{x,\xi}}
\rw\}\\
&\quad+ \max_{\psi\in\{\psi_a,\psi_c\}}\lw\{
\frac 1 {\kappa\eps}\lw|\la L\psi,\IP f_\al\ra_{L^2_{x,\xi}}\rw|+\lw|\la \eps G_\al,\psi\ra_{L^2_{x,\xi}}\rw|
\rw\}\\
&\lesssim o(1)\nl \P f_\al
\nr_{L^2_x}\max_{\phi\in\{\phi_a,\phi_c\}}\nl \nabla_x\phi
\nr _{L^2_x}+
\nl \P f_\al
\nr_{L^6_x}\max_{\phi\in\{\phi_a,\phi_c\}}\nl \nabla_x^2\phi
\nr _{L^{\frac 6 5}_{x}}
+\max_{\psi\in\{\psi_a,\psi_c\}}\lw|\la f_\al,\psi\ra_{L^2_\gamma}\rw|\\
&\quad+\nl \IP f_\al 
\nr _{L^6_{x,\xi}}\max_{\psi\in\{\psi_a,\psi_c\}}\nl \xi\cdot\nabla_x\psi
\nr _{L^{\frac 65}_{x,\xi}}\\
&\quad+\lw\{\nl \eps\pt_t f_\al
\nr _{L^2_{x,\xi}}+\nl\eps G_\al
\nr _{L^2_{x,\xi}}
\rw\}\max_{\psi\in\{\psi_a,\psi_c\}}\nl \psi
\nr _{L^2_{x,\xi}}+\frac 1 {\kappa\eps}\nl \IP f_\al\nr_{L^2_{x,\xi}}\max_{\psi\in\{\psi_a,\psi_c\}}\nl L\psi\nr_{L^2_{x,\xi}}\\
&\lesssim o(1)\nl \P f_\al 
\nr _{L^2_x} \nl \P f_\al 
\nr _{L^6_x}^5+o(1)\nl \P f_\al 
\nr _{L^6_x}^6+\max_{\psi\in\{\psi_a,\psi_c\}}\lw|\la f_\al,\psi\ra_{L^2_\gamma}\rw|+\nl \IP f_\al 
\nr _{L^6_{x,\xi}}\nl \nabla_x^2 \phi \nr_{L^{\frac 65}_x}\\
&\quad+\lw\{\nl \eps\pt_t f_\al
\nr _{L^2_{x,\xi}}+\nl\eps G_\al
\nr _{L^2_{x,\xi}}+(\kappa\e)^{-1} \nl \IP f_\al\nr_{L^2_{x,\xi}}
\rw\}\max_{\phi\in \{\phi_a,\phi_c\}}\nl
\nabla_x\phi\nr_{L^2_x}\\
&\lesssim o(1)\nl \P f_\al 
\nr _{L^2_x} \nl \P f_\al 
\nr _{L^6_x}^5+o(1) \nl \P f_\al 
\nr _{L^6_x}^6+\max_{\psi\in\{\psi_a,\psi_c\}}\lw|\la f_\al,\psi\ra_{L^2_\gamma}\rw|\\
&\quad +\lw\{\nl \eps\pt_t f_\al
\nr _{L^2_{x,\xi}}+\nl\eps G_\al
\nr _{L^2_{x,\xi}}+(\kappa\e)^{-1} \nl \IP f_\al\nr_{L^2_{x,\xi}}+\nl \IP f_\al 
\nr _{L^6_{x,\xi}}\rw\} \nl \P f_\al 
\nr _{L^6_x}^5,
\enda
\]
we used \eqref{elliptic-phi-ac}. We shall now treat the boundary term 
\[
\la f_\al ,\psi\ra_{L^2_\gamma}=\int_{\pt\Omega\times\R_\xi^3}f(t,x,v)\psi(t,x)(\xi \cdot n(x))dS(x)d\xi.
\]
To this end, we decompose 
\[\bega 
f_\al |_{\gamma}
=1_{\gamma_+}(\II-\P_\gamma)f_\al+\P_\gamma f_\al-\frac{\eps}{\delta} 1_{\gamma_-}\pt^\al r.
\enda
\]
We recall 
\[
\P_\gamma f_\al=c_\mu \sqrt\mu_0(\xi)\lw\{\int_{\gamma_+} f_\al(t,x,\wtd\xi)\sqrt\mu_0(\wtd\xi)(n(x)\cdot\wtd \xi)d\wtd\xi
\rw\}.
\]
Hence we have, for $\psi\in \{\psi_a,\psi_c\}$ and $\beta\in \{\beta_a,\beta_c\}$
\beq \label{ran-971}
\bega 
&\la \P_\gamma f_\al ,\psi\ra_{L^2_\gamma}\\
&=c_\mu \int_{\pt\Omega\times \R_\xi^3} \lw\{\int_{\gamma_+} f_\al(t,x,\wtd\xi)\sqrt\mu_0(\wtd\xi)(n(x)\cdot\wtd \xi)d\wtd\xi \rw\}{\mu_0}(\xi)(|\xi|^2-\beta)\xi \cdot\nabla_x \phi(\xi\cdot n(x))d\xi dS(x)\\
&=\int_{\T^2\times\R_\xi^3} \lw\{\int_{\gamma_+} f_\al(t,x,\wtd\xi)\sqrt\mu_0(\wtd\xi)(n(x)\cdot\wtd \xi)d\wtd\xi \rw\}\mu_0(\xi)(|\xi |^2-\beta)(\xi_{\parallel}\pt_{x_\parallel} \phi+\xi_3 \pt_{x_3}\phi)(-\xi_3) d\xi dx_{\parallel}\\
&=0.
\enda 
\eeq
Here we used the fact that $\pt_{x_3}\phi_a=0$ on $\pt\Omega$ and 
$\int_{\R^3 }\xi_\parallel \xi_3 (|\xi |^2-\beta)\mu_0(\xi)d\xi=\int_{\R^3}\xi_3^2 (|\xi|^2-\beta_c)\mu_0(\xi)d\xi=0.$
This implies, for $\psi\in\{\psi_a,\psi_c\}$:
\[\bega 
\la f_\al ,\psi\ra_{L^2_\gamma}&=\la 1_{\gamma_+}(\II-\P_\gamma)f_\al -\frac\eps \delta 1_{\gamma_-}\pt^\al r,\psi\ra_{L^2_\gamma}\\
&\lesssim \nl (\II-\P_\gamma)f_\al\nr_{L^4_{\gamma_+}}\|\nabla_x\phi\|_{L^{4/3}(\pt\Omega)}+\frac \eps\delta  \|\pt^\al r\|_{L^4_{\gamma_-}}\|\nabla_x\phi\|_{L^{4/3}(\pt\Omega)}\\
&\lesssim\|\P f_\al \|_{L^6_x}^5\lw(\|(\II-\P_\gamma)f_\al \|_{L^4_{\gamma_+}}+\frac \eps \delta\|\pt^\al r \|_{L^4_{\gamma_-}}\rw),
\enda 
\]
where we used \eqref{elliptic-phi-ac}.~\\
\underline{Estimating $\nl b_\al 
\nr  _{L^6_x}$:}~\\
To bound $b_\al=(b_{\al,1},b_{\al,2},b_{\al,3}) $, we first choose the test function, for $i,j\in \{1,2,3\}$
\[
\psi=\psi_b^{i,j}=(\vp_i^2-1)\sqrt\mu \pt_j\phi_b^j,\qquad -\triangle \phi_b^j=b_{\al,j}^5,\quad \phi_b^j|_{\pt\Omega}=0.
\]
Then we have the elliptic estimate
\[
\|\nabla_x\phi_b^j\|_{L^{4/3}(\pt\Omega)}+\|\nabla_x^2 \phi_b^j\|_{L^{6/5}}+\|\nabla_x\phi_b^j\|_{L^2}+\|\phi_b^j\|_{L^6}\lesssim \nl Pf_\al\nr_{L^6(\Omega)}^5.
\]
We have 
\[
\xi\cdot\nabla_x \psi_b^{i,j}=\vp_k(\vp_i^2-1)\sqrt\mu \cdot \pt_{x_jx_k}\phi_b^j+R_{\phi_b^j},
\]
where \[
R_{\phi_b^j}=\xi_k\pt_{x_k}\lw((\vp_i^2-1)\sqrt\mu
\rw)\pt_{x_j}\phi_b^j+\eps U_k(\vp_i^2-1)\sqrt\mu\pt_{x_jx_k}\phi_b^j.
\]
This implies 
\beq\label{ran-98}
\bega 
\la \xi\cdot\nabla_x\psi_b^{i,j},\P f_\al \ra_{L^2_{x,\xi}}&=\sum_{k=1}^3 \la \vp_k^2(\vp_i^2-1),\mu\ra_{L^2_\vp}\la b_{\al,k},\pt_{x_jx_k}\phi_b^j\ra_{L^2_x}+\la R_{\phi_b^j},\P f_\al\ra_{L^2_{x,\xi}}\\
&=-2\la b_{\al,i},\pt_{ij} \phi_b^j\ra_{L^2_x}+\la R_{\phi_b^j},\P f_\al\ra_{L^2_{x,\xi}},\enda 
\eeq
where we used the fact that 
\[ \la \vp_k^2(\vp_i^2-1),\mu\ra_{L^2_\vp}=\begin{cases} &-2\qquad\text{if}\quad i=k\\
&0\qquad \text{if}\quad i\neq k.
\end{cases}
\]
Combining \eqref{ran-98} with \eqref{test-L6}, we get
\[\bega
&\la \pt_{ij}(-\triangle_D)^{-1}b_{\al,j},b_{\al,i}\ra_{L^2_x}\\
&\lesssim \lw|\la R_{\phi_b^j}, \P f_\al\ra_{L^2_{x,\xi}}\rw|+\lw|\la f_\al, \psi_b^{i,j}\ra_{L^2_\gamma}\rw|+\lw| \la \xi\cdot\nabla_x \psi_b^{i,j},\IP f_\al\ra_{L^2_{x,\xi}}\rw|+\la \eps\pt_t f_\al , \psi_b^{i,j}\ra_{L^2_{x,\xi}}\\
&\quad+\frac 1 {\kappa\eps}\lw|\la L\psi_b^{i,j},\IP f_\al\ra_{L^2_{x,\xi}}\rw|+\lw|\la \eps G_\al, \psi_b^{i,j}\ra_{L^2_{x,\xi}}\rw|\\
&\lesssim o(1)\nl \P f_\al 
\nr _{L^2_x} \nl \P f_\al 
\nr _{L^6_x}^5+o(1) \nl \P f_\al 
\nr _{L^6_x}^6+\lw|\la f_\al,\psi_b^{i,j}\ra_{L^2_\gamma}\rw|\\
&\quad +\lw\{\nl \eps\pt_t f_\al
\nr _{L^2_{x,\xi}}+\nl\eps G_\al
\nr _{L^2_{x,\xi}}+(\kappa\e)^{-1} \nl \IP f_\al\nr_{L^2_{x,\xi}}+\nl \IP f_\al 
\nr _{L^6_{x,\xi}}\rw\} \nl \P f_\al 
\nr _{L^6_x}^5.
\enda \]
For the boundary term, as in the above, it suffices to see that 
\[
\la \P_\gamma f_\al,(\xi_i^2-1)\sqrt\mu_0\pt_j \phi_b^j
\ra _{L^2_\gamma}=0.
\]
To this end, using a similar argument as in \eqref{ran-971}, we get
\[\bega 
&\la \P_\gamma f_\al,(\xi_i^2-1)\sqrt\mu_0\pt_j \phi_b^j
\ra _{L^2_\gamma}\\
&=-c_\mu \int_{\pt\Omega\times \R_\xi^3} \lw\{\int_{\gamma_+} f_\al(t,x,\wtd\xi)\sqrt\mu_0(\wtd\xi)(n(x)\cdot\wtd \xi)d\wtd\xi \rw\}{\mu_0}(\xi)\xi_3(\xi_i^2-1) \pt_j \phi_b^jd\xi dS(x)=0.\\\enda 
\]
Finally, we get 
\beq\label{ran-99}
\bega
&\lw|\la \pt_{ij}(-\triangle_D)^{-1}\lw(b_{\al,j}^5\rw),b_{\al,i}\ra_{L^2_x}\rw|\\
&\lesssim o(1)\nl \P f_\al 
\nr _{L^2_x} \nl \P f_\al 
\nr _{L^6_x}^5+o(1) \nl \P f_\al 
\nr _{L^6_x}^6+\|\P f_\al \|_{L^6_x}^5\lw(\|(\II-\P_\gamma)f_\al \|_{L^4_{\gamma_+}}+\frac \eps \delta\|\pt^\al r \|_{L^4_{\gamma_-}}\rw)\\
&\quad +\lw\{\nl \eps\pt_t f_\al
\nr _{L^2_{x,\xi}}+\nl\eps G_\al
\nr _{L^2_{x,\xi}}+(\kappa\e)^{-1} \nl \IP f_\al\nr_{L^2_{x,\xi}}+\nl \IP f_\al 
\nr _{L^6_{x,\xi}}\rw\} \nl \P f_\al 
\nr _{L^6_x}^5.
\enda\eeq
Next, for $i\neq j$, we choose the test function to be 
\[
\psi=|\vp|^2\vp_i\vp_j \sqrt\mu \pt_j \phi_b^j.
\]
We have 
\[
\xi\cdot\nabla_x \psi=\sum_{k=1}^3\vp_k\vp_i\vp_j |\vp|^2\sqrt\mu \pt_{jk}\phi_b^j +R_{\psi}^{i,j},
\]
where 
\[
R_\psi^{i,j}=\sum_{k=1}^3\xi_k\pt_{x_k}\lw(\vp_i\vp_j|\vp|^2\sqrt\mu
\rw)\pt_j\phi_b^j+\eps \sum_{k=1}^3 U_k \vp_i\vp_j|\vp|^2\sqrt\mu \pt_{jk}\phi_b^j.
\]
This implies 
\[\bega 
\la \xi\cdot\nabla_x \psi,\P f_\al\ra_{L^2_{x,\xi}}&=\sum_{k,m}\la \vp_i\vp_j\sqrt\mu ,\vp_k\vp_m\sqrt\mu
\ra _{L^2_\vp}\la b_{\al,m},\pt_{jk}\phi_b^j\ra_{L^2_x}+\la R_\psi^{i,j},\P f_\al\ra_{L^2_{x,\xi}}\\
&=\sum_{k,m}(\delta_{ij}\delta_{km}+\delta_{ik}\delta_{jm}+\delta_{im}\delta_{jk})\la \pt_{jk}\phi_b^j,b_{\al,m}\ra_{L^2_x}+\la R_\psi^{i,j},\P f_\al\ra_{L^2_{x,\xi}}\\
&=\la\pt_{ij}\phi_b^j,b_{j,\al}\ra_{L^2_x}+\la \pt_j^2\phi^j_b,b_{\al,i} \ra_{L^2_x}+\la R_\psi^{i,j},\P f_\al\ra_{L^2_{x,\xi}},\enda 
\]
where we use the fact that $i\neq j$. Combining the above with \eqref{test-L6}, we have 
\beq\label{ran-991}
\bega 
&1_{j\neq i}\lw|\la \pt_j^2\triangle_D^{-1} \lw(b_{\al,j}^5\rw),b_{i,\al} \ra_{L^2_x}\rw|\\
& \lesssim \lw|\la \pt_{ij}\triangle_D^{-1}\lw(b_{i,\al}^5\rw),b_{i,\al}\ra_{L^2_x}
\rw|+ o(1)\nl \P f_\al 
\nr _{L^2_x} \nl \P f_\al 
\nr _{L^6_x}^5+o(1) \nl \P f_\al 
\nr _{L^6_x}^6+\lw|\la f_\al,\psi\ra_{L^2_\gamma}\rw|\\
&\quad +\lw\{\nl \eps\pt_t f_\al
\nr _{L^2_{x,\xi}}+\nl\eps G_\al
\nr _{L^2_{x,\xi}}+(\kappa\e)^{-1} \nl \IP f_\al\nr_{L^2_{x,\xi}}+\nl \IP f_\al 
\nr _{L^6_{x,\xi}}\rw\} \nl \P f_\al 
\nr _{L^6_x}^5.\enda 
\eeq
Here we used the fact that 
\[\bega 
\la \P _\gamma f_\al,\psi\ra_{L^2_\gamma}=-c_\mu  \int_{\pt\Omega\times \R_\xi^3} \lw\{\int_{\gamma_+} f_\al(t,x,\wtd\xi)\sqrt\mu_0(\wtd\xi)(n(x)\cdot\wtd \xi)d\wtd\xi \rw\}{\mu_0}(\xi)|\xi|^2\xi_3\xi_i\xi_j \pt_j \phi_b^jd\xi dS(x)=0.
\enda
\]
Combining \eqref{ran-991} and \eqref{ran-99}, we get the result. Finally, we note that 
\[
\nl \IP f_\al\nr_{L^6_{x,\xi}}\lesssim \nl e^{\rho|\xi|^2}f_\al\nr_{L^\infty_{x,\xi}}^{\frac 2 3}\nl\IP f_\al\nr_{L^2_{x,\xi}}^{\frac 13},
\]
and 
\[
\nl (\II-\P_{\gamma_+})f_\al\nr_{L^4_{\gamma_+}}\lesssim\nl (\II-\P_{\gamma_+})f_\al\nr_{L^2_{\gamma_+}}^{\frac 12}\nl e^{\rho|\xi|^2}f_\al\nr_{L^\infty_{x,\xi}}^{\frac 12}.
\]
The proof is complete.
\end{proof}
We now focus on estimating $\nl
\eps \la\xi\ra^{-9}G_\al\nr_{L^2_{x,\xi}}$ where $G_\al$ is given in \eqref{G-al}.

\begin{lemma} There holds 
\[\bega 
&\eps \nl \AA_\al \nl\la\xi\ra^{-9}G_\al 
\nr_{L^2_{x,\xi}}
\nr_{\ell_\al^2}\\
&\lesssim \eps^{0.99}+\eps\kappa^{-1}\mathcal E_f(t)+\eps\kappa^{-\frac 12}\mathcal D_f(t) +\delta\eps\mathcal D_f(t)\mathcal H_{\rho,f}(t)+\delta\kappa^{-\frac 12}\nl \AA_\al \nl \P f_\al
\nr_{L^6_x}
\nr_{\ell^2_\al}\nl \AA_\al \nl \P f_\al
\nr_{L^3_x}
\nr _{\ell^{2}_\al}.
\enda\]
\end{lemma}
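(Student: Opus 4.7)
The plan is to split $G_\al$ from \eqref{G-al} into three groups---the operator commutator $\frac{1}{\kappa\eps^2}[L,\pt^\al]f$, the bulk coefficient term $\pt^\al\{\tfrac{(\pt_t+\eps^{-1}\xi\cdot\nabla_x)\sqrt\mu}{\sqrt\mu}f\}$, and the forcing/nonlinear piece $g_\al$ from \eqref{g-alpha}---and to bound the $\AA_\al$-weighted, $\la\xi\ra^{-9}$-weighted $L^2_{x,\xi}$ norm of each before taking $\ell^2_\al$ and multiplying by $\eps$. Throughout we use $\la\xi\ra^{-9}\lesssim \nu^{-\f12}$, so each bound already established against a $\nu^{-\f12}$ weight in Propositions \ref{com-est}, \ref{Ra-term}, \ref{est-Gammaf2f}, and \ref{G-ff} transfers directly after removing the $f_\al$ factor from the pairing.

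For the commutator, the second line of Proposition \ref{com-est} gives
\[
\frac{1}{\kappa\eps^2}\nl\nl\nu^{-\f12}\AA_\al[L,\pt^\al]f\nr_{L^2_{x,\xi}}\nr_{\ell^2_\al}\lesssim \mathcal D_f(t)+\mathcal E_f(t),
\]
so multiplying by $\eps$ and using $\kappa\le 1$ absorbs this into $\eps\kappa^{-\f12}\mathcal D_f+\eps\kappa^{-1}\mathcal E_f$. For the bulk term, Leibniz and Lemma \ref{bulk-ran-1} express it as $\sum_{\beta+\gamma=\al}\binom{\al}{\beta}\pt^\gamma\{\tfrac{(\pt_t+\eps^{-1}\xi\cdot\nabla_x)\sqrt\mu}{\sqrt\mu}\}f_\beta$; the weight $\la\xi\ra^{-9}$ absorbs $|\vp|^2$, so together with the Prandtl bound $\nl\pt_{x_3}\pt^\gamma U\nr_{L^\infty}\lesssim \kappa^{-\f12}$ controlled analytically by $d_E$ in \eqref{dEt-def}, the discrete Young inequality produces $\eps\kappa^{-\f12}\mathcal E_f\le \eps\kappa^{-1}\mathcal E_f$. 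For the linearized bilinear $\tfrac{2\eps}{\kappa}\pt^\al\Gamma(\ff,f)$, the bilinear estimate $\nl\nu^{-\f12}\Gamma(\pt^\beta\ff,\pt^\gamma f)\nr_{L^2_\xi}\lesssim \nl\nu\pt^\beta\ff\nr_{L^\infty_xL^2_\xi}\nl\pt^\gamma f\nr_{L^2_\xi}$, the analytic bound $\sup_\beta\tfrac{\tau_0^{|\beta|}}{\beta!}\nl\nu\pt^\beta\ff\nr_{L^\infty_xL^2_\xi}\lesssim 1$ reproven inside Proposition \ref{est-Gammaf2f}, and the discrete Young inequality together give $\eps\kappa^{-1}\mathcal E_f$.

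For the forcing $\tfrac{1}{\delta}\pt^\al(\mu^{-\f12}\textbf{R}_a)$, we use \eqref{R-a-def} and repeat the $L^2$ estimates of Proposition \ref{Ra-term} without pairing against $f_\al$: each term there was bounded by an analytic fluid quantity multiplied by $\mathcal E_f$ or $\mathcal D_f$, so discarding that second factor yields a plain bound by $d_E(t)+d_D(t)\lesssim \eps^{0.99}$ in the regime $\eps\sim\kappa^N$, $N\gg 1$, by virtue of Theorem \ref{thm-ana-fluid}. For the pure nonlinear term $\tfrac{\delta}{\kappa}\pt^\al\Gamma(f,f)$, we split $f=\P f+\IP f$ and distribute by Leibniz: the $\P$--$\P$ contribution is controlled by H\"older $L^6_x\cdot L^3_x\to L^2_x$ on the velocity moments, which after the discrete Young inequality with weight \eqref{algebra-est} produces $\delta\kappa^{-\f12}\nl\AA_\al\nl\P f_\al\nr_{L^6_x}\nr_{\ell^2_\al}\nl\AA_\al\nl\P f_\al\nr_{L^3_x}\nr_{\ell^2_\al}$; contributions containing one or two factors of $\IP f$ are treated by borrowing $\sqrt\nu$ into $\mathcal D_f$ and using $\mathcal H_{\rho,f}$ on the companion factor, yielding $\delta\eps\mathcal D_f\mathcal H_{\rho,f}$. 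Leibniz terms with derivatives on $\sqrt\mu$ are absorbed via Proposition \ref{local-1}.

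The main obstacle is the bulk coefficient step: once we abandon the inner-product pairing against $f_\al$, we lose the conservation-law reduction of Lemma \ref{decom-b3} that in Proposition \ref{bulk-est} converted $\pt_{x_3}U_1\cdot b_3$ into $\pt_{x_3}b_3$ with its extra gain. Here the $\kappa^{-\f12}$ blow-up from $\pt_{x_3}U$ is simply paid, and the claimed $\eps\kappa^{-1}\mathcal E_f$ on the right-hand side reflects this unavoidable price. Bookkeeping of mixed Leibniz terms---those where some derivatives hit $\sqrt\mu$---closes thanks to the decay $\la\al\ra^{-9}$ in $\AA_\al$ and Lemma \ref{M-local-2}, which provides an $\eps\kappa^{-\f12}$ gain per normal derivative of $\mu$, ensuring the analytic sum over $\al$ converges.
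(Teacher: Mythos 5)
Your proof is correct and follows essentially the same route as the paper: decompose $G_\al$ into the commutator $\frac{1}{\kappa\eps^2}[L,\pt^\al]f$, the bulk coefficient term, and the forcing/nonlinear block $g_\al$, and cite the corresponding $\nu^{-\frac12}$-weighted bounds from Propositions \ref{com-est}, \ref{Ra-term}, \ref{est-Gammaf2f}, and \ref{G-ff} after dropping the pairing against $f_\al$. One thing you do that the paper's proof silently skips: you explicitly estimate the piece $\frac{2\eps}{\kappa}\pt^\al\Gamma(\ff,f)$ (via the bilinear estimate and the analytic bound on $\ff$), whereas the paper's proof lists only the commutator, bulk, $\Gamma(f,f)$, and $\textbf{R}_a$ contributions. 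Your observation about the bulk term is also apt: having abandoned the conservation-law cancellation of Proposition \ref{bulk-est}, one simply pays the $\kappa^{-\frac12}$ from $\pt_{x_3}U$, giving $\eps\kappa^{-\frac12}\mathcal E_f\leq\eps\kappa^{-1}\mathcal E_f$, which is sharper than (and consistent with) the paper's stated $\lesssim\mathcal E_f(t)$ for that piece and fits the right-hand side of the lemma as written.
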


\begin{proof}
First, we get
\[
\frac {1}{\kappa\e}\nl 
\nl
\la\xi\ra^{-2}\AA_\al [L,\pt^\al]f
\nr_{L^2_{x,\xi}}
\nr _{\ell^2_\al}\lesssim \eps\kappa^{-1}\mathcal E_f(t)+\eps\kappa^{-\frac 12}\mathcal D_f(t),
\]
where we used \eqref{com-est}.
Next we use Lemma \ref{bulk-ran-1} to get
\[\eps \nl
\AA_\al \nl
\pt^\al\lw\{\frac{(\pt_t+\frac 1 \eps \xi\cdot\nabla_x)\sqrt\mu}{\sqrt\mu}f\rw\}
\nr_{L^2_{x,\xi}}\nr
_{\ell_\al^2}\lesssim \mathcal E_f(t).
\]
Next we have 
\[\bega 
&\frac \delta \kappa\nl \AA_\al\nl \pt^\al \Gamma(f,f)\nr_{L^2_{x,\xi}}\nr_{\ell_\al^2}\\
&\lesssim \delta\eps\mathcal D_f(t)\mathcal H_{\rho,f}(t)+\delta\kappa^{-\frac 12}\nl \AA_\al \nl \P f_\al
\nr_{L^6_x}
\nr_{\ell^2_\al}\nl \AA_\al \nl \P f_\al
\nr_{L^3_x}
\nr _{\ell^{2}_\al}.\enda 
\]
Finally for ${\bf{R}}_a$ term, we bound exactly as in Proposition \ref{Ra-term}. 
The proof is complete.
\end{proof}
From the Proposition \ref{L6-P} and the above analytic bound on the forcing term, we obtain the following theorem

\begin{theorem}\label{L6-final} Let $f$ solve \eqref{f-eq}. There holds 
\[\bega
&\nl
\AA_\al \nl \P f_\al
\nr_{L^\infty_t L^6_x}
\nr_{\ell_\al^2}\\
& \lesssim \eps^{\frac 1 3}\kappa^{\frac 1 6} \nl \mathcal H_{\rho,f}\nr_{L^\infty_t}^{\frac 2 3} \nl \mathcal D_f
\nr _{L^\infty_t}^{\frac 1 3} +\nl \mathcal Y_f
\nr_{L^\infty_t}+\kappa^{-\frac 12}\nl \mathcal D_f 
\nr_{L^\infty_t} \\
&\quad+\eps^{-\frac 12}\lw\{ \nl \AA_\al \nl (\II-\P_{\gamma_+})\pt_t f_\al
\nr_{L^2_tL^2_{\gamma_+}} 
\nr _{\ell_\al^2}+\nl \AA_\al \nl (\II-\P_{\gamma_+})f_\al
\nr_{L^2_tL^2_{\gamma_+}} 
\nr _{\ell_\al^2}
\rw\}\nl \mathcal H_{\rho,f}\nr_{L^\infty_t}\\
&\quad+\eps^{0.99}+\eps\kappa^{-\frac 12} \nl \mathcal D_f\nr_{L^\infty_t}+\delta\eps \nl\mathcal D_f \mathcal H_{\rho,f}\nr_{L^\infty_t}\\
&\quad+\delta\kappa^{-\frac12} \nl\AA_\al \nl \P f_\al
\nr_{L^\infty_tL^6_x} 
\nr_{\ell_\al^2}\lw\{
\nl \AA_\al \nl \P f_\al\nr_{L^2_tL^3_x}
\nr _{\ell_\al^2}+ \nl \AA_\al \nl \P \pt_t  f_\al\nr_{L^2_tL^3_x}
\nr _{\ell_\al^2}\rw\}.
\enda 
\]
\end{theorem}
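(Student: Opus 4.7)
The strategy is to lift the single-$\alpha$, single-time bound of Proposition \ref{L6-P} to the fully analytic, $L^\infty_t$ bound claimed here. I would multiply both sides of Proposition \ref{L6-P} by $\AA_\alpha(t)$, square, sum in $\alpha\in\mathbb N_0^3$, take $\sup_{0\le t}$, and estimate each of the seven terms on the right-hand side by quantities already introduced in the paper. The first term $o(1)\|\P f_\alpha\|_{L^2_x}$ contributes $o(1)\mathcal E_f$, which is absorbed via the $L^2$ analytic energy estimate of Theorem \ref{l2-ana}. The interpolation term $\|e^{\rho|\xi|^2}f_\alpha\|_{L^\infty}^{2/3}\|\IP f_\alpha\|_{L^2}^{1/3}$ is rewritten as $(\eps\sqrt\kappa)^{1/3}\|e^{\rho|\xi|^2}f_\alpha\|_{L^\infty}^{2/3}\bigl(\eps^{-1}\kappa^{-1/2}\|\IP f_\alpha\|_{L^2}\bigr)^{1/3}$; applying Hölder in $\ell^2_\alpha$ with exponents $3$ and $3/2$ on the three factors and then Lemma \ref{fh-lem} yields the leading contribution $\eps^{1/3}\kappa^{1/6}\|\mathcal H_{\rho,f}\|_{L^\infty_t}^{2/3}\|\mathcal D_f\|_{L^\infty_t}^{1/3}$.

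For $\|\eps\pt_t f_\alpha\|_{L^2_{x,\xi}}$, I identify $\eps\pt_t\pt^\alpha f = \pt^{\alpha+e_0} f$ with $e_0=(1,0,0)$ and use the weight shift $\AA_\alpha \lesssim \tau^{-1}\sqrt{(|\alpha|+1)(|\alpha|+2)}\,\AA_{\alpha+e_0}$ (as in the proof of Corollary \ref{bulk-est1}); reindexing $\beta=\alpha+e_0$ then absorbs this term into $\|\mathcal Y_f\|_{L^\infty_t}$, modulo the inessential polynomial weight $\langle\alpha\rangle^9$. The term $(\kappa\eps)^{-1}\|\IP f_\alpha\|_{L^2}$ is precisely $\kappa^{-1/2}\mathcal D_f$ by definition of $\mathcal D_f$ in \eqref{EDH}. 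The bulk-forcing contribution $\eps\|\la\xi\ra^{-9}G_\alpha\|_{L^2}$ is already controlled by the lemma immediately preceding the theorem, producing the nonlinear trilinear term $\delta\kappa^{-1/2}\|\AA_\alpha\|\P f_\alpha\|_{L^6}\|_{\ell^2_\alpha}\|\AA_\alpha\|\P f_\alpha\|_{L^3}\|_{\ell^2_\alpha}$ together with the $\eps^{0.99}$ remainder coming from the $\textbf{R}_a$ contribution; the bound $\frac{\eps}{\delta}\|\pt^\alpha r\|_{L^4_{\gamma_-}}$ is similarly folded into $\eps^{0.99}$ using the formula \eqref{r-def} and the $d_\infty$-type analytic control of $P|_{\partial\Omega}$ and $\pt_{x_3}U_j|_{\partial\Omega}$.

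\textbf{Main obstacle — the boundary interpolation.} The subtle point is the term $\|(\II-\P_{\gamma_+})f_\alpha\|_{L^2_{\gamma_+}}^{1/2}\|e^{\rho|\xi|^2}f_\alpha\|_{L^\infty}^{1/2}$: Proposition \ref{L6-P} gives pointwise-in-$t$ control, but Theorem \ref{l2-ana} only produces $L^2_t$ control of the outgoing trace with the crucial $\eps^{-1/2}$ gain. To upgrade to $L^\infty_t$ without losing this gain, I would apply the Sobolev-in-time inequality
\begin{equation*}
\|g\|_{L^\infty_t L^2_{\gamma_+}}\lesssim \|g\|_{L^2_tL^2_{\gamma_+}}^{1/2}\|\pt_tg\|_{L^2_tL^2_{\gamma_+}}^{1/2}
\end{equation*}
to $g=(\II-\P_{\gamma_+})\pt^\alpha f$. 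This forces the appearance of both $\|(\II-\P_{\gamma_+})f_\alpha\|_{L^2_tL^2_{\gamma_+}}$ and $\|(\II-\P_{\gamma_+})\pt_tf_\alpha\|_{L^2_tL^2_{\gamma_+}}$ in the final bound. Multiplying by $\|e^{\rho|\xi|^2}f_\alpha\|_{L^\infty}^{1/2}$, summing in $\ell^2_\alpha$ via Cauchy--Schwarz, and using Lemma \ref{fh-lem} to convert the pointwise exponential weight into $\mathcal H_{\rho,f}$, yields exactly the $\eps^{-1/2}\{\cdots\}\|\mathcal H_{\rho,f}\|_{L^\infty_t}$ term in the theorem.

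\textbf{Closing.} Finally, the trilinear nonlinear contribution from the $\frac{\delta}{\kappa\eps}\pt^\alpha\Gamma(f,f)$ piece of $G_\alpha$ produces the last line of the theorem, with the extra $L^2_t L^3_x$ factors of $\P\pt_t f_\alpha$ appearing for the same Sobolev-in-time reason as in the boundary estimate. Throughout, the algebraic inequality \eqref{algebra-est} and the discrete Young inequality reduce analytic convolutions over $\alpha$ to products of $\ell^2_\alpha$-norms and summable constants $\sum_{|\gamma|\ge 1}\tau_0^{|\gamma|}/\gamma!$, the latter being controlled by Lemma \ref{local-1} whenever the local Maxwellian $\mu$ enters. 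No new commutator or regularity estimates are required beyond those established in the preceding sections.
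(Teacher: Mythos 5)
Your outline is essentially the paper's implicit argument: Theorem~\ref{L6-final} is stated in the paper with no written proof, just the remark that it follows "from Proposition~\ref{L6-P} and the above analytic bound on the forcing term," so the content is precisely what you reconstruct — multiply Proposition~\ref{L6-P} by $\AA_\al$, sum in $\ell^2_\al$, take $\sup_t$, feed in the lemma on $\eps\|\AA_\al\|\la\xi\ra^{-9}G_\al\|_{L^2}\|_{\ell^2_\al}$, and convert the $L^\infty_t$ traces to $L^2_t$ via a one-dimensional Sobolev-in-time argument, which is exactly what generates the extra $\pt_t f_\al$ quantities (both the boundary trace $(\II-\P_{\gamma_+})\pt_t f_\al$ and the interior factor $\P\pt_t f_\al$ in $L^2_tL^3_x$). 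Your H\"older-in-$\al$ treatment of the $2/3$--$1/3$ interpolation term reproducing $\eps^{1/3}\kappa^{1/6}\|\mathcal H_{\rho,f}\|^{2/3}\|\mathcal D_f\|^{1/3}$ is also correct, as is the reindexing of $\eps\pt_t=\pt^{e_0}$ into $\mathcal Y_f$.

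Two small precision notes. First, the Sobolev-in-time step should use the additive $H^1_t\hookrightarrow L^\infty_t$ embedding on the bounded interval $[0,T]$, i.e.\ $\|g\|_{L^\infty_t}\lesssim\|g\|_{L^2_t}+\|\pt_tg\|_{L^2_t}$ (up to initial data), rather than the product form $\|g\|_{L^\infty_t}\lesssim\|g\|_{L^2_t}^{1/2}\|\pt_tg\|_{L^2_t}^{1/2}$ that you wrote, which requires vanishing at an endpoint; the additive form is what directly produces the bracket $\{\|(\II-\P_{\gamma_+})\pt_t f_\al\|_{L^2_tL^2_{\gamma_+}}+\|(\II-\P_{\gamma_+})f_\al\|_{L^2_tL^2_{\gamma_+}}\}$ appearing in the theorem. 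Second, the $o(1)\|\P f_\al\|_{L^2_x}$ term is not absorbed "via Theorem~\ref{l2-ana}" (which is a differential energy inequality, not a pointwise bound); it is simply dominated by $o(1)\mathcal E_f\lesssim o(1)\mathcal Y_f$ and folded into $\|\mathcal Y_f\|_{L^\infty_t}$, as is the $\eps\kappa^{-1}\mathcal E_f$ term from the forcing lemma (these are small under the regime $\eps\sim\kappa^M$, $M$ large). Neither point affects the viability of the argument.
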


\subsection{Analytic $L^2_tL^p_x$ estimate for $p\in (2,3)$}
In this section, we estimate \[\nl \nl \AA_\al \P f_\al
\nr_{L^2_t L^3_x} 
\nr_{\ell_\al^2},\]
appearing in the bound of the nonlinear term in Proposition \ref{G-ff}.
To this end, we recall the equation for $f_\al$ in \eqref{f-al-eq}
\[
\bega 
&\pt_t f_\al+\frac 1 \eps \xi\cdot\nabla_x f_\al+\frac 1 {\kappa\eps^2}Lf_\al+\frac{(\pt_t+\frac 1 \eps \xi\cdot\nabla_x)\sqrt\mu}{\sqrt\mu}f_\al\\
&=\frac 1 {\kappa\eps^2}[L,\pt^\al]f+g_\al+\frac{(\pt_t+\frac 1 \eps \xi\cdot\nabla_x)\sqrt\mu}{\sqrt\mu}f_\al-\pt^\al\lw\{\frac{(\pt_t+\frac 1 \eps \xi\cdot\nabla_x)\sqrt\mu}{\sqrt\mu}f\rw\},
\enda 
\]
where 
\[
g_\al=\pt^\al\lw(-\frac{1}{\eps\delta \sqrt\mu}\textbf{R}_a+\frac{\delta}{\kappa\eps}\Gamma(f,f)+\frac{2}{\kappa}\Gamma(\ff,f)\rw),
\]
along with the boundary condition 
\[
f_\al|_{\gamma_-}=\P_\gamma f_\al-\frac \eps\delta \pt^\al r,
\]
where $r$ is given in \eqref{r-def}.
 We recall the following Proposition, which the averaging in $\xi$ lemma for the transport equation with boundaries, proved in \cite{JangKimAPDE}:
\begin{proposition}\label{JK-Lp}Let $p\in (2,3)$. There holds 
\[
\nl \P f\nr_{L^2_tL^p_x}\lesssim \nl f_0\nr_{L^2_\gamma}+\nl f
\nr_{L^\infty_t L^2_{x,\xi}}+\nl (\eps\pt_t+\xi\cdot\nabla_x)f\nr_{L^2_{t,x,\xi}}.
\]
\end{proposition}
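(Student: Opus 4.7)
The plan is to adapt the classical velocity averaging lemma of Golse-Lions-Perthame-Sentis to the half-space $\Omega = \T^2 \times \R_+$ with boundary trace data. Only a finite number of velocity moments of $f$ against $\{1,\vp_i,|\vp|^2\}\sqrt\mu$ enter $\P f$, so the task reduces to gaining spatial integrability for averages $\int \phi(\xi) f \, d\xi$ with $\phi$ smooth and rapidly decaying in $\xi$; the right-hand side bound naturally has three contributions corresponding to the interior $L^2$ data, the streaming source, and the boundary trace.

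First I would establish the whole-space version on $\T^2 \times \R$: if $h$ and $g := (\eps\pt_t + \xi\cdot\nabla_x)h$ both lie in $L^2_{t,x,\xi}$, then $\int \phi(\xi) h \, d\xi \in L^2_t H^s_x$ for every $s < 1/2$. The argument is Fourier-based: for each space-time frequency $(\tau,k)$, one splits $\xi$-space into the elliptic region $\{|\eps\tau + k\cdot\xi| \ge \lambda|k|\}$, where one inverts the transport symbol using $\|g\|_{L^2}$, and its complement, whose $\xi$-measure is $O(\lambda)$ and on which one controls the integrand by $\|h\|_{L^2_\xi}$. Balancing $\lambda$ in $|k|$ yields the claimed fractional gain, and Sobolev embedding $H^s_x \hookrightarrow L^p_x$ with $\tfrac{1}{p} = \tfrac{1}{2} - \tfrac{s}{3}$ then delivers $L^p_x$ for any $p < 3$.

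Next I would extend $f$ from $\Omega$ to $\T^2 \times \R$ via the Duhamel formula along characteristics: for $x_3 < 0$, define $\tilde f$ by straight-line transport backwards from the boundary using $f_0 = f|_\gamma$ as ``initial data''. The extended $\tilde f$ satisfies the transport equation with source $g$ on $\T^2 \times \R$ together with a distributional boundary term $|\xi_3| \delta(x_3) f_0$, so that applying the whole-space averaging lemma to $\tilde f$ and tracking the contributions produces
\[
\nl \P f \nr_{L^2_t L^p_x(\Omega)} \lesssim \nl f \nr_{L^\infty_t L^2_{x,\xi}} + \nl g \nr_{L^2_{t,x,\xi}} + \nl f_0 \nr_{L^2_\gamma},
\]
where the norm $\nl f_0 \nr_{L^2_\gamma}^2 = \int |f_0|^2 |\xi_3| \, d\xi\, dS\, dt$ encodes precisely the Jacobian weight produced by the change of variables along characteristics through the boundary.

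The main obstacle is the behavior near grazing velocities $\xi_3 \sim 0$, where characteristics become tangent to $\pt\Omega$ and the extension procedure degenerates; moreover, the $|\xi_3|$-weighted trace does not control $f_0$ pointwise in $\xi$. To handle this, I would truncate to $|\xi_3| \ge \eta$ before applying the averaging argument. On the complement $\{|\xi_3| < \eta\}$, the small $\xi$-measure together with the rapid decay of $\phi$ gives a contribution to $\P f$ of size $O(\eta^{1/2}) \nl f \nr_{L^\infty_t L^2_{x,\xi}}$ in $L^2_{t,x}$. Interpolating this with the non-grazing $L^2_t H^s_x$ estimate and optimizing $\eta$ accounts for the loss of the endpoint $p = 3$ and produces the stated inequality for every $p \in (2,3)$, with implicit constant depending on $p$ and $T$.
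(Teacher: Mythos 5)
The paper does not prove this proposition; it quotes it from \cite{JangKimAPDE}, so there is no internal proof to compare against. Your overall strategy --- a whole-space velocity-averaging lemma via a Fourier elliptic/non-elliptic split, an extension of $f$ across $\{x_3=0\}$, and the appearance of the $|\xi_3|$-weighted trace via the Jacobian along characteristics --- is the natural one and is close in spirit to that reference. In particular, thresholding against $\lambda|k|$ rather than $\lambda|(\tau,k)|$ is exactly what produces $L^2_t\dot H^{1/2}_x$ rather than merely joint $H^{1/2}_{t,x}$, and your remark that the $|\xi_3|$-weight in $L^2_\gamma$ is precisely the characteristic Jacobian is correct.

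There are, however, two points where the sketch does not close. First, the zero extension $\tilde f=1_{\{x_3>0\}}f$ places the surface measure $\xi_3\,\delta(x_3)\,f|_\gamma$ into $\tilde g=(\eps\pt_t+\xi\cdot\nabla_x)\tilde f$, which is not an element of $L^2_{t,x,\xi}$, so the whole-space averaging lemma cannot be applied to $\tilde f$ directly. If instead you extend by free transport along characteristics so that $\tilde f$ is continuous across the interface, the $\delta$ disappears, but the extension is then only defined on a $\xi$-dependent slab $\{-c\,|\xi_3|\lesssim x_3<0\}$ (the trace is only available on $[0,T]$), which forces cutoffs that re-introduce source terms; this is the real technical crux and is not carried out in your sketch. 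Second, and more seriously, the grazing decomposition does not produce the claimed $L^p_x$ estimate. You bound $\P^g f=\int_{|\xi_3|<\eta}\phi f\,d\xi$ in $L^2_tL^2_x$ by $O(\eta^{1/2})\|f\|_{L^\infty_tL^2_{x,\xi}}$, but on $\T^2\times\R_+$ the space $L^2_x$ does not embed into $L^p_x$ for $p>2$; and since $\P f=\P^{ng}f+\P^g f$, knowing $\P^{ng}f\in L^2_tH^{1/2}_x$ and $\P^g f\in L^2_tL^2_x$ only places the sum in $L^2_tL^2_x$. One cannot ``interpolate'' two different \emph{summands} to manufacture fractional regularity on their sum; you would need an actual $L^p_x$ (or $H^s_x$ with $s>0$) bound on the grazing piece, which the hypotheses $f\in L^\infty_tL^2_{x,\xi}$ and $g\in L^2_{t,x,\xi}$ do not obviously provide. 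This second issue is the essential gap.
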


\begin{theorem}\label{thmLp} Let $p\in (2,3)$ and $f$ solves the equation \eqref{f-eq}, there holds 
\[\bega 
\nl \AA_\al \nl\P f_\al\nr_{L^2_tL^p_x}
\nr_{\ell_\al^2}& \lesssim \nl \AA_\al \nl \pt^\al f_0
\nr_{L^2_\gamma}
\nr_{\ell_\al^2}+\sup_{0\le s\le t}\mathcal E_f(t)+\kappa^{-\frac 12}\nl \mathcal D_f(t)\nr_{L^2_t}+\kappa^{-\frac 12} \nl \mathcal E_f\nr_{L^2_t}\\
&\quad+\eps\kappa^{\frac12}\nl  \mathcal D_f\nr_{L^2_t}\nl \mathcal H_{\rho,f}
\nr_{L^\infty_t}
+\eps^{0.99}\\
&\quad+\delta\kappa^{-\frac 12}\nl\AA_\al \nl \P f_\al
\nr_{L^\infty_t L^6_x}
\nr_{\ell_\al^2} \nl\AA_\al \nl \P f_\al
\nr_{L^2_t L^3_x}
\nr_{\ell_\al^2}.
\enda
\]
\end{theorem}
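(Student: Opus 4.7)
My plan is to apply the kinetic averaging lemma of Proposition \ref{JK-Lp} to each analytic derivative $f_\al = \pt^\al f$ and then sum in $\ell^2_\al$ against the analytic weights $\AA_\al(t)$. To set this up, I would first use equation \eqref{f-al-eq}, noting that the two occurrences of the bulk term $\frac{(\pt_t+\eps^{-1}\xi\cdot\nabla_x)\sqrt\mu}{\sqrt\mu}f_\al$ on the two sides cancel, so that after multiplying by $\eps$ we may write
\begin{equation*}
(\eps\pt_t+\xi\cdot\nabla_x)f_\al = -\frac{1}{\kappa\eps}L f_\al + \frac{1}{\kappa\eps}[L,\pt^\al]f + \eps g_\al - \eps\pt^\al\!\left\{\frac{(\pt_t+\eps^{-1}\xi\cdot\nabla_x)\sqrt\mu}{\sqrt\mu}f\right\}.
\end{equation*}
Combined with the boundary relation \eqref{f-al-bdr-con}, Proposition \ref{JK-Lp} would then give, for each $\al$,
\begin{equation*}
\|\P f_\al\|_{L^2_t L^p_x} \lesssim \|\pt^\al f_0\|_{L^2_\gamma} + \|f_\al\|_{L^\infty_t L^2_{x,\xi}} + \|(\eps\pt_t+\xi\cdot\nabla_x)f_\al\|_{L^2_{t,x,\xi}},
\end{equation*}
and after multiplication by $\AA_\al(t)$ and $\ell^2_\al$ summation, the first two terms on the right would collapse to the data term $\nl \AA_\al \nl \pt^\al f_0\nr_{L^2_\gamma}\nr_{\ell^2_\al}$ and to $\sup_{[0,t]}\mathcal E_f(s)$, respectively.

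The main task is then to estimate the remaining transport term in $L^2_{t,x,\xi}$. I would split the linear piece $\frac{1}{\kappa\eps}L f_\al = \frac{1}{\kappa\eps}L\IP f_\al$ via Proposition \ref{L-decom-max} into the multiplicative part $\nu \IP f_\al$ and the compact parts $K_1\IP f_\al - K_2\IP f_\al$. The compact pieces can be bounded directly by $\|\IP f_\al\|_{L^2}$, hence by $\eps\kappa^{1/2}\mathcal D_f$, while the multiplicative part requires an $L^\infty$--$L^2$ interpolation between the dissipation $\|\sqrt\nu\IP f_\al\|_{L^2}$ and the weighted $L^\infty$ norm $\mathcal H_{\rho,f}$, producing both the $\kappa^{-1/2}\|\mathcal D_f\|_{L^2_t}$ and the crossed $\eps\kappa^{1/2}\|\mathcal D_f\|_{L^2_t}\|\mathcal H_{\rho,f}\|_{L^\infty_t}$ contribution that appear in the statement. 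The commutator $\frac{1}{\kappa\eps}[L,\pt^\al]f$ is handled exactly as in Proposition \ref{com-est}: the decomposition \eqref{L-alpha-com} carries Gaussian decay through each $\pt^\beta\sqrt\mu$ factor, so the discrete Young inequality together with the algebraic identity \eqref{algebra-est} gives the $\kappa^{-1/2}\|\mathcal E_f\|_{L^2_t}$ contribution.

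The forcing $\eps g_\al$ decomposes as in \eqref{g-alpha} into the approximation error $-\frac{1}{\eps\delta\sqrt\mu}\textbf{R}_a$, the coupling $\frac{2}{\kappa}\Gamma(\ff,f)$, and the genuine nonlinearity $\frac{\delta}{\kappa\eps}\Gamma(f,f)$. The first piece is treated as in Proposition \ref{Ra-term} under the scaling hypothesis $d_E(t)+d_D(t)\lesssim 1$, producing the $\eps^{0.99}$ term; the second as in Proposition \ref{est-Gammaf2f}; and the third trilinearly as in Proposition \ref{G-ff}, yielding the final $\delta\kappa^{-1/2}\nl \AA_\al \nl \P f_\al\nr_{L^\infty_t L^6_x}\nr_{\ell^2_\al}\nl \AA_\al \nl \P f_\al\nr_{L^2_t L^3_x}\nr_{\ell^2_\al}$ term. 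The remaining bulk commutator $\eps\pt^\al\!\left\{\frac{(\pt_t+\eps^{-1}\xi\cdot\nabla_x)\sqrt\mu}{\sqrt\mu}f\right\}$ would be expanded via Leibniz as in Lemma \ref{bulk-ran-1}: the case where all derivatives fall on $f$ is absorbed in $\mathcal E_f$, and the case where at least one derivative hits the bulk coefficient benefits from the analyticity of the Prandtl shear profile and its critical $\sqrt\kappa$ gain on tangential derivatives.

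The hard part will be the linear dissipation step. The a priori dissipation $\mathcal D_f$ only controls $\|\sqrt\nu\IP f_\al\|_{L^2}$, whereas bounding $L f_\al$ in $L^2_{t,x,\xi}$ requires the stronger $\|\nu \IP f_\al\|_{L^2}$. Closing this gap is the reason one is forced to interpolate against the weighted $L^\infty$ norm $\mathcal H_{\rho,f}$, and it is precisely why the crossed term $\eps\kappa^{1/2}\|\mathcal D_f\|_{L^2_t}\|\mathcal H_{\rho,f}\|_{L^\infty_t}$ is unavoidable in the statement; this is also the structural reason why the full bootstrap must simultaneously propagate the $L^2$, $L^6$, $L^\infty$, and present $L^p$ bounds instead of decoupling them. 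A secondary subtlety is the convergence of the analytic $\ell^2_\al$ summation after applying Leibniz to both $[L,\pt^\al]f$ and the bulk coefficient, which rests on \eqref{algebra-est} and the $\la\al\ra^{-9}$ decay inside $\AA_\al(t)$, exactly as in Corollary \ref{bulk-est1}.
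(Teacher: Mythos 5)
Your proposal follows the paper's proof essentially verbatim: apply Proposition \ref{JK-Lp} to $f_\al$, rewrite $(\eps\pt_t+\xi\cdot\nabla_x)f_\al$ via \eqref{f-al-eq} (with the two bulk terms cancelling), and estimate the pieces $(\kappa\eps)^{-1}Lf_\al$, $(\kappa\eps)^{-1}[L,\pt^\al]f$, $\eps g_\al$, and the bulk commutator by invoking Propositions \ref{com-est}, \ref{Ra-term}, \ref{est-Gammaf2f}, \ref{G-ff}, and Lemma \ref{bulk-ran-1}, then take the weighted $\ell^2_\al$ sum. You are somewhat more explicit than the paper on the $(\kappa\eps)^{-1}Lf_\al$ piece (the paper lists it but does not show the bound) and you attribute the crossed $\eps\kappa^{1/2}\|\mathcal D_f\|_{L^2_t}\|\mathcal H_{\rho,f}\|_{L^\infty_t}$ term to this $L$-interpolation whereas the paper's proof produces it from the $\Gamma(f,f)$ estimate; neither discrepancy changes the argument.
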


\begin{proof} Applying Proposition \ref{JK-Lp} for $f_\al$, we get
\[\bega 
&\nl \P f_\al\nr_{L^2_t L^p_x}\\
&\lesssim \nl \pt^\al f_0
\nr_{L^2_\gamma}+\nl f_\al\nr_{L^\infty_t L^2_{x,\xi}}\\
&+(\kappa\eps)^{-1}\lw\{\nl Lf_\al\nr_{L^2_{t,x,\xi}}+\nl [L,\pt^\al]f
\nr_{L^2_{t,x,\xi}}\rw\}+\delta^{-1} \nl\pt^\al\lw\{\mu^{-\frac 12}{\bf{R}}_a
\rw\}
\nr_{L^2_{t,x\xi}}\\
&+\delta\kappa^{-1}\nl\pt^\al \Gamma(f,f)
\nr_{L^2_{t,x,\xi}}+\eps\kappa^{-1} \nl\pt^\al\Gamma(\ff,f)
\nr_{L^2_{t,x,\xi}}+\eps \nl \pt^\al\lw\{\frac{(\pt_t+\frac 1 \eps \xi\cdot\nabla_x)\sqrt\mu}{\sqrt\mu}f\rw\}
\nr_{L^2_{t,x,\xi}}.\\
\enda 
\]
As in the $L^2$ analytic estimates, we obtain
\[
\bega 
(\kappa\e)^{-1}\nl \nl \AA_\al [L,\pt^\al]f
\nr_{L^2_{t,x,\xi}}\nr_{\ell_\al^2}&\lesssim \kappa^{-\frac 12}\nl \mathcal E_f\nr_{L^2_t}+\eps\nl\mathcal D_f\nr_{L^2_t}\\
\nl 
\nl\AA_\al \pt^\al \Gamma(f,f)
\nr_{L^2_{t,x,\xi}}
\nr_{\ell_\al^2}&\lesssim \nl\AA_\al \nl \P f_\al
\nr_{L^\infty_t L^6_x}
\nr_{\ell_\al^2} \nl\AA_\al \nl \P f_\al
\nr_{L^2_t L^3_x}
\nr_{\ell_\al^2}\\
&\quad +\eps\kappa^{\frac12}\nl  \mathcal D_f\nr_{L^2_t}\nl \mathcal H_{\rho,f}
\nr_{L^\infty_t}\\
\eps\kappa^{-1}\nl
\AA_\al 
\nl\pt^\al \Gamma(\ff,f)
\nr _{L^2_{t,x,\xi}}
\nr_{\ell_\al^2} &\lesssim \eps^{0.1}\nl\mathcal E_f
\nr _{L^2_t}+o(1)\nl\mathcal D_f\nr_{L^2_t}\\
\eps\kappa^{\frac12}\nl \AA_\al \nl \pt^\al\lw\{\mu^{-\frac12}(\pt_t+\eps^{-1} \xi\cdot\nabla_x)\sqrt\mu)f\rw\}
\nr_{L^2_{t,x,\xi}}\nr_{\ell_\al^2}
&\lesssim \eps\kappa^{\frac 12}\nl\mathcal H_{\rho,f}
\nr_{L^\infty_t} +\eps\kappa^{\frac 12}
\nl \mathcal E_f\nr_{L^2_t}+\eps^2\kappa\nl \mathcal D_f\nr_{L^2_t}.
\enda 
\]
The proof is complete.
\end{proof}
\section{Proof of the main theorem}
Fix $p\in (2,3)$ and denote $\lambda\in (0,1)$ such that $\frac 1 3=\frac \lambda 6+\frac {1-\lambda}{p}$. By the interpolation and the discrete Holder inequality, we have
\[\nl \AA_\al
\nl
\P f_\al\nr_{L^2_t L^3_x}\nr_{\ell_\al^2}\lesssim \nl\AA_\al \nl\P f_\al\nr_{L^\infty_tL^6_x}\nr_{\ell_\al^2}^\lambda \nl \AA_\al\nl \P f_\al
\nr_{L^2_tL^p_x}
\nr_{\ell_\al^2}^{1-\lambda}.
\] We define the quantity
\[
\zeta_f(t)=\nl \nl \AA_\al \P f_\al
\nr_{L^\infty_tL^6_x}
\nr_{\ell_\al^{2}}  +\nl \nl \AA_\al \P f_\al
\nr_{L^2_t L^p_x} 
\nr_{\ell_\al^2}.\]
From the above inequality, we see that $\nl \AA_\al
\nl
\P f_\al\nr_{L^2_t L^3_x}\nr_{\ell_\al^2}\lesssim \zeta_f(t)
$.
From Theorem \ref{l2-ana} and Proposition \ref{G-ff}, we get

\[\bega
&\mathcal E_f(t)^2+\nl\mathcal D_f\nr_{L^2_t}^2+\eps^{-1} \nl \AA_\al\nl (\II-\P_{\gamma_+})f_\al\nr_{L^2_{t,x,\xi}}
\nr _{\ell_\al^2}^2\\
&\lesssim\delta\kappa^{-\frac 12}\lw(\nl \mathcal D_f\nr_{L^2_t}+\mathcal E_f(t)\rw)\nl \nl \AA_\al \P f_\al
\nr_{L^\infty_tL^6_x}
\nr_{\ell_\al^{2}} \nl \nl \AA_\al \P f_\al
\nr_{L^2_t L^3_x} 
\nr_{\ell_\al^2}\\
&\quad+\delta \eps\nl \mathcal D_f\nr_{L^2_t}^2\sup_{0\le s\le t} \mathcal H_{\rho,f}(s)\\
&\quad+\nl\AA_\al\nl \pt^\al \IP\ff\nr_{L^2_tL^2_{\gamma_-}}\nr_{\ell_\al^2}^2+\frac 1{\eps\delta}
\nl|\div U\nr|_{\tau_0,2}\int_0^t \mathcal E_f(s)ds+\frac{\eps^2}{\kappa}\mathcal H_{\rho,f}(t)^2.\\\enda
\]
This implies, under the assumption $(\eps\delta)^{-1}\||\div U\||_{\tau_0,2}+\delta^{-1}\||\NS(U)\||_{\tau_0,2}\lesssim \eps^{0.99}$ that 
\[\bega
&\mathcal E_f(t)^2+\nl\mathcal D_f\nr_{L^2_t}^2\lw\{1-(\delta\kappa^{-\frac 12} \zeta_f(t)^2)^2-\delta\eps \nl \mathcal H_{\rho,f}\nr_{L^\infty_t}
\rw\}\\
&+\frac 1 \eps \nl \AA_\al\nl (\II-\P_{\gamma_+})f_\al\nr_{L^2_{t,x,\xi}}
\nr _{\ell_\al^2}^2+\nl\mathcal Y_f
\nr_{L^2_t}\\
&\lesssim O(1) \nl \mathcal E_f
\nr_{L^2_t}^2+\delta^2\kappa^{-1}\zeta_f(t)^4+\eps^2\kappa^{-1}\mathcal H_{\rho,f}^2+\eps^{0.99}.
\enda\]
Under the assumption that 
\beq\label{boot-2}
\delta\kappa^{-\frac 12}\zeta_f(t)+ \eps \nl \mathcal H_{\rho,f}
\nr_{L^\infty_t} +\eps^{-\frac 12}\nl \nl \AA_\al(\II-\P_{\gamma_+})f_\al
\nr_{L^2_{t,x,\xi}}
\nr_{\ell_\al^2}
\ll \delta,
\eeq
we will have 
\beq\label{boot-1}
\mathcal E_f(t)+\nl \mathcal D_f
\nr_{L^2_t}+\nl\mathcal Y_f
\nr_{L^2_t}\lesssim \delta.
\eeq
It suffices to check the compatibility of the above conditions, and the result follows by bootstrap argument.
As for $L^\infty$ analytic norm $\mathcal H_{\rho,f}$, we recall Theorem \ref{l-inf}
\[\bega 
\mathcal H_{\rho,f}(t)&\lesssim  \mathcal H_h|_{t=0}+\eps^{-\frac 12} \kappa^{-1}\lw\{\nl\mathcal D_f 
\nr_{L^2_t}+\nl\pt_t \mathcal D_f\nr_{L^2_t}
\rw\}
+\eps^{0.99}.\enda 
\]
It suffices to have estimate on $\zeta_f(t)$. From Theorem \ref{thmLp}, we obtain 
\beq\label{ran-299}
\bega 
\nl \nl \AA_\al \P f_\al
\nr_{L^2_t L^p_x} 
\nr_{\ell_\al^2}&\lesssim \nl \AA_\al \nl \pt^\al f_0
\nr_{L^2_\gamma}
\nr_{\ell_\al^2}+\nl \mathcal E_f\nr_{L^\infty_t} +\kappa^{-\frac 12}\nl \mathcal D_f\nr_{L^2_t}+\kappa^{-\frac 12} \nl \mathcal E_f\nr_{L^2_t}\\
&\quad+\eps\kappa^{\frac12}\nl  \mathcal D_f\nr_{L^2_t}\nl \mathcal H_{\rho,f}
\nr_{L^\infty_t}
+\eps^{0.99}+\delta\kappa^{-\frac 12}\zeta_f(t)^2.\enda 
\eeq
At the same time, from Theorem \ref{L6-final}, we obtain  
\[\bega
&\nl
\AA_\al \nl \P f_\al
\nr_{L^\infty_t L^6_x}
\nr_{\ell_\al^2}\\
& \lesssim \eps^{\frac 1 3}\kappa^{\frac 1 6} \nl \mathcal H_{\rho,f}\nr_{L^\infty_t}^{\frac 2 3} \lw\{\nl\mathcal D_f 
\nr_{L^2_t}+\nl\pt_t \mathcal D_f\nr_{L^2_t}
\rw\}
^{\frac 1 3} +\nl \mathcal Y_f
\nr_{L^2_t}+\nl \pt_t \mathcal Y_f
\nr_{L^2_t}+\kappa^{-\frac 12}\nl \mathcal D_f 
\nr_{L^\infty_t} \\
&\quad+\eps^{-\frac 12}\lw\{ \nl \AA_\al \nl (\II-\P_{\gamma_+})\pt_t f_\al
\nr_{L^2_tL^2_{\gamma_+}} 
\nr _{\ell_\al^2}+\nl \AA_\al \nl (\II-\P_{\gamma_+})f_\al
\nr_{L^2_tL^2_{\gamma_+}} 
\nr _{\ell_\al^2}
\rw\}\cdot\eps^{\frac 12}\nl \mathcal H_{\rho,f}\nr_{L^\infty_t}\\
&\quad+\eps^{0.99}+\eps\kappa^{-\frac 12}\lw\{\nl \mathcal D_f\nr_{L^2_t}+\nl \pt_t\mathcal D_f\nr_{L^2_t}\rw\}
+\delta\eps\lw\{ \nl \mathcal D_f \nr_{L^2_t}+\nl \pt_t\mathcal D_f \nr_{L^2_t}\rw\}\nl \mathcal H_{\rho,f}\nr_{L^\infty_t}\\
&\quad+\delta\kappa^{-\frac12}\zeta_f(t)\lw\{
\zeta_f(t)+ \nl \AA_\al \nl \P \pt_t  f_\al\nr_{L^2_tL^3_x}
\nr _{\ell_\al^2}\rw\}.
\enda 
\]
Combining the above with \eqref{ran-299}, the assumptions \eqref{boot-1} and \eqref{boot-2}, we have 
\[\bega 
&\zeta_f(t)\lesssim  \nl \AA_\al \nl \pt^\al f_0
\nr_{L^2_\gamma}
\nr_{\ell_\al^2}+\delta\kappa^{-\frac 12}+\eps^{0.99}+\delta\kappa^{-\frac 12}\zeta_f(t)^2+\nl \pt_t \mathcal Y_f
\nr_{L^2_t}+\kappa^{-\frac 12}\nl\pt_t \mathcal D_f
\nr_{L^2_t} \\
&+\eps^{-\frac 12} \nl \AA_\al \nl (\II-\P_{\gamma_+})\pt_t f_\al
\nr_{L^2_tL^2_{\gamma_+}} 
\nr _{\ell_\al^2}\cdot \eps^{-\frac 12}\delta+\eps\kappa^{-\frac 12} \nl\pt_t\mathcal D_f
\nr_{L^2_t}
\\
&+\delta\kappa^{-\frac 12}\zeta_f(t)^2+\delta\kappa^{-\frac 12}\zeta_f(t) \zeta_{\pt_tf}(t).
\enda 
\]
The estimate \eqref{boot-2} for $\zeta_f(t)$ is satisfied, given 
\[\bega 
&\kappa^{-\frac 12}\nl \AA_\al \nl \pt^\al f_0
\nr_{L^2_\gamma}
\nr_{\ell_\al^2}+\delta\kappa^{-1}+\kappa^{-\frac 12}\nl\pt_t \mathcal Y\nr_{L^2_t}+\kappa^{-1}\nl\pt_t \mathcal D_f\nr_{L^2_t}+\delta\kappa^{-\frac 12}\zeta_{\pt_t f}(t)\\
&\quad+\delta^{-1}\eps^{-\frac 12} \nl \AA_\al \nl (\II-\P_{\gamma_+})\pt_t f_\al
\nr_{L^2_tL^2_{\gamma_+}} 
\nr _{\ell_\al^2}+\delta^2\eps^{-\frac 12}\lesssim 1.
\enda \]
We can take $\eps=\kappa^{2M}$ and $\delta=\kappa^M$ for $M$ large. The fluid solution is selected so that 
\[
\nl \div U\nr_{\tau_0,2}+\nl\NS (U)\nr_{\tau_0,2}\lesssim \kappa^{100M}.
\]
We see that we need control on the analytic norm of $\pt_t f$. The idea is that the equation for $\pt_t f$ now is a linear equation, given the analytic bound of $f$. 
We have 
\[
\bega 
&\pt_t \pt_t f_\al +\frac 1 \eps \xi\cdot\nabla_x \pt_t f_\al +\frac 1{\kappa\eps^2}L\pt_t f_\al +\frac{(\pt_t +\frac 1\eps \xi\cdot\nabla_x)\sqrt\mu}{\sqrt\mu}\pt_t f_\al\\
&=\lw\{\frac{(\pt_t +\frac 1\eps \xi\cdot\nabla_x)\sqrt\mu}{\sqrt\mu}\pt_t f_\al-\pt_t\pt^\al \lw\{\frac{(\pt_t +\frac 1\eps \xi\cdot\nabla_x)\sqrt\mu}{\sqrt\mu}\rw\} f\rw\}\\
&\quad-\frac 1{\kappa\e^2}[\pt_t\pt^\al ,L]f-\frac{1}{\eps\delta }\pt_t\pt^\al\lw\{\mu^{-\frac 12}
\textbf{R}_a
\rw\}+\frac{2\delta}{\kappa\eps}\pt^\al \Gamma(\pt_t f,f)+\frac{2}{\kappa}\pt^\al \Gamma(\ff,\pt_t f)+\frac2\kappa\pt^\al\Gamma(\ff,\pt_tf).\\
\enda
\]
Using Theorem \ref{l2-ana}, we get 
\[\bega 
&\nl\AA_\al \nl \pt_t f_\al\nr_{L^2_{t,x,\xi}}\nr_{\ell_\al^2}^2+O(1)\nl \mathcal D_{\pt_tf}\nr_{L^2_t}^2+\eps^{-1}\nl\AA_\al \nl (\II-\P_\gamma)\pt_t f_\al
\nr_{L^2_tL^2_\gamma}
\nr_{\ell_\al^2}^2\\
&\lesssim o(1)\mathcal E_f(t) \nl\AA_\al \nl \pt_t f_\al\nr_{L^2_{t,x,\xi}}\nr_{\ell_\al^2}+\kappa^{-\frac12}\nl \mathcal D_f\nr_{L^2_t}^2+\kappa^{-\frac12}\nl \mathcal E_f\nr_{L^2_t}\nl \mathcal D_f\nr_{L^2_t}+o(1)\nl \mathcal D_{\pt_tf}\nr_{L^2_t}^2+\nl \mathcal E_{\pt_tf}\nr_{L^2_t}^2\\
&\quad+\delta\kappa^{-\frac12} \lw(\nl \mathcal D_{\pt_t}f\nr_{L^2_t}+\mathcal E_{\pt_tf}
\rw) 
\nl \nl \AA_\al \P \pt_t f_\al\nr_{L^2_tL^3_x}
\nr_{\ell_\al^2}\nl \nl\AA_\al\P f_\al\nr_{L^2_tL^3_x}\nr_{\ell_\al^2}.
\enda
\]
The proof is complete, since there is no loss of derivative coming from the nonlinear term of the right hand side. This gives us full control on $\nl \mathcal D_{\pt_t f}\nr_{L^2_t}$. Finally, we note that 
\[
\nl \mathcal D_f\nr_{L^\infty_t}\lesssim \nl \mathcal D_f\nr_{L^2_t}+\nl\pt_t\mathcal D_f\nr_{L^2_t}\lesssim \nl \mathcal D_f\nr_{L^2_t}+\nl\mathcal D_{\pt_tf}\nr_{L^2_t}+\eps^{0.99}\nl \mathcal E_f\nr_{L^2_t}.
\]
The proof is complete.
\section{Appendix}
\subsection{Elementary lemmas}
\begin{lemma}[\cite{MR1182792}]\label{lem-Faa}
Let $R>0$ and $n\ge 1,n\in\mathbb N$. There holds
\beq\label{faa-iden1}
\sum_{k_1+2k_2+\cdots+nk_n=n}\frac{(k_1+k_2+\cdots+k_n)!}{k_1!k_2!\cdots k_n!}R^{k_1+k_2+\cdots+k_n}=R(R+1)^{n-1}.
\eeq
\end{lemma}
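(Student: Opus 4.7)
My plan is to prove this classical identity combinatorially by recognizing the left-hand side as a generating sum over compositions of $n$. First I would observe that for a fixed tuple $(k_1,\ldots,k_n)$ with $k_1+2k_2+\cdots+nk_n=n$ and $m:=k_1+\cdots+k_n$, the multinomial coefficient $\frac{m!}{k_1!\cdots k_n!}$ counts the number of ordered tuples $(a_1,\ldots,a_m)$ of positive integers summing to $n$ in which exactly $k_j$ of the entries equal $j$ for each $j$. Therefore the left-hand side of \eqref{faa-iden1} can be rewritten as
\[
\sum_{m=1}^{n} R^m \cdot \#\{\text{compositions of } n \text{ with exactly } m \text{ parts}\}.
\]

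Next I would invoke the standard stars-and-bars count that the number of compositions of $n$ into exactly $m$ positive parts is $\binom{n-1}{m-1}$, obtained by selecting $m-1$ of the $n-1$ gaps between $n$ units. Substituting and reindexing with $\ell = m-1$, the expression becomes
\[
\sum_{m=1}^{n}\binom{n-1}{m-1}R^m
= R\sum_{\ell=0}^{n-1}\binom{n-1}{\ell}R^\ell
= R(1+R)^{n-1}
\]
by the binomial theorem, which is the right-hand side.

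As a self-contained alternative, I could run the same argument via generating functions. Let $F(z)=\sum_{k\ge 1}z^k=z/(1-z)$ be the ordinary generating function for positive integers. The multinomial expansion $F(z)^m=\sum_{\sum jk_j=n}\frac{m!}{k_1!\cdots k_n!}z^n$ (summed over tuples with $\sum k_j=m$) shows that
\[
\sum_{m\ge 1} R^m F(z)^m
= \frac{RF(z)}{1-RF(z)}
= \frac{Rz}{1-(1+R)z},
\]
so extracting the coefficient of $z^n$ on both sides yields the identity. Neither step presents a genuine obstacle; the content of the lemma is purely a bijective matching of the algebraic side with the count of weighted compositions of $n$.
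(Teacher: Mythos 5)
Your argument is correct and takes a genuinely different route from the paper's. The paper proves \eqref{faa-iden1} via Fa\`a di Bruno's formula applied to the composite $h = f\circ g$ with $f(x)=\frac{1}{1-R(x-1)}$ and $g(x)=\frac{1}{1-x}$, computing $h^{(n)}(0)$ in two ways: once by expanding $h(x)=\frac{1-x}{1-(R+1)x}$ directly, once through the chain rule, and identifying coefficients. You instead recognize the left side as a weighted count of compositions: the multinomial coefficient $\frac{m!}{k_1!\cdots k_n!}$ counts the orderings of the multiset with $k_j$ copies of $j$, so summing over all admissible $(k_1,\ldots,k_n)$ with $\sum k_j=m$ gives the number $\binom{n-1}{m-1}$ of compositions of $n$ into $m$ parts, and the binomial theorem finishes. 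Your second variant, summing $R^m F(z)^m$ with $F(z)=z/(1-z)$, produces the same rational function $\frac{Rz}{1-(1+R)z}$ that appears (as $h(z)-1$) in the paper, but you reach it by straightforward geometric-series and multinomial manipulations rather than through Fa\`a di Bruno. Your route is more elementary and self-contained; the paper's route has the advantage of being the same machinery (Fa\`a di Bruno) that is reused verbatim in the subsequent Lemmas \ref{lem-Faa2}, \ref{composite}, and \ref{exp-ana}, so the authors get uniformity of technique across the appendix even if this particular identity admits a shorter proof.
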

\begin{proof} We first show \eqref{faa-iden1}. We define 
\[
g(x)=\frac 1{1-x},\qquad f(x)=\frac{1}{1-R(x-1)},\qquad h(x)=f(g(x))=\frac {1-x}{1-(R+1)x}.
\]
It is clear that
\beq\label{ran-100-1}
\bega
g(x)&=\sum_{n=1}^\infty x^n=\sum_{n=1}^\infty \frac{g^{(n)}(0)}{n!}x^n\\
 f(x)&=\sum_{n=1}^\infty R^nx^n=\sum_{n=1}^\infty \frac{f^{(n)}(0)}{n!}x^n\\
  h(x)&=1+\sum_{n=1}^\infty R(R+1)^{n-1}x^n.
\enda 
\eeq
At the same time, by the chain rule, we have 
\[
h^{(n)}(x)=\sum_{k_1+2k_2+\cdots+nk_n=n}\frac{n!}{k_1!k_2!\cdots k_n!}f^{(k_1+k_2+\cdots+k_n)}(g(x))\prod_{i=1}^n \lw(\frac{g^{(i)}(x)}{i!}
\rw)^{k_i}.
\]
Putting $x=0$ in the above equation and using \eqref{ran-100-1}, we have, for $n\ge 1$:
\[
R(R+1)^{n-1}=\sum_{k_1+2k_2+\cdots+nk_n=n}\frac{(k_1+k_2+\cdots+k_n)!}{k_1!k_2!\cdots k_n!}R^{k_1+k_2+\cdots+k_n}.
\]
This proves \eqref{faa-iden1}. 
\end{proof}
\begin{lemma}\label{lem-Faa2} Let $d\in\mathbb N, d\ge 2$ and $R>0$. There exists a constant $C_d>0$ only depending on $d$ such that
\beq\label{faa-iden2}
\sup_{\substack{\al\in\mathbb N^d\\|\al|\ge 1
}
}\sum_{\substack{k_1,k_2,\cdots k_s\in \mathbb N\\
\beta_1,\beta_2,\cdots, \beta_s\in\mathbb N^d\\
k_1 \beta_1+k_2 \beta_2+\cdots+k_s  \beta_s= \al
}
}\frac{(k_1+k_2+\cdots+k_s)!}{k_1!k_2!\cdots k_s!}R^{k_1+k_2+\cdots+k_s}\le C_d \min\{R,1\} e^{2^d R}.
\eeq
\end{lemma}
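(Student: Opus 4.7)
The approach is to recast the left-hand side of \eqref{faa-iden2} as a coefficient of an explicit rational generating function in $d$ variables, and then obtain pointwise bounds by iterating the one-dimensional identity from Lemma \ref{lem-Faa}.

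First I would set up the generating function. Extending the definition by $S_0(R) := 1$, grouping the terms in the sum by the value of $K = k_1+\cdots+k_s$, and applying the multinomial theorem in the family $\{x^\beta\}_{\beta\in\mathbb N^d\setminus\{0\}}$, one verifies the identity
$$\Phi_R(x_1,\dots,x_d) \;:=\; \sum_{\alpha\in\mathbb N^d} S_\alpha(R)\,x^\alpha \;=\; \sum_{K\ge 0} R^K\bigl(G_d(x)\bigr)^K \;=\; \frac{1}{1-R\,G_d(x)},$$
where $G_d(x) := \prod_{i=1}^d (1-x_i)^{-1}-1$ is the generating function of $\mathbb N^d\setminus\{0\}$. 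Rewriting $\Phi_R$ as $\prod_i(1-x_i)\big/[(1+R)\prod_i(1-x_i)-R]$ makes its rational structure in each individual variable $x_i$ manifest. In particular, evaluation at the symmetric point $x_i = 1/2$ gives $\Phi_R(1/2,\dots,1/2) = [1-R(2^d-1)]^{-1}$, which is the source of the constant $2^d$ in the final bound.

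Next I would carry out a dimensional reduction. Viewing $\Phi_R$ as a rational function of $x_d$ with coefficients depending on $(x_1,\dots,x_{d-1})$, partial-fractioning in $x_d$ and invoking the 1-D formula $R(1+R)^{n-1}$ of Lemma \ref{lem-Faa} allows extraction of the $[x_d^{\alpha_d}]$ coefficient as a rational function of $(x_1,\dots,x_{d-1})$ of the same $\Phi$-type, but with a renormalized coupling $R' \lesssim R(1+R)$. Iterating this reduction over the $d$ coordinates telescopes the $(1+R)$ factors into at most $(1+R)^{2^d}\le e^{2^d R}$, while the factor $\min\{R,1\}$ emerges from the observation that the constraint $|\alpha|\ge 1$ forces $K\ge 1$, so each $S_\alpha(R)$ carries an overall factor of $R$ (kept explicit for $R\le 1$ and absorbed into the constant $C_d$ for $R\ge 1$).

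The main obstacle will be the bookkeeping of the iteration. The 1-D identity $R(1+R)^{n-1}$ grows exponentially in the local index $\alpha_i$, and at each coordinate reduction one must re-express the residual generating function so that this growth is entirely absorbed into the renormalized parameter $R'$ rather than spilling into uncontrolled polynomial or factorial factors. Ensuring that after $d$ reductions the cumulative constant is $2^d$ (and not $d!$ or $(2^d)!$) requires framing the induction so that the inductive hypothesis controls $\Phi_{R'}$ evaluated at $x_j = 1/2$ for the remaining coordinates, matching the prediction from the symmetric evaluation above. Once this structural point is handled, extracting $[x^\alpha]$ and using that $(1-x_i)^{-1}|_{x_i=1/2}=2$ compresses the bookkeeping into the clean bound $C_d\min\{R,1\}e^{2^d R}$, uniformly in $\alpha$.
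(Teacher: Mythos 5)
Your generating-function setup is correct and coincides with the paper's: the paper's $h(x)=1/(1-R(g(x)-1))$ with $g(x)=\prod_i(1-x_i)^{-1}$ is precisely your $\Phi_R$, and the identity $[x^\alpha]h = S_\alpha(R)$ is the first step in both arguments. The gap is in the dimensional-reduction step. Extracting the $x_d^{\alpha_d}$-coefficient from $\Phi_R$ for $\alpha_d\ge 1$ by partial fractions gives, writing $g_{d-1}=\prod_{i<d}(1-x_i)^{-1}$ and $\Phi_R^{(d-1)}=(1+R-Rg_{d-1})^{-1}$,
\[
[x_d^{\alpha_d}]\Phi_R \;=\; R\,(1+R)^{\alpha_d-1}\,g_{d-1}\,\bigl(\Phi_R^{(d-1)}\bigr)^{\alpha_d+1}.
\]
This is \emph{not} of the form $\Phi_{R'}^{(d-1)}$ for any renormalized coupling $R'\lesssim R(1+R)$: it is a product of $g_{d-1}$ with an $(\alpha_d+1)$-st power of $\Phi_R^{(d-1)}$, with both the exponent and the scalar prefactor $(1+R)^{\alpha_d-1}$ growing without bound in $\alpha_d$. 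The proposed recursion therefore does not close, and the heuristic "telescoping into $(1+R)^{2^d}$" has no basis once the true shape of the residual function is written out. The paper's proof takes a genuinely different route: since every $S_\alpha(R)\ge 0$, it dominates $S_\alpha(R)$ by the full sum $\sum_{|\beta|=|\alpha|}S_\beta(R)=[c_0^{|\alpha|}]h(c_0,\dots,c_0)$; the restriction of $h$ to the diagonal is a single-variable composition $\tilde f\circ\tilde g$ with $\tilde g(t)=(1-t)^{-d}-1$, to which the one-dimensional Fa\`a di Bruno identity of Lemma~\ref{lem-Faa} applies, sidestepping coordinate-by-coordinate bookkeeping altogether.

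You should also test the statement against the degenerate direction $\alpha=(0,\dots,0,n)$: there every $\beta_i$ must be supported in the last coordinate, so the sum collapses to the one-dimensional sum of Lemma~\ref{lem-Faa} and equals $R(1+R)^{n-1}$, which is unbounded in $n$. Your own symmetric evaluation $\Phi_R(1/2,\dots,1/2)=(1-R(2^d-1))^{-1}$ only converges for $R<1/(2^d-1)$ and forces $S_\alpha(R)\le 2^{|\alpha|}(1-R(2^d-1))^{-1}$, a bound growing geometrically in $|\alpha|$---consistent with the same difficulty. A uniform-in-$\alpha$ bound of the claimed type cannot hold when the $\beta_i$ range over $\mathbb{N}_0^d\setminus\{0\}$, which is what both your generating function and the paper's identity $\partial^\alpha g|_0=\alpha!$ implicitly assume; this structural obstruction must be resolved (by restricting the index set or inserting an $|\alpha|$-dependent weight) before either line of argument can yield the stated inequality.
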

\begin{proof}
We define the functions $g:\{x\in\mathbb \R^d:\quad \|x\|<1\}\to \R$ and $f:\mathbb R-\{1\}\to \mathbb R$ such that
\[
g(x)=\sum_{\al\in\mathbb N_0^d} x^\al=\prod_{i=1}^d \frac{1}{1-x_i},\qquad f(t)=\frac 1{1-R(t-1)}=(1+R-Rt)^{-1},
\]
where 
\[
x^\al=x_1^{\al_1}x_2^{\al_2}\cdots x_d^{\al_d},\qquad x\in\R^d,\quad \al\in \mathbb N_0^d.
\]
For $n\ge 1$, we have 
\[
f^{(n)}(t)=n!R^n(1+R-Rt)^{-(n+1)}.
\]
Define \[
h(x)=f(g(x))=\frac{1}{1-R(g(x)-1)}\]
 Now using the Faà di Bruno's formula, we have
\[\bega 
&\pt^\al h(x) =\sum_{\substack{k_1,k_2,\cdots k_s\in \mathbb N\\
\beta_1,\beta_2,\cdots, \beta_s\in\mathbb N^d\\
k_1 \beta_1+k_2 \beta_2+\cdots+k_s  \beta_s= \al
}
}\frac{\alpha!}{k_1!k_2!\cdots k_s!}R^{k_1+k_2+\cdots+k_s}(k_1+k_2+\cdots+k_s)!\\
&\times(1+R-Rg(x))^{-(k_1+k_2+\cdots+k_s+1)} \lw\{\frac{1}{\beta_1!}\pt^{\beta_1}g
\rw\}^{k_1}\lw\{\frac{1}{\beta_1!}\pt^{\beta_2} g
\rw\}^{k_2}\cdots\lw\{\frac{1}{\beta_s!}\pt^{\beta_1} g
\rw\}^{k_s}.\\
\enda 
\]
Now putting $x=0$ in the above equation and using the fact that $\pt^\al g|_{x=0}=\al!$, we get
\beq\label{ran-101-2}
\sum_{\substack{k_1,k_2,\cdots k_s\in \mathbb N\\
\beta_1,\beta_2,\cdots, \beta_s\in\mathbb N^d\\
k_1 \beta_1+k_2 \beta_2+\cdots+k_s  \beta_s= \al
}
}\frac{(k_1+k_2+\cdots+k_s)!}{k_1!k_2!\cdots k_s!}R^{k_1+k_2+\cdots+k_s}=\frac{1}{\al!}\pt^\al h|_{x=0}.\eeq
Now we compute the left right side of the above, by expanding $h$ around $x=0$. Denote $$\vec c_0=(c_0,c_0,\cdots,c_0)\in\mathbb R^d.$$
We have 
\beq\label{ran-102-1}
h(\vec c_0)=\sum_{\substack{\al\in\mathbb N_0^3\\|\al|\ge 1}} \pt^\al h|_{x=0} \frac{c_0^{|\al|}}{\al!}
\eeq
Since $g(\vec c_0)=(1-c_0)^{-d}$, we have 
\beq
h(\vec c_0)=f(g(\vec c_0))=f((1-c_0)^{-d})=\frac{1}{1-R((1-c_0)^{-d}-1)}=\wtd f(\wtd g(c_0)),\eeq
where
\[
\wtd f(t)=\frac{1}{1-Rt},\qquad \wtd g(t)=(1-t)^{-d}-1.
\]
Now we have
\[
\wtd f(\wtd g(c_0))=\sum_{n=0}^\infty \frac{1}{n!} (f(\wtd g(c_0)))^{(n)}|_{c_0=0}c_0^n.\]
By the Faà di Bruno's formula, we have 
\[
(\wtd f(\wtd g(c_0))^{(n)}=\sum_{k=1}^n\sum_{
\substack{b_1+b_2+\cdots+b_n=k\\
b_1+2b_2+\cdots+nb_n=n}}\wtd f^{(k)}(\wtd g(c_0))\prod_{i=1}^n\lw\{\frac{\wtd g^{(i)}(c_0)}{i!}
\rw\}^{b_i}.
\]
Plugging $c_0=0$ in the above and using the fact that $\wtd f^{(n)}(0)=R^n, \wtd g^{(n)}(0)=d(d+1)\cdots (d+n-1)$, we have 
\[
(\wtd f(\wtd g(c_0))^{(n)}|_{c_0=0}=\sum_{k=1}^n \sum_{
\substack{b_1+b_2+\cdots+b_n=k\\
b_1+2b_2+\cdots+nb_n=n}}R^k \prod_{i=1}^n \lw\{\frac{d(d+1)\cdots (d+i-1)}{i!}
\rw\}^{b_i}.
\]
Hence 
\[h(\vec c_0)=\wtd f(\wtd g(c_0))=\sum_{n=1}^\infty \lw(\frac 1{n!} \sum_{k=1}^n \sum_{
\substack{b_1+b_2+\cdots+b_n=k\\
b_1+2b_2+\cdots+nb_n=n}}R^k \prod_{i=1}^n \lw\{\frac{d(d+1)\cdots (d+i-1)}{i!}
\rw\}^{b_i}
\rw)c_0^n.
\]
Combining the above with \eqref{ran-102-1}, we have
\[
\sum_{|\al|\ge 1}\frac{\pt^\al h|_{x=0}}{\al!}x^{|\al|}=\sum_{n=1}^\infty \lw(\frac 1{n!} \sum_{k=1}^n \sum_{
\substack{b_1+b_2+\cdots+b_n=k\\
b_1+2b_2+\cdots+nb_n=n}}R^k \prod_{i=1}^n \lw\{\frac{d(d+1)\cdots (d+i-1)}{i!}
\rw\}^{b_i}
\rw)x^n.
\]
Equation $x^n$ on both sides, we have 
\[
\sum_{|\al|=n}\frac{\pt^\al h|_{x=0}}{\al!}=\frac 1{n!} \sum_{k=1}^n \sum_{
\substack{b_1+b_2+\cdots+b_n=k\\
b_1+2b_2+\cdots+nb_n=n}}R^k \prod_{i=1}^n \lw\{\frac{d(d+1)\cdots (d+i-1)}{i!}
\rw\}^{b_i}.\]
Now combining the above with \eqref{ran-101-2}, we have, for $\al\in\mathbb N_0^d$ with $|\al|=n$:
\beq\label{exp-ana-1}
\bega 
&\sum_{\substack{k_1,k_2,\cdots k_s\in \mathbb N\\
\beta_1,\beta_2,\cdots, \beta_s\in\mathbb N^d\\
k_1 \beta_1+k_2 \beta_2+\cdots+k_s  \beta_s= \al
}
}\frac{(k_1+k_2+\cdots+k_s)!}{k_1!k_2!\cdots k_s!}R^{k_1+k_2+\cdots+k_s}\\
&\le \frac 1{n!} \sum_{k=1}^n \sum_{
\substack{b_1+b_2+\cdots+b_n=k\\
b_1+2b_2+\cdots+nb_n=n}}R^k \prod_{i=1}^n \lw\{\frac{d(d+1)\cdots (d+i-1)}{i!}
\rw\}^{b_i}.\enda
\eeq 
Now we note that
\[
\frac{d(d+1)\cdots (d+i-1)}{i!}=\binom{d+i-1}{i}\le \sum_{i=0}^d\binom{d+i-1}{i}= 2^{d+i-1}.
\]
Hence 
\[
\prod_{i=1}^n \lw\{\frac{d(d+1)\cdots (d+i-1)}{i!}
\rw\}^{b_i}\le 2^{dk+n-k}=2^{k(d-1)}2^n.
\]
Combining the above with \eqref{exp-ana-1}, we get
\[\bega 
&\sum_{\substack{k_1,k_2,\cdots k_s\in \mathbb N\\
\beta_1,\beta_2,\cdots, \beta_s\in\mathbb N^d\\
k_1 \beta_1+k_2 \beta_2+\cdots+k_s  \beta_s= \al
}
}\frac{(k_1+k_2+\cdots+k_s)!}{k_1!k_2!\cdots k_s!}R^{k_1+k_2+\cdots+k_s}\\
&\le \frac {2^n} {n!}  \sum_{
\substack{
b_1+2b_2+\cdots+nb_n=n}}\lw( 2^{d-1} R\rw)^{b_1+b_2+\cdots+b_n}\\
&\le \frac {2^n}{n!}\sum_{b_1+2b_2+\cdots+nb_n=n}\frac{(b_1+b_2+\cdots+b_n)!}{b_1!b_2!\cdots b_n!}(2^{d-1}R)^{b_1+b_2+\cdots+b_n}\\
&=\frac{2^n}{n!} 2^{d-1}R(1+2^{d-1}R)^{n-1}\lesssim \min\{R,1\} \frac{2^n}{n!} (1+2^{d-1}R)^n\\
&\lesssim \min\{R,1\}\frac{1}{n!}(2+2^d R)^n\lesssim \min\{ R,1\} e^{2^d R},
\enda \]
where we used \eqref{faa-iden1} in the above. The proof is complete.
\end{proof}
\begin{lemma}\label{composite}[Analyticity estimate for composite functions]. Let $H:\mathbb R^d\to \R$ and $G:\mathbb \R\to\R$ satisfy 
\beq\label{GH}
\sup_{\substack{\al\in  \mathbb N_0^d\\|\al|\ge 1
}}\frac{\tau_H^{|\al|}}{\al !}\nl \pt^\al H\nr_{L^\infty_x}\lesssim R,\qquad \sup_{\al\in\mathbb N_0^3}\frac{\tau_G^{|\al|}}{\al!}\nl \pt^\al G
\nr_{L^\infty_x}\lesssim 1.\eeq
There holds
\[
\sum_{\substack{\al\in\mathbb N_0^d\\|\al|\ge 1}}\frac{\tau_H^{|\al|}}{\al!} \nl \pt^\al  (G(H(x))\nr_{L^\infty_x}\lesssim \min\{R,1\}e^{\tau_H+2^d\tau_G^{-1}R}.
\]
\end{lemma}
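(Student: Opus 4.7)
The plan is to apply the multivariate Faà di Bruno formula to expand $\pt^\al(G\circ H)(x)$ into a sum over partitions $k_1\beta_1+\cdots+k_s\beta_s=\al$ (with $\beta_i\ne 0$), namely
\[
\pt^\al(G(H(x))) = \sum_{\substack{k_1,\ldots,k_s\in\NN\\ \beta_1,\ldots,\beta_s\in\NN_0^d,\ \beta_i\ne 0 \\ k_1\beta_1+\cdots+k_s\beta_s=\al}} \frac{\al!}{k_1!\cdots k_s!}\,G^{(K)}(H(x))\,\prod_{i=1}^s \Bigl(\frac{\pt^{\beta_i}H}{\beta_i!}\Bigr)^{k_i},
\]
where $K=k_1+\cdots+k_s$. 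Inserting the hypotheses \eqref{GH}, which give $|\pt^{\beta_i}H|\lesssim R\,\beta_i!/\tau_H^{|\beta_i|}$ and $|G^{(K)}|\lesssim K!/\tau_G^K$, and using the partition constraint $\sum_i k_i|\beta_i|=|\al|$ to see that the powers of $\tau_H$ cancel exactly, one reduces the problem to the combinatorial inequality
\[
\frac{\tau_H^{|\al|}}{\al!}\,|\pt^\al(G\circ H)(x)| \lesssim \sum_{\text{partitions of }\al} \frac{K!}{k_1!\cdots k_s!}\Bigl(\frac{R}{\tau_G}\Bigr)^K.
\]

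Next I would sum the right-hand side over $|\al|\ge 1$ by grouping terms with $|\al|=n$ and invoking (a strengthened form of) Lemma~\ref{lem-Faa2}. Specifically, the proof of Lemma~\ref{lem-Faa2} already establishes the finer level-$n$ estimate
\[
\sum_{|\al|=n}\sum_{\text{partitions of }\al} \frac{K!}{k_1!\cdots k_s!}R'^K \;\lesssim\; \min\{R',1\}\,\frac{(2+2^d R')^n}{n!},\qquad R'=R/\tau_G,
\]
via the generating function $h(\vec c_0)=[1-R'((1-c_0)^{-d}-1)]^{-1}$ evaluated on the diagonal $\vec c_0=c_0\vec 1$. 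Summing this in $n\ge 1$ produces the desired exponential; the prefactor $\min\{R,1\}$ comes from isolating the $b_1$-term in the combinatorial sum exactly as in the proof of Lemma~\ref{lem-Faa2}.

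To produce the precise exponent $\tau_H$ appearing in $e^{\tau_H+2^d\tau_G^{-1}R}$, I would repeat the majorant/generating-function computation with the substitution $c_0\mapsto \tau_H$ in place of $c_0\mapsto 1$; this replaces $(1-c_0)^{-d}$ by $(1-\tau_H)^{-d}$ and, after expanding $(2+2^d R')^n/n!$ in its natural series, yields the extra factor $e^{\tau_H}$ from the geometric contribution in the $c_0$ variable (assuming $\tau_H$ is smaller than the radius of analyticity of $(1-\cdot)^{-d}$; since the paper's $\tau_H=\tau_0$ is small this is harmless). Combining the three steps gives the claimed bound $\min\{R,1\}e^{\tau_H+2^d\tau_G^{-1}R}$.

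The main obstacle I anticipate is the bookkeeping for the rescaled generating function: verifying that the $\tau_H$-rescaled $c_0$ produces exactly $e^{\tau_H}$ (rather than some other constant) requires carefully tracking the binomial factors $\binom{d+i-1}{i}\le 2^{d+i-1}$ through the Faà di Bruno constraint $b_1+2b_2+\cdots+nb_n=n$, then applying Lemma~\ref{lem-Faa} with the shifted base $2^{d-1}R/\tau_G$. All other steps are direct substitutions from the hypotheses \eqref{GH} and the already-proved combinatorial lemmas.
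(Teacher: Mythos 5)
Your steps one through three are correct and follow the same route as the paper: expand with multivariate Fa\`a di Bruno, note that the weight $\tau_H^{|\al|}$ cancels exactly against the hypothesis, and control the resulting partition sum via the generating-function computation inside the proof of Lemma~\ref{lem-Faa2}. You in fact make explicit something the paper's proof elides: the paper invokes only the sup-form of Lemma~\ref{lem-Faa2} and then writes ``the proof is complete, by summing over $\al$,'' which, taken at face value, would diverge. What is actually needed and what you correctly identify is the level-$n$ estimate
\[
\sum_{|\al|=n}\ \sum_{\substack{k_1\beta_1+\cdots+k_s\beta_s=\al}}\frac{(k_1+\cdots+k_s)!}{k_1!\cdots k_s!}(R')^{k_1+\cdots+k_s}\ \lesssim\ \min\{R',1\}\,\frac{(2+2^d R')^n}{n!},\qquad R'=R/\tau_G,
\]
which the proof of Lemma~\ref{lem-Faa2} establishes via \eqref{ran-101-2} and which sums in $n$ to give $\min\{R',1\}(e^{2+2^dR'}-1)$.

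Step four, however, is a misconception and should be discarded. The diagonal variable $c_0$ in the generating function $h(\vec c_0)$ is a formal indeterminate used to match coefficients of $c_0^n$; it is never evaluated at a numerical value, so there is no ``substitution $c_0\mapsto 1$'' to modify. Replacing it by $\tau_H$ would compute $\sum_n (\text{coeff of }c_0^n)\tau_H^{\,n}$, i.e.\ insert a spurious geometric decay $\tau_H^{\,n}$ into the partition sum, which is not present in $\frac{\tau_H^{|\al|}}{\al!}|\pt^\al(G\circ H)|$ after the exact cancellation of $\tau_H^{|\al|}$ between hypothesis and conclusion. The factor $e^{\tau_H}$ in the stated bound does not need to be produced from anything: since $\tau_H$ drops out of the computation entirely, the sharper bound $\lesssim\min\{R,1\}\,e^{2^d\tau_G^{-1}R}$ (the implied constant absorbing $e^2$ and a $\tau_G$-dependent factor from $\min\{R/\tau_G,1\}$) already implies the claimed one simply because $e^{\tau_H}\ge 1$. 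Your steps one through three, carried to completion, already give a full proof.
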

\begin{proof}By the Faà di Bruno's formula, we have 
\[\bega 
\pt^\al \lw(G(H(x))\rw)&=\sum_{\substack{k_1,k_2,\cdots k_s\in \mathbb N\\
\beta_1,\beta_2,\cdots, \beta_s\in\mathbb N^d\\
k_1 \beta_1+k_2 \beta_2+\cdots+k_s  \beta_s= \al
}
}\frac{\alpha!}{k_1!k_2!\cdots k_s!}\pt_X^{k_1+k_2+\cdots+k_s}G|_{X=H(x)}\\
&\quad\times\lw\{\frac{1}{\beta_1!}\pt^{\beta_1} H
\rw\}^{k_1}\lw\{\frac{1}{\beta_1!}\pt^{\beta_2} H
\rw\}^{k_2}\cdots\lw\{\frac{1}{\beta_s!}\pt^{\beta_1} H
\rw\}^{k_s}.\\
\enda
\]
Combining the above with \eqref{GH}, we have
\[
\frac{\tau_H^{|\al|}}{\al !}|\pt^\al G(H(x))|\le \sum_{\substack{k_1,k_2,\cdots k_s\in \mathbb N\\
\beta_1,\beta_2,\cdots, \beta_s\in\mathbb N^d\\
k_1 \beta_1+k_2 \beta_2+\cdots+k_s  \beta_s= \al
}
}\frac{(k_1+k_2+\cdots+k_s)!}{k_1!k_2!\cdots k_s!}\lw(\frac{R}{\tau_G}
\rw)^{k_1+k_2+\cdots+k_s}.
\]
Now using Lemma \ref{lem-Faa2} for the right hand side in the above, we get
\[
\sup_{\substack{\al\in\mathbb N_0^3\\|\al|\ge 1}}\frac{\tau_H^{|\al|}}{\al !}|\pt^\al \lw\{G(H(x)\rw\}|\lesssim \min\{R,1\} e^{2^d \tau_G^{-1}R}.
\]
The proof is complete, by summing over $\al$.
 \end{proof}
\begin{lemma}\label{exp-ana-ele} Let $A>0$, $R_0\in (0,1)$. There holds  
\[
\sup_{n\ge 0}\frac{R_0^n}{n!} \lw|\pt_s^n (e^{-As^2})
\rw|\le  e^{-\frac{A}{2}s^2},
\]
and 
\[
\sup_{n\ge 0}\frac{R_0^n}{n!}|\pt_s^n(e^{As^2})|\lesssim e^{3As^2}.
\]
\end{lemma}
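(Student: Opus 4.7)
The plan is to view $e^{\pm A s^2}$ as entire functions of the complex variable $s$ and apply Cauchy's integral formula on a circle of radius $R_0$ centred at a real point. This gives the representation
\[
\pt_s^n e^{\pm A s^2} \;=\; \frac{n!}{2\pi}\int_0^{2\pi} \frac{e^{\pm A(s+R_0 e^{i\theta})^2}}{R_0^n\, e^{in\theta}}\,d\theta,
\]
so the task reduces to a pointwise bound on $\Re\bigl[\pm A (s+R_0 e^{i\theta})^2\bigr]$ uniformly in $\theta$. This is the only analytic input; everything else is a Young-type estimate in the exponent.

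For the first inequality, writing $z=s+R_0 e^{i\theta}$ one has
\[
-A\,\Re(z^2) \;=\; -As^2 \;-\; 2AsR_0\cos\theta \;-\; AR_0^2\cos 2\theta.
\]
Using $-2AsR_0\cos\theta \le \tfrac{A}{2}s^2 + 2AR_0^2\cos^2\theta$ and $-AR_0^2\cos 2\theta = AR_0^2 - 2AR_0^2\cos^2\theta$, the $\cos^2\theta$ terms cancel and the right-hand side is bounded by $-\tfrac{A}{2}s^2 + AR_0^2$ uniformly in $\theta$. Dividing by $R_0^n/n!$ in Cauchy's formula then gives $\frac{R_0^n}{n!}|\pt_s^n e^{-As^2}| \le e^{-\tfrac{A}{2}s^2 + AR_0^2}$, which yields the stated bound (with the harmless $e^{AR_0^2}$ absorbed, or after shrinking $R_0$). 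The second inequality is handled identically: $\Re(z^2) = s^2 + 2sR_0\cos\theta + R_0^2\cos 2\theta$, and the Young bound $2sR_0\cos\theta \le s^2 + R_0^2$ together with $\cos 2\theta \le 1$ gives the exponent $\le 2As^2 + 2AR_0^2$, which is $\le 3As^2 + O(1)$ uniformly in $n$ and $s$, so $\tfrac{R_0^n}{n!}|\pt_s^n e^{As^2}| \lesssim e^{3As^2}$.

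There is essentially no obstacle once the contour of integration is chosen to have radius exactly $R_0$; the only subtlety is the Young-type combination of the $\cos\theta$ and $\cos 2\theta$ terms, which is arranged precisely to absorb the linear cross term into $\tfrac{A}{2}s^2$ while using $\cos 2\theta = 2\cos^2\theta - 1$ to cancel the resulting $\cos^2\theta$. For the second inequality the same device, without the need to gain a decay factor $\tfrac{A}{2}s^2$, is even easier because the prefactor $3$ in $e^{3As^2}$ gives ample slack. No additional machinery (Hermite polynomials, explicit derivative formulas, induction on $n$) is needed, and the estimate is uniform in $n\ge 0$ as required by the $\sup$ on the left-hand side.
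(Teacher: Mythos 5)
Your proof is correct and takes a genuinely different route from the paper's. The paper's argument is purely combinatorial: it expands $\partial_s^n(e^{-As^2})$ via Fa\`a di Bruno's formula, observing that since $\partial_s^k(-As^2)=0$ for $k\ge 3$ only two indices $b_1,b_2$ with $b_1+2b_2=n$ survive, and then majorizes $\sum_{b_1+2b_2=n}\frac{n!}{b_1!b_2!}(2A|s|)^{b_1}A^{b_2}$ by $\sum_{k\le n}\frac{n!}{k!}(2A|s|+2A)^k$, which after multiplication by $R_0^n/n!$ gives $\lesssim e^{-As^2+2A|s|}\lesssim e^{-\frac{A}{2}s^2}$. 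You instead bound the Cauchy integral on the circle of radius $R_0$ by a single pointwise estimate on $\Re\big[(s+R_0e^{i\theta})^2\big]$, and your Young/double-angle bookkeeping is exactly right: the two $\cos^2\theta$ contributions cancel, leaving the exponent $-\frac{A}{2}s^2+AR_0^2$. Note that both proofs actually produce the displayed bound only up to a multiplicative constant (the factor $e^{AR_0^2}$ in yours, the implicit constant in the paper's "$\lesssim$"); one can check at $n=1$, $|s|=1/\sqrt A$ that the constant-$1$ inequality "$\le$" in the statement genuinely fails for $A$ large and $R_0$ near $1$, so the "$\le$" must be read as "$\lesssim$", exactly as the paper's own proof does. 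Your route is shorter and more portable (it handles $e^{P(s)}$ for any polynomial $P$ once one controls $\Re P$ on a contour), whereas the paper's keeps the argument real-variable and elementary and reuses the Fa\`a di Bruno machinery that underlies the neighbouring Lemmas \ref{lem-Faa}, \ref{lem-Faa2}, \ref{composite} and \ref{exp-ana}.
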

\begin{proof} We have 
\[\bega 
|\pt_s^n (e^{-As^2})|&=\lw|\sum_{k=1}^n \sum_{\substack{b_1+2b_2=n\\b_1+b_2=k}}e^{-As^2}\frac{n!}{b_1!b_2!}(-2As)^{b_1}(-A)^{b_2}\rw|\\
&\le e^{-As^2} \sum_{k=1}^n \frac {n!} {k!}\sum_{b_1+b_2=k}\frac{k!}{b_1!b_2!}(2A|s|+A)^{b_1+b_2}\\
&\le e^{-As^2}\sum_{k=1}^n \frac{n!}{k!}(2A|s|+2A)^k.
\enda\]
Hence 
\[
\frac{R_0^n}{n!}|\pt_s^n(e^{-As^2})|\lesssim R_0^n e^{-As^2+2A|s|}\lesssim e^{-\frac A 2s^2}.
\]
The proof is complete. The second inequality is similar.
\end{proof}

\begin{lemma}\label{exp-ana}
Let $d\ge 3$ and $f:\mathbb R^d\to \mathbb R$ such that
\[
\sup_{|\al|\ge 1}\frac{\tau_f^{|\al|}}{\al!}|\pt^\al f(x)|\lesssim A_f\qquad \text{for all}\quad x\in\mathbb R^d
\]
There holds 
\[
\sup_{|\al|\ge 1}\frac{\tau_f^{|\al|}}{\al!}|\pt^\al (e^{sf(x)})|\lesssim A_f \exp\lw(sf(x)+2^{d-1}A_f|s|
\rw).
\]
\end{lemma}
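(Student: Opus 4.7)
The plan is to mimic the argument used for the composite-function bound in Lemma \ref{composite}, specializing the outer function to $G(X)=e^{sX}$. First I would apply Faà di Bruno's formula to $\pt^\al(e^{sf(x)})$, which gives
\[
\pt^\al \bigl(e^{sf(x)}\bigr)=\sum_{\substack{k_1,\dots,k_r\in\mathbb N\\ \beta_1,\dots,\beta_r\in\mathbb N_0^d\\ k_1\beta_1+\cdots+k_r\beta_r=\al}}\frac{\al!}{k_1!\cdots k_r!}\,G^{(k_1+\cdots+k_r)}(f(x))\,\prod_{i=1}^r \Bigl(\frac{\pt^{\beta_i} f(x)}{\beta_i!}\Bigr)^{k_i}.
\]
Since $G^{(k)}(X)=s^k e^{sX}$, the common factor $e^{sf(x)}$ comes outside and each derivative of $G$ contributes a factor $s^{k_1+\cdots+k_r}$, exactly the role that the pointwise bound $R^{k_1+\cdots+k_r}$ played in Lemma \ref{composite}.

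Next I would use the hypothesis $\tau_f^{|\beta_i|}|\pt^{\beta_i} f|/\beta_i!\lesssim A_f$ to bound each factor in the product by $A_f\,\tau_f^{-|\beta_i|}$. Multiplying through by $\tau_f^{|\al|}/\al!$ and using $|\al|=\sum_i k_i|\beta_i|$, the powers of $\tau_f$ cancel cleanly and the expression reduces to
\[
\frac{\tau_f^{|\al|}}{\al!}\bigl|\pt^\al\bigl(e^{sf(x)}\bigr)\bigr|\,\lesssim\, e^{sf(x)}\sum_{\substack{k_1,\dots,k_r\in\mathbb N\\ \beta_1,\dots,\beta_r\in\mathbb N_0^d\\ k_1\beta_1+\cdots+k_r\beta_r=\al}}\frac{(k_1+\cdots+k_r)!}{k_1!\cdots k_r!}\bigl(|s|A_f\bigr)^{k_1+\cdots+k_r}.
\]

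At this stage the combinatorial sum on the right is exactly the one estimated in Lemma \ref{lem-Faa2} with $R=|s|A_f$. Applying that lemma I obtain the upper bound
\[
C_d\,\min\{|s|A_f,1\}\,\exp\!\bigl(2^{d-1}\,|s|A_f\bigr)\,e^{sf(x)},
\]
where the factor $\min\{|s|A_f,1\}\lesssim A_f$ supplies the advertised prefactor $A_f$ (absorbing harmless $|s|$-dependence into the implicit constant) and the exponential factor combines with $e^{sf(x)}$ to give $\exp(sf(x)+2^{d-1}A_f|s|)$, as claimed. Taking the supremum over $|\al|\ge 1$ finishes the proof.

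The only nontrivial step is the multinomial combinatorics, which is already encapsulated in Lemma \ref{lem-Faa2}; so once Faà di Bruno is written out and the pointwise analytic bound on $f$ is substituted, there is no real obstacle left. The mild subtlety worth checking carefully is the constant in the exponent: I need the generating-function computation from Lemma \ref{lem-Faa2} to produce $2^{d-1}$ rather than $2^d$ in the exponent, which follows from the sharper bookkeeping $\binom{d+i-1}{i}\le 2^{d+i-1}$ used there, so no change in the argument is required.
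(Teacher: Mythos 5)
Your proof follows the paper's own argument essentially verbatim: apply Faà di Bruno to $e^{sf}$ so that $G^{(k)}(X)=s^ke^{sX}$ factors out an $e^{sf(x)}$, use the hypothesis to bound $\frac{1}{\beta_i!}|\pt^{\beta_i}f|\lesssim A_f\tau_f^{-|\beta_i|}$ so the $\tau_f$-powers cancel with $\tau_f^{|\al|}/\al!$, and then invoke Lemma~\ref{lem-Faa2} with $R=|s|A_f$. That is exactly what the paper does.

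The one place you should be more careful is the constant in the exponent. You assert that the generating-function computation in Lemma~\ref{lem-Faa2} produces $e^{2^{d-1}R}$; it does not. As stated (and as its proof shows — the factor $2^n$ is absorbed by replacing $1+2^{d-1}R$ with $2+2^dR$), Lemma~\ref{lem-Faa2} yields $C_d\min\{R,1\}e^{2^dR}$, so applying it gives $e^{2^d|s|A_f}$ rather than $e^{2^{d-1}|s|A_f}$. The paper's own proof quietly rewrites $e^{2^d|s|A_f}$ as $e^{2^{d-1}A_f|s|}$ in its last line, which is not a valid inequality for general $|s|A_f$; so your claim inherits the same slip, and the statement of Lemma~\ref{exp-ana} should really read $2^d$ in the exponent (or the constant in Lemma~\ref{lem-Faa2} should be tightened). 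Likewise, $\min\{|s|A_f,1\}\lesssim A_f$ requires $|s|\lesssim 1$ and cannot be fixed by absorbing $|s|$ into the implied constant, since the constant must be $s$-independent. Neither defect affects how the lemma is used in the paper (where the application, in the proof of Lemma~\ref{fh-lem} with $d=3$, in fact matches the $2^d=8$ bookkeeping), but your framing that ``no change in the argument is required'' overstates what Lemma~\ref{lem-Faa2} actually delivers.
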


\begin{proof} By the Faà di Bruno's formula, we have 
\[\bega 
\pt^\al &(e^{sf(x)})=\sum_{\substack{k_1,k_2,\cdots,k_\ell\in\mathbb N\\\beta_1,\beta_2,\cdots, \beta_\ell\in \mathbb N^d\\k_1\beta_1+k_2\beta_2+\cdots+k_\ell \beta_\ell=\al
}}\frac{\al!}{k_1!k_2!\cdots k_\ell!}s^{k_1+k_2+\cdots+k_\ell}e^{sf(x)}\prod_{i=1}^\ell \lw\{\frac{1}{\beta_i!}\pt^{\beta_i}f
\rw\}^{k_i}\\
&\lesssim \al! e^{sf(x)}\sum_{\substack{k_1,k_2,\cdots,k_\ell\in\mathbb N\\\beta_1,\beta_2,\cdots, \beta_\ell\in \mathbb N^d\\k_1\beta_1+k_2\beta_2+\cdots+k_\ell \beta_\ell=\al
}}\frac{(k_1+k_2+\cdots+k_\ell)!}{k_1!k_2!\cdots k_\ell!}(|s|A_f)^{k_1+k_2+\cdots+k_\ell}\tau_f^{-|\al|}.\\
\enda 
\]  
Now combining the above with Lemma \ref{lem-Faa2}, we have
\[
\sup_{|\al|\ge 1}\frac{\tau_f^{|\al|}}{\al!}|\pt^\al(e^{sf(x)})|\lesssim  e^{sf(x)}\min\{|s|A_f,1\}e^{2^d |s|A_f}\lesssim e^{sf(x)}A_f e^{2^{d-1} A_f|s|}.
\]
This completes the proof.
\end{proof}

\subsection{Bilinear estimates}
\begin{lemma}\label{bili-appen1}
There holds
\[\bega 
&\nl\nu^{-1/2}\int_{\R^3\times\S^2 }|(\xi-\xi_\star)\cdot\w|\lw|f(\xi_\star)\rw|\lw|g(\xi')\rw|\lw|h(\xi_\star')\rw|d\w d\xi_\star\nr_{L^2_{\xi}}\\
&\lesssim\nl \nu(\xi)^{-1}\lw(\int_{\R^3_{\xi_\star}}|\xi-\xi_\star|^2 |f(\xi_\star)|^2d\xi_\star
\rw)^{1/2}\nr_{L^\infty_\xi} \lw(\nl\sqrt\nu g
\nr_{L^2_\xi}\nl h
\nr_{L^2_\xi}+\nl g
\nr_{L^2_\xi}\nl \sqrt\nu h
\nr_{L^2_\xi}\rw),\enda 
\]
\[\bega 
&\nl\int_{\R^3\times\S^2}|(\xi-\xi_\star)\cdot\w|\lw|f(\xi_\star)\rw|\lw|g(\xi')\rw|\lw|h(\xi_\star')\rw|d\w d\xi_\star\nr_{L^2_{\xi}}\\
&\lesssim \nl\lw(\int_{\R^3_{\xi_\star}}|\xi-\xi_\star|^2 |f(\xi_\star)|^2d\xi_\star
\rw)^{1/2}\nr_{ L^\infty_\xi}\| g\|_{L^2_{\xi}}\| h\|_{L^2_{\xi}}.\enda 
\]
\[
\nl\int_{\R^3\times\S^2}|(\xi-\xi_\star)\cdot\w|f(\xi_\star)g(\xi)h(\xi_\star)d\w d\xi_\star\nr_{L^2_{\xi}}\lesssim \|h\|_{L^2_{\xi}}\nl \lw(\int_{\xi_\star}|\xi-\xi_\star|^2|f(\xi_\star)|^2d\xi_\star\rw)^{1/2} g
\nr_{ L^2_{\xi}}.
\]
\end{lemma}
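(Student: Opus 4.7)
The three inequalities are classical trilinear bounds for the hard-sphere collision kernel; I would prove them in order of increasing difficulty.

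\textbf{Estimate (iii)} is immediate. Since none of the arguments $\xi_\star,\xi,\xi_\star$ is post-collisional, the $\omega$-integration decouples and evaluates to $\int_{\mathbb{S}^2}|(\xi-\xi_\star)\cdot\omega|\,d\omega=\pi|\xi-\xi_\star|$, reducing the integral to $C\,g(\xi)\int|\xi-\xi_\star|\,f(\xi_\star)\,h(\xi_\star)\,d\xi_\star$. A single Cauchy--Schwarz in $\xi_\star$ pairing $|\xi-\xi_\star|\,f(\xi_\star)$ against $h(\xi_\star)$ yields the pointwise bound $g(\xi)\,\bigl(\int|\xi-\xi_\star|^2|f|^2\,d\xi_\star\bigr)^{1/2}\|h\|_{L^2}$, whose $L^2_\xi$-norm is the right-hand side.

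\textbf{Estimates (i) and (ii)} are harder because $g(\xi')$ and $h(\xi_\star')$ depend on post-collisional velocities. My plan is to apply Cauchy--Schwarz in $(\xi_\star,\omega)\in\mathbb{R}^3\times\mathbb{S}^2$ after splitting $|(\xi-\xi_\star)\cdot\omega|=|(\xi-\xi_\star)\cdot\omega|^{1/2}\cdot|(\xi-\xi_\star)\cdot\omega|^{1/2}$ and pairing the first factor with $|f(\xi_\star)|$ and the second with $|g(\xi')||h(\xi_\star')|$. This produces $A(\xi)^{1/2}B(\xi)^{1/2}$ with
\[
A(\xi)=\int|(\xi-\xi_\star)\cdot\omega|\,|f(\xi_\star)|^2\,d\omega\,d\xi_\star,\qquad B(\xi)=\int|(\xi-\xi_\star)\cdot\omega|\,|g(\xi')|^2|h(\xi_\star')|^2\,d\omega\,d\xi_\star.
\]
After the $\omega$-integration, $A(\xi)=C\int|\xi-\xi_\star|\,|f|^2\,d\xi_\star$; using the AM--GM bound $|\xi-\xi_\star|\le\nu(\xi)^{-1}|\xi-\xi_\star|^2+\nu(\xi)$ redistributes the $\nu(\xi)^{-1}$ weight onto the $|f|^2$-integral so that $A(\xi)^{1/2}$ is dominated by $\nu(\xi)^{1/2}\bigl[\nu(\xi)^{-1}\bigl(\int|\xi-\xi_\star|^2|f|^2\bigr)^{1/2}\bigr]$, matching the $L^\infty_\xi$-quantity in the right-hand side of (i)--(ii). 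The crux is $B(\xi)$: I would invoke the pre--post collisional change of variables, an involution $(\xi_\star,\omega)\leftrightarrow(\xi_\star',\omega')$ with $\omega'=(\xi-\xi')/|\xi-\xi'|$ preserving the measure $|(\xi-\xi_\star)\cdot\omega|\,d\omega\,d\xi_\star$ for each fixed $\xi$. After this transformation, $g$ and $h$ depend only on pre-collisional variables; a final Cauchy--Schwarz splitting $|\xi-\xi_\star'|\lesssim \nu(\xi)^{1/2}\nu(\xi_\star')^{1/2}$ between the two factors yields
\[
B(\xi)\;\lesssim\;\nu(\xi)\bigl(\|\sqrt{\nu}\,g\|_{L^2}\|h\|_{L^2}+\|g\|_{L^2}\|\sqrt{\nu}\,h\|_{L^2}\bigr)^{2},
\]
where the symmetric sum reflects the two inequivalent assignments of the $\nu^{1/2}$ factor. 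Multiplying $A(\xi)^{1/2}B(\xi)^{1/2}$ by $\nu(\xi)^{-1/2}$ and taking $L^2_\xi$ gives (i); dropping the $\sqrt{\nu}$ weights on $g,h$ gives (ii).

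\textbf{The main obstacle} is the careful bookkeeping of weights during the pre--post change of variables in $B(\xi)$: one must extract the factor $\nu(\xi)$ in precisely the right place so that the outer $\nu(\xi)^{-1/2}$ in (i) is neither wasted nor insufficient, and the two-term symmetric sum on the right must arise naturally from the two ways of applying Cauchy--Schwarz after the involution. This is the standard Grad--Carleman analysis of the hard-sphere kernel and requires no new technique beyond careful execution of the trilinear computation.
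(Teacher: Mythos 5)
Your estimate (iii) is fine, and your overall plan of relying on the pre--post collisional change of variables is the right general idea, but the way you apply Cauchy--Schwarz for (i) and (ii) misallocates the kernel weight and leaves a genuine gap. In the paper, one first bounds $|(\xi-\xi_\star)\cdot\omega|\le|\xi-\xi_\star|$ and then applies Cauchy--Schwarz \emph{in $\xi_\star$ only}, pairing the \emph{full} weight $|\xi-\xi_\star|$ with $|f(\xi_\star)|$. This produces $\bigl(\int|\xi-\xi_\star|^2|f|^2\,d\xi_\star\bigr)^{1/2}$ directly, and it is this quantity, multiplied by $\nu(\xi)^{-1}$, that sits in $L^\infty_\xi$. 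Your $\tfrac12$--$\tfrac12$ split of the kernel in a Cauchy--Schwarz over $(\xi_\star,\omega)$ instead yields $A(\xi)=\int|(\xi-\xi_\star)\cdot\omega|\,|f|^2\,d\omega\,d\xi_\star\sim\int|\xi-\xi_\star|\,|f|^2\,d\xi_\star$, which carries one power of $|\xi-\xi_\star|$ too few. The AM--GM repair $|\xi-\xi_\star|\le\nu(\xi)^{-1}|\xi-\xi_\star|^2+\nu(\xi)$ then produces the desired term \emph{plus} an uncontrolled remainder $\nu(\xi)\int|f|^2\,d\xi_\star$, and $\int|f|^2$ is simply not dominated by $\nu(\xi)^{-2}\int|\xi-\xi_\star|^2|f|^2$ (take $\xi_\star\approx\xi$). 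So the claimed inequality $A(\xi)\lesssim\nu(\xi)\bigl[\nu(\xi)^{-1}(\int|\xi-\xi_\star|^2|f|^2)^{1/2}\bigr]^2$ is false in general.

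The treatment of $B(\xi)$ also does not close. A \emph{pointwise} bound of the form $B(\xi)\lesssim\nu(\xi)\bigl(\|\sqrt\nu g\|\,\|h\|+\|g\|\,\|\sqrt\nu h\|\bigr)^2$ would make $\|\nu^{-1/2}A^{1/2}B^{1/2}\|_{L^2_\xi}$ pick up a factor $\|\nu^{1/2}\|_{L^2_\xi}=\infty$; what is true, and what is actually needed, is a bound on $\int_\xi B(\xi)\,d\xi$. That integral is controlled by applying Cauchy--Schwarz in $\omega$ first, then using $d\xi\,d\xi_\star=d\xi'\,d\xi_\star'$ for each fixed $\omega$ together with $\nu(\xi)\lesssim\nu(\xi')+\nu(\xi_\star')$ (from energy conservation $|\xi|^2+|\xi_\star|^2=|\xi'|^2+|\xi_\star'|^2$). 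Also note that the bound $|\xi-\xi_\star'|\lesssim\nu(\xi)^{1/2}\nu(\xi_\star')^{1/2}$ you invoke is not correct as stated; one has $|\xi-\xi_\star'|\lesssim\nu(\xi)+\nu(\xi_\star')$, or at best $\lesssim\nu(\xi)\nu(\xi_\star')$. To fix the argument, drop the $\tfrac12$--$\tfrac12$ split: put the whole kernel on $f$ (via $|(\xi-\xi_\star)\cdot\omega|\le|\xi-\xi_\star|$ and Cauchy--Schwarz in $\xi_\star$), estimate the $f$-factor in $L^\infty_\xi$, and only then take $L^2$ over $(\xi,\xi_\star)$ of the $gh$-factor, applying Cauchy--Schwarz in $\omega$ and the collisional change of variables at that last stage.
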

\begin{proof}
We have 
\[\bega 
&\nu(\xi)^{-1/2}\int_{\R^3}\int_{|\w|=1}|(\xi-\xi_\star)\cdot\w|\lw|f(\xi_\star)\rw|\lw|g(\xi')\rw|\lw|h(\xi_\star')\rw|d\w d\xi_\star\\
&\lesssim \nu(\xi)^{-1}\lw(\int_{\R^3_{\xi_\star}}|\xi-\xi_\star|^2 |f(\xi_\star)|^2d\xi_\star
\rw)^{1/2}  \lw\|\int_{|\w|=1}\sqrt\nu(\xi)|g(\xi')||h(\xi_\star')|d\w 
\rw\|_{L^2_{\xi_\star}}.\\
\enda
\]
Hence 
\[\bega 
&\nl \nu(\xi)^{-1/2}\int_{\R^3}\int_{|\w|=1}|(\xi-\xi_\star)\cdot\w|\lw|f(\xi_\star)\rw|\lw|g(\xi')\rw|\lw|h(\xi_\star')\rw|d\w d\xi_\star\nr_{L^2_\xi}\\
%&\lesssim \sup_{\xi\in \R^3}\lw\{\nu(\xi)^{-1}\lw(\int_{\R^3_{\xi_\star}}|\xi-\xi_\star|^2 |f(\xi_\star)|^2d\xi_\star\rw)^{1/2}\rw\}\lw(\int_{\R^3\times\R^3}(\nu(\xi')+\nu(\xi_\star'))|g(\xi')|^2|h(\xi_\star')|^2 d\xi d\xi_\star\rw)^{1/2}\\
&\lesssim\sup_{\xi\in \R^3}\lw\{\nu(\xi)^{-1}\lw(\int_{\R^3_{\xi_\star}}|\xi-\xi_\star|^2 |f(\xi_\star)|^2d\xi_\star
\rw)^{1/2}\rw\}\lw(\nl\sqrt\nu g
\nr_{L^2_\xi}\nl h
\nr_{L^2_\xi}+\nl g
\nr_{L^2_\xi}\nl \sqrt\nu h
\nr_{L^2_\xi}\rw).
\enda
\]
Here we used the fact that $\nu(\xi)\lesssim \nu(\xi')+\nu(\xi_\star')$ and
$d\xi d\xi_\star=d\xi' d\xi_\star'$. 
Next we have 
\[\bega 
&\int_{\R^3\times \S^2}|(\xi-\xi_\star)\cdot\w||f(\xi_\star)||g(\xi)||h(\xi_\star)|d\w d\xi_\star\\
&\lesssim |g(\xi)|\lw(\int_{\R^3}|\xi-\xi_\star|^2|f(\xi_\star)|^2d\xi_\star\rw)^{1/2}\|h\|_{L^2_{\xi}},\\
\enda\]
%and\[\bega&\nl \int_{\R^3\times \S^2}|(\xi-\xi_\star)\cdot\w||f(\xi_\star)||g(\xi)||h(\xi_\star)|d\w d\xi_\star\nr_{L^2_\xi}\lesssim \|h\|_{L^2_\xi}\lw(\int_{\xi}\int_{\xi_\star}|\xi-\xi_\star|^2|f(\xi_\star)|^2d\xi_\star \cdot |g(\xi)|^2d\xi\rw)^{1/2}.\enda \]
The proof is complete.
\end{proof}

\begin{lemma}\label{Gamma-bilinear}
Recall the definition of $\Gamma_{\pm}$ in \eqref{Gamma-pm}. There holds 
\[\bega 
\nl \nu^{-1/2}\Gamma_+(f,g)\nr_{L^2_\xi}&\lesssim \nl\sqrt\nu f
\nr_{L^2_\xi}\nl g
\nr_{L^2_\xi}+\nl f
\nr_{L^2_\xi}\nl \sqrt\nu g
\nr_{L^2_\xi},\\
\nl\nu^{-1/2}\Gamma_-(f,g)
\nr_{L^2_\xi}&\lesssim \|\sqrt\nu f\|_{L^2_\xi}\|g\|_{L^2_\xi},\\
\nl \nu^{-1/2}\Gamma(f,g)
\nr_{L^2_\xi}&\lesssim \nl\sqrt\nu f
\nr_{L^2_\xi}\nl g
\nr_{L^2_\xi}+\nl f
\nr_{L^2_\xi}\nl \sqrt\nu g
\nr_{L^2_\xi}.\enda 
\]
\end{lemma}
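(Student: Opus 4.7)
The plan is to reduce Lemma~\ref{Gamma-bilinear} entirely to the trilinear estimates already proved in Lemma~\ref{bili-appen1}, after first securing a uniform bound on a Gaussian moment. The key preliminary step is the observation that, since $\mu(\xi_\star)=\mu_0(\xi_\star-\eps U)$ with $\eps U$ uniformly bounded, a direct change of variables $w=\xi_\star-\eps U$ gives
\[
\int_{\R^3}|\xi-\xi_\star|^2\mu(\xi_\star)\,d\xi_\star=|\xi-\eps U|^2+3\lesssim\la\xi\ra^2,
\]
and combined with $\nu(\xi)\sim\la\xi\ra$ (from Proposition~\ref{L-decom-max}) this yields
\[
\sup_{\xi\in\R^3}\nu(\xi)^{-1}\lw(\int_{\R^3}|\xi-\xi_\star|^2\mu(\xi_\star)\,d\xi_\star\rw)^{1/2}\lesssim 1.
\]
This is precisely the $L^\infty_\xi$ prefactor that arises in the right-hand sides of Lemma~\ref{bili-appen1}, and it is the only place where the specific Gaussian structure of $\mu$ enters.

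Next I would handle $\Gamma_+$. Writing it as a sum of the two pieces with integrands $\sqrt\mu(\xi_\star)f(\xi')g(\xi_\star')$ and $\sqrt\mu(\xi_\star)g(\xi')f(\xi_\star')$, I apply the first inequality of Lemma~\ref{bili-appen1} to each, with the role of "$f$" in the lemma played by $\sqrt\mu$ (so the prefactor is exactly what was just bounded by a constant) and the roles of "$g,h$" played by the pair $(f,g)$, respectively $(g,f)$. Since the post-collisional variables $(\xi',\xi_\star')$ only enter through an orthogonal change of variables that preserves $L^2_\xi$, the $L^2$-norms in the conclusion are simply $\|f\|_{L^2_\xi},\|g\|_{L^2_\xi}$ and their $\sqrt\nu$-weighted analogues, giving the claimed estimate for $\Gamma_+$.

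For $\Gamma_-$, the variables $\xi,\xi_\star$ are free, so there is no post-collisional substitution to worry about; I would apply the third inequality of Lemma~\ref{bili-appen1} to each summand, say $\sqrt\mu(\xi_\star)f(\xi)g(\xi_\star)$, by first multiplying through by the weight $\nu(\xi)^{-1/2}$ and absorbing it into the factor $g_L(\xi)=\nu(\xi)^{-1/2}f(\xi)$ of the lemma. The lemma then produces $\|g\|_{L^2_\xi}$ times $\|(\int|\xi-\xi_\star|^2\mu(\xi_\star)d\xi_\star)^{1/2}\nu(\xi)^{-1/2}f(\xi)\|_{L^2_\xi}$; by Step~1 the inner weight is $\lesssim\sqrt{\nu(\xi)}$, so this is bounded by $\|\sqrt\nu f\|_{L^2_\xi}\|g\|_{L^2_\xi}$. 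Finally, $\Gamma=\Gamma_+-\Gamma_-$ gives the last inequality by the triangle inequality. The only real technical point is the moment bound in Step~1; once that is in place the rest is a mechanical bookkeeping exercise, and no new obstacle appears.
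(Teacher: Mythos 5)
Your proposal is correct and follows essentially the same route as the paper's own one-paragraph argument: apply the first trilinear estimate of Lemma~\ref{bili-appen1} (with the role of $f$ there played by $\sqrt\mu$) for $\Gamma_+$, and the third estimate for $\Gamma_-$ after folding the $\nu^{-1/2}$ weight onto the $\xi$-dependent factor. Your only genuine addition is spelling out the Gaussian moment bound $\int_{\R^3}|\xi-\xi_\star|^2\mu(\xi_\star)\,d\xi_\star=|\xi-\eps U|^2+3\lesssim\nu(\xi)^2$, which the paper leaves implicit behind the hard-sphere equivalence $\nu(\xi)\sim\la\xi\ra$; this is a worthwhile clarification rather than a deviation.
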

\begin{proof}
Using Lemma \eqref{bili-appen1}, we get 
\[
\nl\nu^{-1/2}\Gamma_+(f,g)
\nr_{L^2_\xi}\lesssim \nl\sqrt\nu f
\nr_{L^2_\xi}\nl g
\nr_{L^2_\xi}+\nl f
\nr_{L^2_\xi}\nl \sqrt\nu g
\nr_{L^2_\xi}.
\]
Now we apply Lemma \ref{bili-appen1}, we have 
\[\bega 
&\nl\int_{\R^3\times\S^2}|(\xi-\xi_\star)\cdot\w|\sqrt\mu(\xi_\star)\frac{f}{\sqrt\nu}(\xi)g(\xi_\star)d\w d\xi_\star\nr_{L^2_{\xi}}\\
&\lesssim \|g\|_{L^2_{\xi}}\nl \lw(\int_{\xi_\star}|\xi-\xi_\star|^2|\sqrt\mu(\xi_\star)|^2d\xi_\star\rw)^{1/2} \frac{f}{\sqrt\nu}
\nr_{ L^2_{\xi}}\lesssim \|\sqrt\nu f\|_{L^2_\xi}\|g\|_{L^2_\xi}. 
\enda 
\]
The proof is complete.
\end{proof}

 \begin{proposition}\label{bilinear-1}
 Recall the definition of $\Gamma$ in \eqref{Gamma-pm}
 There holds 
 \[\bega 
&\la\pt^\al \Gamma(f,g),h\ra_{L^2_{\xi}}\\
&\lesssim  \sum_{\beta+\gamma=\al}\frac{\al !}{\beta !\gamma!}\lw(\nl \sqrt\nu \pt^\beta  f
   \nr_{L^2_\xi}\nl  \pt^\gamma g
   \nr_{L^2_\xi}+\nl  \pt^\beta f
   \nr_{L^2_\xi}\nl \sqrt\nu \pt^\gamma g
   \nr_{L^2_\xi}\rw)\nl\sqrt\nu \IP h\nr_{L^2_\xi}\\
   &+\|h\|_{L^2_\xi}\sum_{\substack{\beta+\gamma+\vr=\al\\|\beta|\ge 1}}\frac{\al!}{\beta!\gamma!\rho!} \sup_\xi \lw\{
   \nu(\xi)^{-1}
   \nl |\xi-\xi_\star|\pt^\beta \sqrt\mu(\xi_\star)
   \nr _{L^2_{\xi_\star}} \rw\}\\
   &\times
\lw(\|\nu \pt^\gamma f\|_{L^2_\xi}\|\pt^\vr g\|_{L^2_\xi}+\|\pt^\gamma f\|_{L^2_\xi}\|\nu \pt^\vr g\|_{L^2_\xi}
 \rw).
   \enda 
 \]
 \end{proposition}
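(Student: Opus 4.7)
The plan is to expand $\partial^\alpha \Gamma(f,g)$ by the Leibniz rule, separating the terms where no derivative falls on the local Maxwellian $\sqrt\mu(\xi_\star)$ from those where at least one derivative does. Recalling \eqref{Gamma-def} and \eqref{Gamma-pm}, applying $\partial^\alpha$ distributes among the three factors $\sqrt\mu(\xi_\star)$, $f$, $g$, giving
\[
\partial^\alpha \Gamma(f,g) = \sum_{\beta+\gamma=\alpha}\frac{\alpha!}{\beta!\gamma!}\Gamma(\partial^\beta f,\partial^\gamma g) + \sum_{\substack{\beta+\gamma+\vr=\alpha\\|\beta|\ge 1}}\frac{\alpha!}{\beta!\gamma!\vr!}\,\widetilde\Gamma_\beta(\partial^\gamma f,\partial^\vr g),
\]
where $\widetilde\Gamma_\beta$ denotes the same bilinear structure as $\Gamma$ but with $\partial^\beta\sqrt\mu(\xi_\star)$ replacing $\sqrt\mu(\xi_\star)$.

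For the first sum, I would invoke the standard null-space property: since $\Gamma(\partial^\beta f,\partial^\gamma g)$ is built from the collision operator around the local Maxwellian, it lies in $(\ker L)^\perp$, so
\[
\langle \Gamma(\partial^\beta f,\partial^\gamma g),h\rangle_{L^2_\xi} = \langle \Gamma(\partial^\beta f,\partial^\gamma g),(\II-\P)h\rangle_{L^2_\xi}.
\]
Applying Cauchy--Schwarz in $\xi$ and Lemma \ref{Gamma-bilinear} to the right-hand side immediately yields the first line of the claimed bound, with the $\|\sqrt\nu\IP h\|_{L^2_\xi}$ factor.

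For the second sum, the factor $\partial^\beta\sqrt\mu(\xi_\star)$ no longer has the symmetry needed to force orthogonality, so one must pair directly against $h$. Here I would apply Lemma \ref{bili-appen1} to each of $\widetilde\Gamma_{\beta,+}$ and $\widetilde\Gamma_{\beta,-}$, which gives a bound by $\|h\|_{L^2_\xi}$ times
\[
\sup_\xi\Bigl\{\nu(\xi)^{-1}\bigl\|\,|\xi-\xi_\star|\,\partial^\beta\sqrt\mu(\xi_\star)\bigr\|_{L^2_{\xi_\star}}\Bigr\}\bigl(\|\nu\partial^\gamma f\|_{L^2_\xi}\|\partial^\vr g\|_{L^2_\xi}+\|\partial^\gamma f\|_{L^2_\xi}\|\nu\partial^\vr g\|_{L^2_\xi}\bigr),
\]
which is exactly the second line of the claim after summation over $\beta,\gamma,\vr$.

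The main obstacle, and the point that requires some care rather than mechanical Cauchy--Schwarz, is the identity $\P\Gamma(\cdot,\cdot)=0$ for the local Maxwellian: one needs to check that the collisional invariants $1,\vp,|\vp|^2$ against $\sqrt\mu$ still annihilate $\Gamma$ defined with $\sqrt\mu(\xi_\star)=\sqrt{\mu_0}(\xi_\star-\eps U)$, which follows by the standard change of variable $\xi\mapsto\xi-\eps U$ reducing to the global Maxwellian case. Once this is in hand, the remaining steps are routine combinatorics and integration estimates already packaged in Lemmas \ref{bili-appen1} and \ref{Gamma-bilinear}.
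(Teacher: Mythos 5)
Your proposal is correct and follows essentially the same route as the paper: Leibniz expansion splitting off the $\beta=0$ block, orthogonality $\Gamma(\cdot,\cdot)\perp\ker L$ to insert $\IP h$ and then Lemma \ref{Gamma-bilinear}, and Lemma \ref{bili-appen1} for the residual terms with $|\beta|\ge 1$ paired against $\|h\|_{L^2_\xi}$. The one point you flag as requiring care --- that $\P\Gamma(\cdot,\cdot)=0$ persists for the local Maxwellian by the change of variable $\xi\mapsto\xi-\eps U$ --- is indeed used silently in the paper's proof and your justification of it is correct.
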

 \begin{proof}
 We have 
  \[\bega
\pt^\al\Gamma_+(f,g)&=\sum_{\beta+\gamma+\vr=\al}\frac{\al!}{\beta!\gamma!\vr !}\int_{\R^3\times\S^2}|(\xi-\xi_\star)\cdot\w|\pt^\beta\sqrt\mu(\xi_\star)\lw(\pt^\gamma f(\xi')\pt^\rho g(\xi_\star')+\pt^\gamma g(\xi')\pt^\vr f(\xi_\star')
 \rw)d\w d\xi_\star\\
\pt^\al \Gamma_-(f,g)&=\sum_{\beta+\gamma+\vr=\al}\frac{\al!}{\beta!\gamma!\vr!}\int_{\R^3\times\S^2}|(\xi-\xi_\star)\cdot\w|\pt^\beta\sqrt\mu(\xi_\star)\lw(\pt^\gamma f(\xi)\pt^\vr g(\xi_\star)+\pt^\gamma g(\xi)\pt^\vr f(\xi_\star)
 \rw)d\w d\xi_\star.
\enda
 \]
This implies 
 \[\bega 
& \pt^\al \Gamma(f,g)\\
 &=\sum_{\beta+\gamma=\al}\frac{\al!}{\beta!\gamma!}\Gamma(\pt^\beta f,\pt^\gamma g)\\
 &\quad+\sum_{\substack{\beta+\gamma+\vr=\al\\|\beta|\ge 1}}\frac{\al!}{\beta!\gamma!\vr!}\int_{\R^3\times\S^2}|(\xi-\xi_\star)\cdot\w|\pt^\beta \sqrt\mu(\xi_\star)\lw(\pt^\gamma f(\xi')\pt^\vr g(\xi_\star')+\pt^\gamma g(\xi')\pt^\vr f(\xi_\star')
 \rw)d\w d\xi_\star\\
 &\quad+\sum_{\substack{\beta+\gamma+\vr=\al\\|\beta|\ge 1}}\frac{\al!}{\beta!\gamma!\vr!}\int_{\R^3\times\S^2}|(\xi-\xi_\star)\cdot\w|\pt^\beta \sqrt\mu(\xi_\star)\lw(\pt^\gamma f(\xi)\pt^\vr g(\xi_\star)+\pt^\gamma g(\xi)\pt^\vr f(\xi_\star)
 \rw)d\w d\xi_\star.\\
   \enda 
 \]
 For the first term, we have 
 \[
 \la \Gamma(\pt^\beta  f,\pt^\gamma g),h\ra_{L^2_{\xi}}=\la  \Gamma(\pt^\beta f,\pt^\gamma g),\IP h\ra_{L^2_{\xi}}\lesssim \nl\nu^{-1/2}\Gamma(\pt^\beta f,\pt^\gamma g)
 \nr_{L^2_{\xi}}\nl\sqrt\nu \IP h
 \nr_{L^2_{\xi}}.
   \]
   Now using Lemma \ref{Gamma-bilinear}, we have 
   \[
   \nl\nu^{-1/2}\Gamma(\pt^\beta f,\pt^\gamma g)
   \nr_{L^2_{\xi}}\lesssim \nl \sqrt\nu \pt^\beta f
   \nr_{L^2_\xi}\nl  \pt^\gamma g
   \nr_{L^2_\xi}+\nl  \pt^\beta f
   \nr_{L^2_\xi}\nl \sqrt\nu \pt^\gamma g
   \nr_{L^2_\xi}.
   \]
   Next we bound 
   \[
\int_{\R^3\times\S^2}|(\xi-\xi_\star)\cdot\w|\pt^\beta \sqrt\mu(\xi_\star)\lw(\pt^\gamma f(\xi')\pt^\rho g(\xi_\star')+\pt^\gamma g(\xi')\pt^\rho f(\xi_\star')
 \rw)d\w d\xi_\star   .
   \]
   Using similar ideas as in Lemma \ref{bili-appen1}, we obtain
      \[\bega 
&\nl\int_{\R^3\times\S^2}|(\xi-\xi_\star)\cdot\w|\pt^\beta\sqrt\mu(\xi_\star)\lw(\pt^\gamma f(\xi')\pt^\rho g(\xi_\star')+\pt^\gamma g(\xi')\pt^\rho f(\xi_\star')
 \rw)d\w d\xi_\star  \nr_{L^2_\xi}\\
  &\lesssim  \nl\nu(\xi)^{-1}\lw(\int_{\xi_\star}|\xi-\xi_\star|^2|\pt^\beta\sqrt\mu(\xi_\star)|^2d\xi_\star
 \rw)\nr_{L^\infty_\xi}\lw(\|\nu\pt^\gamma f\|_{L^2_\xi}\|\pt^\rho g\|_{L^2_\xi}+\|\pt^\gamma f\|_{L^2_\xi}\|\nu \pt^\rho g\|_{L^2_\xi}
 \rw).
 \enda
   \] 
   The proof is complete.
     \end{proof}

\begin{proposition}\label{BGR}
 Let $\beta,\gamma,\rho\in \mathbb N_0^3$ and $\al\in \mathbb N_0^3$ such that $\beta+\gamma+\vr=\al$ and $\vr<\al$. There holds, for a sequence of functions $\{T_\vr\}_{\vr\in\mathbb N_0^3}$, \[\bega 
&\lw\| \nu^{-\frac 12}
\int_{\R^3\times\S^2}|(\xi-\xi_\star)\cdot\w|\lw|\pt^\beta\sqrt\mu(\xi_\star)\rw|\lw|\pt^\gamma \sqrt\mu(\xi')\rw|\lw|T_\vr (\xi_\star')\rw|d\w d\xi_\star
\rw\|_{L^2_{\xi}}\\
&\lesssim \lw\|\nu^{\frac12}\pt^\gamma\sqrt\mu
\rw\|_{L^2_{\xi}}\cdot 1_{\{|\gamma|\ge 1,\gamma+\rho=\al\}} \cdot \lw\|\nu^{\frac12}T_\vr
\rw\|_{L^2_{\xi}}\\
&+\lw\|\nu^{-1}\lw(\int_{\R^3_{\xi_\star}}|\xi-\xi_\star|^2 |\pt^\beta\sqrt\mu(\xi_\star)|^2d\xi_\star\rw)^{1/2}
\rw\|_{L^\infty_{\xi}}\cdot 1_{\{|\beta|\ge 1\}}\cdot \lw\|\nu^{\frac12}\pt^\gamma \sqrt\mu
\rw\|_{L^2_{\xi}}\cdot\lw\|\nu^{\frac12}T_\vr
\rw\|_{L^2_{\xi}}.
\enda 
\]
\end{proposition}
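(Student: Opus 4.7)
The plan is to apply the first inequality of Lemma~\ref{bili-appen1} with $f=|\pt^\beta\sqrt\mu|$, $g=|\pt^\gamma\sqrt\mu|$, $h=|T_\vr|$, and then split the analysis into two cases dictated by the combinatorial constraint $\beta+\gamma+\vr=\al$ with $\vr<\al$, which forces $|\beta|+|\gamma|\ge 1$. The two terms on the right-hand side of the proposition correspond exactly to these two alternatives, with the indicator functions signalling which term dominates in each regime.

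First I would record, from Lemma~\ref{bili-appen1}, the master bound
\[
\text{LHS}\;\lesssim\; \lw\|\nu(\xi)^{-1}\lw(\int_{\R^3}|\xi-\xi_\star|^2|\pt^\beta\sqrt\mu(\xi_\star)|^2d\xi_\star\rw)^{1/2}\rw\|_{L^\infty_\xi}\,\lw(\nl\sqrt\nu\,\pt^\gamma\sqrt\mu\nr_{L^2_\xi}\nl T_\vr\nr_{L^2_\xi}+\nl\pt^\gamma\sqrt\mu\nr_{L^2_\xi}\nl\sqrt\nu\,T_\vr\nr_{L^2_\xi}\rw).
\]
Because $\nu(\xi)\sim\la\xi\ra\gtrsim 1$, we have $\|u\|_{L^2_\xi}\lesssim\|\sqrt\nu\,u\|_{L^2_\xi}$ for any $u$, so the parenthesis collapses to $\|\sqrt\nu\,\pt^\gamma\sqrt\mu\|_{L^2_\xi}\|\sqrt\nu\,T_\vr\|_{L^2_\xi}$. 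When $|\beta|\ge 1$, the resulting inequality is precisely the second term of the proposition, and no further work is required.

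When $|\beta|=0$, the condition $\vr<\al$ forces $\gamma+\vr=\al$ with $|\gamma|\ge 1$, and $\pt^\beta\sqrt\mu=\sqrt\mu$. Here the moment factor becomes
\[
\nu(\xi)^{-1}\lw(\int|\xi-\xi_\star|^2\mu(\xi_\star)d\xi_\star\rw)^{1/2}\;\lesssim\;\nu(\xi)^{-1}\la\xi\ra\;\lesssim\;1,
\]
using that $\mu(\xi_\star)=\mu_0(\xi_\star-\eps U)$ is a shifted Gaussian whose drift $\eps U$ is uniformly small (by Proposition~\ref{local-1} and the smallness of $\eps$), and so its quadratic moment centered at $\xi$ is bounded by $\la\xi\ra^2$. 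This absorbs the $L^\infty$ prefactor and yields the first term of the claimed bound. The two cases are complementary and hence cover all admissible multi-indices.

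The proof is essentially a case split on top of the already-established master inequality of Lemma~\ref{bili-appen1}; the only step requiring care is the uniform-in-$(t,x)$ estimate on the Gaussian moment in the case $|\beta|=0$, which is routine once the smallness of $\eps|U|$ is invoked. I do not anticipate any substantial obstacle.
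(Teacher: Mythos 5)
Your proof is correct and follows essentially the same route as the paper: apply the first inequality of Lemma~\ref{bili-appen1}, collapse the two-term parenthesis using $\nu\gtrsim 1$, case-split on $|\beta|=0$ versus $|\beta|\ge 1$, and for the $\beta=0$ case observe that $\nu^{-1}\big(\int|\xi-\xi_\star|^2\mu(\xi_\star)\,d\xi_\star\big)^{1/2}\lesssim 1$ because the shifted Gaussian's second moment about $\xi$ is of order $\langle\xi\rangle^2\sim\nu(\xi)^2$. This matches the paper's proof step for step.
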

\begin{proof} Using Lemma \ref{bili-appen1} and then consider $\beta\neq 0$ or $\beta=0$, we have 
\[\bega 
&\lw\| \nu^{-\frac 12}
\int_{\R^3\times\S^2}|(\xi-\xi_\star)\cdot\w|\lw|\pt^\beta\sqrt\mu(\xi_\star)\rw|\lw|\pt^\gamma\sqrt\mu(\xi')\rw|\lw|\IP \pt^\rho f(\xi_\star')\rw|d\w d\xi_\star
\rw\|_{L^2_{\xi}}\\
&\lesssim  \lw\|\nu^{-1}\lw(\int_{\R^3_{\xi_\star}}|\xi-\xi_\star|^2 |\pt^\beta\sqrt\mu(\xi_\star)|^2d\xi_\star\rw)^{1/2}
\rw\|_{L^\infty_{\xi}} \cdot 
1_{\{|\beta|\ge 1\}
}
\lw\|\nu^{\frac12} \pt^\gamma\sqrt\mu(\xi)\rw\|_{L^2_\xi}\lw\|\sqrt\nu \IP \pt^\rho f
\rw\|_{L^2_\xi}\\
&\quad+ \lw\|\nu^{\frac12}\pt^\gamma\sqrt\mu
\rw\|_{L^2_{\xi}}\cdot 1_{\{|\gamma|\ge 1\}
}
\lw\|\nu^{\frac12}\IP \pt^\rho f
\rw\|_{L^2_{\xi}}.\\
\enda
\]
Here, for $|\beta|=0$, we use lemma \ref{local-1} to get
\[
\lw(\int_{\R^3_{\xi_\star}}|\xi-\xi_\star|^2 |\sqrt\mu(\xi_\star)|^2d\xi_\star\rw)^{1/2}\lesssim \nu_0(\vp)=\nu(\xi).
\]
The second inequality is shown similarly. This completes the proof. \end{proof}

\begin{proposition}\label{BGR1} Let $\beta,\gamma,\vr\in \mathbb N_0^3$ and $\al\in \mathbb N_0^3$ such that $\beta+\gamma+\vr=\al$ and $\vr<\al$. There holds, for a sequence of functions $\{T_\vr\}_{\vr\in\mathbb N_0^3}$
\[\bega 
 &\lw\| \nu^{-\frac 12}
\int_{\R^3\times\S^2}|(\xi-\xi_\star)\cdot\w|\lw|\pt^\beta \sqrt\mu(\xi_\star)\rw|\lw|\pt^\gamma \sqrt\mu(\xi'_\star)\rw|\lw|T_\vr(\xi')\rw|d\w d\xi_\star
\rw\|_{L^2_{\xi}}\\
&\lesssim 1_{\{\gamma+\rho=\al,|\gamma|\ge 1
\}
}
\nl\nu^{\frac 12} \pt^\gamma\sqrt\mu
\nr_{L^2_\xi}\nl\nu^{\frac 12} T_\vr 
\nr_{L^2_{\xi}}\\
&\quad+1_{\{|\beta|\ge 1\}
}\lw\|\nu^{-1}\lw(\int_{\R^3_{\xi_\star}}|\xi-\xi_\star|^2 |\pt^\beta\sqrt\mu(\xi_\star)|^2d\xi_\star\rw)^{1/2}
\rw\|_{L^\infty_{\xi}}\lw\|\nu^{\frac 12} \pt^\gamma\sqrt\mu\rw\|_{L^2_\xi}\lw\|\nu^{\frac 12} T_\vr\rw\|_{L^2_{\xi}}.\enda 
\]
\end{proposition}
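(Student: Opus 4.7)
\textbf{Plan for the proof of Proposition \ref{BGR1}.} The statement is structurally identical to Proposition \ref{BGR} with the roles of the post-collisional velocities $\xi'$ and $\xi_\star'$ swapped: here $\pt^\gamma\sqrt\mu$ is evaluated at $\xi_\star'$ and $T_\vr$ at $\xi'$, whereas in Proposition \ref{BGR} the roles are reversed. Since the change of variables $d\xi \, d\xi_\star = d\xi' \, d\xi_\star'$ together with the bound $\nu(\xi) \lesssim \nu(\xi')+\nu(\xi_\star')$ treats the two post-collisional variables symmetrically, the argument of Proposition \ref{BGR} adapts verbatim.

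The plan is as follows. First I would apply the first inequality of Lemma \ref{bili-appen1} with the identification $f(\xi_\star)=|\pt^\beta\sqrt\mu(\xi_\star)|$, $h(\xi_\star')=|\pt^\gamma\sqrt\mu(\xi_\star')|$, and $g(\xi')=|T_\vr(\xi')|$. This yields
\[
\lw\|\nu^{-\frac12} I\rw\|_{L^2_\xi} \lesssim \lw\|\nu(\xi)^{-1}\lw(\int_{\R^3}|\xi-\xi_\star|^2|\pt^\beta\sqrt\mu(\xi_\star)|^2d\xi_\star\rw)^{1/2}\rw\|_{L^\infty_\xi}\lw(\|\sqrt\nu \pt^\gamma\sqrt\mu\|_{L^2_\xi}\|T_\vr\|_{L^2_\xi}+\|\pt^\gamma\sqrt\mu\|_{L^2_\xi}\|\sqrt\nu T_\vr\|_{L^2_\xi}\rw),
\]
where $I$ denotes the inner integral on the left-hand side of the Proposition. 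Since $\pt^\gamma\sqrt\mu$ decays exponentially in $\xi$ by Proposition \ref{local-1} (so $\nu$-weights on it are harmless), both products can be absorbed into $\|\sqrt\nu \pt^\gamma\sqrt\mu\|_{L^2_\xi}\|\sqrt\nu T_\vr\|_{L^2_\xi}$.

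Next I would split into cases according to whether $|\beta|\ge 1$ or $\beta=0$. In the case $|\beta|\ge 1$, the bound above is exactly the second term on the right-hand side of the target inequality. In the case $\beta=0$, the constraint $\beta+\gamma+\vr=\al$ with $\vr<\al$ forces $|\gamma|\ge 1$. Here the kernel integral $\int|\xi-\xi_\star|^2\mu(\xi_\star)d\xi_\star$ is precisely comparable to $\nu(\xi)^2$ by the definition \eqref{nu-def} and Gaussian integration (as already used at the end of the proof of Proposition \ref{BGR}), so $\|\nu^{-1}(\int|\xi-\xi_\star|^2\mu\,d\xi_\star)^{1/2}\|_{L^\infty_\xi}\lesssim 1$, which produces the first term on the right-hand side.

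There is no serious obstacle here: the only subtlety is to confirm that the swap $\xi' \leftrightarrow \xi_\star'$ has no effect on Lemma \ref{bili-appen1}, which is immediate from the $\w$-reflection symmetry of the collision integral and the identity $\nu(\xi)\lesssim\nu(\xi')+\nu(\xi_\star')$. Combining the two cases gives the stated bound.
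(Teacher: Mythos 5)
Your proposal is correct and follows the paper's own route exactly: apply the first inequality of Lemma \ref{bili-appen1} (which is symmetric under $\xi'\leftrightarrow\xi_\star'$ thanks to $d\xi\,d\xi_\star=d\xi'\,d\xi_\star'$ and $\nu(\xi)\lesssim\nu(\xi')+\nu(\xi_\star')$), absorb the mixed $\nu$-weighted products into $\|\sqrt\nu\,\pt^\gamma\sqrt\mu\|_{L^2_\xi}\|\sqrt\nu\,T_\vr\|_{L^2_\xi}$ (trivial since $\nu\gtrsim 1$), and split on $\beta=0$ versus $|\beta|\ge 1$, using Proposition \ref{local-1} to show the $L^\infty_\xi$ prefactor is $O(1)$ when $\beta=0$. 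The observation that $\beta=0$ and $\vr<\al$ force $|\gamma|\ge 1$ is the correct justification for the indicator in the first term, which the paper leaves implicit.
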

\begin{proof} Applying Lemma \ref{bili-appen1} and then consider $\beta\neq 0$ or $\beta=0$, we have  
\[\bega
& \lw\| \nu^{-1/2}
\int_{\R^3}\int_{|\w|=1}|(\xi-\xi_\star)\cdot\w|\lw|\pt^\beta \sqrt\mu(\xi_\star)\rw|\lw|\pt^\gamma \sqrt\mu(\xi'_\star)\rw|\lw|T_\vr(\xi')\rw|d\w d\xi_\star
\rw\|_{L^2_{\xi}}\\
&\lesssim 1_{\{\gamma+\rho=\al,|\gamma|\ge 1
\}
}
\nl\sqrt\nu \pt^\gamma\sqrt\mu
\nr_{L^2_\xi}\nl\sqrt\nu T_\vr\nr_{L^2_\xi}\\
&\quad+1_{\{|\beta|\ge 1\}
}\lw\|\nu^{-1}\lw(\int_{\R^3_{\xi_\star}}|\xi-\xi_\star|^2 |\pt^\beta\sqrt\mu(\xi_\star)|^2d\xi_\star\rw)^{1/2}
\rw\|_{L^\infty_{\xi}}\lw\|\sqrt\nu \pt^\gamma\sqrt\mu\rw\|_{L^2_\xi}\lw\|\sqrt\nu T_\vr
\rw\|_{L^2_\xi}.
\enda
\]
Here, we used Lemma \ref{local-1} for the case $|\beta|=0$. This completes the proof.
\end{proof}

\begin{proposition}\label{BGR2} Let $\beta,\gamma,\vr,\al\in \mathbb N_0^3$ such that $\beta+\gamma+\vr=\al$ and $\rho<\al$. There holds, for any sequence $\{T_{\vr}\}_{\vr\in\mathbb N_0^3}$, we have 
\[\bega 
&\nl
\nu^{-\frac12} \int_{\R^3}\int_{|\w|=1}|(\xi-\xi_\star)\cdot \w|\lw|\pt^\beta\sqrt\mu(\xi_\star)\rw|\lw|\pt^\gamma\sqrt\mu(\xi) \rw|\lw|T_\vr (\xi_\star)\rw|d\w d\xi_\star\nr_{L^2_{\xi}}\\
 &\lesssim  \nl \nu^{\frac12} T_\vr  \nr_{L^2_{\xi}}\nl\nu(\xi)^{-1}\lw(\int_{\xi_\star}\nu(\xi_\star)^{-1}|\xi-\xi_\star|^2 |\pt^\beta\sqrt\mu(\xi_\star)|^2d\xi_\star
 \rw)^{1/2} \nr_{L^\infty_{\xi}}\nl \pt^\gamma \sqrt\mu
 \nr_{ L^2_\xi}.\\
 \enda 
\]
and 
\[
\bega
&\nl\nu^{-\frac 12} \int_{\R^3\times\S^2}|(\xi-\xi_\star)\cdot \w|\pt^\beta\mu(\xi_\star)T_\gamma(\xi)d\w d\xi_\star\nr_{L^2_{\xi}}\\
&
\lesssim \nl \nu(\xi)^{-1}\int_{\R^3}\int_{|\w|=1}|(\xi-\xi_\star)\cdot \w|\pt^\beta\mu(\xi_\star)d\w d\xi_\star\nr_{L^\infty_\xi}
\nl \nu^{\frac 12}T_\gamma
\nr_{L^2_\xi}.\enda
 \]
 \end{proposition}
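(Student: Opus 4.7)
The plan is to prove both bounds by elementary weighted Cauchy--Schwarz, after first trivializing the spherical integration. Since $\int_{|\omega|=1}|(\xi-\xi_\star)\cdot\omega|\,d\omega$ is a constant multiple of $|\xi-\xi_\star|$, the $\omega$-integration drops out of both expressions and leaves collision-kernel integrands with a single $|\xi-\xi_\star|$ factor, on which the same techniques already used in Lemma \ref{bili-appen1} apply directly.

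For the first inequality, the key observation is that $|\partial^\gamma\sqrt\mu(\xi)|$ depends only on $\xi$ and pulls outside the $\xi_\star$-integral. I would then apply Cauchy--Schwarz in $\xi_\star$, distributing weights $\nu(\xi_\star)^{\mp 1/2}$ so that the $T_\vr$ factor is absorbed into $\|\nu^{1/2}T_\vr\|_{L^2_\xi}$ and the $\partial^\beta\sqrt\mu$ factor becomes exactly the square-root integral advertised in the statement. This produces the pointwise bound
\[
\nu(\xi)^{-1/2}|\partial^\gamma\sqrt\mu(\xi)|\cdot\Bigl(\int_{\mathbb{R}^3}\nu(\xi_\star)^{-1}|\xi-\xi_\star|^2|\partial^\beta\sqrt\mu(\xi_\star)|^2\,d\xi_\star\Bigr)^{1/2}\cdot\|\nu^{1/2}T_\vr\|_{L^2_\xi}
\]
for the inner expression before taking the outer $L^2_\xi$-norm. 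Splitting this outer norm as $L^\infty_\xi\times L^2_\xi$ by regrouping the factors as $(\nu^{-1}\times\text{square-root})\times(\nu^{1/2}|\partial^\gamma\sqrt\mu|)$ gives the claimed bound, after absorbing the polynomial weight $\nu^{1/2}\sim\langle\xi\rangle^{1/2}$ into the Gaussian decay of $\partial^\gamma\sqrt\mu$ (Lemma \ref{local-1}) in order to reduce $\|\nu^{1/2}\partial^\gamma\sqrt\mu\|_{L^2_\xi}$ to $\|\partial^\gamma\sqrt\mu\|_{L^2_\xi}$.

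For the second inequality the argument is simpler, since $T_\gamma(\xi)$ is already outside the $(\xi_\star,\omega)$-integration and factors out of the integrand entirely. Setting
\[
K_\beta(\xi):=\int_{\mathbb{R}^3\times\mathbb{S}^2}|(\xi-\xi_\star)\cdot\omega|\,\partial^\beta\mu(\xi_\star)\,d\omega\,d\xi_\star,
\]
the integrand reduces pointwise to $\nu(\xi)^{-1/2}|T_\gamma(\xi)|\,K_\beta(\xi)=(\nu^{-1}K_\beta)\cdot(\nu^{1/2}|T_\gamma|)$, and H\"older with $L^\infty_\xi\times L^2_\xi$ yields the claim directly.

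Neither step is genuinely hard; the only subtle point is the precise alignment of the $\nu^{\pm 1/2}$ weights so that the $L^\infty_\xi$ factor on the right contains exactly $\nu^{-1}$ times the kernel quantity rather than a weaker weight. This forces the specific distribution of the weights in the Cauchy--Schwarz step above, but introduces no further analytic difficulty beyond careful bookkeeping of powers of $\nu$.
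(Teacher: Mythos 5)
Your proposal follows essentially the same route as the paper: first integrate out $\omega$ to reduce the kernel to a multiple of $|\xi-\xi_\star|$, then apply Cauchy--Schwarz in $\xi_\star$ with the $\nu^{\pm1/2}$ weights split exactly as you describe (this is the content of the third bound in Lemma~\ref{bili-appen1}, which the paper invokes), and finish both estimates by a pointwise $L^\infty\times L^2$ H\"older split of the remaining $\xi$-dependence. Your explicit remark that the residual $\nu^{1/2}$ weight on $\pt^\gamma\sqrt\mu$ must be absorbed via its Gaussian decay is correct and is a step the paper carries out silently; otherwise the arguments coincide.
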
 
\begin{proof}
Applying Lemma \ref{bili-appen1}, we have 
\[\bega 
&\nl
\nu^{-1/2} \int_{\R^3}\int_{|\w|=1}|(\xi-\xi_\star)\cdot \w|\pt^\beta\sqrt\mu(\xi_\star)\pt^\gamma\sqrt\mu(\xi) T_\vr (\xi_\star)d\w d\xi_\star\nr_{L^2_{\xi}}\\
 &\lesssim \nl \sqrt\nu T_\vr 
 \nr_{L^2_\xi}\nl\lw(\int_{\xi_\star}\nu(\xi_\star)^{-1}|\xi-\xi_\star|^2 |\pt^\beta\sqrt\mu(\xi_\star)|^2d\xi_\star
 \rw)^{1/2} \nu^{-1/2}\pt^\gamma\sqrt\mu 
 \nr_{L^2_\xi}\\
 &\lesssim  \nl \sqrt\nu T_\vr 
 \nr_{L^2_\xi}\nl\nu(\xi)^{-1}\lw(\int_{\xi_\star}\nu(\xi_\star)^{-1}|\xi-\xi_\star|^2 |\pt^\beta\sqrt\mu(\xi_\star)|^2d\xi_\star
 \rw)^{1/2} \nr_{L^\infty_\xi}\nl \pt^\gamma \sqrt\mu
 \nr_{L^2_\xi}.\\
  \enda 
\]
As for the second inequality, we have 
\[\bega 
&\nl \nu(\xi)^{-\frac12}\int_{\R^3}\int_{|\w|=1}|(\xi-\xi_\star)\cdot \w|\pt^\beta\mu(\xi_\star)d\w d\xi_\star\cdot T_\gamma  (\xi)\nr_{L^2_\xi}\\
&\lesssim \nl \nu(\xi)^{-1}\int_{\R^3}\int_{|\w|=1}|(\xi-\xi_\star)\cdot \w|\pt^\beta\mu(\xi_\star)d\w d\xi_\star\nr_{L^\infty_\xi}
\nl \nu^{\frac 12}T_\gamma\nr_{L^2_\xi}.
\enda
\]
The proof is complete.\end{proof}

\begin{lemma} \label{bili-2}There holds 
\[\bega 
&\nl\nu^{-1}\int_{\R^3\times\S^2 }|(\xi-\xi_\star)\cdot\w|\lw|f(\xi_\star)\rw|\lw|g(\xi')\rw|\lw|h(\xi_\star')\rw|d\w d\xi_\star\nr_{L^\infty_{\xi}}\\&\lesssim \nl \la \xi\ra f
\nr _{L^2_\xi} \min\{\nl g\nr_{L^\infty_\xi}\nl f\nr_{L^2_\xi},\nl f\nr_{L^\infty_\xi}\nl g\nr_{L^2_\xi},\}
\enda 
\]
and \[
\nl
\nu^{-1}\int_{\R^3\times\S^2}|(\xi-\xi_\star)\cdot\w|f(\xi_\star)g(\xi)d\w d\xi_\star 
\nr_{L^\infty_\xi}\lesssim \nl g
\nr_{L^\infty_\xi}\nl f\nr_{L^1_\xi}+\nl\nu^{-1}g
\nr_{L^\infty_\xi}\nl \la\xi\ra f\nr_{L^1_\xi}.\]
\end{lemma}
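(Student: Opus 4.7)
The plan is to prove both inequalities pointwise in $\xi$ by mimicking the Cauchy--Schwarz arguments used in the proof of Lemma \ref{bili-appen1}, together with the equivalence $\nu(\xi) \sim \la\xi\ra$ and the elementary bound $|(\xi-\xi_\star)\cdot\omega| \le |\xi-\xi_\star| \le \la\xi\ra\la\xi_\star\ra$.

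For the first, trilinear inequality, I would fix $\xi$ and pull either $g$ or $h$ out in $L^\infty_\xi$ (this explains the two options inside the minimum on the right). The integrand is then a product of two factors, of which $f(\xi_\star)$ can be separated by Cauchy--Schwarz in $d\xi_\star$; the resulting weighted norm $\lw(\int |\xi-\xi_\star|^2 |f(\xi_\star)|^2 d\xi_\star\rw)^{1/2}$ is controlled by $\la\xi\ra \nl \la\cdot\ra f\nr_{L^2_\xi}$ via $|\xi-\xi_\star| \le \la\xi\ra + \la\xi_\star\ra$, so that after division by $\nu(\xi) \sim \la\xi\ra$ one recovers the prefactor $\nl \la\xi\ra f\nr_{L^2_\xi}$ appearing on the right-hand side. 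The leftover $d\omega$-integration against the surviving post-collisional factor is absorbed by Minkowski's inequality in $\omega$ together with the $L^2_\xi$ norm of the remaining unknown on the boundedness of $\S^2$.

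For the second, bilinear inequality the argument is shorter: since $g(\xi)$ is independent of the integration variables $\xi_\star$ and $\omega$, it pulls out of the integral. The elementary splitting $|(\xi-\xi_\star)\cdot\omega| \le |\xi| + |\xi_\star|$ together with $\int_{\S^2} d\omega = 4\pi$ produces two summands: the $|\xi|$ term gives $\nu^{-1}(\xi)|\xi|\,|g(\xi)|\,\nl f\nr_{L^1_\xi} \lesssim \nl g\nr_{L^\infty_\xi}\nl f\nr_{L^1_\xi}$ since $\nu^{-1}(\xi)|\xi|\lesssim 1$, while the $|\xi_\star|$ term yields $\nu^{-1}(\xi)|g(\xi)|\,\nl \la\cdot\ra f\nr_{L^1_\xi}$, whose $L^\infty_\xi$ norm is $\nl \nu^{-1} g\nr_{L^\infty_\xi}\nl \la\xi\ra f\nr_{L^1_\xi}$. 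Summing the two summands yields the claim.

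The main obstacle lies in the trilinear estimate: the pre-/post-collisional change of variables $(\xi_\star, \omega) \mapsto (\xi', \xi'_\star)$ is degenerate at fixed $\xi$, because $\xi'_\star - \xi$ depends only on the component of $\xi_\star - \xi$ perpendicular to $\omega$. As a result, a naive Cauchy--Schwarz produces divergent integrals along the parallel direction $V_\parallel = (\xi_\star - \xi)\cdot\omega$. The fix is to keep the weight $|(\xi-\xi_\star)\cdot\omega| = |V_\parallel|$ attached to the variable it damps, using the $(V_\parallel, V_\perp)$ parametrization introduced in the proof of Proposition \ref{L-decom-max}, so that the angular integration becomes integrable and yields the claimed $L^2_\xi$ norm of the remaining post-collisional factor.
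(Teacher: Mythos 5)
Your treatment of the second, bilinear inequality is exactly the paper's proof: pull $g(\xi)$ out, bound $|(\xi-\xi_\star)\cdot\w|\le|\xi|+|\xi_\star|$, integrate $d\w$ trivially, and use $\nu^{-1}(\xi)|\xi|\lesssim 1$. Nothing to add there.

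For the first, trilinear inequality, your proposal runs along the same Cauchy--Schwarz route as the paper: separate $f(\xi_\star)$ in $L^2_{\xi_\star}$ to produce the prefactor $\nl\la\xi\ra f\nr_{L^2_\xi}$, then deal with the post-collisional factors. However, the concern you raise at the end is not a loose end to be swept up --- it is a genuine gap, and one that is present in the paper's own two-line display as well. After Cauchy--Schwarz, the second factor is
\[
\lw(\int_{\R^3}\int_{\S^2}|g(\xi')|^2|h(\xi'_\star)|^2\,d\w\,d\xi_\star\rw)^{1/2}
= \lw(2\int_{V_\parallel\in\R^3}\int_{V_\perp\perp V_\parallel}\frac{1}{|V_\parallel|^2}\,|g(\xi+V_\parallel)|^2\,|h(\xi+V_\perp)|^2\,dV_\perp\,dV_\parallel\rw)^{1/2},
\]
and this diverges no matter which of $g,h$ is pulled out in $L^\infty$: if $h$ is pulled out, the transverse integral $\int_{\R^2}dV_\perp$ is unbounded; if $g$ is pulled out, $\int_{\R^3}|V_\parallel|^{-2}\,dV_\parallel$ diverges at infinity. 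Your proposed fix --- retaining the kinetic weight $|(\xi-\xi_\star)\cdot\w|=|V_\parallel|$ next to the factor it damps --- replaces $|V_\parallel|^{-2}$ by $|V_\parallel|^{-1}$ in the Carleman representation, but $|V_\parallel|^{-1}$ is still not integrable over $\R^3$ near infinity, and the $V_\perp$ integral remains unbounded. So the parametrization by itself does not close the estimate.

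The ingredient that actually repairs the argument is not covered by the stated hypothesis $\nl\la\xi\ra f\nr_{L^2_\xi}<\infty$: one needs pointwise (e.g.\ Gaussian) decay of $f$ in $\xi_\star=\xi+V_\parallel+V_\perp$, so that $f$ furnishes integrability in the $V_\parallel$ direction when $h$ is pulled out, or in the $V_\perp$ plane when $g$ is pulled out. This is exactly what happens in the paper's applications ($f=\pt^\beta\sqrt\mu$), and it is exactly the mechanism used in the $K_{2,1}$ computation inside Proposition~\ref{L-decom-max}, where the Gaussian $\mu^{1/2}(\xi_\star)$ renders the $V_\perp$ integral finite. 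In short: rather than separating $f$ off by Cauchy--Schwarz, one should keep $f$ inside the integral and use its decay to tame the degenerate Carleman variables; Cauchy--Schwarz can then be applied to whichever of $g,h$ is in $L^2$. Both your write-up and the paper's display gloss over this; you were right to flag the obstacle, but the sketched remedy does not yet supply the missing decay.
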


\begin{proof}
We have 
\[\bega 
&\nu(\xi)^{-1}\int_{\R^3\times\S^2 }|(\xi-\xi_\star)\cdot\w|\lw|f(\xi_\star)\rw|\lw|g(\xi')\rw|\lw|h(\xi_\star')\rw|d\w d\xi_\star\\
&\lesssim \nu(\xi)^{-1}\lw(\int_{\R^3}|\xi-\xi_\star|^2 |f(\xi_\star)|^2d\xi_\star\rw)^{\frac12}\lw(\int_{\R^3}\int_{|\w|=1}|g(\xi')|^2|h(\xi_\star')|^2 d\w d\xi_\star\rw)^{\frac12}\\
&\lesssim \nl \la \xi\ra f
\nr _{L^2_\xi} \min\{\nl g\nr_{L^\infty_\xi}\nl f\nr_{L^2_\xi},\nl f\nr_{L^\infty_\xi}\nl g\nr_{L^2_\xi}\}.
\enda 
\]
Next, we have 
\[\bega 
&\nu(\xi)^{-1}g(\xi) \int_{\R^3\times\S^2}|(\xi-\xi_\star)\cdot\w|f(\xi_\star) d\w d\xi_\star\\
& \lesssim \nu(\xi)^{-1}|g(\xi)|\int_{\R^3}(|\xi|+|\xi_\star|)|f(\xi_\star)|d\xi_\star \\
&\lesssim |g(\xi)|\nl f
\nr_{L^1_\xi}+\nu^{-1}(\xi)g(\xi)\nl\la\xi\ra f
\nr _{L^1_\xi}.
\enda 
\]
The proof is complete.
\end{proof}

\begin{proposition}\label{ana-gamma}
There holds, for any $p_0\ll 1$
\[\nl
\AA_\al \nl
 \nu^{-1} \pt^\al \Gamma(f,g)
\nr_{L^\infty_\xi}
\nr_{\ell_\al^2}\lesssim\nl\AA_\al  \nl e^{p_0|\xi|^2}\pt^\al f 
\nr_{L^\infty_\xi}
\nr_{\ell_\al^2}\cdot\nl\AA_\al  \nl e^{p_0|\xi|^2}\pt^\al g
\nr_{L^\infty_\xi}
\nr_{\ell_\al^2}.\]
\end{proposition}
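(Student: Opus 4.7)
\textbf{Proof proposal for Proposition \ref{ana-gamma}.}
The plan is to expand $\pt^\al \Gamma(f,g)$ via Leibniz, apply the pointwise trilinear kernel bound of Lemma \ref{bili-2} with a $\nu^{-1}$ weight, exchange the exponential weight across the collision map, and finally sum in $\al$ by a weighted discrete Young inequality using the algebraic identity \eqref{algebra-est}. Recalling \eqref{Gamma-def}--\eqref{Gamma-pm}, the only $x,\xi,t$ dependence in $\Gamma(f,g)$ beyond $f$ and $g$ themselves is through the local Maxwellian $\sqrt\mu(\xi_\star)$, so Leibniz gives
\[
\pt^\al \Gamma_\pm (f,g) \;=\; \sum_{\beta+\gamma+\vr=\al} \frac{\al!}{\beta!\gamma!\vr!}\, I^\pm_{\beta,\gamma,\vr}\big(\pt^\beta\sqrt\mu,\pt^\gamma f,\pt^\vr g\big),
\]
where $I^+_{\beta,\gamma,\vr}$ (resp.\ $I^-_{\beta,\gamma,\vr}$) denotes the trilinear form in \eqref{Gamma-pm} with its three factors evaluated at $(\xi_\star,\xi',\xi'_\star)$ (resp.\ $(\xi_\star,\xi,\xi_\star)$). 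A symmetric second copy coming from the two summands inside the braces in \eqref{Gamma-pm} is handled identically.

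For each fixed triple $(\beta,\gamma,\vr)$, I apply Lemma \ref{bili-2} with $f\rightsquigarrow \pt^\beta\sqrt\mu$ and $(g,h)\rightsquigarrow(\pt^\gamma f,\pt^\vr g)$. To keep the factor $e^{p_0|\xi|^2}$ on $\pt^\gamma f,\pt^\vr g$ consistent with the norms on the right-hand side, I use the post-collisional identity $|\xi'|^2+|\xi'_\star|^2 = |\xi|^2+|\xi_\star|^2$ to trade
\[
e^{p_0(|\xi'|^2+|\xi'_\star|^2)} \;=\; e^{p_0|\xi|^2}\,e^{p_0|\xi_\star|^2},
\]
so that $e^{p_0|\xi_\star|^2}\pt^\beta\sqrt\mu$ remains Gaussian-decaying in $\xi_\star$ (by Proposition \ref{local-1}, provided $p_0 \ll 1$), absorbing the needed $\la\xi\ra$ factor and yielding
\[
\nl \nu^{-1} I^\pm_{\beta,\gamma,\vr}\nr_{L^\infty_\xi}
\;\lesssim\; \nl \la\xi\ra\, e^{p_0|\xi_\star|^2}\pt^\beta\sqrt\mu\nr_{L^2_\xi}\cdot\nl e^{p_0|\xi|^2}\pt^\gamma f\nr_{L^\infty_\xi}\cdot \nl e^{p_0|\xi|^2}\pt^\vr g\nr_{L^2_\xi\cap L^\infty_\xi},
\]
and in $L^\infty_\xi$ only, after one more use of Proposition \ref{local-1} to dominate the $L^2_\xi$ norm by the $L^\infty_\xi$ one with an extra Maxwellian factor. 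For the loss part $\Gamma_-$ the estimate is cleaner because the velocities are not shifted. Proposition \ref{local-1} additionally yields $\sum_{|\beta|\ge 1}\frac{\tau_0^{|\beta|}}{\beta!}\|\la\xi\ra e^{p_0|\xi_\star|^2}\pt^\beta\sqrt\mu\|_{L^2_\xi}\lesssim \eps\sqrt\kappa$, while the $\beta=0$ term contributes $O(1)$.

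To close the $\ell^2_\al$ estimate, I multiply by $\AA_\al(t)$ and apply the triple-index version of the algebra inequality \eqref{algebra-est}: the coefficient $\al!\AA_\al/(\beta!\AA_\beta\gamma!\AA_\gamma\vr!\AA_\vr)$ is bounded by $\min\{\la\beta\ra^{-9},\la\gamma\ra^{-9},\la\vr\ra^{-9}\}$ after ordering the indices by size, producing a summable weight in whichever of $\beta,\gamma,\vr$ is smallest. Summing first over that small index (which yields a finite $\ell^1$ norm against the other factors, absorbing one power via Cauchy--Schwarz), and then applying discrete Young $\ell^2\ast\ell^2\to\ell^2$ on the remaining two indices, gives the claimed inequality
\[
\nl\AA_\al\nl\nu^{-1}\pt^\al\Gamma(f,g)\nr_{L^\infty_\xi}\nr_{\ell^2_\al}\;\lesssim\; \nl\AA_\al\nl e^{p_0|\xi|^2}\pt^\al f\nr_{L^\infty_\xi}\nr_{\ell^2_\al}\cdot\nl\AA_\al\nl e^{p_0|\xi|^2}\pt^\al g\nr_{L^\infty_\xi}\nr_{\ell^2_\al}.
\]
The main obstacle is not the pointwise estimate, which is essentially Lemma \ref{bili-2}, but the bookkeeping for the triple convolution in $\al$: one must route the decay coming from $\pt^\beta\sqrt\mu$ (automatic for $|\beta|\ge 1$ but only $O(1)$ at $\beta=0$) and the combinatorial decay $\la\cdot\ra^{-9}$ correctly, so that the smallest of $\gamma$ and $\vr$ is paired with an $\ell^2$ norm and the larger with another $\ell^2$ norm. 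This routing, together with the post-collisional weight exchange above, is what forces the smallness condition $p_0\ll 1$ and makes both the $\Gamma_+$ and $\Gamma_-$ contributions close with the same constant.
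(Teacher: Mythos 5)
Your proposal follows the same route as the paper: Leibniz expansion, the pointwise trilinear bound of Lemma \ref{bili-2}, conversion to $L^\infty_\xi$ norms via Gaussian weights, and closure by Proposition \ref{local-1} plus a weighted discrete convolution estimate in $\al$. The only caveat is a terminological slip at the end: there is no discrete Young inequality $\ell^2 * \ell^2 \to \ell^2$; what actually closes the $\ell^2_\al$ sum is the algebra property of the weight $\AA_\al$ (i.e., $\al!\AA_\al \lesssim \gamma!\AA_\gamma\,\vr!\AA_\vr$ together with the summable $\la\cdot\ra^{-9}$ factor routed to the smaller index, and Cauchy--Schwarz), exactly as in \eqref{algebra-est} — your surrounding discussion indicates you have this in mind, so the argument is substantively correct.
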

\begin{proof} We recall from \eqref{Gamma-def} and then use Lemma \ref{bili-2} to get 
 \[\bega 
\nu^{-1}\pt^\al \Gamma_+(f,g)&\lesssim\sum_{\substack{\beta+\gamma+\vr=\al
}
}
\frac{\al!}{\beta!\gamma!\vr!}\nl\la \xi\ra\pt^\beta\sqrt\mu
\nr_{L^2_\xi}\min\lw\{\nl \pt^\gamma f\nr_{L^2_\xi}\nl \pt^\vr g
\nr _{L^\infty_\xi},\nl \pt^\gamma f\nr_{L^\infty_\xi}\nl \pt^\vr g
\nr _{L^2_\xi}\rw\}\\
&\lesssim \sum_{\substack{\beta+\gamma+\vr=\al
}
}
\frac{\al!}{\beta!\gamma!\vr!}\nl e^{\rho|\xi|^2}\pt^\beta\sqrt\mu
\nr_{L^\infty_\xi}\nl e^{\rho|\xi|^2} \pt^\gamma f\nr_{L^\infty_\xi}\nl e^{\rho|\xi|^2}\pt^\vr g
\nr _{L^\infty_\xi}.
\enda 
 \]
Similarly, we have 
 \[\bega
& \nu^{-1}\pt^\al \Gamma_-(f,g)\\
&\lesssim \sum_{\substack{\beta+\gamma=\al
}
}
\frac{\al!}{\beta!\gamma!} \lw\{\nl
\pt^\beta f
\nr _{L^\infty_\xi}\nl\pt^\gamma(\sqrt\mu g)
\nr_{L^1_\xi}+\nl\nu^{-1} \pt^\beta f
\nr_{L^\infty_\xi}\nl \la\xi\ra \pt^\gamma(\sqrt\mu g)
\nr _{L^1_\xi}
\rw\}\\
&\quad+ \sum_{\substack{\beta+\gamma=\al
}
}
\frac{\al!}{\beta!\gamma!} \lw\{\nl
\pt^\beta (\sqrt\mu g)\nr _{L^\infty_\xi}\nl\pt^\gamma f
\nr_{L^1_\xi}+\nl\nu^{-1} \pt^\beta (\sqrt\mu g)
\nr_{L^\infty_\xi}\nl \la\xi\ra \pt^\gamma f
\nr _{L^1_\xi}
\rw\}\\\enda
\]
The proof is complete, by using Proposition \ref{local-1} and the discrete Young inequality. 
\end{proof}
\begin{proposition}  \label{VIPG}
Recalling \eqref{EDH}, 
There holds 
\[\bega 
&\sum_\al \AA_\al^2\lw\la \pt^\al (V(x)\IP f(x,\xi)),\pt^\al g
\rw\ra_{L^2_{x,\xi}}\lesssim\eps\kappa^{\frac 12} \|| V
\||_{\tau_0,\infty} \mathcal E_f\lw(
\mathcal E_g+\mathcal D_g
\rw).
\enda 
\]
\end{proposition}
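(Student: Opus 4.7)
The plan is to combine a Leibniz expansion with the key orthogonality between the microscopic and macroscopic parts of the operator $\IP=\II-\P$. First I expand
\[
\pt^\al(V\,\IP f)=\sum_{\beta+\gamma=\al}\binom{\al}{\beta}\pt^\beta V\cdot \pt^\gamma(\IP f)
\]
and further decompose $\pt^\gamma(\IP f)=\IP\pt^\gamma f+[\pt^\gamma,\IP]f$. The crucial point is that $\IP\pt^\gamma f$ belongs to $\mathrm{Range}(\IP)$ at each fixed $x$, and since $\pt^\beta V(x)$ is $\xi$-independent, the $L^2_\xi$-orthogonality between $\mathrm{Range}(\IP)$ and $\mathrm{Range}(\P)$ forces
\[
\la \pt^\beta V\cdot \IP\pt^\gamma f,\,\pt^\al g\ra_{L^2_{x,\xi}}=\la \pt^\beta V\cdot \IP\pt^\gamma f,\,\IP\pt^\al g\ra_{L^2_{x,\xi}}.
\]
This is the only step in the argument where the structure of $\IP$ versus $\P$ is essential, and it is exactly what produces the dissipation gain on the $g$-side.

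Cauchy--Schwarz together with the lower bound $\nu\gtrsim 1$ then gives
\[
\bigl|\la \pt^\beta V\cdot \IP\pt^\gamma f,\,\IP\pt^\al g\ra\bigr|\lesssim \|\pt^\beta V\|_{L^\infty_x}\,\|\pt^\gamma f\|_{L^2_{x,\xi}}\,\|\sqrt\nu\,\IP\pt^\al g\|_{L^2_{x,\xi}}.
\]
Multiplying by $\AA_\al^2$, summing in $\al$, and applying the discrete Young inequality together with the combinatorial bound \eqref{algebra-est} on the analytic weights, and using the identity $\|\AA_\al\sqrt\nu\,\IP\pt^\al g\|_{\ell^2_\al L^2_{x,\xi}}=\eps\sqrt\kappa\,\mathcal D_g$ built into the definition of $\mathcal D$, this main contribution is controlled by $\eps\kappa^{1/2}\,\||V\||_{\tau_0,\infty}\,\mathcal E_f\,\mathcal D_g$.

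For the commutator piece $[\pt^\gamma,\IP]f=-[\pt^\gamma,\P]f$, one expands the action of $\pt^\gamma$ on the $\sqrt\mu$-weights and the hydrodynamic moments $a,b,c$ that build $\P f$: every derivative hitting $\sqrt\mu$ or $\vp=\xi-\eps U$ produces an $\eps$-smallness, as quantified by Lemma \ref{M-local-2}. Applying the commutator estimate on $\P$ (Proposition \ref{PD}, already used in Proposition \ref{com-est}) together with the discrete Young inequality and Lemma \ref{local-1} yields an $\ell^2_\al L^2_{x,\xi}$-bound of order $\eps\kappa^{1/2}\,\mathcal E_f$ for $\AA_\al\sqrt\nu\,[\pt^\al,\IP]f$. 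Pairing with $\pt^\al g$ by Cauchy--Schwarz then delivers the $\mathcal E_g$-contribution, and combined with the previous paragraph gives the claimed bound $\eps\kappa^{1/2}\,\||V\||_{\tau_0,\infty}\,\mathcal E_f\,(\mathcal E_g+\mathcal D_g)$.

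The main obstacle is the bookkeeping of the $\eps\kappa^{1/2}$ factor in the commutator piece: a priori $\pt_{x_j}\sqrt\mu\sim\eps$ only (without the extra $\sqrt\kappa$), so to obtain the sharp form of the right-hand side one must exploit the specific Prandtl-shear structure of $U$, namely that the leading profile in \eqref{EP-form} is independent of $x_\parallel$, so that tangential derivatives of $U$ carry an additional $\sqrt\kappa$-smallness (the only term that could spoil this would involve $\pt_{x_3}U\sim\kappa^{-1/2}$, but this direction is excluded from $\pt\in\{\eps\pt_t,\pt_{x_1},\pt_{x_2}\}$). The combinatorial sum over $\beta+\gamma=\al$ is then handled exactly as in Proposition \ref{G-ff}, using the algebraic inequality \eqref{algebra-est} to redistribute the $\AA_\al$-factors between the two sums.
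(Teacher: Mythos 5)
Your proposal is correct and follows essentially the same route as the paper's proof: Leibniz expansion, the decomposition $\pt^\gamma(\IP f)=\IP\pt^\gamma f+[\P,\pt^\gamma]f$ for $|\gamma|\ge 1$, using the $L^2_\xi$-orthogonality of $\IP$ against $\P$ to transfer $\IP$ onto $g$ for the first piece (yielding $\mathcal D_g$ with the $\eps\kappa^{1/2}$ prefactor built into its definition), invoking Proposition \ref{PD} for the commutator piece (yielding $\mathcal E_g$), and closing by Cauchy--Schwarz, the discrete Young inequality and \eqref{algebra-est}. Two minor slips, neither affecting validity: for the tangential-derivative gain on $\sqrt\mu$ the relevant statement is Proposition \ref{local-1} rather than Lemma \ref{M-local-2}, and the commutator to be bounded is $[\pt^\gamma,\P]f$ (paired with the full $\pt^\al g$, so no $\sqrt\nu$ weight is needed), not $[\pt^\al,\IP]f$.
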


\begin{proof}
We have 
\[\bega 
&\la \pt^\al (V(x)\IP f),\pt^\al g
\ra _{L^2_{x,\xi}}=\sum_{\beta+\gamma=\al}\frac{\al!}{\beta!\gamma!}\la \pt^\beta V\pt^\gamma \lw\{\IP f
\rw\},\pt^\al g
\ra_{L^2_{x,\xi}}\\
&=\la \pt^\al V \IP,\pt^\al g\ra_{L^2_{x,\xi}}+\sum_{\substack{\beta+\gamma=\al\\|\gamma|\ge 1
}}\frac{\al!}{\beta!\gamma!}\la \pt^\beta V\pt^\gamma \lw\{\IP f
\rw\},\pt^\al g
\ra_{L^2_{x,\xi}}.\\\enda 
\]
Now for $|\gamma|\ge 1$, we have 
\[
\pt^\gamma (\IP f)=\pt^\gamma f-\pt^\gamma \P f=\pt^\gamma f-\P\pt^\gamma f+[\P,\pt^\gamma]f=\IP \pt^\gamma f+[\P,\pt^\gamma]f.
\]
This implies 
\[\bega 
&\la \pt^\al (V(x)\IP f),\pt^\al g
\ra _{L^2_{x,\xi}}\\
&=\la \pt^\al V \IP f, \IP \pt^\al g\ra_{L^2_{x,\xi}}+\sum_{\substack{\beta+\gamma=\al\\|\gamma|\ge 1
}}\frac{\al!}{\beta!\gamma!}\la\pt^\beta V \IP \pt^\gamma f,\IP \pt^\al g\ra_{L^2_{x,\xi}}\\
&\quad+\sum_{\substack{\beta+\gamma=\al\\|\gamma|\ge 1
}}\frac{\al!}{\beta!\gamma!}\la \pt^\beta V [\P,\pt^\gamma]f,\pt^\al g
\ra_{L^2_{x,\xi}}.
\enda 
\]
Hence we get 
\[\bega 
&\la \pt^\al (V(x)\IP f),\pt^\al g
\ra _{L^2_{x,\xi}}\\
&\lesssim \nl \pt^\al V
\nr_{L^\infty_x}\nl \IP f
\nr_{L^2_{x,\xi}}\nl \nu^{\frac12} \IP \pt^\al g \nr_{L^2_{x,\xi}}\\
&\quad+\sum_{\substack{\beta+\gamma=\al\\|\gamma|\ge 1
}}\frac{\al!}{\beta!\gamma!}\nl \pt^\beta V
\nr _{L^\infty_x}\nl \IP \pt^\gamma f
\nr_{L^2_{x,\xi}}\nl \nu^{\frac12}\IP\pt^\al g \nr_{L^2_{x,\xi}}\\
&\quad+\sum_{\substack{\beta+\gamma=\al\\|\gamma|\ge 1
}}\frac{\al!}{\beta!\gamma!}
\nl \pt^\beta V
\nr_{L^\infty_x}
\nl  [\P,\pt^\gamma]f\nr_{L^2_{x,\xi}}\nl \pt^\al g
\nr_{L^2_{x,\xi}}.\\\enda
\]
The proof is complete, by applying Lemma \ref{PD} and the discrete Young inequality.
\end{proof}
\subsection{Commutator estimates}
\begin{lemma} \label{com-1}Assume 
\eqref{Pf-def}. 
Then for all $m>0$, we have 
\beq\label{com-1-1}
\bega
&\nl\la\vp\ra^m [\pt^\al, \P]f\nr_{L^2_{x,\xi}}\\
&\lesssim \sum_{\substack{\beta+\gamma=\al\\|\beta|\ge 1}}\frac{\al!}{\beta!\gamma!}
\nl \la\vp\ra^m\pt^\beta \lw\{(1,\vp,|\vp|^2) \sqrt\mu\rw\}\nr_{L^\infty_x L^2_\xi} \nl \pt^\gamma f\nr_{L^2_{x,\xi}}\\
&\quad+\sum_{\substack{\beta+\gamma=\al\\|\beta|\ge 1}}\frac{\al!}{\beta!\gamma!}\nl\la \vp\ra^m\pt^\beta \lw\{(1,\vp,|\vp|^2) \sqrt\mu\rw\}
\nr_{L^\infty_x L^2_\xi}\nl \pt^\gamma (a,b,c) \nr_{L^2}.
\enda 
\eeq
As for derivatives, there hold 
\beq\label{com-1-2}
\bega
&\nl\la\vp\ra^m \pt_{x_3}[\pt^\al, \P]f\nr_{L^2_{x,\xi}}\\
&\lesssim \sum_{\substack{\beta+\gamma=\al\\|\beta|\ge 1}}\frac{\al!}{\beta!\gamma!} 
\lw(
\eps\kappa^{-\frac12}
\nl \pt^\beta \sqrt\mu \nr_{L^\infty_x L^2_\xi}+\nl \pt^\beta\pt_{x_3}\sqrt\mu\nr_{L^\infty_x L^2_\xi}\rw)
\nl \pt^\gamma f\nr_{L^2_{x,\xi}}\\
&\quad+ \sum_{\substack{\beta+\gamma=\al\\|\beta|\ge 1}}\frac{\al!}{\beta!\gamma!}\nl \la\vp\ra^m\pt^\beta \lw\{(1,\vp,|\vp|^2) \sqrt\mu\rw\}\nr_{L^\infty_x L^2_\xi}\nl \pt^\gamma \pt_{x_3} f\nr_{L^2_{x,\xi}}\\
&\quad+\sum_{\substack{\beta+\gamma=\al\\|\beta|\ge 1}}\frac{\al!}{\beta!\gamma!}\nl\la \vp\ra^m\pt^\beta \pt_{x_3} \lw\{(1,\vp,|\vp|^2) \sqrt\mu\rw\}
\nr_{L^\infty_x L^2_\xi}\nl \pt^\gamma (a,b,c) \nr_{L^2}\\
&\quad+\sum_{\substack{\beta+\gamma=\al\\|\beta|\ge 1}}\frac{\al!}{\beta!\gamma!}\nl\la \vp\ra^m\pt^\beta \pt_{x_3} \lw\{(1,\vp,|\vp|^2) \sqrt\mu\rw\}
\nr_{L^\infty_x L^2_\xi}\nl \pt^\gamma \pt_{x_3} (a,b,c) \nr_{L^2}.\enda 
\eeq%%%%%%%%
For $i\in \{1,2\}$, there holds 
\beq\label{com-1-3}
\bega
&\nl\la\vp\ra^m \pt_{x_i}[\pt^\al, \P]f\nr_{L^2_{x,\xi}}\\
&\lesssim \sum_{\substack{\beta+\gamma=\al\\|\beta|\ge 1}}\frac{\al!}{\beta!\gamma!} 
\lw(
\eps
\nl \pt^\beta \sqrt\mu \nr_{L^\infty_x L^2_\xi}+\nl \pt^\beta\pt_{x_i}\sqrt\mu\nr_{L^\infty_x L^2_\xi}\rw)
\nl \pt^\gamma f\nr_{L^2_{x,\xi}}\\
&\quad+ \sum_{\substack{\beta+\gamma=\al\\|\beta|\ge 1}}\frac{\al!}{\beta!\gamma!}\nl \la\vp\ra^m\pt^\beta \lw\{(1,\vp,|\vp|^2) \sqrt\mu\rw\}\nr_{L^\infty_x L^2_\xi}\nl \pt^\gamma \pt_{x_i} f\nr_{L^2_{x,\xi}}\\
&\quad+\sum_{\substack{\beta+\gamma=\al\\|\beta|\ge 1}}\frac{\al!}{\beta!\gamma!}\nl\la \vp\ra^m\pt^\beta \pt_{x_i} \lw\{(1,\vp,|\vp|^2) \sqrt\mu\rw\}
\nr_{L^\infty_x L^2_\xi}\nl \pt^\gamma (a,b,c) \nr_{L^2}\\
&\quad+\sum_{\substack{\beta+\gamma=\al\\|\beta|\ge 1}}\frac{\al!}{\beta!\gamma!}\nl\la \vp\ra^m\pt^\beta \pt_{x_i} \lw\{(1,\vp,|\vp|^2) \sqrt\mu\rw\}
\nr_{L^\infty_x L^2_\xi}\nl \pt^\gamma \pt_{x_i} (a,b,c) \nr_{L^2}.\enda 
\eeq
\end{lemma}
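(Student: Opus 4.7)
The plan is to expand $\P f$ via its explicit form from \eqref{Pf-def}, namely
\[
\P f = a\sqrt\mu + b\cdot\vp\,\sqrt\mu + c\,\tfrac{|\vp|^2-3}{2}\sqrt\mu,
\]
and then apply the Leibniz rule to each piece. Both the prefactors $(a,b,c)$---which are moments of $f$ against $\sqrt\mu$, $\vp\sqrt\mu$, and $\tfrac{|\vp|^2-3}{3}\sqrt\mu$---and the velocity weights $\sqrt\mu$, $\vp\sqrt\mu$, $\tfrac{|\vp|^2-3}{2}\sqrt\mu$ depend on $x$ through $U(x)$, so distributing $\pt^\al$ yields a double sum, for example
\[
\pt^\al(a\sqrt\mu) = \sum_{\beta+\gamma=\al}\binom{\al}{\beta}\pt^\beta a\,\pt^\gamma\sqrt\mu,\qquad \pt^\beta a = \sum_{\beta_1+\beta_2=\beta}\binom{\beta}{\beta_1}\int \pt^{\beta_1} f\,\pt^{\beta_2}\sqrt\mu\,d\xi',
\]
and analogously for the $b,c$ pieces. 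The one term of $\P(\pt^\al f) = a_\al\sqrt\mu + b_\al\cdot\vp\sqrt\mu + c_\al\tfrac{|\vp|^2-3}{2}\sqrt\mu$ matches exactly the summand of $\pt^\al(\P f)$ in which every derivative falls on a copy of $f$ inside a moment integral (and none on any Maxwellian factor). Subtracting identifies $[\pt^\al,\P]f$ as the sum of all remaining terms, i.e.\ those in which at least one derivative has acted on some $\sqrt\mu$, $\vp\sqrt\mu$, or $|\vp|^2\sqrt\mu$ factor.

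These remaining terms split into two classes. \emph{Class (i):} a derivative hits the \emph{outer} Maxwellian factor (the one multiplied by $a$, $b$, or $c$). After Cauchy--Schwarz in $\xi$ and taking $L^\infty_x$ for the velocity weight, these terms produce the second sum in \eqref{com-1-1}, of the shape $\pt^\gamma(a,b,c)\cdot\pt^\beta\{(1,\vp,|\vp|^2)\sqrt\mu\}$ with $|\beta|\ge 1$. \emph{Class (ii):} a derivative is absorbed \emph{inside} the moment integral defining $(a,b,c)$. By Cauchy--Schwarz in the dummy velocity $\xi'$ and then $L^2_x$ in $x$, these contribute the first sum in \eqref{com-1-1}, of the shape $\pt^\gamma f$ paired with weight $\pt^\beta\{(1,\vp,|\vp|^2)\sqrt\mu\}$ with $|\beta|\ge 1$. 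This settles \eqref{com-1-1}.

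For \eqref{com-1-2} and \eqref{com-1-3}, the strategy is to apply $\pt_{x_j}$ to the commutator formula derived above and use the product rule once more. The new derivative either lands on $f$ or its moments $(a,b,c)$---producing the $\nl\pt^\gamma\pt_{x_j}f\nr$ and $\nl\pt^\gamma\pt_{x_j}(a,b,c)\nr$ contributions---or on a velocity weight $(1,\vp,|\vp|^2)\sqrt\mu$, producing the $\nl\pt^\beta\pt_{x_j}\{(1,\vp,|\vp|^2)\sqrt\mu\}\nr$ contribution. The key chain-rule identity is
\[
\pt_{x_j}\sqrt\mu = \tfrac{\eps}{2}(\pt_{x_j}U)\cdot\vp\,\sqrt\mu,
\]
so the Prandtl-layer scalings $\nl\pt_{x_3}U\nr_{L^\infty}\lesssim \kappa^{-\frac12}$ and $\nl\pt_{x_i}U\nr_{L^\infty}\lesssim 1$ for $i\in\{1,2\}$ explain the two alternative summands $\eps\kappa^{-\frac12}\nl\pt^\beta\sqrt\mu\nr$ and $\nl\pt^\beta\pt_{x_3}\sqrt\mu\nr$ on the first line of \eqref{com-1-2} (and the analogous $\eps$ versus $\pt_{x_i}\sqrt\mu$ pair on the first line of \eqref{com-1-3}), with the extra $\vp$ factor absorbed into the weight $\la\vp\ra^m$. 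The main obstacle is purely combinatorial bookkeeping: one must track the multi-indices $\beta,\gamma$ carefully so that the constraint $|\beta|\ge 1$ survives in every remaining sum (otherwise we would recover all of $\pt^\al(\P f)$ rather than the commutator), no term is double-counted after the $\pt_{x_j}$ product rule is applied, and each residual $\vp$ factor is absorbed into the weight $\la\vp\ra^m$ at the order dictated by the stated norms.
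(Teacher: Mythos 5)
Your proposal is correct and takes essentially the same approach as the paper: the paper also decomposes $[\pt^\al,\P]f = I_a + I_b + I_c$, applies Leibniz, observes that the full-derivative-on-$f$ term cancels $\P(\pt^\al f)$, and splits what remains into (a) terms where $|\beta|\ge 1$ derivatives hit the outer $\sqrt\mu$ weight, bounded via $\pt^\gamma(a,b,c)$, and (b) terms where derivatives hit the inner $\sqrt\mu$ in the moment integral, bounded via Cauchy--Schwarz in $\xi$; the derivative estimates \eqref{com-1-2}--\eqref{com-1-3} then follow by one more product rule together with $\pt_{x_j}\sqrt\mu = \tfrac{\eps}{2}(\pt_{x_j}U)\cdot\vp\sqrt\mu$ and the Prandtl scalings of $\nabla U$, exactly as you describe. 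The only cosmetic difference is that the paper avoids the double Leibniz expansion by computing $\pt^\al a - a_\al$ directly, but the resulting term structure is identical.
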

\begin{proof}
Recalling $a_\al,b_\al,c_\al$ defined in \eqref{PFal}. We have
\[\bega 
&[\pt^\al ,\P]f=\pt^\al (\P f)-\P(\pt^\al f)\\
&=\lw\{\pt^\al(a\sqrt\mu)-a_\al \sqrt\mu\rw\}+\lw\{\pt^\al (b\cdot\vp\sqrt\mu)-b_\al \cdot\vp\sqrt\mu\rw\}+\lw\{\pt^\al\lw(c\frac{|\vp|^2-3}{2}\sqrt\mu\rw)-c_\al \frac{|\vp|^2-3}{2}\sqrt\mu\rw\}\\
&=I_a+I_b+I_c.
\enda 
\]
where we denote, in this proof,
\[
\begin{cases}
I_a&=\pt^\al(a\sqrt\mu)-a_\al\sqrt\mu\\
I_b&=\pt^\al (b\cdot\vp\sqrt\mu)-b_\al \cdot\vp\sqrt\mu\\
I_c&=\pt^\al(c\frac{|\vp|^2-3}{2}\sqrt\mu)-c_\al \frac{|\vp|^2-3}{2}\sqrt\mu
\end{cases}.
\]
We will treat $I_a$ only, as the proof for $I_b, I_c$ is similar. We have 
\beq\label{ran-107}\bega 
I_a&=\pt^\al (a\sqrt\mu)-a_\al\sqrt\mu=\sum_{\substack{\beta+\gamma=\al\\
}}\frac{\al!}{\beta!\gamma!}\pt^\gamma a\cdot \pt^\beta \sqrt\mu-a_\al \sqrt\mu\\
&=(\pt^\al a-a_\al)\sqrt\mu+\sum_{\substack{\beta+\gamma=\al\\|\beta|\ge 1
}}\frac{\al!}{\beta!\gamma!}\pt^\gamma a\cdot\pt^\beta\sqrt\mu.\\
\enda 
\eeq
Now 
\beq\label{ran-107-1}
\pt^\al a-a_\al =\pt^\al\lw(\int_{\R^3}f\sqrt\mu d\xi\rw)-\int_{\R^3}\pt^\al f\sqrt\mu d\xi=\sum_{\substack{\beta+\gamma=\al\\|\beta|\ge 1}}\frac{\al!}{\beta!\gamma!}\int_{\R^3}\pt^\beta \sqrt\mu \cdot \pt^\gamma fd\xi .
\eeq
Combining \eqref{ran-107} and \eqref{ran-107-1}, we have 
\beq\label{ran-107-2}
I_a=\sum_{\substack{\beta+\gamma=\al\\|\beta|\ge 1}}\frac{\al!}{\beta!\gamma!}\lw\{\sqrt\mu\int_{\R^3}\pt^\beta \sqrt\mu \cdot \pt^\gamma fd\xi+\pt^\gamma a\cdot\pt^\beta\sqrt\mu\rw\}.
\eeq
This implies 
\[\bega 
\nl \la \vp \ra^m I_a\nr_{L^2_\xi}&\lesssim  \sum_{\substack{\beta+\gamma=\al\\|\beta|\ge 1}}\frac{\al!}{\beta!\gamma!}\lw\{
\nl \la\vp\ra^m\pt^\beta  \sqrt\mu\nr_{L^2_\xi}\nl \pt^\gamma f\nr_{L^2_\xi}+
\nl\la \vp\ra^m \pt^\beta \sqrt\mu
\nr_{L^2_\xi }\lw|\pt^\gamma a\rw|
\rw\},\\
\nl \la \vp \ra^m I_a\nr_{L^2_{x,\xi}}&\lesssim \sum_{\substack{\beta+\gamma=\al\\|\beta|\ge 1}}\frac{\al!}{\beta!\gamma!}\lw\{
\nl \la\vp\ra^m\pt^\beta  \sqrt\mu\nr_{L^\infty_x L^2_\xi} \nl \pt^\gamma f\nr_{L^2_{x,\xi}}+
\nl\la \vp\ra^m \pt^\beta \sqrt\mu
\nr_{ L^\infty_xL^2_\xi}\nl \pt^\gamma a\nr_{L^2}
\rw\}\\
\enda 
\]
The proof of \eqref{com-1-1} is complete. Now we show \eqref{com-1-2}. From \eqref{ran-107-2},  we have
\[\bega
\pt_{x_3}I_a&=\sum_{\substack{\beta+\gamma=\al\\|\beta|\ge 1}}\frac{\al!}{\beta!\gamma!}\lw\{\pt_{x_3}\sqrt\mu\int_{\R^3}\pt^\beta \sqrt\mu \cdot \pt^\gamma fd\xi+\pt^\gamma a\cdot\pt^\beta\pt_{x_3}\sqrt\mu\rw\}\\
&\quad+\sum_{\substack{\beta+\gamma=\al\\|\beta|\ge 1}}\frac{\al!}{\beta!\gamma!}\lw\{\sqrt\mu\int_{\R^3}\pt^\beta \sqrt\mu \cdot \pt^\gamma \pt_{x_3} fd\xi+\pt^\gamma \pt_{x_3}a\cdot\pt^\beta\sqrt\mu\rw\}\\
&\quad+\sum_{\substack{\beta+\gamma=\al\\|\beta|\ge 1}}\frac{\al!}{\beta!\gamma!}\sqrt\mu \int_{\R^3}\pt^\beta\pt_{x_3} \sqrt\mu \cdot \pt^\gamma fd\xi. \enda
\]
This implies 
\[\bega
&\nl\la\vp\ra^m \pt_{x_3}I_a
\nr_{L^2_\xi}\lesssim \sum_{\substack{\beta+\gamma=\al\\|\beta|\ge 1}}\frac{\al!}{\beta!\gamma!}\lw\{\nl \la\vp\ra^m\pt_{x_3}\sqrt\mu\nr_{L^2_\xi}
\nl \pt^\beta \sqrt\mu \nr_{L^2_\xi}\nl \pt^\gamma f\nr_{L^2_\xi}+\lw|\pt^\gamma a
\rw|\nl\la\vp\ra^m \pt^\beta \pt_{x_3}\sqrt\mu
\nr_{L^2_\xi}
\rw\}\\
&+ \sum_{\substack{\beta+\gamma=\al\\|\beta|\ge 1}}\frac{\al!}{\beta!\gamma!}\lw\{\nl\la\vp\ra^m
\sqrt\mu\nr_{L^2_\xi}\nl \pt^\beta \sqrt\mu \nr_{L^2_\xi}\nl  \pt^\gamma \pt_{x_3} f\nr_{L^2_\xi}
+\lw|\pt^\gamma \pt_{x_3}a\rw|\nl \pt^\beta\sqrt\mu\nr_{L^2_\xi}
\rw\}\\
&+\sum_{\substack{\beta+\gamma=\al\\|\beta|\ge 1}}\frac{\al!}{\beta!\gamma!}\nl \pt^\beta\pt_{x_3}\sqrt\mu\nr_{L^2_\xi}\nl\pt^\gamma f\nr_{L^2_\xi}.\enda
\]
Taking $L^2$ in $x\in\Omega$, we get
\[\bega
&\nl\la\vp\ra^m \pt_{x_3}I_a
\nr_{L^2_{x,\xi}}\\
%&\lesssim \sum_{\substack{\beta+\gamma=\al\\|\beta|\ge 1}}\frac{\al!}{\beta!\gamma!}\lw\{\nl \la\vp\ra^m\pt_{x_3}\sqrt\mu\nr_{L^\infty_x L^2_\xi}\nl \pt^\beta \sqrt\mu \nr_{L^\infty_x L^2_\xi}\nl \pt^\gamma f\nr_{L^2_{x,\xi}}+\nl \pt^\gamma a\nr_{L^2}\nl\la\vp\ra^m \pt^\beta \pt_{x_3}\sqrt\mu\nr_{L^\infty_x L^2_\xi}\rw\}\\&\quad+ \sum_{\substack{\beta+\gamma=\al\\|\beta|\ge 1}}\frac{\al!}{\beta!\gamma!}\lw\{\nl\la\vp\ra^m\sqrt\mu\nr_{L^\infty_x L^2_\xi}\nl \pt^\beta \sqrt\mu \nr_{L^\infty_x L^2_\xi}\nl  \pt^\gamma \pt_{x_3} f\nr_{L^2_{x,\xi}}+\nl \pt^\gamma \pt_{x_3}a\nr_{L^2}\nl \pt^\beta\sqrt\mu\nr_{L^\infty_x L^2_\xi}\rw\}\\&\quad+\sum_{\substack{\beta+\gamma=\al\\|\beta|\ge 1}}\frac{\al!}{\beta!\gamma!}\nl \pt^\beta\pt_{x_3}\sqrt\mu\nr_{L^\infty_x L^2_\xi}\nl\pt^\gamma f\nr_{L^2_{x,\xi}}\\
&\lesssim \sum_{\substack{\beta+\gamma=\al\\|\beta|\ge 1}}\frac{\al!}{\beta!\gamma!}\lw\{
\lw(
\eps\kappa^{-\frac12}
\nl \pt^\beta \sqrt\mu \nr_{L^\infty_x L^2_\xi}+\nl \pt^\beta\pt_{x_3}\sqrt\mu\nr_{L^\infty_x L^2_\xi}\rw)
\nl \pt^\gamma f\nr_{L^2_{x,\xi}}\rw\}\\
&\quad+ \sum_{\substack{\beta+\gamma=\al\\|\beta|\ge 1}}\frac{\al!}{\beta!\gamma!}\lw\{\nl \pt^\gamma a
\nr_{L^2}\nl\la\vp\ra^m \pt^\beta \pt_{x_3}\sqrt\mu
\nr_{L^\infty_x L^2_\xi}\rw\}\\
&\quad+ \sum_{\substack{\beta+\gamma=\al\\|\beta|\ge 1}}\frac{\al!}{\beta!\gamma!}\lw\{
\nl \pt^\beta \sqrt\mu \nr_{L^\infty_x L^2_\xi}\nl  \pt^\gamma \pt_{x_3} f\nr_{L^2_{x,\xi}}
+\nl \pt^\gamma \pt_{x_3}a\nr_{L^2}\nl \pt^\beta\sqrt\mu\nr_{L^\infty_x L^2_\xi}
\rw\}.\\
\enda
\]
Here we use Proposition \ref{local-1} in the last estimate. The proof of \eqref{com-1-2} is complete. The proof of \eqref{com-1-3} is similar, and we skip the details.
\end{proof}
%%%%%%%%%%%%%%%%%
\begin{corollary} If $\P f=0$, there holds 
\beq\label{ran-109}
\bega
\nl\nu^{\frac 12} \pt_{x_3}[\pt^\al, \P]f\nr_{L^2_{\xi}}
&\lesssim \sum_{\substack{\beta+\gamma=\al\\|\beta|\ge 1}}\frac{\al!}{\beta!\gamma!} 
\lw(
\eps\kappa^{-\frac12}
\nl \pt^\beta \sqrt\mu \nr_{L^\infty_x L^2_\xi}+\nl \pt^\beta\pt_{x_3}\sqrt\mu\nr_{L^\infty_x L^2_\xi}\rw)
 \nl \pt^\gamma f\nr_{L^2_{\xi}}\\
&\quad+ \sum_{\substack{\beta+\gamma=\al\\|\beta|\ge 1}}\frac{\al!}{\beta!\gamma!}\nl \nu^{\frac 12}\pt^\beta \lw\{(1,\vp,|\vp|^2) \sqrt\mu\rw\}\nr_{L^\infty_x L^2_\xi}\nl \pt^\gamma \pt_{x_3} f\nr_{L^2_{\xi}},\\
\enda 
\eeq
%%%%
\beq\label{ran-109-1}
\bega
\nl\nu^{\frac 12} \pt_{x_i}[\pt^\al, \P]f\nr_{L^2_{\xi}}
&\lesssim \sum_{\substack{\beta+\gamma=\al\\|\beta|\ge 1}}\frac{\al!}{\beta!\gamma!} 
\lw(
\eps
\nl \pt^\beta \sqrt\mu \nr_{L^\infty_x L^2_\xi}+\nl \pt^\beta\pt_{x_i}\sqrt\mu\nr_{L^\infty_x L^2_\xi}\rw)
 \nl \pt^\gamma f\nr_{L^2_{\xi}}\\
&\quad+ \sum_{\substack{\beta+\gamma=\al\\|\beta|\ge 1}}\frac{\al!}{\beta!\gamma!}\nl \nu^{\frac 12}\pt^\beta \lw\{(1,\vp,|\vp|^2) \sqrt\mu\rw\}\nr_{L^\infty_x L^2_\xi}\nl \pt^\gamma \pt_{x_i} f\nr_{L^2_{\xi}}.\\
\enda 
\eeq
There also hold the analytic estimates
\beq\label{P-com-pt3}
\bega
&\sum_{\al\in\mathbb N_0^3}\frac{\tau^{|\al|}}{\al!} \nl\nu^{\frac 12} \pt_{x_3}[\pt^\al, \P]f\nr_{L^2_{\xi}}\\
&\lesssim \eps\kappa^{-\frac 12}\sum_{\al\in\mathbb N_0^3}\frac{\tau^{|\al|}}{\al!}\nl \pt^\al f
\nr _{L^2_\xi}+\eps\kappa^{\frac 12}\sum_{\al\in\mathbb N_0^3}\frac{\tau^{|\al|}}{\al!}\nl \pt^\al \pt_{x_3} f
\nr _{L^2_\xi},
\enda 
\eeq
and 
\beq\label{P-com-pt12}
\bega
&\sum_{\al\in\mathbb N_0^3}\frac{\tau^{|\al|}}{\al!} \nl\nu^{\frac 12} \pt_{x_i}[\pt^\al, \P]f\nr_{L^2_{\xi}}\\
&\lesssim \eps \sum_{\al\in\mathbb N_0^3}\frac{\tau^{|\al|}}{\al!}\nl \pt^\al f
\nr _{L^2_\xi}+\eps\kappa^{\frac 12}\sum_{\al\in\mathbb N_0^3}\frac{\tau^{|\al|}}{\al!}\nl \pt^\al \pt_{x_i} f
\nr _{L^2_\xi}.
\enda \eeq
\end{corollary}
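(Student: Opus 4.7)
The corollary is essentially Lemma~\ref{com-1} specialized to the macroscopic-free subspace $\ker\P$, combined with a summation argument to pass to analytic norms. The plan is as follows. Since $\P f=0$ identically in $(t,x)$, we have $a=b=c=0$, so all terms involving $\pt^\gamma(a,b,c)$ and $\pt^\gamma\pt_{x_i}(a,b,c)$ in \eqref{com-1-2} and \eqref{com-1-3} vanish, leaving only the first two lines of each estimate. To convert the polynomial weight $\la\vp\ra^m$ into $\nu^{1/2}$ it suffices to take $m$ large enough that $\la\vp\ra^m\sqrt\mu$ dominates $\nu^{1/2}\sqrt\mu$ pointwise in $\xi$, which only changes a universal constant. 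This yields \eqref{ran-109} and \eqref{ran-109-1} immediately.

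For the analytic estimates \eqref{P-com-pt3} and \eqref{P-com-pt12}, I would multiply \eqref{ran-109} (resp.\ \eqref{ran-109-1}) by $\tau^{|\al|}/\al!$, recognize the resulting expression as a convolution over $\beta+\gamma=\al$, and apply the discrete Young inequality exactly as in the proofs of Proposition~\ref{com-est} and Proposition~\ref{G-ff}. The sums in $\beta$ with $|\beta|\ge 1$ are then controlled by the local-Maxwellian analyticity estimates already in hand: Proposition~\ref{local-1} gives $\sum_{|\beta|\ge 1}(\tau_0^{|\beta|}/\beta!)\nl\pt^\beta\sqrt\mu\nr_{L^\infty_x L^2_\xi}\lesssim \eps\sqrt\kappa$, while Lemma~\ref{M-local-2} encodes the shear-layer scaling through $\sum_{\beta}(\tau_0^{|\beta|}/\beta!)\nl\pt^\beta\pt_{x_3}\sqrt\mu\nr_{L^\infty_x L^2_\xi}\lesssim \eps\kappa^{-1/2}$ for the normal direction, and $\lesssim \eps$ for the tangential directions $i\in\{1,2\}$. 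The free sum in $\gamma$ is then exactly the analytic norm of $\pt^\gamma f$ or $\pt^\gamma\pt_{x_i} f$ appearing on the right of \eqref{P-com-pt3}--\eqref{P-com-pt12}.

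Matching powers of $\eps$ and $\kappa$ reproduces the two coefficients on each line. In \eqref{P-com-pt3} the first summand on the right is dominated by the $\nl\pt^\beta\pt_{x_3}\sqrt\mu\nr$ contribution, since the alternative $\eps\kappa^{-1/2}\nl\pt^\beta\sqrt\mu\nr$ only produces a term of order $\eps^2$ and is absorbed; the coefficient $\eps\kappa^{1/2}$ on the second summand arises from applying the $\eps\sqrt\kappa$ bound to the second line of \eqref{ran-109}. The estimate \eqref{P-com-pt12} is analogous, with $\kappa^{-1/2}$ replaced by $1$ because tangential derivatives of $U$ are $O(1)$ rather than $O(\kappa^{-1/2})$ for the Prandtl layer.

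The main obstacle, if any, is purely combinatorial bookkeeping: one must verify that the discrete Young inequality applies with the precise analytic weights $\AA_\al$ of Section~\ref{func-space-BE}, which reduces to the algebraic identities of the type \eqref{algebra-est}. Once this is secured, no further structural input (conservation laws, boundary conditions, or trace estimates) is needed; the whole argument is an exercise in summation built on top of Lemma~\ref{com-1} and the pointwise Maxwellian analyticity bounds of Proposition~\ref{local-1} and Lemma~\ref{M-local-2}.
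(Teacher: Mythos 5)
Your argument matches the paper's own proof: \eqref{ran-109}--\eqref{ran-109-1} are obtained by specializing Lemma \ref{com-1} to $\P f\equiv 0$ (killing the $(a,b,c)$ terms) and reading off the pointwise-in-$x$ estimate, and \eqref{P-com-pt3}--\eqref{P-com-pt12} then follow by multiplying by $\tau^{|\al|}/\al!$ and summing, with the $\beta$-sums controlled via Proposition \ref{local-1} and Lemma \ref{M-local-2}. The only minor over-complication is your appeal to the discrete Young inequality and to the $\AA_\al$ weights of Section \ref{func-space-BE}: here the weight is the plain $\ell^1$ weight $\tau^{|\al|}/\al!$, under which $\al!/(\beta!\gamma!)$ cancels exactly and the Cauchy product factorizes with no combinatorial loss, so no Young-type argument or identities of the type \eqref{algebra-est} are needed.
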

\begin{proof} The proof follows directly from the previous proposition, using the fact that $(a,b,c)\equiv 0$ and skipping the $x$ integration. The inequality \eqref{P-com-pt3} follows from Proposition \ref{local-1} and Lemma \eqref{M-local-2}. Similarly, \eqref{P-com-pt12}  follows from \eqref{ran-109-1} and \eqref{M-local-2}.
\end{proof}

\begin{proposition}\label{com-2} Assume \eqref{Pf-def}.  For $\tau\le \tau_0$ small defined in Proposition \ref{local-1}, we have 
\beq\label{ptP}
\|\pt\P f\|_{L^2_x}\lesssim \|\pt f\|_{L^2}+\eps\sqrt\kappa \|f\|_{L^2}\qquad\forall \pt\in \{\eps\pt_t,\pt_{x_1},\pt_{x_2}\}.
\eeq
Moreover, there holds the analytic estimate
\beq\label{ptP-a}
\sum_{\al\in \mathbb N_0^3}\frac{\tau^{|\al|}}{\al!}\nl \pt^\al \P f\nr_{L^2_x}\lesssim\sum_{\al\in \mathbb N_0^3}\frac{\tau^{|\al|}}{\al!}\nl\pt^\al f
\nr_{L^2_{x,\xi}},
\eeq
and 
\beq\label{ptP-a1}\lw(\sum_{\al\in \mathbb N_0^3}\AA_\al^2\nl \pt^\al \P f\nr_{L^2_x}^2\rw)^{1/2}\lesssim\lw(\sum_{\al\in \mathbb N_0^3}\AA_\al^2\nl\pt^\al f
\nr_{L^2}^2\rw)^{1/2}.
\eeq
\end{proposition}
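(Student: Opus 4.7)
The plan is to start from the commutator identity $\pt^\al\P f = \P(\pt^\al f) + [\pt^\al,\P]f$ and bound the two pieces separately. The main term $\P(\pt^\al f)$ has coefficients that are Gaussian-weighted moments of $\pt^\al f$, so Cauchy--Schwarz in $\xi$ (using the exponential decay of $\sqrt\mu$ built into the local Maxwellian bounds of Proposition \ref{local-1}) gives $\|\P(\pt^\al f)\|_{L^2_x}\lesssim \|\pt^\al f\|_{L^2_{x,\xi}}$, which yields the main term on the right-hand side in each of \eqref{ptP}, \eqref{ptP-a}, and \eqref{ptP-a1}.

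For the pointwise single-derivative bound \eqref{ptP}, the commutator $[\pt,\P]f$ has already been expanded in Lemma \ref{com-1}; since $|\al|=1$, only the case $|\beta|=1$, $|\gamma|=0$ appears, so
\[
\|[\pt,\P]f\|_{L^2_x} \lesssim \|\pt\sqrt\mu\|_{L^\infty_x L^2_\xi}\bigl(\|f\|_{L^2_{x,\xi}}+\|(a,b,c)\|_{L^2}\bigr).
\]
Bounding $\|(a,b,c)\|_{L^2}\lesssim \|f\|_{L^2_{x,\xi}}$ by the same Cauchy--Schwarz argument, the remaining task is to extract the prefactor $\eps\sqrt\kappa$. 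This is exactly the content of Lemma \ref{M-local-2}: for $\pt\in\{\pt_{x_1},\pt_{x_2}\}$ the parallel derivatives of the shear-type profile $U$ are of size $\sqrt\kappa$ (the leading shear $u_e^0+u_p^0$ carries no $x_\parallel$ dependence, so $\pt_{x_i}U$ is only produced by the $\sqrt\kappa$-order correctors), and the chain rule contributes one extra $\eps$ from $\mu=\mu_0(\xi-\eps U)$. For $\pt=\eps\pt_t$ the chain rule yields $\eps\cdot\eps\pt_t U\lesssim \eps$, which is absorbed into $\eps\sqrt\kappa$ in the working regime $\eps\sim\kappa^N$ with $N\ge 1$.

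For the analytic estimates \eqref{ptP-a} and \eqref{ptP-a1}, the same commutator identity is summed against $\frac{\tau^{|\al|}}{\al!}$ (respectively $\AA_\al$) and the resulting double sum is factored by the discrete Young inequality. Using the combinatorial inequality
\[
\frac{\al!\,\AA_\al}{\beta!\,\AA_\beta\,\gamma!\,\AA_\gamma}\lesssim \la\beta\ra^{-9}+\la\gamma\ra^{-9}
\]
(which is already exploited in the proof of Corollary \ref{bulk-est1} via the algebraic weight $\la\al\ra^{9}$ in $\AA_\al$), the commutator contribution factors into
\[
\Bigl(\sum_{|\beta|\ge 1}\AA_\beta\bigl\|\pt^\beta\{(1,\vp,|\vp|^2)\sqrt\mu\}\bigr\|_{L^\infty_x L^2_\xi}\Bigr)\cdot\Bigl(\sum_\gamma \AA_\gamma\|\pt^\gamma f\|_{L^2_{x,\xi}}\Bigr),
\]
and Proposition \ref{local-1} bounds the first factor by $\eps\sqrt\kappa\ll 1$, so the commutator is subsumed into the main term on the right.

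I expect no serious technical obstacle: the heart of the argument---the gain $\eps\sqrt\kappa$ from differentiating a Maxwellian centered at $\eps U$ for a shear-type profile---is already packaged in Lemma \ref{M-local-2} and Proposition \ref{local-1}, and the moment-to-$f$ reduction $\|(a,b,c)\|_{L^2}\lesssim \|f\|_{L^2_{x,\xi}}$ is immediate from \eqref{Pf-def}. The only care needed is the bookkeeping of the $\AA_\al$ weights to cleanly execute the discrete Young inequality, exactly as in the earlier commutator estimates of Section~\ref{func-space-BE}.
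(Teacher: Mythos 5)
Your proposal is correct in substance and identifies all the right ingredients (Leibniz rule in the $\xi$-integral, Cauchy--Schwarz in $\xi$, the $\eps\sqrt\kappa$ gain from Proposition \ref{local-1}, discrete Young inequality for the analytic weights), but it takes a more circuitous route than the paper. The paper does not pass through the commutator decomposition $\pt^\al \P f = \P\pt^\al f + [\pt^\al,\P]f$ and Lemma \ref{com-1} at all; it simply Leibniz-expands the moment directly,
\[
\pt^\al a=\sum_{\beta\le\al}\binom{\al}{\beta}\int_{\R^3}\pt^\beta f\,\pt^{\al-\beta}\sqrt\mu\,d\xi,
\]
applies Cauchy--Schwarz in $\xi$ to get $\|\pt^\al a\|_{L^2_x}\lesssim\sum_{\beta\le\al}\binom{\al}{\beta}\|\pt^\beta f\|_{L^2}\|\pt^{\al-\beta}\sqrt\mu\|_{L^\infty_x L^2_\xi}$, and then invokes Proposition \ref{local-1} to split off the $\beta=\al$ term (prefactor $1$) from the rest (prefactor $\eps\sqrt\kappa$). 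This is shorter and avoids any self-reference.

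Your detour through Lemma \ref{com-1} has one unflagged logical step: the right-hand side of that lemma contains $\|\pt^\gamma(a,b,c)\|_{L^2}$, which is precisely $\|\pt^\gamma\P f\|_{L^2_x}$ — the quantity being bounded, at lower order. You silently replace it by $\|\pt^\gamma f\|_{L^2_{x,\xi}}$ in the factored commutator contribution. To justify that you need either an absorption step (move the $\eps\sqrt\kappa\cdot\text{LHS}$ term to the left, using that the prefactor is $\ll 1$) or an induction on $|\al|$, or simply to expand $\pt^\gamma a$ by Leibniz once more — which lands you right back at the paper's direct computation. None of these is stated. The conclusion is sound, but the paper's direct route sidesteps this bookkeeping entirely and is the cleaner argument.
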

\begin{proof} We show the bound for $a$ only, as the other cases are similar. We have \[
a(t,x)=\int_{\R^3}f(t,x,\xi)\sqrt\mu(\xi)d\xi\]
Take $\pt\in \{\eps\pt_t,\pt_{x_1},\pt_{x_2}\}$ and $\al\in \mathbb N_0^3$, we have 
\beq\label{ran-212}
\bega 
\pt^\al a&=\sum_{\beta\le \al} \binom{\al}{\beta}\int_{\R^3} \pt^\beta f\pt^{\al-\beta}\sqrt\mu d\xi\lesssim \sum_{\beta\le \al}\frac{\al!}{\beta!(\al-\beta)!}\nl \pt^\beta f
\nr_{L^2_\xi}\nl \pt^{\al-\beta}\sqrt\mu\nr_{L^2_\xi}\\
&\lesssim \sum_{\beta\le \al}\frac{\al!}{\beta!(\al-\beta)!}\nl \pt^\beta f
\nr_{L^2_\xi}\nl \pt^{\al-\beta}\sqrt\mu\nr_{L^\infty_xL^2_\xi}\\
\nl\pt^\al a\nr_{L^2_x}&\lesssim  \sum_{\beta\le \al}\frac{\al!}{\beta!(\al-\beta)!}\nl \pt^\beta f
\nr_{L^2}\nl \pt^{\al-\beta}\sqrt\mu\nr_{L^\infty_xL^2_\xi}.\\
\enda 
\eeq
Using Proposition \ref{local-1}, we get $\nl \pt^{\al-\beta}\sqrt\mu\nr_{L^\infty_xL^2_\xi}\lesssim 1_{\{|\al-\beta|\ge 1\}}\eps\sqrt\kappa+1_{\{\beta=\al\}}$. Combining this with \eqref{ran-212}, we get
\[
\nl \pt^\al a\nr_{L^2_x}\lesssim \|\pt^\al f\|_{L^2}+\eps\sqrt\kappa \sum_{\beta<\al}\|\pt^\beta f\|_{L^2}.
\]
This shows the inequality \eqref{ptP-a1}. As for the second inequality, we have 
\[\bega 
\sum_{\al\in\mathbb N_0^3}\frac{\tau^{|\al|}}{\al!}\nl \pt^\al a\nr_{L^2_x}&\lesssim \sum_{\al\in\mathbb N_0^3}\sum_{\beta\le \al}\frac{\tau^{|\beta|}}{\beta!}\|\pt^\beta f\|_{L^2}\cdot \frac{\tau^{|\al-\beta|}}{|\al-\beta|!}\nl\pt^{\al-\beta}\sqrt\mu
\nr_{L^\infty_xL^2_\xi}\cdot \frac{\al!}{\beta!(\al-\beta)!}\cdot\frac{\beta!|\al-\beta|!}{\al!}\\
&\lesssim \sum_{\al\in\mathbb N_0^3}\frac{\tau^{|\al|}}{\al!}\nl \pt^\al f\nr_{L^2},\enda 
\]
again, by using Proposition \ref{local-1}. This finishes the proof of \eqref{ptP-a}. The second inequality is done similarly. The proof is complete. 
\end{proof}
%%%%%%%%%
\begin{proposition}\label{PD} For all $m>0$, there hold the following estimates
\beq\label{PD1}
\sum_{\al\in \mathbb N_0^3}\frac{\tau^{|\al|}}{\al!}\nl\la\vp\ra^m[\pt^\al, \P]f\nr_{L^2_{x,\xi}}\lesssim\eps \kappa^{\frac12}\sum_{\al\in \mathbb N_0^3}\frac{\tau^{|\al|}}{\al!}\nl\pt^\al f
\nr_{L^2_{x,\xi}},
\eeq
and
\beq\label{PD2}
\lw\{\sum_{\al\in \mathbb N_0^3}\AA_\al^2\nl\la\vp\ra^m[\pt^\al, \P]f\nr_{L^2_{x,\xi}}^2\rw\}^{1/2}\lesssim\eps \kappa^{\frac12}\cdot \lw\{\sum_{\al\in \mathbb N_0^3}\AA_\al^2\|\pt^\al f\|_{L^2_{x,\xi}}^2\rw\}^{1/2}.
\eeq
\end{proposition}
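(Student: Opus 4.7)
The plan is to start from the pointwise estimate of Lemma \ref{com-1} for $[\pt^\al,\P]f$, and convert the two sums (one in $f$ and one in the hydrodynamic moments $(a,b,c)$) into a convolution in the multi-index $\al = \beta+\gamma$, which we control by discrete Young's inequality after exploiting the smallness of the Maxwellian factor with $|\beta|\ge 1$.

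First I would multiply the pointwise inequality \eqref{com-1-1} by $\frac{\tau^{|\al|}}{\al!}$ (resp.\ by $\AA_\al(t)$), use the factorial identity $\frac{\tau^{|\al|}}{\al!}\cdot\frac{\al!}{\beta!\gamma!}=\frac{\tau^{|\beta|}}{\beta!}\cdot\frac{\tau^{|\gamma|}}{\gamma!}$, and invoke Proposition \ref{local-1} to produce the decisive gain
\[
\sum_{|\beta|\ge 1}\frac{\tau_0^{|\beta|}}{\beta!}\nl \la\vp\ra^m \pt^\beta\{(1,\vp,|\vp|^2)\sqrt\mu\}\nr_{L^\infty_x L^2_\xi}\lesssim \eps\sqrt\kappa.
\]
This handles the first term in \eqref{com-1-1}: the resulting double sum factors into a sum in $\beta$ (bounded by $\eps\sqrt\kappa$) times the analytic norm of $\pt^\gamma f$, which is exactly the right-hand side of \eqref{PD1}. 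For the second term in \eqref{com-1-1}, which involves $\pt^\gamma(a,b,c)$, I would invoke Proposition \ref{com-2} (in particular \eqref{ptP-a}) to re-express its analytic sum in terms of the analytic sum of $\pt^\gamma f$ itself, thereby closing the estimate.

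For the $\ell^2$ version \eqref{PD2} the same structure is used, but instead of multiplying by $\frac{\tau^{|\al|}}{\al!}$ one multiplies by $\AA_\al(t)^2$ and applies discrete Young's inequality. Here the algebraic identity \eqref{algebra-est}
\[
\frac{\al!\,\AA_\al}{\beta!\,\AA_\beta\,\gamma!\,\AA_\gamma}\lesssim \la\gamma\ra^{-9}1_{\{|\beta|\ge |\gamma|\}}+\la\beta\ra^{-9}1_{\{|\gamma|\ge |\beta|\}}
\]
ensures that the $\beta$-sum (with $|\beta|\ge 1$) converges in $\ell^1$, so that $\ell^1 \ast \ell^2 \subset \ell^2$ gives the desired $\ell^2_\al$ bound; for the moment piece I again invoke Proposition \ref{com-2}, specifically \eqref{ptP-a1}, to pass from $\nl \pt^\gamma(a,b,c)\nr_{L^2}$ back to $\nl\pt^\gamma f\nr_{L^2_{x,\xi}}$ with constants independent of $\gamma$.

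The main obstacle, though essentially mechanical, is bookkeeping: one must carefully distinguish the case $|\beta|\ge 1$ (where the $\eps\sqrt\kappa$ smallness of $\pt^\beta\sqrt\mu$ from Proposition \ref{local-1} is available) from the case $\beta=0$ (which is excluded by the commutator structure, as visible from the derivation \eqref{ran-107}--\eqref{ran-107-2}), and to ensure that every factor in the convolution is paired with the correct weight so that the $\la\al\ra^9$ weight of $\AA_\al$ is absorbed using \eqref{algebra-est}. Once this combinatorial step is done, the estimates follow from the discrete Young inequality together with Propositions \ref{local-1} and \ref{com-2}.
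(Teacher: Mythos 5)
Your proposal is correct and follows essentially the same route as the paper: multiply the pointwise commutator bound from Lemma \ref{com-1} by the analytic weight, factor the sum via $\frac{\tau^{|\al|}}{\al!}\cdot\frac{\al!}{\beta!\gamma!}=\frac{\tau^{|\beta|}}{\beta!}\cdot\frac{\tau^{|\gamma|}}{\gamma!}$, use Proposition \ref{local-1} to extract the $\eps\sqrt\kappa$ smallness from the $|\beta|\ge 1$ Maxwellian factors, and close the moment terms with Proposition \ref{com-2}. The $\ell^2$ version via discrete Young's inequality with the weight inequality \eqref{algebra-est} is likewise the intended argument, which the paper's terse "the result follows directly from Proposition \ref{local-1} and Proposition \ref{com-2}" leaves implicit.
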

\begin{proof} Using Lemma \ref{com-1}, we get
\[\bega
&\frac{\tau^{|\al|}}{\al!}\nl\la\vp\ra^m [\pt^\al, \P]f\nr_{L^2_{x,\xi}}\\
&\lesssim \sum_{\substack{\beta+\gamma=\al\\|\beta|\ge 1}}\frac{\tau^{|\beta|}}{\beta!}
\nl \la\vp\ra^m\pt^\beta \lw\{(1,\vp,|\vp|^2) \sqrt\mu\rw\}\nr_{ L^2_\xi L^\infty_x} \frac{\tau^{|\gamma|}}{\gamma!}\nl \pt^\gamma f\nr_{L^2_{x,\xi}}\\
&\quad+\sum_{\substack{\beta+\gamma=\al\\|\beta|\ge 1}}\frac{\tau^{|\beta|}}{\beta!}\nl\la \vp\ra^m\pt^\beta \lw\{(1,\vp,|\vp|^2) \sqrt\mu\rw\}
\nr_{L^\infty_x L^2_\xi }\frac{\tau^{|\gamma|}}{\gamma!}\nl \pt^\gamma (a,b,c) \nr_{L^2}.
\enda 
\]
The result follows directly from Proposition \ref{local-1} and Proposition \ref{com-2}.
\end{proof}
%%%%%%%%
\begin{proposition}\label{com-1} There hold 
\beq\label{ran-112}
\sum_{\al\in\mathbb N_0^3}\frac{\tau^{|\al|}}{\al!}\nl \nu^{-\frac 1 2}[L,\pt^\al]f\nr_{L^2_\xi}\lesssim \eps\kappa^{\frac12}\sum_{\al\in\mathbb N_0^3}\frac{\tau^{|\al|}}{\al!} \nl\nu^{\frac 12}\pt^\al f
\nr_{L^2_\xi},
\eeq
and 
\beq\label{ran-112-1}
\sum_{\al\in\mathbb N_0^3}\frac{\tau^{|\al|}}{\al!}\nl \nu^{-1}[L,\pt^\al]f\nr_{L^\infty_\xi}\lesssim \eps\kappa^{\frac12}\sum_{\al\in\mathbb N_0^3}\frac{\tau^{|\al|}}{\al!} \lw(\nl\pt^\al f
\nr_{L^\infty_\xi}+\nl\pt^\al f
\nr_{L^2_\xi}
\rw).\eeq
There also hold 
\[\bega 
\nl\nu^{-\frac12} [L,\pt_{x_3}]f 
\nr_{L^2_\xi}
&\lesssim  \eps \kappa^{\frac12}\nl\nu^{\frac 12}\pt_{x_3} f
\nr_{L^2_\xi}+\eps\kappa^{-\frac 12} \nl \nu^{\frac 12} f\nr_{L^2_\xi},\\
%%%%%%%
\nl\nu^{-\frac12}[L, \pt_{x_i}]f 
\nr_{L^2_\xi}
&\lesssim  \eps \kappa^{\frac12}\nl\nu^{\frac 12}\pt_{x_i} f
\nr_{L^2_\xi}+\eps \nl\nu^{\frac 12} f\nr_{L^2_\xi},\qquad 1\le i\le 2\\
\nl\nu^{-1}[L,\pt_{x_3}]f
\nr_{L^\infty_\xi}&\lesssim \eps\kappa^{\frac12}\lw(\nl\pt_{x_3}f
\nr_{L^\infty_\xi}+\nl \pt_{x_3}f
\nr_{L^2_\xi}
\rw)+\eps \kappa^{-\frac12}\lw(\nl f
\nr_{L^\infty_\xi}+\nl f
\nr_{L^2_\xi}
\rw),\\
\nl\nu^{-1}[L,\pt_{x_i}]f
\nr_{L^\infty_\xi}&\lesssim \eps\kappa^{\frac12}\lw(\nl\pt_{x_i}f
\nr_{L^\infty_\xi}+\nl \pt_{x_i}f
\nr_{L^2_\xi}
\rw)+\eps \kappa^{-\frac12}\lw(\nl f
\nr_{L^\infty_\xi}+\nl f
\nr_{L^2_\xi}
\rw),\quad 1\le i\le 2.\enda 
\]
\end{proposition}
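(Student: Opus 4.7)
The plan is to recycle the decomposition of $[L,\pt^\al]f$ already performed in \eqref{L-alpha-com} of Proposition \ref{com-est}, which splits the commutator into eight pieces $T_{\al,1},\ldots,T_{\al,8}$ according to which of the three $\sqrt\mu$ factors in the definition \eqref{L-def} of $L$ receives a nontrivial derivative. Since $[L,\pt^\al]f$ vanishes when $|\al|=0$, only indices with $|\beta|+|\gamma|\ge 1$ contribute, and each such term carries at least one factor $\pt^\beta\sqrt\mu$ or $\pt^\gamma\sqrt\mu$ with $|\beta|+|\gamma|\ge 1$. This is exactly where the smallness $\eps\kappa^{1/2}$ is generated: by Proposition \ref{local-1},
\[
\sum_{|\al|\ge 1}\frac{\tau_0^{|\al|}}{\al!}\nl e^{p_0|\xi|^2}\pt^\al\sqrt\mu\nr_{L^\infty_x L^\infty_\xi}\lesssim \eps\kappa^{1/2}.
\]

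First I would prove \eqref{ran-112}. For each term $T_{\al,i}$, I would apply the velocity-bilinear bounds of Propositions \ref{BGR}, \ref{BGR1}, \ref{BGR2} to get an estimate of the form
\[
\nl\nu^{-1/2}T_{\al,i}\nr_{L^2_\xi}\lesssim \sum_{\beta+\gamma+\vr=\al}\frac{\al!}{\beta!\gamma!\vr!}\,M_\beta\,M_\gamma\,\lw(\nl\sqrt\nu\,\IP\pt^\vr f\nr_{L^2_\xi}+\nl\sqrt\nu\,[\P,\pt^\vr]f\nr_{L^2_\xi}\rw),
\]
where $M_\sigma$ is shorthand for $\nl e^{p_0|\xi|^2}\pt^\sigma\sqrt\mu\nr_{L^\infty_\xi}$ or the kernel-weighted version in Proposition \ref{BGR2}, and at least one index is nonzero. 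Multiplying by $\tau^{|\al|}/\al!$, using $\frac{\al!}{\beta!\gamma!\vr!}\cdot\frac{\tau^{|\al|}}{\al!}\le \frac{\tau^{|\beta|}}{\beta!}\cdot\frac{\tau^{|\gamma|}}{\gamma!}\cdot\frac{\tau^{|\vr|}}{\vr!}$, and summing in $\al$ via the discrete Young inequality turns the convolution into a product of three series. Two of these are bounded by $\eps\kappa^{1/2}$ through Proposition \ref{local-1} (with at least one nontrivial derivative saved), while the third is controlled by $\sum_\vr\frac{\tau^{|\vr|}}{\vr!}\nl\nu^{1/2}\pt^\vr f\nr_{L^2_\xi}$, after absorbing the $[\P,\pt^\vr]$ commutator via Proposition \ref{PD}. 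This yields \eqref{ran-112}. For \eqref{ran-112-1} I would repeat the argument with Lemma \ref{bili-2} in place of Lemma \ref{bili-appen1} to handle the $L^\infty_\xi$ output; the interpolation forces both $L^\infty$ and $L^2$ norms of $\pt^\al f$ on the right-hand side.

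For the single-derivative statements I would write
\[
[L,\pt_{x_i}]f=\int_{\R^3\times\S^2}|(\xi-\xi_\star)\cdot\omega|\,\pt_{x_i}\{\sqrt\mu(\xi_\star)\sqrt\mu(\xi')-\cdots\}\,f(\cdot)\,d\omega d\xi_\star,
\]
schematically four terms with a single $\pt_{x_i}$ falling on one $\sqrt\mu$. Applying the same bilinear bounds from Propositions \ref{BGR}--\ref{BGR2} reduces matters to controlling $\nl\pt_{x_i}\sqrt\mu\nr$ in appropriate norms, which is exactly the content of Lemma \ref{M-local-2}. For $i\in\{1,2\}$ this gives $\eps$, while for $i=3$ it gives $\eps\kappa^{-1/2}$; combined with a second $\sqrt\mu$ from the unhit leg of the bilinear expression (bounded by $O(1)$), this produces the scalings $\eps$ and $\eps\kappa^{-1/2}$ in front of $\nl\nu^{1/2}f\nr_{L^2_\xi}$. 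The term in which the derivative is distributed through Leibniz onto $f$ yields the $\eps\kappa^{1/2}\nl\nu^{1/2}\pt_{x_i}f\nr_{L^2_\xi}$ piece by the $|\al|=0$ version of \eqref{ran-112} applied to $\pt_{x_i}f$ in place of $f$. The $L^\infty_\xi$ versions follow identically using Lemma \ref{bili-2}.

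The main obstacle, which is already implicit in Proposition \ref{com-est}, is bookkeeping the asymmetry between tangential and normal derivatives of the local Maxwellian: since $U$ contains the Prandtl fast variable $x_3/\sqrt{\eta_0\kappa}$, the pointwise analytic bound on $\pt_{x_3}\sqrt\mu$ degrades to $\eps\kappa^{-1/2}$ whereas $\pt_{x_1},\pt_{x_2},\eps\pt_t$ give $\eps$ (Lemma \ref{M-local-2}). Propagating this dichotomy through the eight-term decomposition \eqref{L-alpha-com} without introducing spurious losses---in particular, keeping the analytic series summable by balancing the discrete Young inequality against the weights $\AA_\al$---is the delicate step; everything else is a direct application of the bilinear lemmas and Proposition \ref{local-1}.
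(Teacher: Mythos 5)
Your argument for \eqref{ran-112} reaches the right conclusion, but it takes a structurally heavier route than the paper. The paper writes $\nu^{-1/2}[L,\pt^\al]f = A_\al+B_\al+C_\al+D_\al$ (four terms, Leibniz on the three Maxwellian factors only) and applies Propositions \ref{BGR}, \ref{BGR1}, \ref{BGR2} directly with $T_\vr=\pt^\vr f$, then sums with the weights $\tau^{|\al|}/\al!$ against Proposition \ref{local-1}. You instead recycle the eight-term decomposition \eqref{L-alpha-com}, which carries the extra split $\pt^\vr\IP f = \IP\pt^\vr f + [\P,\pt^\vr]f$, and then re-absorb the commutator $[\P,\pt^\vr]$ via Proposition \ref{PD}. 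That split is useful in Proposition \ref{com-est} precisely because one wants to pair against the dissipation $\nl\sqrt\nu\,\IP f_\al\nr$, but here the target \eqref{ran-112} already retains the full $\nl\nu^{1/2}\pt^\al f\nr_{L^2_\xi}$ on the right, so the detour buys nothing and costs a use of Proposition \ref{PD} that in fact you would first have to restate pointwise in $x$ ($L^2_\xi$ rather than $L^2_{x,\xi}$, which is the form in which \eqref{ran-112} is written). This is a workable but needlessly roundabout version of the paper's argument.

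There is, however, a genuine misconception in your treatment of the single-derivative estimates. You attribute the term $\eps\kappa^{1/2}\nl\nu^{1/2}\pt_{x_i}f\nr_{L^2_\xi}$ to ``the term in which the derivative is distributed through Leibniz onto $f$,'' citing ``the $|\al|=0$ version of \eqref{ran-112}.'' Neither part of this is right. Since $L$ depends on $x$ only through its coefficients (the $\sqrt\mu$, $\mu$, $\nu$ factors), the commutator $[L,\pt_{x_i}]f = L(\pt_{x_i}f)-\pt_{x_i}(Lf)$ is exactly \emph{minus} the coefficient derivative of $L$ acting on $f$; the Leibniz term where $\pt_{x_i}$ lands on $f$ is the one that cancels. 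Thus $[L,\pt_{x_i}]f$ contains no $\pt_{x_i}f$ whatsoever, and the piece $\eps\kappa^{1/2}\nl\nu^{1/2}\pt_{x_i}f\nr$ appearing on the right-hand side of the paper's inequalities is superfluous padding, not something you should be trying to derive. Moreover the $|\al|=0$ term of \eqref{ran-112} is vacuous ($[L,\pt^0]=0$), so the invocation is empty. The actual source of the scalings $\eps\kappa^{-1/2}$ (for $\pt_{x_3}$) and $\eps$ (for $\pt_{x_1},\pt_{x_2}$) in front of $\nl\nu^{1/2}f\nr_{L^2_\xi}$ is simply the single factor $\pt_{x_i}\sqrt\mu$ through Lemma \ref{M-local-2}, with the remaining untouched Maxwellian legs of size $O(1)$ contributing nothing beyond their $\xi$-decay. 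The final inequalities remain true, but the route you sketch for producing the extra term reveals a misreading of what the commutator contains.
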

\begin{proof} We show the first inequality, and the others are similar. As in \eqref{L-alpha-com}, we have 
\[
\bega
&\nu^{-\frac12}[L,\pt^\al]f=L(\pt^\al f)-\pt^\al Lf\\
&=\nu^{-\frac12}\sum_{\substack{\beta+\gamma+\rho =\al\\\rho<\al
}
}\frac{\al!}{\beta!\gamma!\rho!}\int_{\R^3\times\S^2}|(\xi-\xi_\star)\cdot \w|\pt^\beta\sqrt\mu(\xi_\star)\pt^\gamma\sqrt\mu(\xi')\pt^\rho  f(\xi_\star')d\w d\xi_\star\\
&\quad +\nu^{-\frac12}\sum_{\substack{\beta+\gamma+\rho =\al\\\rho<\al
}
}\frac{\al!}{\beta!\gamma!\rho!}\int_{\R^3\times\S^2}|(\xi-\xi_\star)\cdot \w|\pt^\beta \sqrt\mu(\xi_\star)\pt^\gamma \sqrt\mu(\xi_\star')\pt^\rho  f(\xi')d\w d\xi_\star\\
&\quad-\nu^{-\frac12}\sum_{\substack{\beta+\gamma+\rho =\al\\\rho<\al
}
}\frac{\al!}{\beta!\gamma!\rho!}\int_{\R^3\times \S^2}|(\xi-\xi_\star)\cdot \w|\pt^\beta\sqrt\mu(\xi_\star) \pt^\gamma \sqrt\mu(\xi)\pt^\rho  f(\xi_\star)d\w d\xi_\star\\
&\quad-\nu^{-\frac12}\sum_{\substack{\beta+\gamma=\al\\\gamma<\al
}
}\frac{\al!}{\beta!\gamma!}\int_{\R^3\times \S^2}|(\xi-\xi_\star)\cdot \w|\pt^\beta\mu(\xi_\star) \pt^\gamma   f(\xi)d\w d\xi_\star\\
&=A_\al+B_\al+C_\al+D_\al.
\enda \]
Using Propositions \ref{BGR}, \ref{BGR1}, \ref{BGR2}, we have
\[\bega 
&\nl A_\al\nr_{L^2_\xi} \lesssim\sum_{\substack{\gamma+\rho=\al\\|\gamma|\ge 1}}\frac{\al!}{\rho!\gamma!}\lw\|\nu^{\frac12}\pt^\gamma\sqrt\mu
\rw\|_{L^2_{\xi}}\lw\|\nu^{\frac12} \pt^\rho f
\rw\|_{L^2_{\xi}}\\
&+
 \sum_{\substack{\beta+\gamma+\rho =\al\\|\beta|\ge 1
}
}\frac{\al!}{\beta!\gamma!\rho!}\lw\|\nu^{-1}\lw(\int_{\R^3_{\xi_\star}}|\xi-\xi_\star|^2 |\pt^\beta\sqrt\mu(\xi_\star)|^2d\xi_\star\rw)^{1/2}
\rw\|_{L^\infty_{\xi}}\lw\|\nu^{\frac12}\pt^\gamma \sqrt\mu
\rw\|_{L^2_{\xi}}\lw\|\nu^{\frac12} \pt^\rho f
\rw\|_{L^2_{\xi}},\\
\enda\]
\[\bega
&\nl B_\al\nr_{L^2_\xi}\lesssim \sum_{\substack{\gamma+\rho=\al\\|\gamma|\ge 1}}\frac{\al!}{\rho!\gamma!}\nl\nu^{\frac 12} \pt^\gamma\sqrt\mu
\nr_{L^2_\xi}\nl\nu^{\frac 12} \pt^\rho f
\nr_{L^2_{\xi}}\\
&+
 \sum_{\substack{\beta+\gamma+\rho =\al\\|\beta|\ge 1
}
}\frac{\al!}{\beta!\gamma!\rho!}\lw\|\nu(\xi)^{-1}\lw(\int_{\R^3_{\xi_\star}}|\xi-\xi_\star|^2 |\pt^\beta\sqrt\mu(\xi_\star)|^2d\xi_\star\rw)^{1/2}
\rw\|_{L^\infty_{\xi}}\lw\|\nu^{\frac 12} \pt^\gamma\sqrt\mu\rw\|_{L^2_\xi}\lw\|\nu^{\frac 12}  \pt^\rho f
\rw\|_{L^2_{\xi}},\\
\enda\]
\[\bega 
\nl C_\al\nr_{L^2_\xi}&\lesssim \sum_{\substack{\beta+\gamma+\rho =\al\\|\beta|+|\gamma|\ge 1
}
}\frac{\al!}{\beta!\gamma!\rho!} \nl\nu(\xi)^{-1}\lw(\int_{\xi_\star}\nu(\xi_\star)^{-1}|\xi-\xi_\star|^2 |\pt^\beta\sqrt\mu(\xi_\star)|^2d\xi_\star
 \rw)^{1/2} \nr_{L^\infty_{\xi}}\nl \pt^\gamma \sqrt\mu
 \nr_{ L^2_\xi}\nl \nu^{\frac12}  \pt^\rho f
 \nr_{L^2_{\xi}},\\
\nl D_\al\nr_{L^2_\xi}&\lesssim \sum_{\substack{\beta+\gamma+\rho =\al\\|\beta|\ge 1
}
}\frac{\al!}{\beta!\gamma!}\nl \nu(\xi)^{-1}\int_{\R^3}\int_{|\w|=1}|(\xi-\xi_\star)\cdot \w|\pt^\beta\mu(\xi_\star)d\w d\xi_\star\nr_{L^\infty_\xi}
\nl \nu^{\frac 12} \pt^\gamma f
\nr_{L^2_\xi}.  \enda
\]
The proof of \eqref{ran-112} is complete, using Lemma \ref{local-1}. The proof of \eqref{ran-112-1} is done similarly, by using Lemma \ref{bili-2} and Lemma \ref{local-1}. We skip the details.
\end{proof}

\begin{proposition}\label{com-2} There holds 
\beq\label{ran-113}
\sum_{\al\in\mathbb N_0^3}\frac{\tau^{|\al|}}{\al!}\nl\nu^{-\frac12} \pt_{x_3}[L,\pt^\al]f 
\nr_{L^2_\xi}
\lesssim  \eps \kappa^{\frac12}\sum_{\al\in\mathbb N_0^3}\frac{\tau^{|\al|}}{\al!} \nl\nu^{\frac 12}\pt^\al \pt_{x_3} f
\nr_{L^2_\xi}+\eps\kappa^{-\frac 12} \sum_{\al\in\mathbb N_0^3}\frac{\tau^{|\al|}}{\al!}\nl \pt^\al f\nr_{L^2_\xi}.\eeq%%%%%%%%%
For $i\in \{1,2\}$, there hold
\beq\label{ran-113-1}
\sum_{\al\in\mathbb N_0^3}\frac{\tau^{|\al|}}{\al!}\nl\nu^{-\frac12} \pt_{x_i}[L,\pt^\al]f 
\nr_{L^2_\xi}
\lesssim  \eps \kappa^{\frac12}\sum_{\al\in\mathbb N_0^3}\frac{\tau^{|\al|}}{\al!} \nl\nu^{\frac 12}\pt^\al \pt_{x_i} f
\nr_{L^2_\xi}+\eps \sum_{\al\in\mathbb N_0^3}\frac{\tau^{|\al|}}{\al!}\nl \pt^\al f\nr_{L^2_\xi}.\eeq
\end{proposition}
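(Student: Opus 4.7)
The plan is to reduce Proposition~\ref{com-2} to the eight-term decomposition of $[L,\pt^\al]f$ already derived in \eqref{L-alpha-com}, and then apply $\pt_{x_j}$ for $j\in\{1,2,3\}$ by Leibniz. Writing $[L,\pt^\al]f=\sum_{k=1}^{8}T_{\al,k}$, each $T_{\al,k}$ is schematically of the form
\[
\int_{\R^3\times\S^2}|(\xi-\xi_\star)\cdot\w|\,\pt^\beta\sqrt\mu\cdot\pt^\gamma\sqrt\mu\cdot\pt^\vr f\,d\w\,d\xi_\star,
\]
with $\beta+\gamma+\vr=\al$ and either $|\beta|+|\gamma|\ge 1$ (so a derivative of the Maxwellian is present) or the commutator with $\P$ is involved. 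Applying $\pt_{x_j}$ creates two families of terms: (A) $\pt_{x_j}$ lands on one of the Maxwellian factors $\pt^\beta\sqrt\mu$ or $\pt^\gamma\sqrt\mu$; (B) $\pt_{x_j}$ lands on $\pt^\vr f$. This is the only structural step, and it preserves the shape of each $T_{\al,k}$.

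For family (B), one simply replaces $\pt^\vr f$ by $\pt^\vr\pt_{x_j}f$ in every term and quotes the bilinear bounds of Propositions~\ref{BGR}, \ref{BGR1}, \ref{BGR2} verbatim; this reproduces the estimate of Proposition~\ref{com-1} with $f$ replaced by $\pt_{x_j}f$, yielding exactly the $\eps\kappa^{1/2}\sum_\al\frac{\tau^{|\al|}}{\al!}\|\nu^{1/2}\pt^\al\pt_{x_j}f\|_{L^2_\xi}$ piece on the right in \eqref{ran-113}--\eqref{ran-113-1}. For family (A), the new object to control is $\pt_{x_j}\pt^\beta\sqrt\mu$ (or $\pt_{x_j}\pt^\gamma\sqrt\mu$). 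Here Lemma~\ref{M-local-2} is the crucial input: in the analytic norm, for $j=3$ it contributes a factor $\eps\kappa^{-1/2}$, while for $j\in\{1,2\}$ it contributes only $\eps$, because $\pt_{x_3}U\sim\kappa^{-1/2}$ whereas $\pt_{x_\parallel}U\sim 1$ on Prandtl shear layers. One then reuses the same bilinear kernel estimates (Propositions~\ref{BGR}--\ref{BGR2}) exactly as in the proof of \eqref{ran-112}, but with the Maxwellian-bound $\|\cdot\|$ replaced by its $\pt_{x_j}$-differentiated version supplied by Lemma~\ref{M-local-2}. This produces the remaining $\eps\kappa^{-1/2}\sum\frac{\tau^{|\al|}}{\al!}\|\pt^\al f\|_{L^2_\xi}$ (resp.\ $\eps\sum\frac{\tau^{|\al|}}{\al!}\|\pt^\al f\|_{L^2_\xi}$) term, with no derivative on $f$ remaining.

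Summing the two families over $\al\in\mathbb N_0^3$ and applying the discrete Young inequality together with the algebra identities \eqref{algebra-est} (as was done for \eqref{ran-112}) closes the convolution in the multi-index $\al=\beta+\gamma+\vr$. The analytic radius $\tau\le\tau_0$ is preserved because, by Lemma~\ref{M-local-2}, the sequences $\{\AA_\beta\|\pt^\beta\pt_{x_j}\sqrt\mu\|\}_\beta$ have finite $\ell^1$ mass bounded by $\eps\kappa^{-1/2}$ (resp.\ $\eps$), which is exactly what Young's inequality consumes.

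The main obstacle is bookkeeping: tracking which of the eight $T_{\al,k}$ contains a commutator with $\P$ (items $T_{\al,2},T_{\al,4},T_{\al,6},T_{\al,8}$) so that Proposition~\ref{PD} can be invoked on $[\P,\pt^\vr]\pt_{x_j}f$, and verifying that no new boundary or singular terms appear when the extra $\pt_{x_3}$ crosses $\P$. The key conceptual point is that the scaling $\eps\kappa^{-1/2}$ in front of $\sum\|\pt^\al f\|_{L^2_\xi}$ (without a $\pt_{x_3}$) is dictated entirely by Lemma~\ref{M-local-2} applied to $\pt_{x_3}\mu^{p_0}$; no other term in the expansion is more singular, because every Maxwellian derivative carries an $\eps$ from the chain rule on $\mu_0(\xi-\eps U)$, and exactly one such factor is supplied by the commutator structure itself.
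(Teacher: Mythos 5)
Your high-level strategy is the right one and coincides with the paper's: Leibniz-expand $\pt_{x_j}[L,\pt^\al]f$ into (B) a family where $\pt_{x_j}$ lands on the $f$-factor, collected as $[L,\pt^\al](\pt_{x_j}f)$ and bounded by quoting the preceding Proposition with $f$ replaced by $\pt_{x_j}f$, plus (A) a family where $\pt_{x_j}$ lands on a Maxwellian factor, bounded by the kernel estimates of Propositions~\ref{BGR}--\ref{BGR2} together with Lemma~\ref{M-local-2}, which supplies the anisotropic scaling $\eps\kappa^{-1/2}$ for $j=3$ versus $\eps$ for $j\in\{1,2\}$. This is exactly the paper's decomposition $\nu^{-1/2}\pt_{x_3}[L,\pt^\al]f=\nu^{-1/2}[L,\pt^\al](\pt_{x_3}f)+A_{\al,1}+A_{\al,2}+B_{\al,1}+B_{\al,2}-C_{\al,1}-C_{\al,2}-D_\al$, and your closing-step discussion (discrete Young in $\al$, \eqref{algebra-est}) matches as well.

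Where you deviate---and where a genuine gap sits---is in your choice of the \emph{eight}-term decomposition \eqref{L-alpha-com}, which splits each $\pt^\vr f$ as $\IP\pt^\vr f+[\P,\pt^\vr]f$. That split was designed for Proposition~\ref{com-est}, whose right-hand side is a dissipation norm $\mathcal D_f$ built out of $\IP$; it serves no purpose here, because the right-hand sides of \eqref{ran-113}--\eqref{ran-113-1} involve only bare $L^2_\xi$-norms of $\pt^\al f$ and $\pt^\al\pt_{x_j}f$. Worse, once you introduce the $\P$-split and then apply $\pt_{x_j}$, you genuinely \emph{do} pick up extra commutators $[\pt_{x_j},\IP]$ and $\pt_{x_j}[\P,\pt^\vr]$, since $\P$ depends on $x$ through $\mu=\mu_0(\xi-\eps U(x))$. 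You flag this as the "main obstacle" and assert that "no new boundary or singular terms appear when the extra $\pt_{x_3}$ crosses $\P$," but that claim is unsubstantiated and, in fact, the terms \emph{are} singular (they carry the same $\eps\kappa^{-1/2}$ scale as $\pt_{x_3}U$) and would have to be estimated separately, e.g.\ via the corollary to Lemma~\ref{com-1}. The paper sidesteps this entirely by running the argument with the \emph{four}-term expansion $A_\al+B_\al+C_\al+D_\al$ already used to prove \eqref{ran-112}, in which $\pt^\vr f$ appears unprojected and no $[\pt_{x_j},\P]$ commutators ever arise. Replacing your $T_{\al,1},\dots,T_{\al,8}$ by the non-split $A_\al,B_\al,C_\al,D_\al$---which you implicitly do anyway when you write the schematic form $\int|(\xi-\xi_\star)\cdot\w|\,\pt^\beta\sqrt\mu\,\pt^\gamma\sqrt\mu\,\pt^\vr f$ without a $\P$---removes the gap and makes your outline coincide with the paper's proof.
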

\begin{proof} We show \eqref{ran-113} only, as the proof of \eqref{ran-113-1} similar.
Using \eqref{L-alpha-com}, we have
\[
\nu^{-\frac12}\pt_{x_3}[L,\pt^\al]f=\nu^{-\frac12}[L,\pt^\al](\pt_{x_3}f)+A_{1,\al}+A_{2,\al}+B_{1,\al}+B_{2,\al}-C_{1,\al}-C_{2,\al}-D_{\al},
\]
where 
\[\bega
A_{\al,1}&=\sum_{\substack{\beta+\gamma+\rho =\al\\\rho<\al
}
}\frac{\al!}{\beta!\gamma!\rho!}\int_{\R^3\times\S^2}|(\xi-\xi_\star)\cdot \w|\pt^\beta\pt_{x_3}\sqrt\mu(\xi_\star)\pt^\gamma\sqrt\mu(\xi')\pt^\rho f(\xi_\star')d\w d\xi_\star,\\
A_{\al,2}&=\sum_{\substack{\beta+\gamma+\rho =\al\\\rho<\al
}
}\frac{\al!}{\beta!\gamma!\rho!}\int_{\R^3\times\S^2}|(\xi-\xi_\star)\cdot \w|\pt^\beta\sqrt\mu(\xi_\star)\pt^\gamma\pt_{x_3}\sqrt\mu(\xi')\pt^\rho f(\xi_\star')d\w d\xi_\star,\\
B_{\al,1}&=\sum_{\substack{\beta+\gamma+\rho =\al\\\rho<\al
}
}\frac{\al!}{\beta!\gamma!\rho!}\int_{\R^3\times\S^2}|(\xi-\xi_\star)\cdot \w|\pt^\beta \pt_{x_3}\sqrt\mu(\xi_\star)\pt^\gamma \sqrt\mu(\xi_\star')\pt^\rho  f(\xi')d\w d\xi_\star,\\
B_{\al,2}&=\sum_{\substack{\beta+\gamma+\rho =\al\\\rho<\al
}
}\frac{\al!}{\beta!\gamma!\rho!}\int_{\R^3\times\S^2}|(\xi-\xi_\star)\cdot \w|\pt^\beta \sqrt\mu(\xi_\star)\pt^\gamma \pt_{x_3}\sqrt\mu(\xi_\star')\pt^\rho  f(\xi')d\w d\xi_\star,\\
C_{1,\al}&=\sum_{\substack{\beta+\gamma+\rho =\al\\\rho<\al
}
}\frac{\al!}{\beta!\gamma!\rho!}\int_{\R^3\times\S^2}|(\xi-\xi_\star)\cdot \w|\pt^\beta\pt_{x_3}\sqrt\mu(\xi_\star) \pt^\gamma \sqrt\mu(\xi)\pt^\rho  f(\xi_\star)d\w d\xi_\star,\\
C_{\al,2}&=\sum_{\substack{\beta+\gamma+\rho =\al\\\rho<\al
}
}\frac{\al!}{\beta!\gamma!\rho!}\int_{\R^3\times\S^2}|(\xi-\xi_\star)\cdot \w|\pt^\beta\sqrt\mu(\xi_\star) \pt^\gamma \pt_{x_3}\sqrt\mu(\xi)\pt^\rho  f(\xi_\star)d\w d\xi_\star,\\
D_{\al}&=\sum_{\substack{\beta+\gamma=\al\\\gamma<\al
}
}\frac{\al!}{\beta!\gamma!}\int_{\R^3}\int_{|\w|=1}|(\xi-\xi_\star)\cdot \w|\pt^\beta \pt_{x_3}\mu(\xi_\star) \pt^\gamma f(\xi)d\w d\xi_\star.
\enda
\]
Using Proposition \ref{BGR}, \ref{BGR1}, \ref{BGR2}, we obtain the following estimates
\[
\bega
\nl A_{\al,1}\nr_{L^2_\xi}&\lesssim \nl \nu(\xi)^{-1}\lw(\int_{\R^3_{\xi_\star}}|\xi-\xi_\star|^2 |\pt_{x_3}\sqrt\mu(\xi_\star)|^2d\xi_\star\rw)^{1/2} \nr_{L^\infty_\xi} \sum_{\substack{\gamma+\rho=\al\\|\gamma|\ge 1}}\frac{\al!}{\rho!\gamma!}\lw\|\nu^{\frac12}\pt^\gamma\sqrt\mu
\rw\|_{L^2_{\xi}}\lw\|\nu^{\frac12} \pt^\rho f
\rw\|_{L^2_{\xi}}\\
&\quad+
 \sum_{\substack{\beta+\gamma+\rho =\al\\|\beta|\ge 1
}
}\frac{\al!}{\beta!\gamma!\rho!}\lw\|\nu^{-1}\lw(\int_{\R^3_{\xi_\star}}|\xi-\xi_\star|^2 |\pt^\beta\pt_{x_3}\sqrt\mu(\xi_\star)|^2d\xi_\star\rw)^{1/2}
\rw\|_{L^\infty_{\xi}}\lw\|\nu^{\frac12}\pt^\gamma \sqrt\mu
\rw\|_{L^2_{\xi}}\lw\|\nu^{\frac12} \pt^\rho f
\rw\|_{L^2_{\xi}},\\
\nl A_{\al,2}\nr_{L^2_\xi}&\lesssim \sum_{\substack{\gamma+\rho=\al\\|\gamma|\ge 1}}\frac{\al!}{\gamma!\rho!}
\nl\nu^{\frac12} \pt^\gamma\pt_{x_3}\sqrt\mu
\nr_{L^2_\xi}\nl\nu^{\frac12}  \pt^\rho f
\nr_{L^2_{\xi}}\\
&\quad+\sum_{\substack{\beta+\gamma+\rho =\al\\|\beta|\ge 1
}
}\frac{\al!}{\beta!\gamma!\rho!}
\lw\|\nu^{-1}\lw(\int_{\R^3_{\xi_\star}}|\xi-\xi_\star|^2 |\pt^\beta\pt_{x_3}\sqrt\mu(\xi_\star)|^2d\xi_\star\rw)^{1/2}
\rw\|_{L^2_{\xi}}\lw\|\nu^{\frac12} \pt^\gamma\sqrt\mu\rw\|_{L^2_\xi}\lw\|\nu^{\frac12} \pt^\rho f
\rw\|_{L^2_{\xi}}.\\
\enda\]
Using Proposition \ref{BGR1}, we obtain 
\[\bega 
\nl B_{\al,1}\nr_{L^2_\xi}&\lesssim  \nl \nu(\xi)^{-1}\lw(\int_{\R^3_{\xi_\star}}|\xi-\xi_\star|^2 |\pt_{x_3}\sqrt\mu(\xi_\star)|^2d\xi_\star\rw)^{1/2} \nr_{L^\infty_\xi} \sum_{\substack{\gamma+\rho=\al\\|\gamma|\ge 1}}\frac{\al!}{\rho!\gamma!}\nl\nu^{\frac 12} \pt^\gamma\sqrt\mu
\nr_{L^2_\xi}\nl\nu^{\frac 12} \pt^\rho f
\nr_{L^2_{\xi}}\\
&\quad+
 \sum_{\substack{\beta+\gamma+\rho =\al\\|\beta|\ge 1
}
}\frac{\al!}{\beta!\gamma!\rho!}\lw\|\nu(\xi)^{-1}\lw(\int_{\R^3_{\xi_\star}}|\xi-\xi_\star|^2 |\pt^\beta\pt_{x_3}\sqrt\mu(\xi_\star)|^2d\xi_\star\rw)^{1/2}
\rw\|_{L^\infty_{\xi}}\lw\|\nu^{\frac 12} \pt^\gamma\sqrt\mu\rw\|_{L^2_\xi}\lw\|\nu^{\frac 12}  \pt^\rho f
\rw\|_{L^2_{\xi}},\\
\nl B_{\al,2}\nr_{L^2_\xi}&\lesssim \sum_{\substack{\gamma+\rho=\al\\|\gamma|\ge 1}}\frac{\al!}{\rho!\gamma!}\nl\nu^{\frac 12} \pt_{x_3}\pt^\gamma\sqrt\mu
\nr_{L^2_\xi}\nl\nu^{\frac 12} \pt^\rho f
\nr_{L^2_{\xi}}\\
&\quad+
 \sum_{\substack{\beta+\gamma+\rho =\al\\|\beta|\ge 1
}
}\frac{\al!}{\beta!\gamma!\rho!}\lw\|\nu(\xi)^{-1}\lw(\int_{\R^3_{\xi_\star}}|\xi-\xi_\star|^2 |\pt^\beta\sqrt\mu(\xi_\star)|^2d\xi_\star\rw)^{1/2}
\rw\|_{L^\infty_{\xi}}\lw\|\nu^{\frac 12} \pt^\gamma\pt_{x_3}\sqrt\mu\rw\|_{L^2_\xi}\lw\|\nu^{\frac 12}  \pt^\rho f
\rw\|_{L^2_{\xi}}.\\
\enda\]
Using Proposition \ref{BGR2}, we obtain 
\[\bega 
\nl C_{\al,1}\nr_{L^2_\xi}&\lesssim \sum_{\substack{\beta+\gamma+\rho =\al\\|\beta|+|\gamma|\ge 1
}
}\frac{\al!}{\beta!\gamma!\rho!} \nl\nu(\xi)^{-1}\lw(\int_{\xi_\star}\nu(\xi_\star)^{-1}|\xi-\xi_\star|^2 |\pt^\beta\pt_{x_3}\sqrt\mu(\xi_\star)|^2d\xi_\star
 \rw)^{1/2} \nr_{L^\infty_{\xi}}\nl \pt^\gamma \sqrt\mu
 \nr_{ L^2_\xi}\nl \nu^{\frac12}  \pt^\rho f
 \nr_{L^2_{\xi}},\\
 \nl C_{\al,2}\nr_{L^2_\xi}&\lesssim \sum_{\substack{\beta+\gamma+\rho =\al\\|\beta|+|\gamma|\ge 1
}
}\frac{\al!}{\beta!\gamma!\rho!} \nl\nu(\xi)^{-1}\lw(\int_{\xi_\star}\nu(\xi_\star)^{-1}|\xi-\xi_\star|^2 |\pt^\beta\sqrt\mu(\xi_\star)|^2d\xi_\star
 \rw)^{1/2} \nr_{L^\infty_{\xi}}\nl \pt^\gamma\pt_{x_3}\sqrt\mu
 \nr_{ L^2_\xi}\nl \nu^{\frac12}  \pt^\rho f
 \nr_{L^2_{\xi}}.\\ 
\enda
\]
Hence we get \[
\nl D_\al\nr_{L^2_\xi}\lesssim \sum_{\substack{\beta+\gamma+\rho =\al\\|\beta|\ge 1
}
}\frac{\al!}{\beta!\gamma!}\nl \nu(\xi)^{-1}\int_{\R^3\times\S^2}|(\xi-\xi_\star)\cdot \w|\pt^\beta\mu(\xi_\star)d\w d\xi_\star\nr_{L^\infty_\xi}
\nl \nu^{\frac 12} \pt^\gamma f
\nr_{L^2_\xi} .
\]
Using Propositions \ref{local-1} and \eqref{M-local-2}, we get
\[\bega 
&\sum_{|\al|\ge 1}\frac{\tau^{|\al|}}{\al!}\lw\{\nl A_{\al,1}\nr_{L^2_\xi}+
\nl A_{\al,2}\nr_{L^2_\xi}+\nl B_{\al,1}\nr_{L^2_\xi}+\nl B_{\al,2}\nr_{L^2_\xi}+\nl C_{\al,1}\nr_{L^2_\xi}+ \nl C_{\al,2}\nr_{L^2_\xi}+\nl D_\al\nr_{L^2_\xi}\rw\}\\
&\lesssim \eps\kappa^{-\frac 12} \sum_{\al\in\mathbb N_0^3}\frac{\tau^{|\al|}}{\al!}\nl \pt^\al f\nr_{L^2_\xi}. 
\enda 
\]
Lastly, applying Theorem \ref{com-1}, we get 
\[\sum_{\al\in\mathbb N_0^3}\frac{\tau^{|\al|}}{\al!}\nl \nu^{-\frac 1 2}[L,\pt^\al]\pt_{x_3} f\nr_{L^2_\xi}\lesssim \eps \kappa^{\frac12}\sum_{\al\in\mathbb N_0^3}\frac{\tau^{|\al|}}{\al!} \nl\nu^{\frac 12}\pt^\al \pt_{x_3} f
\nr_{L^2_\xi}.
\]The proof is complete.
\end{proof}

\subsection{Analytic estimates on $L^{-1}$ and bounds on $A_{ij}(\vp)$}

\begin{lemma}\label{ran-124}
Recall the definition of $\nu$ in \eqref{nu-def}. 
There exists $\tau_0>0$ such that
\[
\sum_{|\al|\ge 1}\frac{\tau_0^{|\al|}}{\al!}\nl\pt^\al (\nu^{-1}(\xi))
\nr_{L^\infty_\xi}\lesssim \eps \kappa^{\frac 12}.
\]
\end{lemma}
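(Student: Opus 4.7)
The plan is to rewrite $\nu$ as a composition of the standard collision frequency with the velocity field $\xi-\eps U$, and then transfer the analyticity of $\eps U$ to $\nu^{-1}$ via the composition estimate from Proposition \ref{composite} (or rather its straightforward multi-dimensional version).

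First, the substitution $\vp_\star = \xi_\star - \eps U(t,x)$ in \eqref{nu-def} shows that $\nu(\xi) = \nu_0(\xi - \eps U(t,x))$, where
\[
\nu_0(\vp) := \int_{\R^3\times\S^2}|(\vp-\vp_\star)\cdot\w|\,\mu_0(\vp_\star)\,d\w\,d\vp_\star
\]
is the standard collision frequency about the global Maxwellian. It is classical that $\nu_0$ depends only on $|\vp|$, satisfies $\nu_0(\vp)\sim\langle\vp\rangle$ uniformly, and is real analytic on $\R^3$ with a radius of analyticity that is uniform in $\vp$. Since $\nu_0\ge c>0$, the same holds for $\nu_0^{-1}$, so there exists $\tau_G>0$ such that
\[
\sup_{\vp\in\R^3}\frac{\tau_G^{|\beta|}}{\beta!}|\pt_\vp^\beta\nu_0^{-1}(\vp)|\lesssim 1\qquad\text{for all }\beta\in\mathbb N_0^3.
\]

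Next, I would establish the smallness of the analytic norm of $\eps U$ in the directions $(\eps\pt_t,\pt_{x_1},\pt_{x_2})$:
\[
R := \sup_{|\al|\ge1}\frac{\tau_0^{|\al|}}{\al!}\|\pt^\al(\eps U)\|_{L^\infty_x}\lesssim \eps\sqrt\kappa.
\]
This is exactly the mechanism behind Proposition \ref{local-1}: the leading Prandtl shear $u_e^0(t,x_3)+u_p^0(t,z)$ is independent of $x_\parallel$, so any $\pt_{x_1}$ or $\pt_{x_2}$ derivative annihilates it, and the first non-trivial contribution carries an intrinsic $\sqrt\kappa$ factor coming from the correctors $u_e^i,u_p^i$ with $i\ge 1$; combined with the prefactor $\eps$ this yields $\eps\sqrt\kappa$. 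For derivatives involving $\eps\pt_t$ only, the extra $\eps$ in $\eps\pt_t$ already produces $O(\eps^2)$, which is controlled by $\eps\sqrt\kappa$ in the regime $\eps\sim\kappa^N$ with $N\gg 1$.

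Finally, I would apply Proposition \ref{composite} (in its multi-dimensional form) to the composition
\[
\nu^{-1}(\xi) = G(H(t,x)),\qquad H(t,x)=\eps U(t,x)\in\R^3,\qquad G(s)=\nu_0^{-1}(\xi-s),
\]
treating $\xi$ as a parameter. The hypothesis for $G$ holds uniformly in $\xi$ by the first step, and the hypothesis for $H$ holds with constant $R\lesssim \eps\sqrt\kappa$ by the second step. The conclusion of the composition estimate gives
\[
\sum_{|\al|\ge1}\frac{\tau_0^{|\al|}}{\al!}\|\pt^\al\nu^{-1}(\xi)\|_{L^\infty_\xi}\lesssim \min\{R,1\}\,e^{\tau_0+2^3\tau_G^{-1}R}\lesssim \eps\sqrt\kappa.
\]
The main obstacle is the uniform-in-$\vp$ analyticity of $\nu_0^{-1}$: although $\nu_0$ is smooth with $\nu_0\sim\langle\vp\rangle$, deriving bounds $|\pt_\vp^\beta\nu_0^{-1}|\le C\tau_G^{-|\beta|}\beta!$ with $\tau_G>0$ independent of $\vp$ requires either the explicit closed-form representation of $\nu_0$ in terms of the error function, or a direct analysis of the convolution $|\cdot|\ast\mu_0$ exploiting the Gaussian decay of $\mu_0$ to control arbitrarily high $\vp$-derivatives uniformly.
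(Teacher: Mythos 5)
Your proposal is correct in substance but takes a genuinely different route from the paper. You write $\nu(\xi)=\nu_0(\xi-\eps U(t,x))$ and push the analyticity of $\eps U$ through the fixed kernel $\nu_0^{-1}$ via a composition estimate; the paper instead applies Fa\`a di Bruno directly to the scalar function $t\mapsto 1/t$ composed with the \emph{scalar} inner function $(t,x)\mapsto\nu(t,x,\xi)$, using $|\pt_t^k(1/t)|=k!\,t^{-(k+1)}$ to absorb $\nu^{-k}$ into $\prod_i(\nu^{-1}\pt^{\beta_i}\nu)^{k_i}$, bounding $\sup_{|\beta|\ge 1}\frac{\tau_0^{|\beta|}}{\beta!}\|\nu^{-1}\pt^\beta\nu\|_{L^\infty}\lesssim\eps\sqrt\kappa$ from Lemma \ref{local-1}, and then summing with Lemma \ref{lem-Faa2}. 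Your decomposition is conceptually appealing because it isolates the macroscopic field $\eps U$ as the only $(t,x)$-dependent object, but it costs two extra inputs that the paper's argument sidesteps: (i) the uniform-in-$\vp$ analyticity of $\nu_0^{-1}$, which is true (it follows from the entirety of $\nu_0=C\,|\cdot|*\mu_0$ together with $\nu_0\gtrsim 1$ and the Hermite-type bounds $\sup|\pt^\beta\mu_0|\lesssim C^{|\beta|}\sqrt{\beta!}$, or from the closed-form expression of $\nu_0$), but you correctly flag it as needing its own argument; and (ii) a vector-valued version of Proposition \ref{composite} / Lemma \ref{lem-Faa2}, since your inner map $\eps U$ takes values in $\R^3$, whereas the paper's inner map $\nu$ is scalar and the stated Lemma \ref{lem-Faa2} applies as is. Both approaches yield the claimed $\eps\sqrt\kappa$ bound; the paper's is the more economical because the needed derivative bounds on $\nu$ come for free from the already-proved Lemma \ref{local-1} applied to $\mu$, avoiding any discussion of the $\vp$-regularity of $\nu_0^{-1}$.
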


\begin{proof}
We have 
\[\bega 
|\pt^\al (\nu(\xi)^{-1})|&\lesssim \sum_{\substack{k_1,k_2,\cdots k_s\in \mathbb N\\
\beta_1,\beta_2,\cdots, \beta_s\in\mathbb N^d\\
k_1 \beta_1+k_2 \beta_2+\cdots+k_s  \beta_s= \al
}
}\frac{\alpha!}{k_1!k_2!\cdots k_s!}(k_1+k_2+\cdots+k_s)!\prod_{i=1}^s\lw\{\frac{1}{\beta_i!}\nu(\xi)^{-1}\pt^{\beta_i}\nu
\rw\}^{k_i}.\enda 
\]
Using Lemma \ref{local-1}, we have
 \[
 \sup_{|\beta_i|\ge 1
 }\frac{\tau_0^{|\beta_i|}}{\beta_i!}\nu^{-1}(\xi)\pt^{\beta_i}\nu \lesssim \eps\kappa^{\frac12}.
 \]
Hence 
\[
\frac{\tau_0^{|\al|}}{\al!}|\pt^\al(\nu(\xi)^{-1})|\lesssim \sum_{\substack{k_1,k_2,\cdots k_s\in \mathbb N\\
\beta_1,\beta_2,\cdots, \beta_s\in\mathbb N^d\\
k_1 \beta_1+k_2 \beta_2+\cdots+k_s  \beta_s= \al
}
}\frac{(k_1+k_2+\cdots+k_s)!}{k_1!k_2!\cdots k_s!} (\eps\kappa^{\frac12})^{k_1+k_2+\cdots+k_s} .
\]
Combining the above with Lemma \ref{lem-Faa}, we get, for any $c_0\in (0,1)$:
\[
\sum_{|\al|\ge 1}\frac{(c_0\tau_0)^{|\al|}}{\al!} |\pt^\al (\nu(\xi)^{-1})|\lesssim \frac{\eps\kappa^{\frac 12}}{(1-c_0)^d-\eps \kappa^{\frac 12}}.
\]
The proof is complete.
\end{proof}

\begin{proposition}For $g\in\mathcal N^\perp$,  there holds, for $\rho\ll 1$
\beq\label{ran-154}
\nl e^{\rho|\xi|^2}L^{-1}g
\nr_{L^\infty_\xi}\lesssim \nl \nu(\xi)^{-1}e^{\rho|\xi|^2}g
\nr_{L^\infty_\xi}+\nl \nu(\xi)^{-1}e^{\rho |\xi|^2}g
\nr_{L^2_\xi}.
\eeq
\end{proposition}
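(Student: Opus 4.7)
The plan is to bootstrap the $L^\infty$ bound from the $L^2$ bound \eqref{L-1} via the resolvent identity associated to the splitting of $L$ in Proposition \ref{L-decom-max}. Write $f=L^{-1}g$ so that $Lf=g$; using $Lf=\nu(\xi)f - \mathcal Kf$ with $\mathcal K=K_2-K_1$ (combining \eqref{Kk1}--\eqref{Kk2}) we obtain the identity
\[
e^{\rho|\xi|^2}f(\xi)=\nu(\xi)^{-1}e^{\rho|\xi|^2}g(\xi)+\nu(\xi)^{-1}e^{\rho|\xi|^2}(\mathcal Kf)(\xi).
\]
Both pieces of $\mathcal K$ are integral operators whose kernels are pointwise dominated by $k_{\rho_0}(\xi,\xi_\star)=|\xi-\xi_\star|^{-1}e^{-\rho_0|\xi-\xi_\star|^2}$ for some universal $\rho_0>0$, modulo a bounded factor coming from the boost $\eps U$ in the local Maxwellian (which is harmless once $\eps$ is small). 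So everything reduces to estimating $\nu(\xi)^{-1}e^{\rho|\xi|^2}\int k_{\rho_0}(\xi,\xi_\star)|f(\xi_\star)|d\xi_\star$ in $L^\infty_\xi$.

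The key absorption step is the weight transfer: for $\rho\ll \rho_0$ one has $e^{\rho|\xi|^2}\le e^{\rho|\xi_\star|^2}e^{2\rho|\xi-\xi_\star|^2}$, so $e^{\rho|\xi|^2}k_{\rho_0}(\xi,\xi_\star)\lesssim k_{\rho_0/2}(\xi,\xi_\star)e^{\rho|\xi_\star|^2}$, and $k_{\rho_0/2}$ retains the integrability $\int k_{\rho_0/2}(\xi,\xi_\star)d\xi_\star\lesssim \la\xi\ra^{-1}\sim \nu(\xi)^{-1}$ together with the $L^2$ estimate $\nl k_{\rho_0/2}(\xi,\cdot)\nr_{L^2_{\xi_\star}}\lesssim \nu(\xi)^{-1/2}$. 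Next I would split the resulting integral into a near-zero region $\{|\xi_\star|\le N\}$ and a tail $\{|\xi_\star|>N\}$: on the tail, an $e^{-\rho_0|\xi-\xi_\star|^2/4}$ factor to spare gives arbitrary polynomial decay, while on the bounded region I apply Cauchy--Schwarz in $\xi_\star$, producing the bound
\[
\nu(\xi)^{-1}e^{\rho|\xi|^2}|\mathcal Kf(\xi)|\lesssim \nu(\xi)^{-1/2}\nl e^{\rho|\xi_\star|^2}f\nr_{L^2_{\xi_\star}}.
\]

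At this point I invoke \eqref{L-1} to control $\nl e^{\rho|\xi_\star|^2}L^{-1}g\nr_{L^2_{\xi_\star}}$ by $\nl e^{\rho|\xi|^2}\nu^{-1}g\nr_{L^2_\xi}$, which gives exactly the second term on the right of \eqref{ran-154}; the direct term $\nu^{-1}e^{\rho|\xi|^2}g$ yields the first. The main obstacle I anticipate is the weight transfer across $\mathcal K$: one must choose $\rho$ strictly smaller than the kernel rate $\rho_0$ and, more delicately, verify that the Maxwellian in $K_2$ — which is centered at $\eps U$ rather than at the origin — still furnishes a kernel pointwise comparable to $k_{\rho_0}$. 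This follows because $|\eps U|\lesssim\eps\to 0$, so the shift only produces a benign multiplicative factor $e^{O(\eps)|\xi|}$ that is absorbed by a fractional power of the Gaussian. Once these weight bookkeeping steps are done, the remaining calculation is a direct one-step iteration; no fixed-point argument is needed because we already have $f$ at hand and only need to upgrade its integrability class from $L^2$ to $L^\infty$.
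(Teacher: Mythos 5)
Your proposal is correct and matches the underlying mathematics the paper relies on; the paper's own proof is terse and simply cites the classical $L^\infty\cap L^2$ estimate for the linearized operator around a global Maxwellian (after translating $\vp=\xi-\eps U$) and transfers weights back, whereas you reconstruct that estimate from scratch via the resolvent identity $f=\nu^{-1}g+\nu^{-1}\mathcal K f$, the pointwise kernel bound, a Gaussian weight transfer, Cauchy--Schwarz, and the $L^2$ estimate \eqref{L-1}. That is the right route and it closes.

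Two small bookkeeping remarks that you should tidy but which do not undermine the argument. First, the weight transfer $e^{\rho|\xi|^2}\le e^{\rho|\xi_\star|^2}e^{2\rho|\xi-\xi_\star|^2}$ is not true as written; Young's inequality gives only $e^{\rho|\xi|^2}\le e^{(1+\delta)\rho|\xi_\star|^2}e^{C_\delta\rho|\xi-\xi_\star|^2}$ for any $\delta>0$, so the $\xi_\star$-exponent on the right is necessarily strictly larger than $\rho$. This is harmless — you just invoke \eqref{L-1} with the slightly enlarged exponent, which is still admissible since the statement allows any $\rho\ll1$ — but it should be phrased that way. Second, the claims $\int k_{\rho_0/2}(\xi,\xi_\star)\,d\xi_\star\lesssim\nu(\xi)^{-1}$ and $\|k_{\rho_0/2}(\xi,\cdot)\|_{L^2_{\xi_\star}}\lesssim\nu(\xi)^{-1/2}$ are both overstated: $k_{\rho_0/2}$ is translation-invariant in $\xi-\xi_\star$, so those quantities are $\xi$-independent constants. (The genuine Boltzmann kernel $k(\xi,\xi_\star)$ does satisfy $\int k\,d\xi_\star\lesssim\la\xi\ra^{-1}$ because of the extra exponential factor $e^{-c\,||\xi|^2-|\xi_\star|^2|^2/|\xi-\xi_\star|^2}$ that $k_{\rho_0}$ throws away.) Fortunately this does not matter here, since the $\nu(\xi)^{-1}$ already in front of $\mathcal K f$ in the resolvent identity supplies all the decay that the conclusion requires; replacing your $\nu^{-1/2}$ by the correct $O(1)$ constant still yields $\nu^{-1}e^{\rho|\xi|^2}|\mathcal K f(\xi)|\lesssim\|e^{\rho'|\cdot|^2}f\|_{L^2_\xi}\lesssim\|\nu^{-1}e^{\rho'|\cdot|^2}g\|_{L^2_\xi}$, which is the second term on the right of \eqref{ran-154}.
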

\begin{proof} We have, for $\rho\in (0,\frac 14)$:
\[
\nl \nu_0(\xi-\eps U)e^{\rho|\xi-\eps U|^2} f
\nr_{L^\infty_\xi}\lesssim \nl e^{\rho|\vp|^2}g
\nr_{L^\infty_\xi}+\nl \nu_0(\vp)^{-1}e^{\rho|\vp|^2} g
\nr_{L^2_\vp}.
\]
Hence 
\[
\nl e^{\rho|\xi|^2}f
\nr_{L^\infty_\xi}\lesssim \nl \nu(\xi)^{-1}e^{\rho|\xi|^2}g
\nr_{L^\infty_\xi}+\nl \nu(\xi)^{-1}e^{\rho |\xi|^2}g
\nr_{L^2_\xi},
\]
for $\rho\ll 1$.
\end{proof}

\begin{proposition} \label{Lfg-L2} Assume that $Lf=g$. There holds, for $\rho\ll 1$, the estimate 
\[
\sum_{\al\in\mathbb N_0^3}\frac{\tau^{|\al|}}{\al!}\nl e^{\rho |\xi|^2}\pt^\al f\nr_{L^2_\xi}\lesssim\sum_{\al\in\mathbb N_0^3}\frac{\tau^{|\al|}}{\al!}\nl e^{\rho|\xi|^2} \nu^{-1}\pt^\al g
\nr_{L^2_\xi},
\]
and
\[\bega 
\sum_{\al\in\mathbb N_0^3}\frac{\tau^{|\al|}}{\al!}\nl e^{\rho|\xi|^2}\pt^\al  f
\nr_{L^\infty_\xi}\lesssim \sum_{\al\in\mathbb N_0^3}\frac{\tau^{|\al|}}{\al!}\nl e^{\rho|\xi|^2} \nu^{-1}\pt^\al g
\nr_{L^2_\xi\cap L^2_\xi}.\\
\enda
\]
Let $\ell\in \{1,2,3\}$. There holds 
\[\bega 
\sum_{\al\in\mathbb N_0^3}\frac{\tau^{|\al|}}{\al!}\nl e^{\rho |\xi|^2}\pt^\al \pt_{x_\ell} f\nr_{L^\infty_\xi}&\lesssim\sum_{\al\in\mathbb N_0^3}\frac{\tau^{|\al|}}{\al!}\lw\{\nl e^{\rho|\xi|^2} \nu^{-1}\pt^\al g
\nr_{L^2_\xi\cap L^\infty_\xi }+\nl e^{\rho|\xi|^2} \nu^{-1}\pt^\al \pt_{x_\ell} g
\nr_{L^2_\xi\cap L^\infty_\xi }\rw\},
\enda 
\]
and
\[\bega 
\sum_{\al\in\mathbb N_0^3}\frac{\tau^{|\al|}}{\al!}\nl e^{\rho |\xi|^2}\pt^\al \pt_t f\nr_{L^\infty_\xi}&\lesssim\sum_{\al\in\mathbb N_0^3}\frac{\tau^{|\al|}}{\al!}\lw\{\nl e^{\rho|\xi|^2} \nu^{-1}\pt^\al g
\nr_{L^2_\xi\cap L^\infty_\xi }+\nl e^{\rho|\xi|^2} \nu^{-1}\pt^\al \pt_t g
\nr_{L^2_\xi\cap L^\infty_\xi }\rw\}.
\enda 
\]
\end{proposition}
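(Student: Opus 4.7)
The natural starting point is to apply $\pt^\al$ to the equation $Lf=g$ and commute, obtaining
\[
L(\pt^\al f)=\pt^\al g-[\pt^\al,L]f,
\]
so that by the pointwise $L^{-1}$ bound \eqref{L-1} in Proposition \ref{L-decom-max} (applied with the weight $e^{\rho|\xi|^2}$, valid for $\rho\ll 1$),
\[
\nl e^{\rho|\xi|^2}\pt^\al f\nr_{L^2_\xi}
\;\lesssim\;
\nl e^{\rho|\xi|^2}\nu^{-1}\pt^\al g\nr_{L^2_\xi}
+\nl e^{\rho|\xi|^2}\nu^{-1}[\pt^\al,L]f\nr_{L^2_\xi}.
\]
Multiplying by $\tau^{|\al|}/\al!$ and summing in $\al$, the first term on the right is exactly the quantity on the right-hand side of the claimed inequality. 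For the commutator term, I would invoke Proposition \ref{com-1} (specifically \eqref{ran-112}), which is tailor-made for this: it gives
\[
\sum_{\al}\frac{\tau^{|\al|}}{\al!}\nl\nu^{-1/2}[L,\pt^\al]f\nr_{L^2_\xi}
\;\lesssim\;\eps\kappa^{1/2}\sum_{\al}\frac{\tau^{|\al|}}{\al!}\nl\nu^{1/2}\pt^\al f\nr_{L^2_\xi},
\]
and a nearly identical bound with $\nu^{-1}$ in place of $\nu^{-1/2}$ on the left follows from the same decomposition together with Lemma \ref{ran-124}. Because the prefactor $\eps\kappa^{1/2}$ is small, the commutator contribution is absorbed into the left-hand side by a standard absorption argument, closing the first estimate. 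The weight $e^{\rho|\xi|^2}$ is harmless provided $\rho$ is small relative to the Gaussian decay of $\mu$; the kernels $k_{\rho_0}$, $\bk_1$, $\bk_2$ appearing in Proposition \ref{L-decom-max} all have exponential decay that dominates $e^{\rho|\xi|^2}$ for $\rho\ll\rho_0$.

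For the $L^\infty$ estimate, I would repeat the same strategy but use \eqref{ran-154} instead of \eqref{L-1}, which converts an $L^\infty$ bound on $L^{-1}g$ into a bound on $\nu^{-1}g$ in both $L^\infty_\xi$ and $L^2_\xi$. The $L^2$ part is already controlled by the first step, and the $L^\infty$ part of the commutator is handled by \eqref{ran-112-1} of Proposition \ref{com-1}, which produces the same small prefactor $\eps\kappa^{1/2}$ together with norms that appear on the right-hand side of the claimed bound. The extra $L^2_\xi$ piece in \eqref{ran-154} is exactly the reason the claimed right-hand side uses $L^2_\xi\cap L^\infty_\xi$.

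For the $\pt_{x_\ell}$ and $\pt_t$ versions, the key observation is that $L$ depends on $(t,x)$ through the local Maxwellian $\mu(t,x,\xi)$, so $\pt_{x_\ell}$ does not commute with $L$. Differentiating $L(\pt^\al f)=\pt^\al g-[\pt^\al,L]f$ once more yields
\[
L\bigl(\pt_{x_\ell}\pt^\al f\bigr)=\pt_{x_\ell}\pt^\al g-[\pt_{x_\ell}\pt^\al,L]f,
\]
and $[\pt_{x_\ell}\pt^\al,L]$ is controlled in analytic norm by Proposition \ref{com-2} (and the analogous estimate for $\pt_t$, which is identical in structure since $\eps\pt_t$ plays the same role as $\pt_{x_1},\pt_{x_2}$ in the multi-index $\al$, while $\pt_{x_3}$ yields the worse factor $\eps\kappa^{-1/2}$ controlled by \eqref{ran-113}). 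Applying \eqref{ran-154} to $\pt_{x_\ell}\pt^\al f$ and summing with weights $\tau^{|\al|}/\al!$ produces two types of terms on the right: an $\eps\kappa^{1/2}$-small piece involving $\sum\frac{\tau^{|\al|}}{\al!}\nu^{1/2}\pt^\al\pt_{x_\ell}f$, absorbed on the left, and a lower-order piece involving $\sum\frac{\tau^{|\al|}}{\al!}\pt^\al f$, which is bounded via the first part of the proposition by the analytic norm of $\nu^{-1}\pt^\al g$. This yields both $\pt^\al g$ and $\pt^\al\pt_{x_\ell}g$ contributions on the right, matching the statement.

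The principal subtlety is making the absorption rigorous for the $\pt_{x_\ell}$ estimate when $\ell=3$: the commutator $[L,\pt_{x_3}]$ can carry the singular factor $\kappa^{-1/2}$ from $\pt_{x_3}\sqrt{\mu}\sim\kappa^{-1/2}\xi_3\sqrt{\mu}$ (boundary-layer scale), and one must check that \eqref{ran-113} genuinely produces $\eps\kappa^{1/2}\|\nu^{1/2}\pt^\al\pt_{x_3}f\|_{L^2_\xi}+\eps\kappa^{-1/2}\|\pt^\al f\|_{L^2_\xi}$, with the first term (small in $\eps\kappa^{1/2}$) absorbed and the second reduced to an already-controlled quantity by the first part of the proposition. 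Provided $\eps\ll\kappa^{1/2}$, which is consistent with the regime $\eps\sim\kappa^N$ with $N\gg 1$, the absorption closes and the estimate follows.
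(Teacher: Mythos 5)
Your overall strategy — differentiate $Lf=g$, invert $L$ with the weighted estimate \eqref{L-1} or \eqref{ran-154}, and absorb the commutator using its $O(\eps\kappa^{1/2})$ smallness and the regime $\eps\kappa^{-1/2}\ll 1$ — is exactly what the paper does; the treatment of $\pt_{x_3}$ via \eqref{ran-113}, the reduction of the lower-order $\eps\kappa^{-1/2}\|\pt^\al f\|$ piece through the zeroth-order estimate, and the explanation of why the $L^\infty$ bound requires the $L^2_\xi\cap L^\infty_\xi$ norm all match.

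The one imprecision worth flagging is the claim that the needed exponentially-weighted commutator bound ``follows from \eqref{ran-112} together with Lemma~\ref{ran-124}.'' Neither quite delivers it: \eqref{ran-112} is stated without any $e^{\rho|\xi|^2}$ weight (it pairs $\nu^{-1/2}$ on the left with $\nu^{1/2}$ on the right), and Lemma~\ref{ran-124} controls the analyticity in $(t,x)$ of $\nu^{-1}$ rather than propagating a Gaussian weight in $\xi$ through the collision integral. The estimate you actually need to feed into \eqref{L-1} is
\[
\sum_\al\frac{\tau^{|\al|}}{\al!}\nl e^{\rho|\xi|^2}\nu^{-1}[L,\pt^\al]f\nr_{L^2_\xi}
\lesssim \eps\kappa^{1/2}\sum_\al\frac{\tau^{|\al|}}{\al!}\nl e^{\rho|\xi|^2}\pt^\al f\nr_{L^2_\xi},
\]
and to get the $e^{\rho|\xi|^2}$ weight through the bilinear collision integral one must re-run the commutator expansion \eqref{ran-123} using Lemma~\ref{bili-appen1} together with the inequality $|\xi|^2\le|\xi'|^2+|\xi_\star'|^2$ and the exponentially weighted analytic bounds on $\pt^\beta\sqrt\mu$ from Proposition~\ref{local-1} (which has a built-in $e^{p_0(|\xi|^2+|\vp|^2)}$ factor); this is what the paper does. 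Your own remark that the Gaussian decay of the kernels dominates $e^{\rho|\xi|^2}$ for $\rho\ll\rho_0$ is the correct justification — it just needs to be attached to \ref{bili-appen1} and \ref{local-1} rather than \eqref{ran-112} and \ref{ran-124}.
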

\begin{proof} We recall that 
\beq\label{ran-123-0}
\bega 
-Lf&=\int_{\R^3\times\S^2}|(\xi-\xi_\star)\cdot \w|\sqrt\mu(\xi_\star) \lw\{\sqrt\mu(\xi')f(\xi_\star')+f(\xi')\sqrt\mu(\xi_\star')\rw\}d\w d\xi_\star\\
&\quad-\sqrt\mu(\xi)\int_{\R^3\times\S^2}|(\xi-\xi_\star)\cdot\w| \sqrt\mu(\xi_\star)f(\xi_\star)d\w d\xi_\star\\
&\quad-f(\xi)\int_{\R^3\times\S^2}|(\xi-\xi_\star)\cdot\w|\mu(\xi_\star)d\w d\xi_\star
\enda 
\eeq
Take $\al\in\mathbb N_0^3$ and apply $\pt^\al$ on both sides, we have
\beq\label{ran-123}\bega 
&-L(\pt^\al f)\\
&\quad+\sum_{\substack{\beta+\gamma+\vr=\al\\\vr<\al}}\frac{\al!}{\beta!\gamma!\vr!}\int_{\R^3\times\S^2}|(\xi-\xi_\star)\cdot \w|\pt^\beta\sqrt\mu(\xi_\star)\lw\{\pt^\gamma \sqrt\mu(\xi')\pt^\vr f(\xi_\star')+\pt^\vr  f(\xi')\pt^\gamma\sqrt\mu(\xi_\star')\rw\}d\w d\xi_\star
\\
&\quad-\sum_{\substack{\beta+\gamma+\vr=\al\\\vr<\al}}\frac{\al!}{\beta!\gamma!\vr!}\pt^\beta \sqrt\mu(\xi)\int_{\R^3\times\S^2}|(\xi-\xi_\star)\cdot \w| \pt^\gamma\sqrt\mu(\xi_\star)\pt^\vr f(\xi_\star)d\w d\xi_\star
\\
&\quad-\sum_{\substack{\beta+\gamma=\al\\\beta<\al
}}\frac{\al!}{\beta!\gamma!} \pt^\beta f(\xi)\int_{\R^3\times\S^2}|(\xi-\xi_\star)\cdot\w|\pt^\gamma\sqrt\mu(\xi_\star) d\w d\xi_\star\\
&=\pt^\al g .\enda 
\eeq
By the $L^2$ estimate \eqref{L-1}, we have 
\[\bega 
&\nl e^{\rho|\xi|^2}\pt^\al f
\nr_{L^2_\xi}\\
&\lesssim \nl e^{\rho|\xi|^2}\nu(\xi)^{-1}\pt^\al g
\nr_{L^2_\xi}\\
&\quad+\sum_{\substack{\beta+\gamma+\vr=\al\\\vr<\al}}\frac{\al!}{\beta!\gamma!\vr!}\nl e^{\rho|\xi|^2}\nu(\xi)^{-1}\int_{\R^3\times\S^2}|(\xi-\xi_\star)\cdot \w|\pt^\beta\sqrt\mu(\xi_\star)\pt^\gamma \sqrt\mu(\xi'_\star) \pt^\vr f(\xi')d\w d\xi_\star\nr_{L^2_\xi}\\
&\quad+\sum_{\substack{\beta+\gamma+\vr=\al\\\vr<\al}}\frac{\al!}{\beta!\gamma!\vr!}\nl e^{\rho|\xi|^2}\nu(\xi)^{-1}\int_{\R^3\times\S^2}|(\xi-\xi_\star)\cdot \w|\pt^\beta\sqrt\mu(\xi_\star)\pt^\gamma \sqrt\mu(\xi') \pt^\vr f(\xi_\star')d\w d\xi_\star\nr_{L^2_\xi}\\
&\quad+\sum_{\substack{\beta+\gamma+\vr=\al\\\vr<\al}}\frac{\al!}{\beta!\gamma!\vr!}\nl e^{\rho|\xi|^2}\nu(\xi)^{-1}\pt^\beta \sqrt\mu(\xi)\int_{\R^3\times\S^2}|(\xi-\xi_\star)\cdot \w| \pt^\gamma\sqrt\mu(\xi_\star)\pt^\vr f(\xi_\star)d\w d\xi_\star\nr_{L^2_\xi}\\
&\quad+ \sum_{\substack{\beta+\gamma=\al\\\beta<\al
}}\frac{\al!}{\beta!\gamma!} \nl e^{\rho|\xi|^2}\nu(\xi)^{-1} \pt^\beta f(\xi)\int_{\R^3\times\S^2}|(\xi-\xi_\star)\cdot\w|\pt^\gamma\sqrt\mu(\xi_\star) d\w d\xi_\star
\nr_{L^2_\xi}.
\enda
\]
Applying Lemma \ref{bili-appen1} and the fact that $|\xi|^2\le |\xi_\star'|^2+|\xi'|^2$, we get
  \[\bega 
  &\nl e^{\rho|\xi|^2}\nu(\xi)^{-1}\int_{\R^3\times\S^2}|(\xi-\xi_\star)\cdot \w|\pt^\beta\sqrt\mu(\xi_\star)\pt^\gamma \sqrt\mu(\xi'_\star) \pt^\vr f(\xi')d\w d\xi_\star\nr_{L^2_\xi}\\
  &\quad+\nl e^{\rho|\xi|^2}\nu(\xi)^{-1}\int_{\R^3\times\S^2}|(\xi-\xi_\star)\cdot \w|\pt^\beta\sqrt\mu(\xi_\star)\pt^\gamma \sqrt\mu(\xi') \pt^\vr f(\xi_\star')d\w d\xi_\star\nr_{L^2_\xi}  \\
  &\quad+\nl e^{\rho|\xi|^2}\nu(\xi)^{-1}\pt^\beta \sqrt\mu(\xi)\int_{\R^3\times\S^2}|(\xi-\xi_\star)\cdot \w| \pt^\gamma\sqrt\mu(\xi_\star)\pt^\vr f(\xi_\star)d\w d\xi_\star\nr_{L^2_\xi}\\
      &\lesssim \nl e^{\rho|\xi|^2}\pt^\vr f
  \nr_{L^2_\xi}\nl e^{2\rho|\xi|^2}\pt^\beta \sqrt\mu
  \nr_{L^\infty_\xi}\nl e^{2\rho |\xi|^2}\pt^\gamma\sqrt\mu
  \nr_{L^\infty_\xi}.
  \enda
    \]
    Similarly, we obtain 
    \[\bega 
    & \nl e^{\rho|\xi|^2}\nu(\xi)^{-1} \pt^\beta f(\xi)\int_{\R^3\times\S^2}|(\xi-\xi_\star)\cdot\w|\pt^\gamma\sqrt\mu(\xi_\star) d\w d\xi_\star
\nr_{L^2_\xi}\\
    &\lesssim \nl e^{\rho|\xi|^2} \pt^\beta f
    \nr_{L^2_\xi}\nl e^{2\rho|\xi|^2}  \pt^\gamma \sqrt\mu
    \nr _{L^\infty_\xi}.
    \enda 
        \]
       Combining all of the above inequalities with Lemma \ref{local-1}, we get 
\[
\sum_{\al\in\mathbb N_0^3}\frac{\tau^{|\al|}}{\al!}\nl e^{\rho|\xi|^2}\pt^\al f
\nr_{L^2_\xi}\lesssim \sum_{\al\in\mathbb N_0^3}\frac{\tau^{|\al|}}{\al!}\nl e^{\rho|\xi|^2}\nu(\xi)^{-1}\pt^\al g 
\nr_{L^2_\xi}+\eps\kappa^{\frac12} \sum_{\al\in\mathbb N_0^3}\frac{\tau^{|\al|}}{\al!}\nl e^{\rho|\xi|^2}\pt^\al f
\nr_{L^2_\xi}.\]
The proof is complete. 
Next we show the analytic $L^\infty$ estimate. Again from \eqref{ran-123}
and the inequality \eqref{ran-154}, we have 
\[\bega 
&\nl e^{\rho|\xi|^2}\pt^\al  f
\nr_{L^\infty_\xi}\\
&\lesssim \nl e^{\rho|\xi|^2}\nu(\xi)^{-1}\pt^\al g
\nr_{L^2_\xi\cap L^\infty_\xi}\\
&\quad+\sum_{\substack{\beta+\gamma+\vr=\al\\\vr<\al}}\frac{\al!}{\beta!\gamma!\vr!}\nl e^{\rho|\xi|^2}\nu(\xi)^{-1}\int_{\R^3\times\S^2}|(\xi-\xi_\star)\cdot \w|\pt^\beta\sqrt\mu(\xi_\star)\pt^\gamma \sqrt\mu(\xi'_\star) \pt^\vr f(\xi')d\w d\xi_\star\nr_{L^2_\xi\cap L^\infty_\xi}\\
&\quad+\sum_{\substack{\beta+\gamma+\vr=\al\\\vr<\al}}\frac{\al!}{\beta!\gamma!\vr!}\nl e^{\rho|\xi|^2}\nu(\xi)^{-1}\int_{\R^3\times\S^2}|(\xi-\xi_\star)\cdot \w|\pt^\beta\sqrt\mu(\xi_\star)\pt^\gamma \sqrt\mu(\xi') \pt^\vr f(\xi_\star')d\w d\xi_\star\nr_{L^2_\xi\cap L^\infty_\xi}\\
&\quad+\sum_{\substack{\beta+\gamma+\vr=\al\\\vr<\al}}\frac{\al!}{\beta!\gamma!\vr!}\nl e^{\rho|\xi|^2}\nu(\xi)^{-1}\pt^\beta \sqrt\mu(\xi)\int_{\R^3\times\S^2}|(\xi-\xi_\star)\cdot \w| \pt^\gamma\sqrt\mu(\xi_\star)\pt^\vr f(\xi_\star)d\w d\xi_\star\nr_{L^2_\xi\cap L^\infty_\xi}\\
&\quad+ \sum_{\substack{\beta+\gamma=\al\\\beta<\al
}}\frac{\al!}{\beta!\gamma!} \nl e^{\rho|\xi|^2}\nu(\xi)^{-1} \pt^\beta f(\xi)\int_{\R^3\times\S^2}|(\xi-\xi_\star)\cdot\w|\pt^\gamma\sqrt\mu(\xi_\star) d\w d\xi_\star
\nr_{L^2_\xi\cap L^\infty_\xi}.
\enda 
\]
Similarly as the above $L^2$ estimate above, we
use Lemma \ref{bili-2} and Lemma \ref{local-1} to get 
\[\bega 
&\sum_{\al\in\mathbb N_0^3}\frac{\tau^{|\al|}}{\al!}\nl e^{\rho|\xi|^2}\pt^\al  f
\nr_{L^\infty_\xi}\\
&\lesssim \sum_{\al\in\mathbb N_0^3} \nl e^{\rho|\xi|^2}\nu(\xi)^{-1}\pt^\al g
\nr_{L^2_\xi\cap L^\infty_\xi}\\
&\quad+\eps\kappa^{\frac 12}\lw\{\sum_{\al\in\mathbb N_0^3}\frac{\tau^{|\al|}}{\al!}\nl e^{\rho|\xi|^2}\pt^\al  f
\nr_{L^\infty_\xi}+\sum_{\al\in\mathbb N_0^3}\frac{\tau^{|\al|}}{\al!}\nl e^{\rho|\xi|^2}\pt^\al  f
\nr_{L^2_\xi}\rw\}.
\enda 
\]
The proof is complete.
Now we show the third inequality in Proposition \ref{Lfg-L2}. From \eqref{ran-123-0}, we have 
\[
\bega 
&-L (\pt_{x_\ell}f)+\int_{\R^3\times\S^2}|(\xi-\xi_\star)\cdot \w|\sqrt\mu(\xi_\star) \lw\{\pt_{x_\ell}\sqrt\mu(\xi')f(\xi_\star')+f(\xi')\pt_{x_\ell}\sqrt\mu(\xi_\star')\rw\}d\w d\xi_\star\\
&\quad-\pt_{x_\ell}\sqrt\mu(\xi)\int_{\R^3\times\S^2}|(\xi-\xi_\star)\cdot\w| \sqrt\mu(\xi_\star)f(\xi_\star)d\w d\xi_\star-\sqrt\mu(\xi)\int_{\R^3\times\S^2}|(\xi-\xi_\star)\cdot\w| \pt_{x_\ell}\sqrt\mu(\xi_\star)f(\xi_\star)d\w d\xi_\star\\
&\quad-f(\xi)\int_{\R^3\times\S^2}|(\xi-\xi_\star)\cdot\w|\pt_{x_\ell} \mu(\xi_\star)d\w d\xi_\star=\pt_{x_\ell} g.\enda 
\]
The proof proceeds exactly as above, by using Lemma \ref{M-local-2} and the fact that $\eps\kappa^{-\frac 12}\ll 1$. We skip the details.
\end{proof}

\begin{corollary}\label{corol-Aij} There exists universal constants $\tau_0,\rho_0>0$ such that 
\[\bega 
&\sum_{\al\in\mathbb N^3}\frac{\tau_0^{|\al|}}{\al!}
\nl e^{\rho_0|\xi|^2}\pt^\al (Id,\pt_{x_\ell},\pt_t)A_{ij}(\vp)
\nr_{L^2_\xi\cap L^\infty_\xi}
%+\sum_{\ell=1}^3 \nl e^{\rho_0|\xi|^2}\pt^\al  \pt_{x_\ell} A_{ij}(\vp)\nr_{L^2_\xi\cap L^\infty_\xi}+\nl e^{\rho_0|\xi|^2}\pt^\al  \pt_t A_{ij}(\vp)\nr_{L^2_\xi\cap L^\infty_\xi}
\lesssim 1.
\enda 
\]
\end{corollary}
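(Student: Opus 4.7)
The plan is to invoke Proposition \ref{Lfg-L2} with $f = A_{ij}$ and $g = \hat A_{ij}$, reducing the estimate for $A_{ij}$ to an analytic estimate on the explicit function $\hat A_{ij}(\vp)$. Since $\hat A_{ij}(\vp) = (\vp_i\vp_j - \tfrac{1}{3}\delta_{ij}|\vp|^2)\sqrt{\mu_0}(\vp)$ is a polynomial in $\vp$ times a Gaussian, and the $(t,x)$-dependence enters only through $\vp = \xi - \eps U(t,x)$, the problem is essentially to track how spatial/time derivatives act on a composite Gaussian.

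First, by the chain rule, $\pt^\al \hat A_{ij}$ is a sum of products of derivatives $\pt^{\beta_k} (\eps U)$ times $\nabla_\vp$ derivatives of $\hat A_{ij}(\vp)$ evaluated at $\vp = \xi - \eps U$. Using the Fa\`a di Bruno formula exactly as in Lemma \ref{composite} and Lemma \ref{exp-ana-ele} (which were already used to prove Proposition \ref{local-1}), together with the polynomial $\times$ Gaussian structure, I would show
\[
\sum_{\al\in\mathbb N_0^3}\frac{\tau_0^{|\al|}}{\al!}\nl e^{2\rho_0|\xi|^2}\pt^\al \hat A_{ij}\nr_{L^2_\xi\cap L^\infty_\xi}\lesssim 1
\]
for $\rho_0\ll 1$ and $\tau_0$ sufficiently small. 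The weight $e^{2\rho_0|\xi|^2}$ with $\rho_0\ll 1$ is easily absorbed by $\sqrt{\mu_0}(\vp)$ since $\eps|U|\ll 1$. Then multiplying by $\nu^{-1}\lesssim \la\xi\ra^{-1}$ preserves the same bound, and an application of the first two estimates in Proposition \ref{Lfg-L2} yields the claim for the undifferentiated term $A_{ij}$.

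For the $\pt_{x_\ell}$ and $\pt_t$ cases, I would apply the third and fourth estimates of Proposition \ref{Lfg-L2}, which require control of $\nu^{-1}e^{\rho|\xi|^2}\pt^\al \pt_{x_\ell}\hat A_{ij}$ and $\nu^{-1}e^{\rho|\xi|^2}\pt^\al \pt_t\hat A_{ij}$ in $L^2_\xi\cap L^\infty_\xi$. These derivatives produce an extra factor $\eps \pt_{x_\ell} U$ or $\eps\pt_t U$ (times a polynomial in $\vp$ and $\sqrt{\mu_0}$). Combining with Lemma \ref{M-local-2}, which gives $\eps\kappa^{-1/2}$ for $\pt_{x_3}$ and $\eps$ for $\pt_{x_1},\pt_{x_2},\eps\pt_t$, we obtain the analytic bound with a constant harmless in the regime $\eps\ll\kappa^{1/2}$.

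The main obstacle is the $\pt_{x_3}$ derivative, where each differentiation of the local Maxwellian structure costs a factor $\eps\kappa^{-1/2}$ from $\pt_{x_3}U \sim \kappa^{-1/2}$. One must verify that these are compensated by the small $\eps$ prefactor built into $\pt^\beta(\eps U)$ (as arranged in the multi-index convention $\pt^\al = (\eps\pt_t)^{\al_0}\pt_{x_1}^{\al_1}\pt_{x_2}^{\al_2}$: note that $\pt_{x_3}$ sits outside), and by the Gaussian decay $e^{-|\vp|^2/2}$ which absorbs any polynomial in $\vp$ when $\rho_0$ is chosen sufficiently small. Since we only differentiate once in $x_3$ (not as part of the analytic sum), the single loss of $\eps\kappa^{-1/2}$ is bounded by $1$ under the operating regime $\eps \sim \kappa^N$, so the analytic norm closes without further loss.
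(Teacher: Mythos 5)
Your proposal follows essentially the same route as the paper's proof: both reduce the estimate to an analytic bound on $\hat A_{ij}$ via Proposition \ref{Lfg-L2} (using $LA_{ij}=\hat A_{ij}$), and then bound $\pt^\al \hat A_{ij}$ and $\pt^\al\pt_{x_\ell}\hat A_{ij}$ using the Gaussian analyticity lemmas (Lemma \ref{composite} / Lemma \ref{exp-ana-ele}). The paper's own proof is considerably more terse — it simply displays the two reductions to $\hat A_{ij}$ and cites Lemma \ref{exp-ana-ele} — whereas you spell out the Fa\`a di Bruno structure, explicitly treat the $\pt_t$ case, and track the $\eps\kappa^{-1/2}$ bookkeeping for the single $\pt_{x_3}$ derivative via Lemma \ref{M-local-2}; all of this is correct and in fact fills in details the paper leaves implicit. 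One minor point you could sharpen: the third and fourth estimates of Proposition \ref{Lfg-L2} control only the $L^\infty_\xi$ norm of $\pt^\al\pt_{x_\ell}A_{ij}$ and $\pt^\al\pt_t A_{ij}$, so to recover the $L^2_\xi$ part of the $L^2_\xi\cap L^\infty_\xi$ claim you should note that a weighted $L^\infty_\xi$ bound with weight $e^{\rho|\xi|^2}$ trivially gives a weighted $L^2_\xi$ bound with any smaller $\rho_0<\rho$ (which is why you sensibly carry a margin $e^{2\rho_0|\xi|^2}$); this gap exists in the paper's version too, so your proof is, if anything, closer to closing it.
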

\begin{proof}
Applying Proposition \ref{Lfg-L2} and using the fact that $LA_{ij}=\hat A_{ij}=\lw(\vp_i\vp_j-\frac 1 3 \delta_{ij}|\vp|^2
\rw)\sqrt\mu$, we need to check that 
\[
\sum_{\al\in\mathbb N^3}\frac{\tau_0^{|\al|}}{\al!}
\nl e^{\rho_0|\xi|^2}\pt^\al \hat A_{ij}
\nr _{L^2_\xi\cap L^\infty_\xi}\lesssim 1
\]
and 
\[\sum_{\al\in\mathbb N^3}\frac{\tau_0^{|\al|}}{\al!}
\nl e^{\rho_0|\xi|^2}\pt^\al  \pt_{x_\ell}\hat  A_{ij}
\nr _{L^2_\xi\cap L^\infty_\xi}\lesssim 1.
\]
The above inequality follows from \eqref{exp-ana-ele}.
\end{proof}
\begin{proposition} Recall $\tau_0>0$ to be the analyticity radius of $U$, and the definition of $\mu_M$ in \eqref{mu-M-def}. Let $T_M\in (0,1)$ so that $|T_M^{-1}-1|\ll 1$. 
 There holds, for $\tau_0\ll 1$:
\[
\sum_{|\al|\ge 1}\frac{\tau_0^{|\al|}}{\al!}\nl\pt^\al \lw(\frac{\sqrt\mu}{\sqrt\mu_M} A_{ij}
\rw)
\nr_{L^2_\xi} \lesssim 1.
\]
\end{proposition}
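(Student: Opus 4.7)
\medskip

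The plan is to reduce everything to analytic estimates already established earlier in the paper by exhibiting an explicit multiplicative decomposition of $\tfrac{\sqrt\mu}{\sqrt{\mu_M}}A_{ij}$. Direct calculation gives
\[
\frac{\sqrt\mu}{\sqrt{\mu_M}} \;=\; T_M^{3/4}\, e^{\frac{|\xi|^2}{4}(T_M^{-1}-1)}\cdot e^{\frac{1}{2}\eps U\cdot\xi}\cdot e^{-\frac{1}{4}\eps^2|U|^2},
\]
hence
\[
\frac{\sqrt\mu}{\sqrt{\mu_M}}A_{ij}(\vp)\;=\;T_M^{3/4}\, e^{\frac{|\xi|^2}{4}(T_M^{-1}-1)}\cdot e^{\frac{1}{2}\eps U\cdot\xi}\cdot e^{-\frac{1}{4}\eps^2|U|^2}\cdot A_{ij}(\vp).
\]
The first factor is independent of $(t,x)$, so only the zeroth-order derivative of this factor ever appears in a Leibniz expansion; it contributes only a pointwise weight in $\xi$ that, provided $|T_M^{-1}-1|\ll \rho_0$, is swallowed by the Gaussian weight $e^{\rho_0|\xi|^2}$ carried by the $A_{ij}$ estimate of Corollary \ref{corol-Aij}.

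Next, I would apply Leibniz to $\pt^\al$ (with $|\al|\ge 1$) and distribute the derivatives among the three $(t,x)$-dependent factors $e^{\frac{1}{2}\eps U\cdot\xi}$, $e^{-\frac{1}{4}\eps^2|U|^2}$ and $A_{ij}(\vp)$. For each factor I plug in an already-established analytic bound:
Lemma \ref{exp-ana} (componentwise, for each coordinate $\ell$) gives
\[
\sup_{\al\in\mathbb N_0^3}\frac{\tau_0^{|\al|}}{\al!}\bigl|\pt^\al e^{\frac{1}{2}\eps U_\ell \xi_\ell}\bigr|\;\lesssim\; e^{\frac{1}{2}\eps U_\ell\xi_\ell}\,e^{C\eps|\xi_\ell|},
\]
which is pointwise controlled by $e^{2\eps\|U\|_{L^\infty}|\xi|}$. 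The factor $e^{-\frac{1}{4}\eps^2|U|^2}$ is handled by Lemma \ref{composite} with $G(t)=e^{-t/4}$ and $H=\eps^2|U|^2$, whose analytic norm in $x_\parallel$ and $\eps\pt_t$ is $O(\eps^2)$, yielding a uniformly bounded analytic norm. Finally, the factor $A_{ij}(\vp)$ is controlled by Corollary \ref{corol-Aij}, which supplies the Gaussian weight $e^{\rho_0|\xi|^2}$ in both $L^2_\xi$ and $L^\infty_\xi$ simultaneously.

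Combining the three bounds via the discrete Young inequality (for products of three analytic sums, exactly as was done repeatedly in Section \ref{func-space-BE} and in the forcing-term estimates), one obtains a bound of the form
\[
\sum_{|\al|\ge 1}\frac{\tau_0^{|\al|}}{\al!}\Bigl\|\pt^\al\Bigl(\tfrac{\sqrt\mu}{\sqrt{\mu_M}}A_{ij}\Bigr)\Bigr\|_{L^2_\xi}\;\lesssim\;\bigl\|e^{\frac{|\xi|^2}{4}(T_M^{-1}-1)+2\eps\|U\|_{L^\infty}|\xi|-\rho_0|\xi|^2}\bigr\|_{L^2_\xi}\;\cdot\;O(1),
\]
which is finite provided $|T_M^{-1}-1|<4\rho_0$ and $\eps\|U\|_{L^\infty}$ is small, both of which are built into the hypotheses.

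The main potential obstacle is precisely tracking the exponential weight in $\xi$: the factor $e^{\frac{|\xi|^2}{4}(T_M^{-1}-1)}$ is a genuine Gaussian growth (of small rate), and no derivative can be forced onto it to produce additional smallness, so it must be absorbed into the decay window $\rho_0$ provided by $A_{ij}$. This is exactly the quantitative role of the assumption $|T_M^{-1}-1|\ll 1$; everything else in the argument is a routine application of the analytic product/composition machinery developed in the appendix.
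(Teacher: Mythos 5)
Your proof is correct and follows essentially the same route as the paper's: factorize $\tfrac{\sqrt\mu}{\sqrt{\mu_M}}$ into a $(t,x)$-independent $\xi$-Gaussian, the factors $e^{\frac{1}{2}\eps U\cdot\xi}$ and $e^{-\frac{1}{4}\eps^2|U|^2}$ (handled via Lemmas \ref{exp-ana}, \ref{exp-ana-ele}, \ref{composite}), Leibniz-expand against $A_{ij}$, and absorb the residual Gaussian growth into the $e^{\rho_0|\xi|^2}$ decay from Corollary \ref{corol-Aij} under $|T_M^{-1}-1|\ll\rho_0$. The only (cosmetic) differences from the paper's own proof are that you keep the $\xi$-Gaussian separate from $e^{\frac{1}{2}\eps U_\ell\xi_\ell}$ rather than grouping them coordinate-by-coordinate, and you observe directly that the $\xi$-Gaussian, being $(t,x)$-independent, contributes only at order zero in the Leibniz expansion rather than writing out a trivially vanishing $\pt^\al$-bound for it.
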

\begin{proof} We recall from \eqref{ran-76} that
\[
\frac{\sqrt\mu}{\sqrt\mu_M}=T_M^{\frac 32}e^{|T_M^{-1}-1|\frac{|\xi|^2}{4}}e^{-\frac 1 4 \eps^2|U|^2}e^{\frac 12 \eps U\cdot\xi}=T_M^{\frac 32} \prod_{\ell=1}^3 e^{|T_M^{-1}-1|\frac{\xi_\ell^2}{4}+\frac 12 \eps U_\ell \xi_\ell} \prod_{\ell=1}^3e^{-\frac 1 4 \eps^2U_\ell^2}.\]
Since the function $e^{-\frac 1 4\eps^2U_\ell^2}$ is analytic, it suffices to bound
\[
\sup_{|\al|\ge 1}\frac{\tau_0^{|\al|}}{\al!}\nl \pt^\al\lw(e^{|T_M^{-1}-1|\frac{\xi_\ell^2}{4}+\frac 12 \eps U_\ell \xi_\ell} A_{ij}(\vp)\rw)
\nr_{L^2_\xi}.
\]
Using Lemma \ref{exp-ana} and \ref{exp-ana-ele} and  we have 
\beq\label{ran-128}
\bega 
\sup_{|\al|\ge 1}\frac{\tau_0^{|\al|}}{\al!}
\lw|\pt^\al (e^{\frac 12\eps U_\ell \xi_\ell})
\rw|&\lesssim \eps\kappa^{\frac 12}e^{\frac 12\eps \xi_\ell U_\ell+4\eps\kappa^{\frac 12}|\xi_\ell|}\lesssim e^{C_0\eps (1+\|U\|_{L^\infty})|\xi|},\\
\sup_{|\al|\ge 1}\frac{\tau_0^{|\al|}}{\al!}
\lw|\pt^\al (e^{|T_M^{-1}-1|\frac{\xi_\ell^2}{4}})
\rw|&\lesssim e^{\frac{3|T_M^{-1}-1|}{4}\xi_\ell^2}\lesssim e^{\frac{3|T_M^{-1}-1|}{4}|\xi|^2}.\enda
\eeq
Now we have 
\[
\pt^\al\lw(e^{|T_M^{-1}-1|\frac{\xi_\ell^2}{4}+\frac 12 \eps U_\ell \xi_\ell} A_{ij}(\vp)
\rw)=\sum_{\beta+\gamma+\vr=\al}\frac{\al!}{\beta!\gamma!\vr!}\pt^\beta \lw(e^{|T_M^{-1}-1|\frac{\xi_\ell^2}{4}}\rw)\pt^\gamma\lw(e^{\frac 12 \eps U_\ell \xi_\ell}
\rw)\pt^\vr\lw(A_{ij}(\vp)
\rw).
\]
Combining the above with \eqref{ran-128} and Corollary \ref{corol-Aij}, we have
\[\bega 
\frac{\tau_0^{|\al|}}{\al!}\pt^\al\lw(e^{|T_M^{-1}-1|\frac{\xi_\ell^2}{4}+\frac 12 \eps U_\ell \xi_\ell} A_{ij}(\vp)
\rw)\lesssim e^{\frac{3|T_M^{-1}-1|}{4}|\xi|^2+C_0\eps (1+|U|_{L^\infty})|\xi|-\rho |\xi|^2} \cdot \frac{\tau_0^{|\vr|}}{\vr!} e^{\rho|\xi|^2}|\pt^\vr A_{ij}|.\\
\enda
\]
The proof is complete, under the assumption $|T_M^{-1}-1|\ll \rho$.
\end{proof}

%\bibliographystyle{abbrv}
%\bibliography{book-ref-ext}
%%\def\cprime{$'$} \def\cprime{$'$}
%\def\cprime{$'$} \def\cprime{$'$}
%%\begin{thebibliography}{10}
%%\end{thebibliography}
\def\cprime{$'$} \def\cprime{$'$}

\end{document}